 \documentclass[11 pt]{article}
\usepackage{amssymb, amsmath, amsfonts,amsthm}
\usepackage[a4paper,vmargin={3.5cm,3.5cm},hmargin={2.5cm,2.5cm}]{geometry}
\usepackage{mathrsfs} 
\usepackage[usenames,dvipsnames,svgnames,table]{xcolor}
\usepackage{url} 
\usepackage{bbm} 
\usepackage{enumerate}
\usepackage{graphicx}
\usepackage{comment}

\usepackage[colorlinks,
            linkcolor=Blue,
            anchorcolor=red,
            citecolor=RoyalPurple,
            ]{hyperref} 

\usepackage[capitalize, nameinlink]{cleveref}
\usepackage{titlesec} 
\titleformat{\section}[block]
{\normalfont \large\bfseries}
{\thesection}{1.2em}{\bfseries}
\titleformat{\subsection}[block]
{\normalfont \normalsize \bfseries}
{\thesubsection}{0.9em}{\bfseries}
\titlespacing{\paragraph}{%
  0pt}{
  0.5\baselineskip}{
  1em}

\usepackage{setspace}
 \setlength{\textheight}{250mm} 
 \setlength{\textwidth}{175mm}\setlength{\oddsidemargin}{-5mm}
 \setlength{\evensidemargin}{0mm}
 \setlength{\topmargin}{-10mm}
 \setlength{\headheight}{-5mm}
\setlength{\parskip}{1mm}
 \linespread{1.05}


\makeatletter
\def\th@plain{%
  \thm@notefont{}
  \itshape 
}
\def\th@definition{%
  \thm@notefont{}
  \normalfont 
}
\makeatother

\newtheorem{theorem}{Theorem}[section]
\newtheorem{corollary}[theorem]{Corollary}
\newtheorem{lemma}[theorem]{Lemma}
\newtheorem{proposition}[theorem]{Proposition}
\newtheorem{remark}[theorem]{Remark}

\newtheorem{definition}[theorem]{Definition}
\numberwithin{equation}{section}


\newcommand{\bN}{\mathbb{N}}

\newcommand{\bR}{\mathbb{R}}
\newcommand{\ind}[1]{\mathbbm{1}_{\left\{ #1 \right\} } } 
\newcommand{\dd}{\mathrm{d}} 

\newcommand{\bE}{\mathbb{E}} 
\newcommand{\bP}{\mathbb{P}} 

\newcommand{\cA}{\mathcal{A}} 
\newcommand{\cB}{\mathcal{B}} 
\newcommand{\cR}{\mathcal{R}} 
\newcommand{\cG}{\mathcal{G}} 
\newcommand{\cE}{\mathcal{E}} %
\newcommand{\cD}{\mathcal{D}} 
\newcommand{\cF}{\mathcal{F}}
\newcommand{\cL}{\mathcal{L}} 
\newcommand{\cS}{\mathcal{S}} 
\newcommand{\cW}{\mathcal{W}} %
\newcommand{\cY}{\mathcal{Y}} 

\newcommand{\rF}{\mathscr{F}} 
\newcommand{\rG}{\mathscr{G}} 
\newcommand{\rK}{\mathscr{K}} 

\newcommand{\eqdis}{\overset{d}{=}}

\newcommand{\fN}{\mathbf{N}} 
\newcommand{\fe}{\mathbf{e}} 

\newcommand{\kL}{\mathfrak{L}} 
\newcommand{\ki}{\mathfrak i}

\newcommand{\cT}{\mathcal{T}} 
\newcommand{\besq}{{\rm BESQ}}


\title{Stochastic Flows and Marked Stable Processes}
\author{
Elie A\"id\'ekon \footnote{School of Mathematical Sciences, Fudan University; {aidekon@fudan.edu.cn}}, Quan Shi \footnote{SKLMS, Academy of Mathematics and Systems Science, Chinese Academy of Sciences; {quan.shi@amss.ac.cn}}, Chengshi Wang\footnote{School of Mathematical Sciences, Fudan University; {cswang17@fudan.edu.cn}}
}
\date{\today}


\begin{document}

\maketitle
\vspace{-1em}
\begin{abstract}
    We construct  a random partition of the space-time plane $\bR_+\times \bR$ using  two coupled stochastic squared Bessel flows, whose parameters differ by $\delta\in (0,2)$. We show that the cells of this partition correspond to squared Bessel excursions with a negative parameter $-\delta$ which are embedded within the jumps of a spectrally positive $(1+\frac\delta 2)$ stable process. In particular, we demonstrate that interval partition evolutions \cite{Paper1-1} and stable shredded disks \cite{BjCuSi22} arise naturally in this framework.
\end{abstract}
{\bf 2010 Mathematics Subject Classification:} 
60G51, 
 60H40  
  60J55, 
 60J80, 
 \\
{\bf Keywords:} Ray--Knight theorem, squared Bessel flow, interval partition, L\'evy process. 

\tableofcontents
\section{Introduction}

Given a white noise $\cW$ on $\bR_+\times \bR$, a squared Bessel (BESQ) flow with parameter $c\in \bR$ is a collection of processes  $(\cS_{r,x}(a),\, x\ge r)$ indexed by $(a,r)\in \bR_+\times \bR$, where each $(\cS_{r,x}(a),\, x\ge r)$ starts at $\cS_{r,r}(a)=a$ and is a solution of the stochastic differential equation (SDE) 
\begin{equation}
    \dd_x \cS_{r,x}(a) =  2 \cW([0,\cS_{r,x}(a)],\dd x) + c \, \dd x,\qquad x\ge r,
\end{equation}
\noindent with absorption at $0$ when $c\le 0$. 
It is a classical result that this SDE admits a unique strong solution, which is a so-called squared Bessel process of dimension $c$. 
 We refer to Section \ref{s:BESQ} for further background on BESQ flows.  BESQ flows first appeared in \cite{PitmYor82} and are connected among others to continuous-state branching processes \cite{bertoin-legall00, lambert02, dawson2012stochastic}, Ray--Knight theorems \cite{legall-yor, aidekon2024infinite,aidekon2023stochastic}, skew Brownian flows \cite{burdzy2001local,burdzy2004lenses,AWYskew}. 

In a recent work, \cite{aidekon2024infinite} uses  BESQ flows to give an alternative proof of the well-known Ray--Knight theorems \cite{Knight63,Ray63,legall-yor,CarmPetiYor94}. The results in that paper apply to the so-called perturbed reflecting Brownian motion (PRBM) \vspace{-0.1cm}
$$ X_t := |B_t| - \mu \kL_t,\quad t\ge 0\vspace{-0.1cm}$$

\noindent where $\mu\ge 0$, $B=(B_t)_{t\ge 0}$ is a standard one-dimensional Brownian motion  and $(\kL_t)_{t\ge 0}$ is the continuous local time process of $B$ at position 0. A BESQ flow $\cR$ is related to the local times $(L(t,x))_{t\ge 0,x\in \bR}$ of $X$ via the formula
$$
\cR_{r,x}(a)=L(\tau_a^r,x), \qquad a\ge 0, r\in \bR, x\ge r, 
$$

\noindent where
\[
    \tau_a^r := \inf\{t\ge0\,:\, L(t,r) > a\} 
\]

\noindent denote the right-continuous inverse local times of $X$. 
Set for every squared integrable $g: \bR_+\times \bR \rightarrow \bR$, 
\begin{align}\label{WN}
    W(g):= \int_0^{+\infty} g(L(t,X_t),X_t)\ {\rm sgn}(B_t)\dd B_t.\vspace{-0.1cm}
\end{align}
\noindent Then $W$ defines a white noise on $\bR_+\times\bR$ and $\cR$ is a BESQ flow driven by $W$ with the parameter $c$ varying in space: $\delta:= \frac{2}{\mu}$ for $x < 0$, and $0$ for $x \ge 0$. 

In this work, we employ this framework to explore the finer structure of  Brownian local time. Our analysis reveals a hidden  Poisson point process of $\besq$-excursions with negative parameters and leads to the appearance of a stable Lévy process. This was partially motivated from recent developments in interval partition evolutions \cite{Paper1-2,FPRW-Aldous} and certain random planar maps \cite{BjCuSi22}, where  analogous structures were observed. We thereby establish a perhaps unexpected   connection to these diverse contexts.

\paragraph{Coupled BESQ flows and a marked stable process}

Specifically, we consider two coupled BESQ flows driven by the same white noise. 
More precisely, let $\mu>1$ such that $\delta =\frac2\mu \in (0,2)$. 
 We consider both the BESQ flow $\cR$ (for red) given by the local time defined above, as well as a BESQ flow $\cB$ (for blue), also driven by $W$, with parameter $0$ for $x < 0$, and $-\delta$ for $x \ge 0$. 
For a fixed point $(a_0,r_0)\in\bR_+\times\bR$, almost surely, a single blue/red flow line starts at $(a_0, r_0)$ with $\cB\le \cR$. However, in relation with semi-flat bifurcation times in the skew Brownian flow \cite{burdzy2004lenses}, it is shown in \cite{AWYskew} that there exist exceptional points $(a,r)$ where flow lines of $\cR$ and $\cB$ simultaneously bifurcate and interlace, as in Figure~\ref{fig:flow}. At such a common bifurcation point $(a,r)$, the rightmost blue flow line temporarily lies between two red lines. The region delimited by this blue line and the red line at its left will be denoted by $\cT_{(a,r)}$ in \eqref{eq:spindle}. Any two such regions $\cT_{(a,r)}$ are either disjoint or one is contained in the other. We then consider maximal regions for the inclusion order, and refer to it as \textbf{spindles}, see Definition \ref{def:spindle}.

\begin{figure}[htbp]
\centering
       \scalebox{0.6}{
        \def\svgwidth{\columnwidth}
\begingroup%
  \makeatletter%
  \providecommand\color[2][]{%
    \errmessage{(Inkscape) Color is used for the text in Inkscape, but the package 'color.sty' is not loaded}%
    \renewcommand\color[2][]{}%
  }%
  \providecommand\transparent[1]{%
    \errmessage{(Inkscape) Transparency is used (non-zero) for the text in Inkscape, but the package 'transparent.sty' is not loaded}%
    \renewcommand\transparent[1]{}%
  }%
  \providecommand\rotatebox[2]{#2}%
  \newcommand*\fsize{\dimexpr\f@size pt\relax}%
  \newcommand*\lineheight[1]{\fontsize{\fsize}{#1\fsize}\selectfont}%
  \ifx\svgwidth\undefined%
    \setlength{\unitlength}{312.41851566bp}%
    \ifx\svgscale\undefined%
      \relax%
    \else%
      \setlength{\unitlength}{\unitlength * \real{\svgscale}}%
    \fi%
  \else%
    \setlength{\unitlength}{\svgwidth}%
  \fi%
  \global\let\svgwidth\undefined%
  \global\let\svgscale\undefined%
  \makeatother%
  \begin{picture}(1,0.39813776)%
    \lineheight{1}%
    \setlength\tabcolsep{0pt}%
    \put(0,0){\includegraphics[width=\unitlength,page=1]{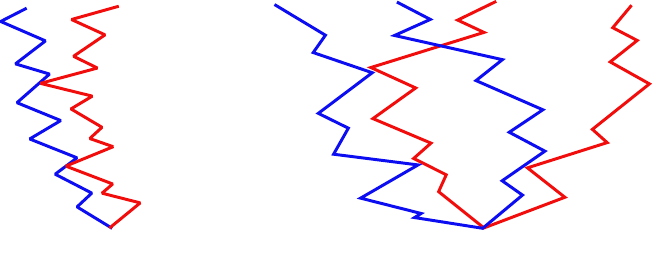}}%
    \put(0.11137114,0.00951943){\makebox(0,0)[lt]{\lineheight{1.25}\smash{\begin{tabular}[t]{l}\huge $(a_0,r_0)$\end{tabular}}}}%
    \put(0.69372474,0.00667868){\makebox(0,0)[lt]{\lineheight{1.25}\smash{\begin{tabular}[t]{l}\huge $(a,r)$\end{tabular}}}}%
    \put(0,0){\includegraphics[width=\unitlength,page=2]{flow.pdf}}%
    \put(0.66874434,0.21053342){\makebox(0,0)[lt]{\lineheight{1.25}\smash{\begin{tabular}[t]{l}\huge $\cT_{(a,r)}$\end{tabular}}}}%
  \end{picture}%
\endgroup%

    }
    \caption{
    Left: For a fixed point $(a_0,r_0)\in\bR_+\times\bR$, almost surely, a single blue/red flow line starts at $(a_0, r_0)$.
Right: 
There exist exceptional points $(a,r)$, where the blue and red flows bifurcate and the rightmost blue line temporarily lies between the two red lines. The shaded region $\cT_{(a,r)}$ is bounded by the rightmost blue and leftmost red lines. }
    \label{fig:flow}
\end{figure}

Spindles are disjoint, bounded subsets of the half-plane that (roughly) form a partition, in the sense that the \textbf{gasket $\mathscr{K}$}—the complement of all spindle interiors—has Lebesgue measure zero (see Corollary~\ref{c:spindle zero measure}).
The main results of this paper (Theorems~\ref{thm:PPP} and \ref{thm:Levy}) establish two key representations:
\begin{enumerate}
    \item The widths of spindles, viewed as a function of the vertical level, arise as a Poisson point process of $\besq$ excursions of parameter $-\delta\in (-2,0)$;
    \item spindles  are naturally embedded in the jumps of a spectrally positive stable Lévy process $\xi$ of index $1+\frac{\delta}{2}\in (1,2)$.  
\end{enumerate}

In other words, we can describe our main result as follows. Note that each spindle $\cT_{(a,r)}$ has a bottom point $(a,r)$ and a top point where its blue and red boundaries meet, call it $(c,z)$. We then call width process the process $(\cB_{r,x}(a)-\cR_{r,x}(a-),x\in [r,z])$ where $\cB_{r,\cdot}(a)$ denotes the blue boundary and $\cR_{r,\cdot}(a-)$ the red boundary. Our main result shows that there is a bijection between the spindles generated by $(\cB,\cR)$ and the jumps of a stable process $\xi$, such that 
the collection 
\[
\{(\xi(t-),\xi(t))\colon t>0, \Delta \xi(t)>0\}
\]

\noindent is the collection of pairs $(r,z)$ over all spindles. This bijection preserves the left-right order of the spindles, in the sense that, for any fixed level $y$, one spindle is to the left of another if and only if its corresponding jump in $\xi$ occurs at an earlier time. Moreover, conditionally on $\xi$, the width processes of the spindles are independent $\besq^{-\delta}$ excursions of duration  $\Delta \xi(t)$. See Figure~\ref{fig:skewer} for an illustration.

\begin{figure}[htbp]
\centering
       \scalebox{0.9}{
        \def\svgwidth{\columnwidth}
\begingroup%
  \makeatletter%
  \providecommand\color[2][]{%
    \errmessage{(Inkscape) Color is used for the text in Inkscape, but the package 'color.sty' is not loaded}%
    \renewcommand\color[2][]{}%
  }%
  \providecommand\transparent[1]{%
    \errmessage{(Inkscape) Transparency is used (non-zero) for the text in Inkscape, but the package 'transparent.sty' is not loaded}%
    \renewcommand\transparent[1]{}%
  }%
  \providecommand\rotatebox[2]{#2}%
  \newcommand*\fsize{\dimexpr\f@size pt\relax}%
  \newcommand*\lineheight[1]{\fontsize{\fsize}{#1\fsize}\selectfont}%
  \ifx\svgwidth\undefined%
    \setlength{\unitlength}{502.88701576bp}%
    \ifx\svgscale\undefined%
      \relax%
    \else%
      \setlength{\unitlength}{\unitlength * \real{\svgscale}}%
    \fi%
  \else%
    \setlength{\unitlength}{\svgwidth}%
  \fi%
  \global\let\svgwidth\undefined%
  \global\let\svgscale\undefined%
  \makeatother%
  \begin{picture}(1,0.40888825)%
    \lineheight{1}%
    \setlength\tabcolsep{0pt}%
    \put(0,0){\includegraphics[width=\unitlength,page=1]{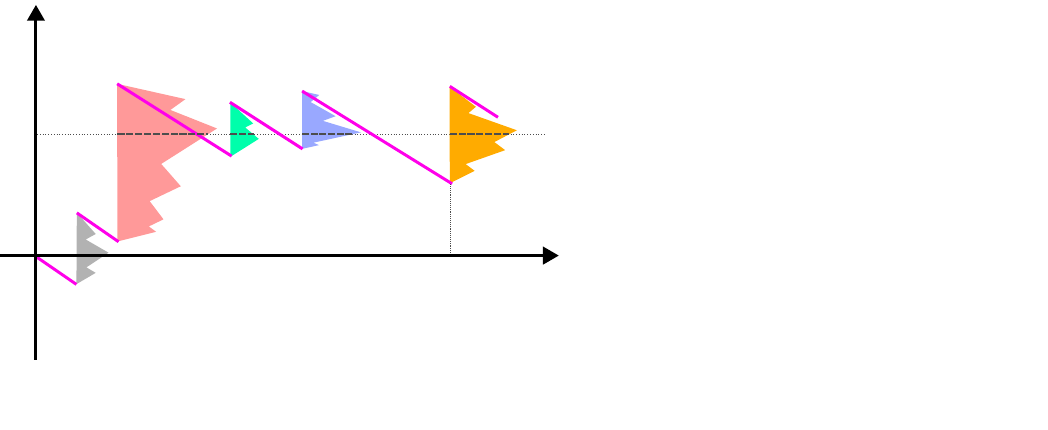}}%
    \put(0.42310871,0.13423515){\makebox(0,0)[lt]{\lineheight{1.25}\smash{\begin{tabular}[t]{l}\large $t_i$\end{tabular}}}}%
    \put(0.00831804,0.27677876){\makebox(0,0)[lt]{\lineheight{1.25}\smash{\begin{tabular}[t]{l}\large $y$\end{tabular}}}}%
    \put(0,0){\includegraphics[width=\unitlength,page=2]{skewer.pdf}}%
    \put(0.00735042,0.39709652){\makebox(0,0)[lt]{\lineheight{1.25}\smash{\begin{tabular}[t]{l}\large $\xi$\end{tabular}}}}%
    \put(0.87668949,0.20963774){\makebox(0,0)[lt]{\lineheight{1.25}\smash{\begin{tabular}[t]{l}\large $(a,r)$\end{tabular}}}}%
    \put(0.63420367,0.27489849){\makebox(0,0)[lt]{\lineheight{1.25}\smash{\begin{tabular}[t]{l}\large $y$\end{tabular}}}}%
    \put(0.80430547,0.0487253){\makebox(0,0)[lt]{\lineheight{1.25}\smash{\begin{tabular}[t]{l}\large $\cB_{r,y}(a)-\cR_{r,y}(a-)$\end{tabular}}}}%
    \put(0,0){\includegraphics[width=\unitlength,page=3]{skewer.pdf}}%
    \put(0.27377904,0.05107341){\makebox(0,0)[lt]{\lineheight{1.25}\smash{\begin{tabular}[t]{l}\large $f_i(y-\xi(t_i-))$\end{tabular}}}}%
  \end{picture}%
\endgroup%

    }
    \caption{On the top left picture, we draw a schematic representation of the process $\xi$ colored in purple, with its jumps decorated by independent $\besq^{-\delta}$ excursions $f_i$ filled with different colors. On the top right picture, we draw some spindles generated by the red and blue flows $\cR$  and $\cB$. The widths of the spindles at level $y$, i.e. $\cB_{r,y}(a)-\cR_{r,y}(a-)$, are given by the values of the $\besq^{-\delta}$ excursions at the corresponding level, i.e. $f_i(y-\xi(t_i-))$.}
    \label{fig:skewer}
\end{figure}

\noindent With this interpretation, we can perform more explicit calculations for the random partition. For instance,  we deduce that (Corollary~\ref{cor:stb-sub}), at every level $y\in \mathbb{R}$, the intersection of the gasket with the horizontal line,  $(\bR_+\times \{y\}) \cap \mathscr{K}$, is the range of a $\frac\delta2$-stable subordinator with Laplace exponent $\lambda \to \lambda^{\frac\delta2}$.  Following the approach in \cite{Lambert10,LambertBravo16}, the appearance of the Lévy process further reveals a branching structure within the spindles. Roughly speaking, we can interpret the family of spindles as a Crump-Mode-Jagers branching process,  where each spindle generates offspring according to a Poisson point process, and the resulting genealogy is described by a $(1+\frac\delta2)$-stable L\'evy tree \cite{DuLG05}.  

Our random partition of the space-time (in local time) is also reminiscent of the study in \cite{PitmYor82}, where Pitman and Yor construct a $\besq^c$ process with $c\ge 0$ as a sum of a Poisson point process of $\besq^0$-excursions. In view of the Ray--Knight theorem, this corresponds to decomposing the Brownian path into excursions. 
In \cite{PW18}, a $\besq^{-\delta}$ process is embedded within the Brownian local time by decomposing the Brownian path via a related skew Brownian motion.

\paragraph{Two limiting objects of discrete random structures}

Marked Lévy processes were employed in \cite{Paper1-1,Paper1-2} to construct a family of \emph{interval-partition-valued diffusion processes}, which were further generalized to a two-parameter family in \cite{FRSW2}.
They arise as scaling limits of up-down Markov chains on integer compositions \cite{RogersWinkel,RivRiz23,ShiWinkel-2}. Their projection onto the Kingman simplex are related to Petrov’s diffusion \cite{Paper1-3,Petrov09}. Moreover, in the monograph \cite{FPRW-Aldous}, interval partition evolutions serve as fundamental tools for constructing a continuum-tree-valued diffusion with stationary distribution given by the Brownian continuum random tree, solving a conjecture originally posed by \cite{AldousDiffusionProblem}. 
Such marked Lévy processes are also used in \cite{Paper0} to give a uniform Hölder-continuous approximation of the local time of a stable process. These structures were later used in \cite{PaperFV,Paper1-3} to build a two-parameter family of measure-valued processes with Pitman–Yor stationary distributions, whose existence was conjectured by \cite{FengSun10}, thereby extending Ethier and Kurtz’s \emph{infinitely-many-neutral-alleles models} \cite{EthKurtzBook}. Our representation of spindles reveals a fundamental connection between BESQ flows and interval partition evolutions. While Theorem~\ref{thm:RK-IPE} provides an initial evidence of this relationship,  our new approach allows applying the powerful machinery of stochastic flows to study problems concerning interval partition evolutions. We will explore this aspect in greater depth in a forthcoming paper.

Our second application concerns  random planar maps which are the dual maps of a hard multimer model defined by Di
Francesco and Guitter \cite{difrancesco_guitter}. These planar maps are a generalized version of causal dynamical triangulations in which lozenges (made up of two triangles) are replaced with faces of arbitrarily large degrees. While the Gromov--Hausdorff limit of causal dynamical triangulations is proved to be a line segment \cite{CuHuNa},  its generalized version converges to some non-degenerate object called  \emph{stable shredded sphere}  \cite{BjCuSi22}, which is encoded by an excursion of the stable process $\xi$. The result of our paper shows that the gasket $\rK$ is homeomorphic to the half-plane analogue of the stable shredded sphere, see Theorem \ref{t:homeomorphism}.

\paragraph{Strategy of the proof}

Let us briefly describe the appearance of the stable process $\xi$. The process
$$
\varrho_t:=(L(t,X_t),X_t),\quad t\ge 0
$$

\noindent can be thought of as an explorer of $\bR_+\times \bR$ (Forman \cite{Formanbrick} names $\varrho$ a \emph{Brownian bricklayer} in the Brownian motion case). When $X$ is a Brownian motion, it is shown in \cite{Barlow} that the process $L(t,X_t)$ is not a semimartingale, while in \cite{beffara} the process $\varrho$ is seen as a scaling limit of an ${\rm SLE}_\kappa$ process as $\kappa\to \infty$. 
The process $\varrho$ is continuous, space-filling and does not cross itself. Although the time spent by $\varrho$ in the gasket $\rK$ has Lebesgue measure zero, we perform a time-change for $\varrho$, which effectively acts as an inverse local time for the gasket, to track $\varrho$ only when it lies on the gasket. This construction is carried out in two steps in Section \ref{s:time-change} and Section \ref{sec:markedLevy}.  Via this time-change we  transform the process $X$ into the L\'evy process $\xi$; see Theorem \ref{thm:Levy}. Each jump of $\xi$ is associated with the exploration of a spindle by $\varrho$, when it enters the spindle through the bottom and leaves it through the top. 
The size of the jump of $\xi$ is given by the height of this corresponding spindle. 
Note that, at the moment of exit through the top, the spindle has not yet been fully explored. We refer to Section \ref{s:explore} for a more detailed description of the exploration of spindles. Finally, we associate to each spindle its width process, which is the distance between its blue boundary and its red boundary as one goes from the bottom to the top of a spindle.  Additive properties of the BESQ processes hint that this process should be a  ${\rm BESQ}^{-\delta}$ excursion. We show that it is indeed the case in Theorem \ref{thm:PPP}.

\paragraph{Organization of the paper}
The paper is organized as follows. Section \ref{s:review} reviews results on BESQ flows. Section~\ref{s:spindle} introduces spindles in the coupled flows $\cR$ and $\cB$ and describe the exploration of spindles. Section~\ref{sec:main} proves the main results Theorems~\ref{thm:PPP} and \ref{thm:Levy}, giving the embedding of the BESQ excursions in the stable L\'evy process $\xi$. Finally, Section \ref{sec:constr} shows the connections with interval partition evolutions of  \cite{Paper1-1,Paper1-2,FRSW2} and stable shredded spheres of \cite{BjCuSi22}.  

\bigskip

We suppose that all the $\sigma$-fields appearing in the paper are complete and filtrations are right-continuous.


\section{BESQ Flows}
\label{s:review}
We first recall some properties of BESQ flows, closely following the exposition given in \cite{AWYskew}. We will sometimes apply results of \cite{AWYskew} to ${\rm BESQ}(\overline{\delta})$ flows instead of ${\rm BESQ}^\delta$ flows, using \cite[Section 2.5]{AWYskew}.

\subsection{Preliminaries on BESQ Flows}
\label{s:BESQ}

Let $\delta \ge 0$.  The squared Bessel process of dimension $\delta$ started at $a\ge 0$, denoted by ${\rm BESQ}^\delta_a$,  is the  unique solution of
\begin{equation}\label{eq:besselprocess}
S_x = a+2\int_0^x \sqrt{|S_r|} \, \dd B_r + \delta x, \qquad x\ge 0,
\end{equation}

\noindent where
$B$ is a standard Brownian motion. The ${\rm BESQ}^\delta$ process hits zero at a positive time if and only if $\delta< 2$. It is absorbed at $0$ when $\delta= 0$ and is  reflecting
at $0$ when $\delta\in (0,2)$. When $\delta<0$, the ${\rm BESQ}^\delta_a$ process is solution of \eqref{eq:besselprocess} but is absorbed at $0$ when it touches it, which happens in a finite time a.s. We refer to \cite{GoinYor03} for details and more properties of squared Bessel processes.

\bigskip
\begin{definition}[BESQ flow; {\cite[Definition 3.3]{aidekon2023stochastic}, \cite[Definition 2.2]{AWYskew}}]
\label{def:BESQflow}
Let $\delta \in \bR$ and ${\mathcal W}$ be a white noise on $\bR_+\times \bR$.  We call ${\rm BESQ}^\delta$ flow  driven by ${\mathcal W}$ a collection ${\mathcal S}$ of continuous processes $({\mathcal S}_{r,x}(a),\, x\ge r)_{r\in \bR,a\ge 0}$ such that: 
\begin{enumerate}[(1)]
\item
for each $(a,r)\in \bR_+\times \bR$, the process $({\mathcal S}_{r,x}(a),\, x\ge r)$ is almost surely the unique strong solution of the following SDE  
\begin{equation}\label{eq:BESQflow}
{\mathcal S}_{r,x}(a) = 
a + 2 \int_r^x  {\mathcal W}([0,{\mathcal S}_{r,s}(a)], \dd s) + 
\delta (x-r),\, x\ge r
\end{equation}
where, if $\delta\le 0$, the process is absorbed when hitting $0$.
 \item Almost surely,
 \begin{enumerate}[(i)]
 \item for all $r\in \bR$ and $a\ge 0$, $ {\mathcal S}_{r,r}(a)=a$,
 \item for all $r\le x$, $a\mapsto  {\mathcal S}_{r,x}(a)$ is c\`adl\`ag,
 \item (Perfect flow property) for any $r\le x \le y$ and $a\ge 0$, $\cS_{r,y}(a)=\cS_{x,y}\circ \cS_{r,x}(a)$. 
\end{enumerate} 
\end{enumerate}

We call killed ${\rm BESQ}^\delta$ flow driven by $\cW$ the flow obtained from the ${\rm BESQ}^\delta$ flow by absorbing at $0$ the flow line $\cS_{r,\cdot}(a)$ at time $\inf\{x>r\,:\,\cS_{r,x}(a)=0\}$. 

We call general ${\rm BESQ}^\delta$ flow  a  ${\rm BESQ}^\delta$ flow or a  killed ${\rm BESQ}^\delta$ flow.   
\end{definition}

When $\delta\notin(0,2)$ there is no difference between the killed ${\rm BESQ}^\delta$ flow and the (non-killed) ${\rm BESQ}^\delta$ flow lines. Indeed when $\delta\le 0$, flow lines touching $0$ stay at $0$, while when $\delta\ge 2$, flow lines cannot touch $0$, except at their starting point. When $\delta\in (0,2)$, flow lines are reflected at $0$ in the ${\rm BESQ}^\delta$ flow, whereas they are absorbed at $0$ in the killed version. Note that when $\delta\in(0,2)$ there are exceptional times $r$ when $\cS_{r,x}(0)>0$ for all $x>r$ close enough to $r$. At such time $r$, the flow line of a killed ${\rm BESQ}^\delta$ flow with $\delta\in (0,2)$ starting from $(0,r)$ leaves $0$ before eventually coming back and being absorbed.

The difference between flow lines of BESQ flows behave as squared Bessel processes, see Section~\ref{s:decomp}. We can also consider BESQ flows with varying dimensions.
For $\cF = (\cF_t, t\in \bR)$  a filtration, a $\cF$-predictable process $\overline{\delta}:\bR\to\bR$ is called a drift function if there exists a deterministic vector $(\delta_i)_{1\le i\le n} \in \bR^n$ and (possibly random) numbers $-\infty=\tilde{t}_1<\tilde{t}_2<\ldots<\tilde{t}_{n+1}=\infty$ such that $\overline{\delta}=\delta_i$ on $A_i:=(\tilde{t}_{i},\tilde{t}_{i+1})$.
The drift functions in this paper will be of the form  $\overline{\delta}= \delta_1 \mathbbm{1}_{(-\infty,r)}+\delta_2 \mathbbm{1}_{(r,\infty)}$ or $\overline{\delta}:= \delta_1 \mathbbm{1}_{(-\infty,r_1)}+\delta_2 \mathbbm{1}_{(r_1,r_2)} + \delta_3 \mathbbm{1}_{(r_2,\infty)}$. We will write them as $\delta_1 \,|_r\, \delta_2$ and $\delta_1 \,|_{r_1}\, \delta_2\,|_{r_2}\, \delta_3$ respectively. (We may have $r_1=r_2$, in which case $\delta_1 \,|_{r_1}\, \delta_2\,|_{r_2}\, \delta_3 := \delta_1 \,|_{r_1}\,  \delta_3$. If $r=\infty$, $\delta_1\,|_r\, \delta_2=\delta_1$.) 
The following generalization of Definition \ref{def:BESQflow} is taken from \cite{AWYskew}. 

\begin{definition}\cite[Definition 2.15]{AWYskew}\label{def:vary}
   Let $\cW$ be a white noise with respect to some filtration $\cF$.  Let $\overline{\delta}$ be a drift function with respect to ${\mathcal F}$.  
   We call a collection of non-negative processes $\cS=(\cS_{r,x}(a),\, x\ge r)_{r\in \bR,a\ge 0}$ a ${\rm BESQ}(\overline{\delta})$ flow driven by $\cW$ if 
   
    (1) The restriction of $\cS$ to each $A_i$ is a ${\rm BESQ}^{\delta_i}$ flow driven by $\cW$.

    (2) The regularity conditions of Definition \ref{def:BESQflow} hold.

If $E$ is a (possibly random) Borel set, we call $\rm BESQ(\overline{\delta})$ flow killed on $E$ the flow obtained from the $\rm BESQ(\overline{\delta})$ flow by absorbing the flow line $\cS_{r,\cdot}(a)$ at $0$ at time $\inf\{x \in E\cap (r,\infty) \,:\, \cS_{r,x}(a)=0\}$.

 If $E=\bR$, we will simply call it killed  $\rm BESQ(\overline{\delta})$ flow. We call general  $\rm BESQ(\overline{\delta})$ flow a $\rm BESQ(\overline{\delta})$ flow or a  killed $\rm BESQ(\overline{\delta})$ flow.
\end{definition}

The following proposition collects results of \cite[Section 3 \& Appendix A]{aidekon2023stochastic} and \cite[Section~2.3]{AWYskew}.
\begin{proposition}\label{p:perfect}(\cite{aidekon2023stochastic, AWYskew})
Let $\cS$ be a general ${\rm BESQ}(\overline{\delta})$ flow  driven by ${\mathcal W}$. 
\begin{enumerate}[(i)]

 \item ((Almost) \footnote{The term \emph{almost} indicates that we restrict to the case where $\cS_{r,x}(a)>0$, thereby excluding the exceptional points mentioned earlier, which may occur for certain $x>r$ with $\cS_{r,x}(a)=0$  when $\cS$ a killed ${\rm BESQ}^{\delta}$ flow with parameter $\delta\in (0,2)$. This restriction is superfluous when $\cS$ is non-killed by definition.} 
 perfect flow property) Almost surely, for every $r\le x\le y$, $a\ge 0$ with $\cS_{r,x}(a)>0$, $\cS_{r,y}(a)=\cS_{x,y}\circ \cS_{r,x}(a)$.

 \item (Coalescence)\footnote{If $\cS$ is non-killed, it is also true when $\max(r,r')= x$ by the perfect flow property.} Almost surely, for $r,r'< x$ and $0\le a,a'$,  if $\cS_{r,x}(a)=\cS_{r',x}(a')$, then $\cS_{r,y}(a)=\cS_{r',y}(a')$ for all $y\ge x$. 

 \item (Instantaneous coalescence) Let $\cD$ be a countable dense subset of $\bR_+\times\bR$.  Almost surely, for every $(a,r)\in \bR_+\times\bR$ and $\varepsilon>0$, there exists some $(r_n,a_n)\in \cD$ such that $\cS_{r,x}(a)=\cS_{r_n,x}(a_n)$ for all $x\geq r+\varepsilon$.

 \item (Comparison principle) Let $\cS'$ be a ${\rm BESQ}(\overline{\delta'})$ flow driven by the same white noise ${\mathcal W}$ with $\overline{\delta'}\ge \overline{\delta}$.  Almost surely, for all $(a,r)\in \bR_+\times\bR$ and $x\ge r$, $\cS_{r,x}(a)\le \cS'_{r,x}(a)$. We will write it for short $\cS\le \cS'$. 
\end{enumerate}   
\end{proposition}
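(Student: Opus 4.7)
The four items are all structural properties of BESQ flows drawn from \cite{aidekon2023stochastic, AWYskew}, so the plan is essentially to reduce each statement to the strong uniqueness of the defining SDE \eqref{eq:BESQflow} and to stitch the resulting almost-sure statements into a single full-measure event using the c\`adl\`ag regularity built into Definition \ref{def:BESQflow}. Because $\overline{\delta}$ is piecewise constant on finitely many intervals $A_i$, the varying-parameter case of each item reduces, by induction across successive intervals, to the constant-parameter case treated in the cited references.

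For item (i), for each \emph{fixed} $r\le x\le y$ and $a\ge 0$ with $\cS_{r,x}(a)>0$, both $(\cS_{r,z}(a),\,z\in[x,y])$ and $(\cS_{x,z}\circ\cS_{r,x}(a),\,z\in[x,y])$ are strong solutions of \eqref{eq:BESQflow} on $[x,y]$ with the same initial condition $\cS_{r,x}(a)$ and the same driving white noise, hence agree a.s.\ on $\{\cS_{r,x}(a)>0\}$. To upgrade to a single a.s.\ event valid simultaneously in $(a,r,x,y)$, I first establish the equality on a countable dense family $(a,r)\in\cD$, $(x,y)\in\mathbb{Q}^2$, then extend using the c\`adl\`ag dependence on $a$ and the joint continuity in $x$. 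Item (ii) follows from the same strong-uniqueness principle: once $\cS_{r,x}(a)=\cS_{r',x}(a')$, both $(\cS_{r,y}(a))_{y\ge x}$ and $(\cS_{r',y}(a'))_{y\ge x}$ solve the same SDE started from the same value at time $x$.

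Item (iii) is the deepest. Its proof requires the \emph{contracting} nature of BESQ flows: two flow lines launched within distance $\varepsilon$ of each other coalesce within time $O(\varepsilon)$ with overwhelming probability. The strategy is to invoke the quantitative coalescence estimates of \cite[Section 3]{aidekon2023stochastic} (and their extension to killed flows in \cite[Section 2.3]{AWYskew}) and combine them with a Borel--Cantelli argument over a refining grid in $\cD$ to show that, for every $(a,r)$ and every $\varepsilon>0$, the flow line $\cS_{r,\cdot}(a)$ meets one of the countably many flow lines issued from $\cD$ on $[r,r+\varepsilon]$.

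Item (iv) is the main obstacle and requires the most care. On each interval $A_i$ where both $\overline{\delta}$ and $\overline{\delta'}$ are constant, the pathwise inequality $\cS\le \cS'$ comes from the monotonicity of solutions of \eqref{eq:BESQflow} in the drift, proved by a Tanaka-type argument applied to the difference $\cS'-\cS$ driven against the common white noise \cite[Section 2.3]{AWYskew}. The delicate point is crossing the partition endpoints $\tilde t_i$ in the presence of possible killing: once $\cS$ has been absorbed at $0$ the inequality is automatic since $\cS'\ge 0$, and at a non-killing transition $\tilde t_i$ the values match up so the comparison propagates. Collating these pieces across the $A_i$ yields $\cS\le \cS'$ globally. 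My plan is to import the single-interval comparison from \cite{AWYskew} and focus the bookkeeping entirely on the finite number of transition times and killing events.
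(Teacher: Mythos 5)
First, a structural point: the paper does not prove this proposition at all — it is imported wholesale, as a citation, from \cite[Section 3 \& Appendix A]{aidekon2023stochastic} and \cite[Section~2.3]{AWYskew}. So there is no in-paper argument to match, and your proposal should be judged on whether your sketch of those external proofs would actually go through. As written it has two genuine weak points. For (i) and (ii), reducing everything to strong uniqueness of the SDE \eqref{eq:BESQflow} misplaces the difficulty. For a non-killed flow the perfect flow property is not something to be derived — it is condition (2)(iii) of Definition \ref{def:BESQflow} — and the substantive content of (i) is the \emph{killed} case, where one must check that the absorption time after $x$ of the line started at $(a,r)$ agrees with that of the line restarted at $(\cS_{r,x}(a),x)$, and must handle the exceptional reflection times flagged in the footnote (this is exactly why the statement is only an ``almost'' flow property). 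Your sketch never engages with killing. Likewise in (ii), ``both solve the same SDE started from the same value at time $x$'' does not suffice: the common value is random, the statement must hold simultaneously for uncountably many quadruples $(a,r,a',r')$, and pathwise uniqueness for fixed deterministic initial data does not upgrade to this by itself; the clean route is to deduce (ii) from the flow property (i), plus a separate treatment of the value $0$ in the reflecting regime $\delta\in(0,2)$ (again the reason for the restriction $r,r'<x$ in the footnote). Saying ``extend using c\`adl\`ag dependence on $a$ and continuity in $x$'' is naming exactly the step where all the work of \cite{AWYskew} lies, not doing it.

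For (iii) your proposed mechanism — quantitative contraction estimates (``coalescence within time $O(\varepsilon)$ with overwhelming probability'') plus Borel--Cantelli over a refining grid — is not what the cited works provide, and it is not needed. The actual mechanism is soft: monotonicity and right-continuity of $a\mapsto\cS_{r,x}(a)$ give that for $a'>a$ close enough $\cS_{r,x}(a')=\cS_{r,x}(a)$ (the paper's \eqref{eq:rightcont}, i.e.\ \cite[Proposition 2.4 (i)]{AWYskew}); one then picks $(a_n,r_n)\in\cD$ with $r_n\in(r,r+\varepsilon)$ and $a_n\in[\cS_{r,r_n}(a),\cS_{r,r_n}(a'))$ and invokes coalescence (ii). This is precisely the argument the present paper replays in its own Proposition \ref{p:perfect}-adjacent result \eqref{eq:coalescence}. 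Your item (iv) sketch (single-interval comparison via a Tanaka/local-time argument on the difference, then bookkeeping across the finitely many transition levels and killing events) is the standard and correct route, modulo the same caveat that the simultaneous-in-$(a,r)$ statement again needs the monotonicity-in-$a$ plus countable-dense reduction rather than fixed-initial-condition comparison alone.
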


By \cite[Proposition 2.4 (i)]{AWYskew}, almost surely, for any $r<x$ and $a\ge 0$
\begin{equation}\label{eq:rightcont}
\cS_{r,x}(a')=\cS_{r,x}(a)  \textrm{ for all } a'>a \textrm{ close enough to }a. 
\end{equation}

For a white noise $\cW$ and a flow $\cS=(\cS_{r,x}(a))$ and a real number $y$, we let $\theta_y\cW$ be $\cW$ translated by $y$ in space, i.e. the white noise
\begin{align}\label{eq:WN translation}
   (\theta_y\cW)(A\times[r,x]) := \cW(A\times[r-y,x-y]),\quad A \text{ a Borel subset of } \bR_+,\, x\ge r
\end{align}
and $\theta_y\cS$, called $\cS$ translated by $y$ in space, denote the flow 
\begin{align}\label{eq:S translation}
   \theta_y \cS_{r,x}(a) := \cS_{r-y,x-y}(a),\quad a\ge 0,\, x\ge r.
\end{align}

\noindent For a function $g:\mathbb{R}\to \mathbb{R}_+$ and a flow $\cS$, we denote by $\cS-g$ the flow
\begin{equation}\label{eq:S-g}
    \max(\cS_{r,x}(a+g(r))-g(x),0),\qquad r\le x,\, a\ge 0.
\end{equation}

\paragraph{Left-continuous version of a BESQ flow.}

Let $\cS$ be a ${\rm BESQ}(\overline{\delta})$ flow. We define the left-continuous version of  $\cS$ as, for any $a>0$ and $x\ge r$,
\begin{equation}\label{def:left S}
\cS_{r,x}(a-):= \lim_{a'\uparrow a} \cS_{r,x}(a'). 
\end{equation}

\noindent The coalescence property of $\cS$ together with \cite[Proposition 2.4 (ii)]{AWYskew} shows that almost surely, for every $(a,r)\in (0,\infty)\times\bR$ and $x>r$, one can find $a'<a$ such that  
\begin{equation}\label{eq:leftcont}
\cS_{r,y}(a-)=\cS_{r,y}(a'),\ \forall\, y\ge x.
\end{equation}
This identity implies that a flow line of the left-continuous version of $\cS$ coalesces with flow lines of $\cS$  outside its starting point in the sense that almost surely for any $r<x$, $a>0$, $r'\le x$ and $a'\ge 0$, 
\begin{equation}\label{eq:coal_leftcont}
\cS_{r,x}(a-)=\cS_{r',x}(a') \Rightarrow \cS_{r,y}(a-)=\cS_{r',y}(a'),\; \forall\,y\ge x.
\end{equation}
We next justify that almost surely for every $a>0$, $b>0$ and $r\le x\le y$,
\begin{equation}
\label{eq:perfectflow-}
\cS_{r,x}(a-)\le b \Rightarrow \cS_{r,y}(a-)\le \cS_{x,y}(b-).
\end{equation}

\noindent One can suppose without loss of generality that $r<x<y$.  One can find $a'<a$  such that   $\cS_{r,s}(a')=\cS_{r,s}(a-)$ for any $s\ge x$. Suppose first that $\cS_{r,x}(a-)< b$.  Let $b'\in (\cS_{r,x}(a'),b)$ such that $\cS_{x,y}(b')=\cS_{x,y}(b-)$. The coalescence property of $\cS$ shows that $\cS_{r,y}(a')\le \cS_{x,y}(b')$ which completes the proof. In the case $\cS_{r,x}(a-)=b$, the perfect flow property of $\cS$ shows that $\cS_{r,y}(a')=\cS_{x,y}(b)$ hence $\cS_{r,y}(a-)=\cS_{x,y}(b)$ which is also $\cS_{x,y}(b-)$ by  Proposition \ref{c: no middle bifurcation} below. 
The last argument is a rephrasing of the (almost) perfect flow property for the left-continuous version of $\cS$.  This flow also satisfies the other properties of Proposition \ref{p:perfect}. Properties (ii) and (iii) follow from the corresponding properties of $\cS$ while  the comparison principle holds by taking the limit in \eqref{def:left S}.

\medskip

In Proposition \ref{c:measurability}, we will need to examine the measurability of a flow line starting from a random point w.r.t. some filtration. To this end, we will use Propositions \ref{p:decomp_coalescence} and \ref{p:decompdual_coalescence}, both of which are derived from the next result.

\begin{proposition}
    Let $\cS$ be a ${\rm BESQ}(\overline{\delta})$ flow and $\cD$ be a countable dense subset of $\bR_+\times\bR$. Almost surely for every $(a,r)\in \bR_+\times \bR$ and $x> r$,
    \begin{equation}\label{eq:coalescence}
\cS_{r,x}(a) =\lim_{\substack{(a_n,r_n)\in \cD \to (a,r)\\ r_n\in (r,x),a_n\ge \cS_{r,r_n}(a)}} \cS_{r_n,x}(a_n)=\limsup_{\substack{(a_n,r_n)\in \cD \to (a,r) \\ r_n\in (r,x)}} \cS_{r_n,x}(a_n).
\end{equation}
If moreover  $a>0$,
\begin{equation}
\cS_{r,x}(a-) =\lim_{\substack{(a_n,r_n)\in \cD \to (a,r)\\ r_n\in (r,x), a_n\le \cS_{r,r_n}(a-)}} \cS_{r_n,x}(a_n)=\liminf_{\substack{(a_n,r_n)\in \cD \to (a,r) \\ r_n\in (r,x)}} \cS_{r_n,x}(a_n). \label{eq:coalescence-}
\end{equation}
\end{proposition}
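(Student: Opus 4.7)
My plan is to prove each of \eqref{eq:coalescence} and \eqref{eq:coalescence-} by matched upper and lower bounds. The key tools are the comparison principle, the (almost) perfect flow property, the continuity of each flow line $s\mapsto\cS_{r,s}(a')$, and the one-sided continuities of $a\mapsto \cS_{r,x}(a)$ encoded in \eqref{eq:rightcont} and in the definition \eqref{def:left S} of the left-continuous version. A guiding principle is that the constrained limits in \eqref{eq:coalescence} and \eqref{eq:coalescence-} sandwich $\cS_{r,x}(a)$ and $\cS_{r,x}(a-)$, while the unconstrained limsup/liminf are controlled by picking comparison flow lines that start strictly above (resp.\ below) the target value $a$.

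For \eqref{eq:coalescence}, the lower bound on the constrained limit is immediate: if $a_n\ge \cS_{r,r_n}(a)$, comparison together with the perfect flow property gives $\cS_{r_n,x}(a_n)\ge \cS_{r_n,x}(\cS_{r,r_n}(a))=\cS_{r,x}(a)$. For the matching upper bound---which I would prove for \emph{any} $(a_n,r_n)\in\cD$ with $r_n\in(r,x)$ and $(a_n,r_n)\to(a,r)$, thereby simultaneously yielding the $\limsup\le \cS_{r,x}(a)$ half of the second equality---I use \eqref{eq:rightcont} to pick $a^*>a$ with $\cS_{r,x}(a^*)=\cS_{r,x}(a)$. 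Since $\cS_{r,\cdot}(a^*)$ is continuous and starts at $a^*$, we have $\cS_{r,r_n}(a^*)\to a^*>a\leftarrow a_n$, so $a_n\le \cS_{r,r_n}(a^*)$ eventually, and then $\cS_{r_n,x}(a_n)\le \cS_{r_n,x}(\cS_{r,r_n}(a^*))=\cS_{r,x}(a^*)=\cS_{r,x}(a)$. The $\limsup$ in the second equality equals (and not merely bounds from above) $\cS_{r,x}(a)$ because valid constrained sequences exist: $\cD$ is dense and $\cS_{r,r_n}(a)\to a$ as $r_n\downarrow r$.

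The proof of \eqref{eq:coalescence-} is symmetric. For the $\liminf$ bound on an \emph{arbitrary} sequence, fix $a^*<a$; then $\cS_{r,r_n}(a^*)\to a^*<a=\lim a_n$, so $a_n\ge \cS_{r,r_n}(a^*)$ eventually, whence $\cS_{r_n,x}(a_n)\ge \cS_{r,x}(a^*)$, and letting $a^*\uparrow a$ gives $\liminf \cS_{r_n,x}(a_n)\ge \cS_{r,x}(a-)$. For the matching upper bound under the constraint $a_n\le \cS_{r,r_n}(a-)$, I exploit $\cS_{r,r_n}(a-)=\lim_{a''\uparrow a}\cS_{r,r_n}(a'')$: whenever $a_n<\cS_{r,r_n}(a-)$, one can pick $a_n''<a$ with $\cS_{r,r_n}(a_n'')\ge a_n$, giving $\cS_{r_n,x}(a_n)\le \cS_{r,x}(a_n'')\le \cS_{r,x}(a-)$. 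The constrained limit in \eqref{eq:coalescence-} is then attained for the same reason as before.

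The principal obstacle I foresee is the borderline case $a_n=\cS_{r,r_n}(a-)$, where the approximation by $a_n''<a$ fails because the supremum defining $\cS_{r,r_n}(a-)$ need not be attained. For each fixed $(a_0,r_0)\in\cD$ and each deterministic $(a,r)$, the event $\{\cS_{r,r_0}(a-)=a_0\}$ has probability zero, but to conclude \emph{almost surely for every} $(a,r)\in \bR_+\times\bR$ I need a single null set outside of which the borderline case can be ruled out uniformly in the uncountable parameter $(a,r)$. I expect to handle this by combining countability of $\cD$ with the monotonicity of $a\mapsto\cS_{r,r_0}(a-)$, so that only a null set of starting points $(a,r)$ needs to be discarded. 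A minor technical point is that the perfect flow identity $\cS_{r_n,x}(\cS_{r,r_n}(a^*))=\cS_{r,x}(a^*)$ requires $\cS_{r,r_n}(a^*)>0$ in the killed setting, which holds for large $n$ since $\cS_{r,r_n}(a^*)\to a^*>0$.
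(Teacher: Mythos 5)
Your treatment of \eqref{eq:coalescence} is correct and is essentially the paper's argument: sandwich $\cS_{r_n,x}(a_n)$ between the flow lines started from $(a,r)$ and from $(a^*,r)$ with $a^*>a$ chosen via \eqref{eq:rightcont}, using monotonicity and the perfect flow property. The lower bound in \eqref{eq:coalescence-} (via $a^*<a$ and $a^*\uparrow a$) is also fine.

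The genuine gap is exactly the borderline case you flag, $a_n=\cS_{r,r_n}(a-)$, and your proposed repair does not close it. The statement quantifies over \emph{all} $(a,r)$ inside a single almost-sure event, so you are not allowed to discard a null set of starting points; worse, the whole point of this proposition in the paper is to be applied at exceptional \emph{random} points $(a,r)$ (bifurcation points, bottom points of spindles) and to random approximating sequences in $\cD$, for which the equality $a_n=\cS_{r,r_n}(a-)$ genuinely occurs — the map $a'\mapsto\cS_{r,r_n}(a')$ need not attain its left limit, and a pointwise ``probability zero for fixed $(a,r)$ and fixed $(a_0,r_0)$'' computation says nothing about these configurations. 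Without an additional input, nothing prevents the flow line restarted exactly at $(\cS_{r,r_n}(a-),r_n)$ from bifurcating above the left-limit line, which would ruin the upper bound. The paper closes this case using \eqref{eq:leftcont} (imported from \cite[Proposition 2.4 (ii)]{AWYskew} together with coalescence): for every $x'>r$ there exists $a'<a$ such that $\cS_{r,y}(a')=\cS_{r,y}(a-)$ for all $y\ge x'$, i.e.\ the left-limit line is \emph{attained} by an actual flow line from any later time on. Applying this at $x'=r_n$, the borderline starting point lies on the flow line $\cS_{r,\cdot}(a')$, so by the perfect flow property (equivalently by \eqref{eq:coal_leftcont}) one gets $\cS_{r_n,x}(a_n)=\cS_{r,x}(a')=\cS_{r,x}(a-)$, and in fact the same device handles the whole interval $a_n\in(\cS_{r,r_n}(a'),\cS_{r,r_n}(a-)]$ at once. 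Your proof needs this attainment/coalescence property of the left-continuous version (or an equivalent structural fact) as an explicit ingredient; with it, the rest of your argument goes through.
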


\begin{proof}
    Let $(a,r)\in \bR_+\times \bR$ and $x>r$. Let $a'>a$ such that $\cS_{r,x}(a')=\cS_{r,x}(a)$. Take a sequence $(a_n,r_n)\in \cD$ which converges to $(a,r)$, such that $r_n\in (r,x)$ and $a_n\ge \cS_{r,r_n}(a)$. For $n$ large enough,  $a_n\in [\cS_{r,r_n}(a),\cS_{r,r_n}(a'))$, hence $\cS_{r_n,x}(a_n)=\cS_{r,x}(a)$ by coalescence. It shows the first identity of \eqref{eq:coalescence}. If $r_n\in (r,x)$ and $a_n\le \cS_{r,r_n}(a)$, then $\cS_{r_n,x}(a_n)\le \cS_{r,x}(a)$, hence we get the second equality.
    
    We turn to the proof of \eqref{eq:coalescence-}. By \eqref{eq:leftcont} one can find $a'<a$  such that $\cS_{r,x}(a')=\cS_{r,x}(a-)$. By \eqref{eq:coal_leftcont}, $\cS_{r_n,x}(a_n)=\cS_{r,x}(a-)$ if $r_n\in (r,x)$ and $a_n \in (\cS_{r,r_n}(a'),\cS_{r,r_n}(a-)]$. 
     If $a_n\ge \cS_{r,r_n}(a-)$, then $\cS_{r_n,x}(a_n)\ge \cS_{r,x}(a-)$. It completes the proof.
\end{proof}

\paragraph{Duality and bifurcation} 
  
  For a white noise ${\mathcal W}$ on $\bR_+\times \bR$, let ${\mathcal W}^*$ be its image by the map $(a,r)\mapsto (a,-r)$.
 
\begin{proposition}(\cite[Proposition 3.8]{aidekon2023stochastic}, \cite[Proposition 2.17]{AWYskew})\label{p:BESQdual vary}
    Let $\overline{\delta}\colon \bR \to \bR$ be a deterministic drift function and $\overline{\delta}^*(x):=\overline{\delta}(-x)$, $x\in \bR$. If ${\mathcal S}$ is a $\rm BESQ(\overline{\delta})$ flow (resp.\ killed $\rm BESQ(\overline{\delta})$ flow), driven by ${\mathcal W}$, then its dual
    \begin{equation}\label{def:dual}
    \big( \cS^*_{r,x}(a) := \inf\{ b>0: \cS_{-x,-r}(b)>a \}, \; x\ge r\big), \quad r\in \bR, a\ge 0
    \end{equation}
    is a killed $\rm BESQ(2-\overline{\delta}^*)$ flow (resp.\ a   $\rm BESQ(2-\overline{\delta}^*)$ flow), driven by $-{\mathcal W}^*$.
\end{proposition}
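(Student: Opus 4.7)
The plan is to reduce to the case of a constant drift $\overline{\delta}\equiv \delta$ on each slab of constancy, and then glue the constant-drift duals using the (almost) perfect flow property.

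Writing $\overline{\delta}=\sum_i \delta_i \mathbbm{1}_{A_i}$ with $A_i=(\tilde{t}_i,\tilde{t}_{i+1})$, we have $2-\overline{\delta}^* = \sum_i (2-\delta_i)\mathbbm{1}_{-A_i}$. The inversion defining $\cS^*$ intertwines with flow composition: for $r<x<y$, Proposition~\ref{p:perfect}(i) gives $\cS_{-y,-r}(b)=\cS_{-x,-r}\circ\cS_{-y,-x}(b)$, and a monotone-inversion argument yields $\cS^*_{r,y}(a)=\cS^*_{x,y}\circ \cS^*_{r,x}(a)$. Hence it suffices to verify the duality on each slab separately; the pieces glue correctly across the transition times $-\tilde{t}_i$.

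For constant $\delta$, I fix a starting point $(a_0,r_0)$ and set $M_x:=\cS^*_{r_0,x}(a_0)$ for $x\ge r_0$. By the comparison principle the map $b\mapsto \cS_{-x,-r_0}(b)$ is nondecreasing, so $M_x$ is well-defined, and the right-continuity in the initial value \eqref{eq:rightcont} plus the construction as a generalized inverse give that $M$ is càdlàg in both variables. Away from the jump times of $M$ one has $\cS_{-x,-r_0}(M_x)=a_0$. The central step is an It\^o/change-of-variables computation on this identity: the increments of $\cW$ on the strip under the flow line, when expressed in the dual coordinates, become increments of $-\cW^*([0,M_x],\dd x)$ (the minus sign from the orientation reversal $x\mapsto -x$ together with the spatial reflection producing $\cW^*$), and the quadratic-variation correction from inverting the generator $2b\,\partial_b^2+\delta\,\partial_b$ shifts the drift from $\delta$ to $2-\delta$. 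This is exactly the $\mathrm{BESQ}^{2-\delta}$ SDE \eqref{eq:BESQflow} for $M$ driven by $-\cW^*$. The regularity axioms of Definition~\ref{def:BESQflow} transfer automatically: $\cS^*_{r,r}(a)=\inf\{b>0:b>a\}=a$, the càdlàg property in $a$ is the right-continuity of a monotone inverse, and the flow property was already noted in the reduction step.

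The main obstacle is the boundary behaviour at $0$, which is precisely where the killed/non-killed swap in the statement is produced. When $\cS$ is non-killed with $\delta\in(0,2)$, forward flow lines reflect at $0$; every reflection corresponds to an interval of $x$ on which $M_x=0$, so $\cS^*$ is absorbed at $0$, i.e.\ killed. Conversely, when $\cS$ is killed, its "dead" flow lines permit the dual $M_x$ to leave $0$ freely, producing a non-killed dual. Making this rigorous requires the left-continuous version \eqref{def:left S} and the coalescence identities \eqref{eq:leftcont}--\eqref{eq:perfectflow-} in order to track $M_x$ at the exceptional times where the forward flow touches $0$; this is also where the "(almost)" qualifier in Proposition~\ref{p:perfect}(i) must be handled carefully, since the SDE for $M$ holds only modulo these excursions of $\cS$ at $0$. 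Once the flow-line duality is verified in both killed and non-killed cases on each slab, splicing via the perfect flow property yields the full statement.
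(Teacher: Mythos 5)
The paper does not actually prove this proposition: it is imported verbatim from \cite[Proposition 3.8]{aidekon2023stochastic} and \cite[Proposition 2.17]{AWYskew}, so there is no internal argument to compare yours against; your attempt has to stand on its own as a proof of the cited result.

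As such, it has a genuine gap at its centre. The skeleton (reduction to slabs of constant drift, composition of generalized inverses to get the dual flow property, and the killed/non-killed swap coming from the boundary behaviour at $0$) is the right shape, but the step that actually carries the content of the proposition --- that the dual line $M_x=\cS^*_{r_0,x}(a_0)$ solves the $\besq^{2-\delta}$ SDE \eqref{eq:BESQflow} driven by $-\cW^*$ --- is only asserted, via an ``It\^o/change-of-variables'' heuristic. This is not a routine computation one can wave at: the noise in \eqref{eq:BESQflow} is of Dawson--Li type, entering through the solution-dependent domain $[0,\cS_{r,s}(a)]$, so ``inverting'' the flow line involves an It\^o--Ventzell/occupation-type argument over the two-parameter field together with a proof that $M$ is adapted (in the reversed $x$-direction) to the filtration of $-\cW^*$, which you never address; this adaptedness is exactly what makes the statement ``driven by $-\cW^*$'' meaningful. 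Moreover, your stated mechanism --- ``the quadratic-variation correction from inverting the generator $2b\,\partial_b^2+\delta\,\partial_b$ shifts the drift from $\delta$ to $2-\delta$'' --- does not identify the correct source of the parameter $2-\delta$: a naive time-reversal/generator-inversion heuristic for a single $\besq^{\delta}$ path gives $4-\delta$, not $2-\delta$; the $2-\delta$ here comes from inverting the flow in its \emph{initial value} $a$ while reversing the spatial direction $x\mapsto -x$, which is a different (and harder) computation. Two further points need repair rather than acknowledgement: the identity $\cS_{-x,-r_0}(M_x)=a_0$ fails whenever $a_0$ sits inside a jump of $b\mapsto\cS_{-x,-r_0}(b)$ (i.e.\ at ancestors/bifurcation points in the sense of Definition \ref{def:bifur}), which happens for uncountably many $x$ along a fixed dual line, so the SDE identification cannot simply be run ``away from the jump times of $M$''; and the slab-gluing step uses a dual flow property that you derive from Proposition \ref{p:perfect}(i) by ``monotone inversion'', but generalized inverses compose only up to the same exceptional sets, so this too needs the left-continuous-version machinery \eqref{def:left S}--\eqref{eq:perfectflow-} rather than a one-line remark. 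In short, the proposal is a reasonable plan, but the analytic heart of the cited proposition is missing.
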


Forward flow lines (i.e. flow lines of $\cS$) and dual flow lines (i.e. flow lines of $\cS^*$) do not cross. More precisely, we have the following result which is \cite[Proposition 2.10]{AWYskew} together with the discussion in \cite[Section 2.5]{AWYskew}.

\begin{proposition}(\cite[Proposition 2.10]{AWYskew})\label{p:properties dual}
 Let $\overline{\delta}\colon \bR \to \bR$ be a drift function, $\cS$ be a general $\rm BESQ(\overline{\delta})$ flow
 and $\cS^*$ be its dual as in \eqref{def:dual}. The following statements hold almost surely.
\begin{enumerate}[(i)]
\item For any $r<x$, $b\ge 0$ and $a\ge \cS^*_{-x,-r}(b)$,  we have $\cS_{r,x}(a)>b$ and 
\begin{equation}\label{eq:properties dual+}
    \cS_{r,y}(a)\ge \cS^*_{-x,-y}(b),\qquad  y\in [r,x].
\end{equation}
Moreover  $\cS_{r,y}(a)> \cS^*_{-x,-y}(b)$ if $y\in (r,x)$ and $\cS_{r,y}(a)>0$.
\item  For any $r<x$, $b\ge 0$ such that $a:=\cS^*_{-x,-r}(b)>0$ and $0\le a'< a$, 
\[
\cS_{r,y}(a')\le \cS^*_{-x,-y}(b),\qquad  y\in [r,x].
\]
 Moreover $\cS_{r,y}(a')< \cS^*_{-x,-y}(b)$ if  $y\in (r,x)$  and $\cS^*_{-x,-y}(b)>0$.
\end{enumerate}
\end{proposition}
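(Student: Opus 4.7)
The plan is to derive both parts from the defining relation
\[
\cS^*_{-x,-r}(b) \;=\; \inf\{c > 0 \,:\, \cS_{r,x}(c) > b\},
\]
combined with the comparison principle (monotonicity of $c \mapsto \cS_{r,x}(c)$), the càdlàg regularity in the starting height recalled in \eqref{eq:rightcont}, and the perfect flow property of Proposition~\ref{p:perfect}.

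As a first step I would settle the endpoint case $y = x$. Set $a_0 := \cS^*_{-x,-r}(b)$. For $a' < a_0$, monotonicity and the definition of the infimum force $\cS_{r,x}(a') \le b$, which is (ii) at $y = x$. For $a > a_0$, one can find $c \in [a_0, a)$ with $\cS_{r,x}(c) > b$, and monotonicity then gives $\cS_{r,x}(a) > b$. The delicate value is $a = a_0$ itself: right-continuity yields only $\cS_{r,x}(a_0) \ge b$, and the strict inequality claimed in (i) has to be extracted from the specific càdlàg structure of BESQ flows. I would argue that if $\cS_{r,x}(a_0) = b$, then the forward flow line from $(a_0,r)$ and the dual flow line from $(b,-x)$ both arrive at $(b,x)$; by coalescence of forward flow lines (Proposition~\ref{p:perfect}(ii)) the forward lines from $(a_0,r)$ and from starting heights slightly above $a_0$ would then coincide on $[r,x]$, contradicting the very definition of $a_0$ as an infimum of a strict inequality.

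With the base case in hand, the second step extends the inequalities to arbitrary $y \in [r,x]$ via the perfect flow identity $\cS_{r,x}(a) = \cS_{y,x}(\cS_{r,y}(a))$. Applying the base case to the triple $(y,x,\cS_{r,y}(a))$ yields the equivalence $\cS_{y,x}(c) > b \Leftrightarrow c \ge \cS^*_{-x,-y}(b)$, so $\cS_{r,x}(a) > b$ forces $\cS_{r,y}(a) \ge \cS^*_{-x,-y}(b)$, which is \eqref{eq:properties dual+}. Part (ii) follows by the symmetric argument starting from the base-case upper bound $\cS_{r,x}(a') \le b$.

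For the two strict inequalities in the ``Moreover'' clauses I would exploit continuity of the flow lines in the time variable together with the non-crossing principle established above: since both $y \mapsto \cS_{r,y}(a)$ and $y \mapsto \cS^*_{-x,-y}(b)$ are continuous and already weakly ordered on $[r,x]$, a point of equality inside $(r,x)$ at which the forward line stays positive would, via the perfect flow property and coalescence, propagate equality all the way to $y=x$, contradicting the strict inequality already established there. The main technical obstacle is bookkeeping the exceptional regimes—absorption of the forward line at $0$, absorption of the dual line at $0$, and the drift function $\overline{\delta}$ changing value across $[r,x]$—which must be split into cases following the killed-flow conventions of Definition~\ref{def:vary}.
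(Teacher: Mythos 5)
First, note that the paper itself offers no proof of this statement: it is imported verbatim from \cite[Proposition 2.10]{AWYskew}, so your attempt can only be judged on its own merits. Your endpoint analysis at $y=x$ is sound (the strictness at $a=\cS^*_{-x,-r}(b)$ really comes from the right-constancy \eqref{eq:rightcont} plus the infimum definition, not from coalescence as you write, but you cite \eqref{eq:rightcont} and the logic is the right one), and the extension of the weak inequalities to interior $y$ via $\cS_{r,x}(a)=\cS_{y,x}(\cS_{r,y}(a))$ works, provided you note that for killed flows the needed positivity $\cS_{r,y}(a)>0$ is automatic in (i) from $\cS_{r,x}(a)>b\ge 0$, and that in (ii) the case $\cS_{r,y}(a')=0$ is trivial. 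In fact, for part (ii) your ``propagate to $y=x$'' idea does yield the strict interior inequality: if $\cS_{r,y}(a')=\cS^*_{-x,-y}(b)=c>0$, the perfect flow property and the endpoint case of (i) at the pair $(y,x)$ give $\cS_{r,x}(a')=\cS_{y,x}(c)>b$, contradicting the endpoint bound $\cS_{r,x}(a')\le b$.

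The genuine gap is the ``Moreover'' clause of part (i). Your mechanism — equality at an interior $y$ propagates ``via the perfect flow property and coalescence'' to equality at $y=x$, contradicting $\cS_{r,x}(a)>b$ — is unsound: coalescence (Proposition~\ref{p:perfect}(ii)) concerns two flow lines of the \emph{same} flow, whereas here one curve is a forward line and the other a dual line, and these do not merge upon touching. Indeed, if $\cS_{r,y}(a)=\cS^*_{-x,-y}(b)=c>0$, then the perfect flow property together with your own endpoint case gives $\cS_{r,x}(a)=\cS_{y,x}(c)>b$, which is perfectly consistent with everything established at $y=x$; no contradiction propagates forward. The correct route is to propagate the \emph{dual} line backwards: since $\cS^*$ is itself a general BESQ flow (Proposition~\ref{p:BESQdual vary}), its (almost) perfect flow property and $c>0$ give $\cS^*_{-y,-r}(c)=\cS^*_{-y,-r}(\cS^*_{-x,-y}(b))=\cS^*_{-x,-r}(b)\le a$, and then your endpoint case of (i), applied to the pair $(r,y)$ with target level $c$, yields $\cS_{r,y}(a)>c$, the desired contradiction (this is also where the hypothesis $\cS_{r,y}(a)>0$ enters, to guarantee $c>0$). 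Alternatively one can argue via the ancestor characterization \eqref{equivalence bifurcation} combined with Proposition~\ref{c: no middle bifurcation}, since equality would force $(\cS_{r,y}(a),y)$ to be a bifurcation point lying on a flow line away from its starting point. Without one of these ingredients your plan does not prove the strict inequality in (i).
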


\noindent The dual flow is related to the following bifurcation events. 
\begin{definition}\label{def:bifur}
We call a point $(a, r) \in (0,\infty)\times\bR$ a \textbf{bifurcation point} if $\cS_{r,x}(a) > \cS_{r,x}(a-)$ for some  $x > r$. 
We say that a bifurcation point $(a,r)$ is \textbf{an ancestor of $(b,x)$} if $\cS_{r,x}(a-) \le b <\cS_{r,x}(a)$. 
\end{definition}

\noindent Note that since $\cS_{r,x}(a-)$ coalesces with flow lines of $\cS$ outside its starting point, the  bifurcation  may happen only at the beginning.

 \begin{lemma}[{\cite[Proposition 2.9]{AWYskew}}]
      Almost surely, for any $a>0$, $b\ge 0$ and $r<x$,
\begin{equation}\label{equivalence bifurcation}
   (a,r) \textrm{ is an ancestor of } (b,x)  \quad \iff \quad  a=\cS^*_{-x,-r}(b). 
\end{equation}
 \end{lemma}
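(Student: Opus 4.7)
By the very definition of the dual flow in \eqref{def:dual}, we have
\[
\cS^*_{-x,-r}(b)=\inf\{a'>0:\cS_{r,x}(a')>b\}.
\]
Hence the lemma reduces to showing that, almost surely for all admissible $(a,b,r,x)$, the value $a$ is the infimum above if and only if $\cS_{r,x}(a-)\le b<\cS_{r,x}(a)$. My plan is to prove the two implications separately, working path-by-path on the almost-sure event where the regularity properties of the flow (monotonicity, right-continuity in $a$, and the ``flatness'' property \eqref{eq:rightcont}) hold for \emph{every} choice of $(a,r,x)$.

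For the direction ``ancestor $\Rightarrow$ infimum'', suppose $\cS_{r,x}(a-)\le b<\cS_{r,x}(a)$. The strict inequality $\cS_{r,x}(a)>b$ shows that $a$ belongs to the set $\{a'>0:\cS_{r,x}(a')>b\}$, so the infimum is at most $a$. Conversely, monotonicity of $a'\mapsto \cS_{r,x}(a')$ combined with $\cS_{r,x}(a-)\le b$ gives $\cS_{r,x}(a')\le \cS_{r,x}(a-)\le b$ for every $a'<a$, so no $a'<a$ can lie in the set, and the infimum is exactly $a$.

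For the direction ``infimum $\Rightarrow$ ancestor'', let $a=\inf\{a'>0:\cS_{r,x}(a')>b\}$. The key step is to show $\cS_{r,x}(a)>b$ (not merely $\ge b$). If instead $\cS_{r,x}(a)\le b$, then by \eqref{eq:rightcont} there exists $\varepsilon>0$ with $\cS_{r,x}(a')=\cS_{r,x}(a)\le b$ for all $a'\in[a,a+\varepsilon)$, which contradicts $a$ being the infimum of a set that must then meet $[a,a+\varepsilon)$. The inequality $\cS_{r,x}(a-)\le b$ is then immediate from monotonicity and the fact that every $a'<a$ lies outside the set.

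The main subtlety is that the statement is an almost-sure assertion \emph{uniform} in the four-parameter family $(a,b,r,x)$, so one must fix a single null set once and for all. For monotonicity in $a$ and the left-limit \eqref{def:left S}, this is built into the definition of a BESQ flow; the uniform flatness property \eqref{eq:rightcont} is the content of \cite[Proposition 2.4(i)]{AWYskew}. Once these regularities are secured uniformly, both implications reduce to elementary manipulations of the infimum, and no further input from the flow structure is needed.
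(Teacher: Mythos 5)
Your proof is correct: the paper quotes this lemma from \cite{AWYskew} without reproducing a proof, and your argument — unfolding the dual as the right-continuous inverse $\cS^*_{-x,-r}(b)=\inf\{a'>0:\cS_{r,x}(a')>b\}$ and then using monotonicity in the starting point, the left limit \eqref{def:left S}, and the uniform right-flatness \eqref{eq:rightcont} to upgrade $\cS_{r,x}(a)\ge b$ to a strict inequality — is exactly the natural route and handles the only genuinely delicate point (mere right-continuity would not suffice there). The single cosmetic caveat is that monotonicity of $a'\mapsto\cS_{r,x}(a')$ is not literally written into Definition \ref{def:BESQflow}; it follows from the coalescence/comparison properties of Proposition \ref{p:perfect} (flow lines started at the same time cannot cross), which you should cite rather than calling it part of the definition.
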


\noindent The following proposition says that flow lines of a BESQ flow cannot meet a bifurcation point in $(0,\infty)\times \bR$ outside their starting point. 

\begin{proposition}[{\cite[Corollary 2.11]{AWYskew}}]\label{c: no middle bifurcation}
    Let  $\cS$ be a general ${\rm BESQ}(\overline{\delta})$ flow.  Almost surely, for any $a\ge 0$ and $r<x$ such that $\cS_{r,x}(a)>0$, the point $(\cS_{r,x}(a),x)$ is not a bifurcation point for $\cS$. 
\end{proposition}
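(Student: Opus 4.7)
The plan is to argue by contradiction, combining the equivalence \eqref{equivalence bifurcation} between bifurcation points and dual flow values with the no-crossing principle for forward and dual flow lines from Proposition~\ref{p:properties dual}. Roughly, if $(\cS_{r,x}(a),x)$ were a bifurcation point, a dual flow line would pass through it, and extending this dual line back to time $-r$ would pin $a$ against a barrier that it cannot touch without crossing.

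More precisely, suppose for contradiction that for some $a\ge 0$ and $r<x$ with $b:=\cS_{r,x}(a)>0$, the point $(b,x)$ is a bifurcation point of $\cS$. By \eqref{equivalence bifurcation}, there exist $y>x$ and $c\ge 0$ such that $b=\cS^*_{-y,-x}(c)$; that is, the dual flow line starting from $(c,y)$ reaches $(b,x)$ at time $-x$. Since the dual $\cS^*$ is itself a general $\besq$ flow (Proposition~\ref{p:BESQdual vary}), its perfect flow property (Proposition~\ref{p:perfect}(i) applied to $\cS^*$) extends this dual line back to time $-r$, giving
\[
a^* \;:=\; \cS^*_{-y,-r}(c) \;=\; \cS^*_{-x,-r}(b).
\]
I would then split according to the order of $a$ and $a^*$. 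If $a\ge a^*$, Proposition~\ref{p:properties dual}(i), applied on $[r,y]$ with dual parameters $c$ and $a^*=\cS^*_{-y,-r}(c)$ at the intermediate time $x\in(r,y)$, yields $\cS_{r,x}(a^*)\ge \cS^*_{-y,-x}(c)=b$; the hypothesis $b>0$ then forces $\cS_{r,x}(a^*)>0$, which triggers the ``moreover'' clause and sharpens this to $\cS_{r,x}(a^*)>b$. Monotonicity in the last argument gives $\cS_{r,x}(a)\ge\cS_{r,x}(a^*)>b$, contradicting $\cS_{r,x}(a)=b$. If instead $a<a^*$, then $a^*>0$ automatically, so Proposition~\ref{p:properties dual}(ii), applied with the same dual data and $a'=a$, uses $\cS^*_{-y,-x}(c)=b>0$ and $x\in(r,y)$ to produce the strict bound $\cS_{r,x}(a)<b$, again contradicting $\cS_{r,x}(a)=b$.

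The delicate step is the promotion to a strict inequality in the case $a\ge a^*$: Proposition~\ref{p:properties dual}(i) only yields strict separation at an intermediate time $x\in(r,y)$ under the positivity condition $\cS_{r,x}(a^*)>0$. The hypothesis $\cS_{r,x}(a)=b>0$ is exactly what supplies this, since the non-strict bound forces $\cS_{r,x}(a^*)\ge b>0$. This also explains why the conclusion must be restricted to live flow lines: once a line is absorbed at $0$, it sits on $\{0\}\times\bR$, which by Definition~\ref{def:bifur} lies outside the set of bifurcation points, so no contradiction can be extracted there.
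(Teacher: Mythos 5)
The paper does not actually prove this proposition: it is imported verbatim from \cite[Corollary 2.11]{AWYskew}, so there is no in-paper argument to compare against. Judged on its own terms, your proof is correct and is exactly the kind of argument one would expect behind the citation: a bifurcation at $(b,x)=(\cS_{r,x}(a),x)$ produces, via \eqref{equivalence bifurcation}, a dual flow line $\cS^*_{-y,-\cdot}(c)$ passing through $(b,x)$ with $y>x$; since $x$ is an \emph{interior} time both for the forward line started at $(a,r)$ and for this dual line, the no-crossing statements of Proposition~\ref{p:properties dual} (part (i) when $a\ge a^*:=\cS^*_{-y,-r}(c)$, part (ii) when $a<a^*$, the positivity $b>0$ supplying the strictness in each case) forbid the two lines from meeting at value $b$, giving the contradiction. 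Your use of the perfect flow property of $\cS^*$ to define $a^*$ is legitimate because $\cS^*_{-y,-x}(c)=b>0$; note only that Proposition~\ref{p:BESQdual vary} is stated for deterministic drift functions, so for a random drift one should either invoke the corresponding statement of \cite{AWYskew} or observe that the identity $\cS^*_{-y,-r}(c)=\cS^*_{-x,-r}(b)$ follows directly from the definition \eqref{def:dual} together with the (almost) perfect flow property and monotonicity of $\cS$. Two cosmetic simplifications: in the case $a\ge a^*$ you can apply Proposition~\ref{p:properties dual}(i) directly with starting value $a$ (its hypothesis $a\ge\cS^*_{-y,-r}(c)$ is exactly $a\ge a^*$), which removes the detour through $\cS_{r,x}(a^*)$ and the appeal to monotonicity in the starting point; and the closing remark about absorbed lines is unnecessary, since when $\cS_{r,x}(a)=0$ the point $(0,x)$ is excluded from being a bifurcation point by Definition~\ref{def:bifur} itself.
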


\subsection{Decomposition of BESQ flows}
\label{s:decomp}

In this section, we divide the space-time plane $\bR_+\times \bR$ into left and right components with respect to a flow line $Y$ as in \cite[Section 3]{AWYskew}. We give properties of the restrictions of a flow $\cS$  to each component, denoted respectively by $\cS^-$ and $\cS^+$. We then establish a similar result when $Y$ is a dual flow line.

\medskip

We use the setting of \cite[Chapter 2]{Walsh1986AnIT}. Let $\cW$ be a white noise on $\bR_+\times \bR$ with respect to some right-continuous filtration  $\cF:=(\cF_x,\, x\in \bR)$.   Following \cite{AWYskew}, if $Y=(Y_r,\,r\ge r_0)$ is a non-negative predictable process,  
we introduce the martingale measures  $\cW^-_Y$ and $\cW^+_Y$ defined by
\begin{align}
 \label{eq:W-}   \cW^-_Y(\dd \ell, \dd x) &:= \cW(\dd \ell, \dd x),\, && \ell\le Y_x, x\ge r_0, \\
\label{eq:W+}    \cW^+_Y(\dd \ell, \dd x)  &:= \cW(Y_x+\dd \ell, \dd x), && \ell\ge 0, x\ge r_0.
\end{align}

\noindent In other words,  for any Borel set $A\subset \bR_+$ with finite Lebesgue measure, and any $r_0\le x\le y$,
\begin{align*}
    (\cW_Y^-)_{x,y}(A) &:= \int_{A\times [x,y]} \mathbbm{1}_{ [0,Y_r]}(\ell)\cW(\dd \ell,\dd r),
    \\
    (\cW_Y^+)_{x,y}(A) &:= \int_{\bR_+\times [x,y]} \mathbbm{1}_{A}(\ell-Y_r) \cW(\dd \ell, \dd r).
\end{align*}

\noindent  See Figure~\ref{fig:decomp} for an illustration. Observe that $\cW_Y^+$ is a white noise on $\bR_+\times[r_0,\infty)$. The case $b=0$ of the following result follows from \cite[Proposition 3.3]{AWYskew}. The general case $b\ge 0$ is similar. 

\begin{proposition}\label{p:decomp}
    Let  $\delta_1\ge 0$, $\delta_2,\delta_3\in \bR$, $b\ge 0$ and $r_0,z\in \bR$. Let $Y$ be the ${\rm BESQ}(\delta_1\,|_z\, \delta_2)$ flow line starting from $(b,r_0)$ driven by $\cW$ and $\cS$ be the ${\rm BESQ}(\delta_1+\delta_3 \,|_z\, \delta_2+\delta_3)$ flow driven by $\cW$. Write $\zeta$ for the absorption time of $Y$, i.e. $\zeta:=\inf\{r\ge \max(r_0,z)\,:\, Y_r =0\}$ if $\delta_2<0$ and $\zeta=\infty$ otherwise.
    
    (i) The flow $\cS^+$ defined by
\begin{equation}\label{eq:decomp+}
\cS^+_{r,x}(a):= \max(\cS_{r,x}(a+Y_r)-Y_x,0),\qquad r_0\le r\le x, a\ge 0
\end{equation}

\noindent is on $\bR_+\times [r_0,+\infty)$ a ${\rm BESQ}(\delta_3\,|_{\zeta} \,\delta_2+\delta_3 )$ flow driven by $\cW_Y^+$. 

(ii) Both $Y$ and the collection of processes $\cS^-$ defined by  
\begin{equation}\label{eq:decomp-}
\cS^-_{r,x}(a):= 
\min(\cS_{r,x}(a),Y_x), \quad  r_0\le r\le x , 0\le a< Y_r 
\end{equation}

\noindent are driven by the martingale measure $\cW_Y^-$. In particular, they are measurable w.r.t. to $\cW_Y^-$. 

(iii)  $\cW_Y^+$ is a white noise independent of $\cW_Y^-$.
\end{proposition}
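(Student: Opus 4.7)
The plan is to adapt the strategy of \cite[Proposition 3.3]{AWYskew}, which handles the case $b = 0$, to accommodate the general starting height $b \geq 0$ and the piecewise drift $\delta_1 \,|_z\, \delta_2$ of $Y$. The common mechanism behind all three statements is to decompose $\cW$ along the predictable boundary $x \mapsto Y_x$ into the two pieces $\cW_Y^-$ and $\cW_Y^+$, and to observe that each of $Y$, $\cS^-$, and $\cS^+$ is driven only by one of these pieces.

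I would treat parts (ii) and (iii) first. For (iii), $\cW_Y^-$ and $\cW_Y^+$ are martingale measures obtained by integrating predictable indicators against $\cW$, hence both are centered Gaussian; their joint bracket vanishes because the supporting regions $\{\ell \leq Y_x\}$ and $\{\ell > Y_x\}$ are disjoint, so orthogonality combined with Gaussianity yields independence. For (ii), $Y$ itself solves an SDE whose stochastic integral only queries $\cW$ on $\{\ell \leq Y_s\}$, so by strong uniqueness $Y$ is $\cW_Y^-$-measurable. To treat $\cS^-$, introduce the first meeting time $\sigma := \inf\{x \geq r : \cS_{r,x}(a) \geq Y_x\}$. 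On $[r,\sigma)$, $\cS_{r,x}(a) < Y_x$, so its SDE uses only noise on $[0, Y_s]$ and is therefore $\cW_Y^-$-measurable; on $[\sigma,\infty)$, the $\min$ forces $\cS^-_{r,x}(a) = Y_x$, which is again $\cW_Y^-$-measurable.

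The heart of the proof is (i). Applying It\^o's formula to $Z_x := \cS_{r,x}(a + Y_r) - Y_x$ and cancelling the common piece $\cW([0, Y_x], dx)$ in the two stochastic integrals, on the event $\{Z_x > 0, \, x < \zeta\}$ the change of variable $\ell \mapsto \ell - Y_x$ converts the remaining noise term into $\cW_Y^+$, giving
\begin{equation*}
    dZ_x \;=\; 2\,\cW_Y^+([0, Z_x], dx) + \delta_3\,dx,
\end{equation*}
the defining SDE of a ${\rm BESQ}^{\delta_3}$ flow line driven by $\cW_Y^+$. For $x > \zeta$, $Y_x \equiv 0$, so $\cS^+_{r,x}(a) = \cS_{r,x}(a + Y_r)$ itself satisfies the ${\rm BESQ}^{\delta_2 + \delta_3}$ SDE driven by $\cW_Y^+$ (which now agrees with $\cW$ above $0$). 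The flow regularity properties of Definition~\ref{def:vary} then transfer from those of $\cS$ and $Y$ through continuity of the $\max$ operation.

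The main obstacle I anticipate is the behavior of $\cS^+$ at the boundary $Z = 0$, where the $\max$ with $0$ must be reconciled with the correct BESQ boundary behavior. For $\delta_3 \geq 0$, the comparison principle (Proposition~\ref{p:perfect}(iv)) applied to $\cS$ and the ${\rm BESQ}(\delta_1\,|_z\,\delta_2)$ flow $\cS'$ whose line starting at $(b, r_0)$ is $Y$, together with monotonicity in $a$ and the perfect flow property of $\cS'$, gives $\cS_{r,x}(a + Y_r) \geq \cS'_{r,x}(a + Y_r) \geq \cS'_{r,x}(Y_r) = Y_x$, so $Z \geq 0$ and the $\max$ is inactive. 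For $\delta_3 < 0$ the flow line $\cS_{r,\cdot}(a + Y_r)$ may dip below $Y$; however, once they meet at some time $\tau$, a second application of comparison to $\cS$ and $\cS'$ started from the common value $\cS_{r,\tau}(a + Y_r) = Y_\tau$ gives $\cS_{r,x}(a + Y_r) = \cS_{\tau,x}(Y_\tau) \leq \cS'_{\tau,x}(Y_\tau) = Y_x$ for all $x \geq \tau$, so $Z \leq 0$ thereafter. This is precisely the absorption at $0$ built into Definition~\ref{def:BESQflow} for non-positive dimension.
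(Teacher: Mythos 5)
Your treatment of parts (i) and (ii) matches the paper's strategy essentially verbatim: write $Z_x = \cS_{r,x}(a+Y_r) - Y_x$, observe that $Z$ solves the $\cW_Y^+$-driven SDE with drift $\delta_3$ while $Z>0$ and $x<\zeta$, and then handle absorption vs. inactivity of the $\max$ separately according to the sign of $\delta_3$; for (ii) the key observation that the SDE for $\cS_{r,\cdot}(a)$ only queries $\cW$ on $[0,Y_s]$ while it stays below $Y$, plus the coalescence $\cS^-=Y$ after the first meeting time when $\delta_3>0$, is exactly the paper's argument. (The paper reasons about the region $x>\zeta$ via new flow lines started at $r>\zeta$ rather than continuations of existing ones, but these are reconciled by the perfect flow property, so this is a cosmetic difference.)

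The genuine gap is in (iii). You claim that $\cW_Y^-$ and $\cW_Y^+$ are both centered Gaussian "because they are obtained by integrating predictable indicators against $\cW$", and then conclude independence from vanishing covariance plus Gaussianity. This does not hold: an It\^o integral of a \emph{random} predictable integrand against white noise is generically not Gaussian. In particular $\cW_Y^-(g) = \int g(\ell,s)\,\ind{[0,Y_s]}(\ell)\,\cW(\dd \ell,\dd s)$ has random bracket $\int |g(\ell,s)|^2\,\ind{[0,Y_s]}(\ell)\,\dd\ell\,\dd s$, so it is not Gaussian and the pair $(\cW_Y^-,\cW_Y^+)$ is not jointly Gaussian; the orthogonality-implies-independence shortcut is unavailable. (By contrast $\cW_Y^+$ \emph{is} a white noise, but this already requires the L\'evy-type characterization of martingale measures with deterministic Lebesgue covariance — it does not follow automatically from the predictability of the integrand.) The paper sidesteps this entirely by citing the prior result \cite[Proposition 3.2]{AWYskew}, which proves independence via a filtration/martingale argument rather than a Gaussianity shortcut. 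To repair your proof of (iii) you would need to reproduce that argument (or invoke it), not appeal to joint Gaussianity.
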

\begin{proof}
We outline the proof for completeness.

(i) Suppose first that $\delta_3\ge 0$. Then $\cS_{r,x}^+(a)=\cS_{r,x}(a+Y_r)-Y_x$ by the comparison principle. For $x<\zeta$,  $\cS_{r,x}(a+Y_r)-Y_x$ is solution of the SDE \eqref{eq:BESQflow} with $\delta=\delta_3$ and $\cW_Y^+$ in place of $\cW$. For $x\ge r> \zeta$ (hence $\delta_2<0$), $Y_x=0$ and $\cS_{r,x}^+(a)=\cS_{r,x}(a)$. The regularity conditions are also satisfied so we get (i) in this case. In the case $\delta_3<0$, $\cS_{r,x}^+(a)=\cS_{r,x}(a+Y_r)-Y_x$ until it hits $0$. The comparison principle says that $\cS^+_{r,x}(a)=0$ afterwards. Up to this time (or to $\zeta$ if $\cS_{r,\zeta}^+(a)>0$), $\cS_{r,x}^+(a)$ is solution of the SDE \eqref{eq:BESQflow} with $\delta=\delta_3$ and $\cW_Y^+$ in place of $\cW$. We need to check the regularity conditions. The regularity conditions (i) and (ii) of Definition \ref{def:BESQflow} are clear. The perfect flow property follows from the perfect flow property of $\cS$ and the absorption of $\cS^+$ at $0$. 

(ii)  In the setting of \cite{Walsh1986AnIT}, the noise $\cW$ driving $Y$ or $\cS_{r,x}(a)$ in the case $\delta_3\le 0$ can be replaced by $\ind{[0,Y_x]}\cdot\cW(\cdot,\dd x)=\cW_Y^-(\cdot,\dd x)$. In the case $\delta_3>0$, the same holds until $\cS_{r,x}(a)$ hits $Y$, after which $\cS_{r,x}^-(a)=Y_x$ by the comparison principle.

(iii) It is a consequence of \cite[Proposition 3.2]{AWYskew}.
\end{proof}

In Proposition \ref{c:measurability}, to prove the measurability of a flow line of $\cS^-$ of \eqref{eq:decomp-}  starting from a random point w.r.t. $\cW_Y^-$, we will  use Proposition \ref{p:decomp} (ii) combined with the following proposition.
\begin{proposition}\label{p:decomp_coalescence}
    Let $\cD$ be a countable dense subset of $\bR_+\times \bR$. In the setting of Proposition \ref{p:decomp},  almost surely: 
    
    For every $x> r\ge r_0$ and $0\le a< Y_r$ or $a=Y_r>0$,
    \begin{equation}\label{decomp:coalescence}
        \cS_{r,x}^-(a)=\limsup_{\substack{(a_n,r_n)\in \cD\to (a,r) \\ r_n\in (r,x) \\ a_n < Y_{r_n}}} \cS_{r_n,x}^-(a_n),
    \end{equation}
    where the definition of $\cS_{r,x}^-(a)$ in \eqref{eq:decomp-} is also extended to $a=Y_r$.
    
    For every $x> r\ge r_0$ and $0<a\le Y_r$,
    \begin{equation}\label{decomp:coalescence-}
        \cS_{r,x}^-(a-)=\liminf_{\substack{(a_n,r_n)\in \cD \to (a,r) \\ r_n\in (r,x)\\ a_n<Y_{r_n}}} \cS_{r_n,x}^-(a_n).
    \end{equation}
\end{proposition}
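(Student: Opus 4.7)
The strategy is to reduce everything to the already-established coalescence identities \eqref{eq:coalescence} and \eqref{eq:coalescence-} for the unrestricted flow $\cS$, using the defining formula $\cS^-_{r,x}(a) = \min(\cS_{r,x}(a), Y_x)$ from \eqref{eq:decomp-}, extended in the obvious way to $a = Y_r$. The key elementary fact is that for any real constant $c$, the map $t \mapsto \min(t, c)$ is continuous and commutes with both $\limsup$ and $\liminf$ of any real sequence. Hence, once the restricted $\limsup$ or $\liminf$ of $\cS_{r_n, x}(a_n)$ is controlled, applying $\min(\cdot, Y_x)$ yields the corresponding statement for $\cS^-$.

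In the generic case $a < Y_r$ (and $0 < a < Y_r$ for \eqref{decomp:coalescence-}), the continuity of the trajectory $Y$ gives $Y_{r_n} \to Y_r > a$, so the constraint $a_n < Y_{r_n}$ is automatically satisfied for all sufficiently large $n$ and is vacuous in the limit. Applying \eqref{eq:coalescence} (respectively \eqref{eq:coalescence-}) to $\cS$ followed by $\min(\cdot, Y_x)$ then immediately gives the claimed identities.

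The boundary case $a = Y_r > 0$ is where the real work lies. The upper bound in \eqref{decomp:coalescence} and the lower bound in \eqref{decomp:coalescence-} are immediate, since restricting a sequence can only decrease a limsup and increase a liminf; combined with the unrestricted coalescence and the monotonicity of $\min(\cdot, Y_x)$, these give $\limsup \cS^-_{r_n, x}(a_n) \le \cS^-_{r, x}(Y_r)$ and $\liminf \cS^-_{r_n, x}(a_n) \ge \cS^-_{r, x}(Y_r-)$. For the matching direction in \eqref{decomp:coalescence-}, approximating from below by $a_n \uparrow Y_r$ with $a_n < Y_r$ (hence $a_n < Y_{r_n}$ eventually) along a suitable $r_n \downarrow r$ in $(r, x)$ in the manner of the proof of \eqref{eq:coalescence-} produces a sequence with $\cS^-_{r_n, x}(a_n) \to \cS^-_{r, x}(Y_r -)$, closing the liminf.

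The matching upper bound in \eqref{decomp:coalescence} at $a = Y_r$ is the most delicate step and I expect it to be the main obstacle. The comparison principle between $\cS$ and the ${\rm BESQ}(\delta_1|_z \delta_2)$ flow $\hat Y$ of which $Y$ is a line, together with the perfect-flow identity $Y_{r_n} = \hat Y_{r, r_n}(Y_r)$, forces a case split. When $\delta_3 \le 0$ one has $\cS_{r, r_n}(Y_r) \le Y_{r_n}$, so choosing $(a_n, r_n) \in \cD$ with $a_n \in [\cS_{r, r_n}(Y_r), Y_{r_n})$ and $r_n \to r$ forces $\cS_{r_n, x}(a_n) = \cS_{r, x}(Y_r)$ by the coalescence argument used to prove \eqref{eq:coalescence}, giving the claim. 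When $\delta_3 > 0$ this interval is empty, but then $\cS^-_{r, x}(Y_r) = Y_x$ (since $\cS_{r, x}(Y_r) \ge \hat Y_{r, x}(Y_r) = Y_x$), and one instead chooses $a_n \uparrow Y_{r_n}$ in $\cD$, using $\cS_{r_n, x}(a_n) \ge \hat Y_{r_n, x}(a_n)$ together with the fact that $(Y_{r_n}, r_n)$ is not a bifurcation point of $\hat Y$ when $Y_{r_n} > 0$ (Proposition~\ref{c: no middle bifurcation} applied to $\hat Y$) to deduce $\hat Y_{r_n, x}(a_n) \to Y_x$ and hence $\cS^-_{r_n, x}(a_n) \to Y_x$. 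The care needed to reconcile these two regimes with the possible bifurcations of $\cS$ at $(Y_r, r)$ is what makes this case the technical heart of the proposition.
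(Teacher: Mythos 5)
Your overall strategy is the same as the paper's: reduce both identities to \eqref{eq:coalescence}--\eqref{eq:coalescence-} through $\cS^-=\min(\cS,Y_\cdot)$, observe that for $a<Y_r$ the constraint $a_n<Y_{r_n}$ is asymptotically vacuous, and handle the boundary case $a=Y_r>0$ by a case split on the sign of $\delta_3$. Your $\delta_3<0$ branch (sampling $a_n$ between $\cS_{r,r_n}(Y_r)$ and $Y_{r_n}$ and invoking coalescence) is precisely the paper's argument for that case. However, the boundary case contains two concrete defects as written.

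First, the borderline $\delta_3=0$ sits in the wrong branch: when $\delta_3=0$ the flow $\cS$ coincides with the ${\rm BESQ}(\delta_1\,|_z\,\delta_2)$ flow carrying $Y$, so $\cS_{r,r_n}(Y_r)=Y_{r_n}$ and the interval $[\cS_{r,r_n}(Y_r),Y_{r_n})$ from which you propose to pick $a_n$ is empty. (Your second branch uses only $\cS\ge\widehat Y$, valid for all $\delta_3\ge0$, so the repair is simply to route $\delta_3=0$ there; the paper splits as $\delta_3\ge0$ versus $\delta_3<0$ for exactly this reason.) Second, and more substantively, in the branch $\delta_3>0$ the convergence $\widehat Y_{r_n,x}(a_n)\to Y_x$ is not delivered by the lemma you cite: Proposition \ref{c: no middle bifurcation} applied at the moving points $(Y_{r_n},r_n)$ only gives, for each fixed $n$, left-continuity of $a\mapsto\widehat Y_{r_n,x}(a)$ at $Y_{r_n}$, and says nothing about the joint limit $(a_n,r_n)\to(Y_r,r)$ with $a_n<Y_{r_n}$; moreover one cannot literally take ``$a_n\uparrow Y_{r_n}$ in $\cD$'', since $\cD$ need not meet the vertical line through a given $r_n$. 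What closes the argument is a left-limit coalescence statement at the limiting point $(Y_r,r)$ itself: non-bifurcation of $(Y_r,r)$ for the $Y$-flow (Proposition \ref{c: no middle bifurcation} when $r>r_0$, the a.s.\ property of the fixed starting point when $r=r_0$) identifies $\widehat Y_{r,x}(Y_r-)=Y_x$, and then \eqref{eq:leftcont}/\eqref{eq:coal_leftcont} produce, via the nonempty windows $(\widehat Y_{r,r_n}(a'),Y_{r_n})$ with $a'<Y_r$, an actual $\cD$-sequence with $a_n<Y_{r_n}$ along which $\widehat Y_{r_n,x}(a_n)=Y_x$ exactly. The paper implements this by working with the left-continuous version of $\cS$ directly (showing $\cS_{r,\cdot}(a-)\ge Y$ when $\delta_3\ge0$, applying \eqref{eq:coalescence-} to $\cS$, then taking the minimum with $Y_x$), which avoids the comparison flow $\widehat Y$ altogether. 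A similar repair is needed in your sketch for \eqref{decomp:coalescence-}: the inference ``$a_n<Y_r$ hence $a_n<Y_{r_n}$ eventually'' is false when $a_n\to Y_r$ (take $Y_{r_n}$ approaching $Y_r$ faster), so the approximating sequence must be chosen with $a_n<\min(\cS_{r,r_n}(a-),Y_{r_n})$ inside the coalescing window, which is exactly the sandwich the paper uses.
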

\begin{proof}
    Let us first prove \eqref{decomp:coalescence}. By \eqref{eq:coalescence},
    \[
    \cS^-_{r,x}(a)=\min(\cS_{r,x}(a),Y_x)=\limsup_{\substack{(a_n,r_n)\in \cD\to (a,r) \\ r_n\in (r,x)}} \min(\cS_{r_n,x}(a_n),Y_x).
    \]

    \noindent When $a<Y_r$, the condition $a_n<Y_{r_n}$ is asymptotically satisfied when $(a_n,r_n)\to (a,r)$, hence \eqref{decomp:coalescence} holds in this case.
It remains to prove \eqref{decomp:coalescence} when $a=Y_r$. Let from now on $a=Y_r>0$ with $r\ge r_0$. If $\delta_3\ge 0$, $\cS_{r,s}(a)\ge Y_s$ for all $s\ge r$ by the comparison principle. 
By the perfect flow property and Proposition~\ref{c: no middle bifurcation} applied to $Y$, $Y_s$ is on the left-continuous version of the ${\rm BESQ}(\delta_1\,|_z\, \delta_2)$ flow line starting from $(a=Y_r,r)$; then 
$\cS_{r,s}(a-)\ge Y_s$ by the comparison principle applied to the left-continuous versions of the flows. 
Noticing that 
\[
\liminf_{\substack{(a_n,r_n)\in \cD\to (a,r) \\ r_n\in (r,x)}} \cS_{r_n,x}(a_n)
\le 
\limsup_{\substack{(a_n,r_n)\in \cD\to (a,r) \\ r_n\in (r,x)\\ a_n< \min(\cS_{r,r_n}(a-),Y_{r_n})}} \cS_{r_n,x}(a_n) \le  
\limsup_{\substack{(a_n,r_n)\in \cD\to (a,r) \\ r_n\in (r,x)\\ a_n< \cS_{r,r_n}(a-)}} \cS_{r_n,x}(a_n), 
\]
we deduce by \eqref{eq:coalescence-} that
\[
\limsup_{\substack{(a_n,r_n)\in \cD\to (a,r) \\ r_n\in (r,x)\\ a_n<Y_{r_n}}} \cS_{r_n,x}(a_n) = \lim_{\substack{(a_n,r_n)\in \cD\to (a,r) \\ r_n\in (r,x)\\ a_n< \min(\cS_{r,r_n}(a-),Y_{r_n})}} \cS_{r_n,x}(a_n)= \cS_{r,x}(a-).
\] 
Taking the minimum with $Y_x$ yields that
\[
\limsup_{\substack{(a_n,r_n)\in \cD\to (a,r) \\ r_n\in (r,x)\\ a_n<Y_{r_n}}} \cS^-_{r_n,x}(a_n) =
Y_x=\cS_{r,x}^-(a),
\]
which is equation \eqref{decomp:coalescence}. It remains the case $\delta_3<0$. Note that 
\[
    \cS^-_{r,x}(a)=\limsup_{\substack{(a_n,r_n)\in \cD\to (a,r) \\ r_n\in (r,x)}} \min(\cS_{r_n,x}(a_n),Y_x)\ge \limsup_{\substack{(a_n,r_n)\in \cD\to (a,r) \\ r_n\in (r,x)\\ a_n<Y_{r_n}}} \cS^-_{r_n,x}(a_n) \vspace{-0.2cm}
    \]

\noindent so we only have to prove the reverse inequality.
Since $\delta_3<0$, $\cS_{r,s}(a)\le Y_s$ for all $s\ge r$ by the comparison principle and $\cS_{r,s}(a)<Y_s$ for $s>r$ arbitrarily close to $r$. Observing that $\cS_{r_n,x}^-(a_n)=\cS_{r_n,x}(a_n)$ when $a_n<Y_{r_n}$, 
the right-hand side of \eqref{decomp:coalescence} is
\[
\limsup_{\substack{(a_n,r_n)\in \cD\to (a,r) \\ r_n\in (r,x)\\ a_n<Y_{r_n}}} \cS_{r_n,x}(a_n) \ge \lim_{\substack{(a_n,r_n)\in \cD\to (a,r) \\ r_n\in (r,x)\\ a_n\in (\cS_{r,r_n}(a),Y_{r_n})}} \cS_{r_n,x}(a_n)= \cS_{r,x}(a)=\cS_{r,x}^-(a)
\]

\noindent by  \eqref{eq:coalescence}. The proof of \eqref{decomp:coalescence} is complete. We prove now \eqref{decomp:coalescence-}. Fix $a\in (0,Y_r]$. We have 
\begin{align*}
    \cS^-_{r,x}(a-)=\min(\cS_{r,x}(a-),Y_x) &=\liminf_{\substack{(a_n,r_n)\in \cD\to (a,r) \\ r_n\in (r,x)}} \min(\cS_{r_n,x}(a_n),Y_x)\\
    & \le \liminf_{\substack{(a_n,r_n)\in \cD \to (a,r) \\ r_n\in (r,x)\\ a_n<Y_{r_n}}} \min(\cS_{r_n,x}(a_n), Y_x) \\
    &\le
    \lim_{\substack{(a_n,r_n)\in \cD \to (a,r) \\ r_n\in (r,x)\\ a_n<\min(\cS_{r,r_n}(a-),Y_{r_n})}} \min(\cS_{r_n,x}(a_n),Y_x)\\
    &=\min(S_{r,x}(a-),Y_x)=\cS_{r,x}^-(a-)
\end{align*}
where the second and the penultimate equalities  come from \eqref{eq:coalescence-}. We obtain  \eqref{decomp:coalescence-} by recalling that $\min(\cS_{r_n,x}(a_n), Y_x)=\cS_{r_n,x}^-(a_n)$ when $a_n<Y_{r_n}$.
\end{proof}

We give the analogue of these propositions when $Y$ is a dual line, i.e.\  a non-negative process predictable with respect to the natural filtration of $\cW^*$.
Let $\cW^{*,-}_Y$ and $\cW^{*,+}_Y$ be defined as in \eqref{eq:W-} and \eqref{eq:W+} respectively, with $\cW^*$ replacing $\cW$. Then define $\cW^-_Y$ and $\cW^+_Y$ as the images of $\cW^{*,-}_Y$ and $\cW^{*,+}_Y$, respectively, under the map $(a,x)\mapsto (a,-x)$.

\begin{proposition}\label{p:dual S+}
Let $\delta_1^* \ge 0$, $\delta_2^*\in \bR$, $z\ge r_0$, $b\ge 0$ and $Y$ be the ${\rm BESQ}(\delta^*_1\,|_z\, \delta_2^*)$ flow line starting at $(b,r_0)$ driven by $-\cW^*$ and killed on $(z,\infty)$.    Let $\delta_3\ge 0$ such that $\delta_3\ge 2-\delta_1^*$ and $\cS$ be the ${\rm BESQ}(2-\delta_2^*-\delta_3\,|_{-z}\,2-\delta_1^*-\delta_3)$ flow driven by $\cW$.  Consider the collection of processes defined by
 \[
\cS^{+}_{r,x}(a) :=\max(\cS_{r,x}(a+ Y_{-r})- Y_{-x},0), \qquad a\ge 0,\, r\le x\le -r_0
\]
and 
 \[
\cS^{-}_{r,x}(a) :=
\cS_{r,x}(a), \quad  -\zeta^*<r\le x\le -r_0,\,\quad 0\le a < Y_{-r}, 
\]

\noindent where $\zeta^*$ is the absorption time of $Y$, i.e. $\zeta^*:=\inf\{r\ge z\,:\, Y_r=0\}$ if $\delta_2^*<2$ and $\zeta^*=\infty$ otherwise.

    (i) The flow  $\cS^{+}$ is on $(-\infty,-r_0)$ a  ${\rm BESQ}(2-\delta_2^*-\delta_3\,|_{-\zeta^*}\, {2-\delta_3})$ flow driven by $\cW^{+}_Y$, killed on $(-\zeta^*,-r_0)$. 

    (ii) $\cS_{r,x}^-(a)\le Y_{-x}$ for any $ -\zeta^*<r\le x\le -r_0,\, 0\le a<Y_{-r}$.
    
    (iii)  Both $Y$ and $\cS^{-}$  are measurable w.r.t. $\cW_Y^{-}$.

    (iv) $\cW_Y^+$ is a white noise independent of $\cW_Y^-$.
\end{proposition}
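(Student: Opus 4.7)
The strategy is to reduce Proposition~\ref{p:dual S+} to Proposition~\ref{p:decomp} via the duality of Proposition~\ref{p:BESQdual vary}, applied once to convert $\cS$ into $\cS^*$ (so that it, like $Y$, is driven by $-\cW^*$), and once more to translate the conclusions back. By Proposition~\ref{p:BESQdual vary}, $\cS^*$ is a killed ${\rm BESQ}(\delta_1^*+\delta_3\,|_z\,\delta_2^*+\delta_3)$ flow driven by $-\cW^*$. The hypotheses $\delta_1^*\ge 0$, $\delta_3\ge 0$, and $\delta_3\ge 2-\delta_1^*$ guarantee $\delta_1^*+\delta_3\ge 2$, so $\cS^*$ undergoes no killing in the regime $[r_0,z)$, and Proposition~\ref{p:decomp} applies to the pair $(\cS^*,Y)$ with $\delta_1=\delta_1^*$, $\delta_2=\delta_2^*$, and the same $\delta_3$. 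This yields that
\begin{equation*}
\cS^{*,+}_{s,t}(c) := \max(\cS^*_{s,t}(c+Y_s)-Y_t,0),\quad r_0\le s\le t,\ c\ge 0,
\end{equation*}
is a ${\rm BESQ}(\delta_3\,|_{\zeta^*}\,\delta_2^*+\delta_3)$ flow driven by $(-\cW^*)_Y^+$, that $Y$ and $\cS^{*,-}$ are measurable with respect to $(-\cW^*)_Y^-$, and that these two noises are independent.

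To prove (i), I dualize. Using the identity $\cS^*_{-t,-s}(b)>a\iff \cS_{s,t}(a)<b$, a direct computation shows
\begin{equation*}
(\cS^{*,+})^*_{r,x}(b) = \max(\cS_{r,x}(b+Y_{-r})-Y_{-x},0) = \cS^+_{r,x}(b),\qquad r\le x\le -r_0,
\end{equation*}
so $\cS^+$ is the dual of $\cS^{*,+}$. A second application of Proposition~\ref{p:BESQdual vary} identifies $\cS^+$ as a killed ${\rm BESQ}(2-\delta_2^*-\delta_3\,|_{-\zeta^*}\,2-\delta_3)$ flow driven by $-((-\cW^*)_Y^+)^*$; writing out the reflection $(a,s)\mapsto(a,-s)$ of the signed pushforwards yields $-((-\cW^*)_Y^+)^* = \cW_Y^+$ and analogously $-((-\cW^*)_Y^-)^* = \cW_Y^-$, which proves (i). Property (iv) is immediate from the independence in Proposition~\ref{p:decomp} and the fact that the $\sigma$-fields generated by $(-\cW^*)_Y^{\pm}$ coincide with those of $\cW_Y^{\pm}$.

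For (ii), I introduce the auxiliary ${\rm BESQ}(2-\delta_2^*\,|_{-z}\,2-\delta_1^*)$ flow $\cA$ driven by $\cW$. By Proposition~\ref{p:BESQdual vary}, $\cA^*$ is the killed ${\rm BESQ}(\delta_1^*\,|_z\,\delta_2^*)$ flow driven by $-\cW^*$, so $Y = \cA^*_{r_0,\cdot}(b)$. Proposition~\ref{p:properties dual}(ii) applied to $\cA$ then gives $\cA_{r,x}(a)\le \cA^*_{-x,-r}(b)=Y_{-x}$ whenever $0\le a<Y_{-r}$, and the comparison principle (Proposition~\ref{p:perfect}(iv)), using $\delta_3\ge 0$, yields $\cS\le \cA$, whence $\cS^-_{r,x}(a)\le Y_{-x}$. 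For (iii), the measurability of $Y$ with respect to $\cW_Y^-$ follows from that with respect to $(-\cW^*)_Y^-$ as above, while the measurability of $\cS^-$ with respect to $\cW_Y^-$ follows from (ii) and the SDE for $\cS$: in the range $0\le a<Y_{-r}$ one has $\cS_{r,\cdot}(a)\le Y_{-\cdot}$, so the SDE is driven only by the restriction $\cW\mathbbm{1}_{\{\ell\le Y_{-x}\}}=\cW_Y^-$.

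The main technical obstacle I anticipate is the careful bookkeeping of signs and orientations when composing the two dualities and the corresponding pushforwards of white noises; in particular, one must verify that the killing set of $\cS^+$ is precisely $(-\zeta^*,-r_0)$—the reflection of the region where $Y$ is strictly positive, so that in $x<-\zeta^*$ the identity $Y_{-x}=0$ renders the $\max$ with zero inert—rather than the full domain of the dual flow.
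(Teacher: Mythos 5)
Your route for (i) and (iv) is essentially the paper's: dualize $\cS$ into the killed ${\rm BESQ}(\delta_1^*+\delta_3\,|_z\,\delta_2^*+\delta_3)$ flow $\cS^*$ driven by $-\cW^*$, decompose along $Y$ via Proposition~\ref{p:decomp}, and dualize back after checking that $\cS^+$ is the dual of $\cS^{*,+}$ (the paper performs your ``direct computation'' carefully via Proposition~\ref{p:properties dual}, and your sign bookkeeping $-((-\cW^*)_Y^+)^*=\cW_Y^+$ is correct). Your (ii) is a legitimate variant: the paper compares $\cS^*$ with the flow of which $Y$ is a line and applies Proposition~\ref{p:properties dual}~(ii) to $\cS$, whereas you introduce the auxiliary ${\rm BESQ}(2-\delta_2^*\,|_{-z}\,2-\delta_1^*)$ flow $\cA$ with $Y=\cA^*_{r_0,\cdot}(b)$ and use $\cS\le\cA$; this works, but your displayed inequality is garbled — the correct application takes the proposition's ``$x$'' to be $-r_0$ and its ``$y$'' to be your $x$, giving $\cA_{r,x}(a)\le \cA^*_{r_0,-x}(b)=Y_{-x}$. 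In (i), however, Proposition~\ref{p:decomp} does not literally apply to the pair $(\cS^*,Y)$: $\cS^*$ is a killed flow, and $Y$ is killed on $(z,\infty)$ even when $\delta_2^*\in[0,2)$, while in Proposition~\ref{p:decomp} absorption of $Y$ occurs only for $\delta_2<0$. The paper resolves this by applying the decomposition only on the interval where $Y>0$ (where killed and non-killed versions coincide), by noting that on $(-\infty,-\zeta^*)$ one simply has $\cS^+=\cS$ driven by $\cW=\cW_Y^+$, and by gluing at $-\zeta^*$ with the perfect flow property, using that $\zeta^*$ is $\cW_Y^{*,-}$-measurable, hence independent of $\cW_Y^+$. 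You flag the killing issue in your last paragraph but leave exactly this part uncarried out.

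The genuine gap is your proof of (iii) for $\cS^-$. The step ``$\cS_{r,\cdot}(a)\le Y_{-\cdot}$, so the SDE is driven only by $\cW\,\ind{\ell\le Y_{-x}}=\cW_Y^-$, hence $\cS^-$ is $\sigma(\cW_Y^-)$-measurable'' is not justified: deducing measurability of a strong solution with respect to a restricted noise requires the restriction boundary to be predictable in the direction in which the SDE runs, and $Y_{-\cdot}$ is a dual line, adapted to the filtration of $\cW^*$, hence anticipating for the forward flow $\cS$. A pathwise bound only says the integrand charges the region below $Y$; it does not give $\sigma(\cW_Y^-)$-measurability (this is precisely the step that Proposition~\ref{p:decomp}~(ii) justifies, and only for a forward, predictable $Y$ in the Walsh framework). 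The paper circumvents this by staying in the dual picture, where $Y$ is predictable: it sets $\cS^{*,-}_{r,x}(a)=\min(\cS^*_{r,x}(a),Y_x)$, obtains its $\cW_Y^{*,-}$-measurability from Proposition~\ref{p:decomp}~(ii), and then recovers $\cS^-$ by inverting below $Y$, namely $\cS_{r,x}(a)=\inf\{c\in[0,Y_{-x})\colon \cS^{*,-}_{-x,-r}(c)>a\}$, the empty-set case being equal to $Y_{-x}$ by (ii) and Proposition~\ref{p:properties dual}~(i). Without this dual detour (or an honest treatment of the anticipating boundary), your (iii) is incomplete.
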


\begin{proof}
    Statement (iv) is a consequence of Proposition \ref{p:decomp} (iii). Let us prove (ii).  Let $\cS^*$ be the dual flow of $\cS$, which,  by Proposition \ref{p:BESQdual vary},  is a killed ${\rm BESQ}(\delta_1^*+\delta_3\,|_z \, \delta_2^*+\delta_3)$ flow driven by $-\cW^*$ (observe that $\delta_1^*+\delta_3\ge 2$  hence flow lines may be killed only on $(z,\infty)$). Then, for  $-\zeta^*<r\le x\le -r_0$, the comparison principle implies that $\cS^*_{-x,-r}(Y_{-x}) \ge Y_{-r}$   hence $\cS_{r,x}(a') \le Y_{-x}$ for any $0\le a' < Y_{-r}$  by Proposition \ref{p:properties dual} (ii) with $y=x$ and $b=Y_{-x}$ there. We turn to (iii). The flow line $Y$ is measurable w.r.t. to $\cW_Y^-$ by Proposition \ref{p:decomp} (ii). Similarly, let 
\[
\cS^{*,-}_{r,x}(a) := \min(\cS^*_{r,x}(a),Y_x),  \qquad r_0\le r\le x,\, 0\le a < Y_r.
\]

\noindent Then $\cS^{*,-}$ is measurable w.r.t. $\cW_Y^{*,-}$ by another use of Proposition \ref{p:decomp} (ii). Let $0\le a<Y_{-r}$. Observe that 
\[
\{c \in [0,Y_{-x})\,:\, \cS^*_{-x,-r}(c)>a\}=\{c \in [0,Y_{-x})\,:\, \cS^{*,-}_{-x,-r}(c)>a\}.
\]

\noindent If the set above is empty, then $\cS_{r,x}(a)> c$ for all $c<Y_{-x}$  by Proposition \ref{p:properties dual} (i), hence $\cS_{r,x}(a)=Y_{-x}$ by (ii). Otherwise, since $\cS$ is the dual of $\cS^*$, $\cS_{r,x}(a)=\inf\{c\ge0\,:\, \cS^*_{-x,-r}(c)>a\}$ which is $\cS_{r,x}(a)=\inf\{c\in [0,Y_{-x})\,:\, \cS^{*,-}_{-x,-r}(c)>a\}$. We deduce that $\cS^{-}$ is measurable w.r.t. $\cW_Y^{*,-}$, hence also w.r.t. $\cW_Y^-$.
 It remains to prove (i). We let $\cS^{*,+}$  be defined as
\[
\cS^{*,+}_{r,x}(a) := \cS^*_{r,x}(a+ Y_{r})- Y_{x},  \qquad a\ge 0,\, r_0\le r\le x.
\]

\noindent By the comparison principle, $\cS^{*,+}_{r,x}(a)\ge 0$.  On $(z,\zeta^*)$, the flow lines $\cS^*_{r,\cdot}(a+ Y_{r})$ and $Y$ don't touch $0$ hence by Proposition \ref{p:decomp} (i) and (iii), $\cS^{*,+}$ is a ${\rm BESQ}^{\delta_3}$ flow driven by $-\cW^{*,+}_Y$, independent of $\cW_Y^{*,-}$ on this interval.  Let us show that $\cS^{+}$ is the dual flow of $\cS^{*,+}$ on $(-\infty,-r_0)$. Let $r< x< -r_0$ and $a\ge 0$. We want to show 
\begin{equation}\label{eq:dual difference}
    \cS^{+}_{r,x}(a) =\inf\{c\ge 0\,:\, \cS^{*,+}_{-x,-r}(c)>a\}.
\end{equation}

\noindent We first prove 
\begin{equation}\label{upper bound dual difference}
    \cS^{+}_{r,x}(a) \le \inf\{c\ge 0\,:\, \cS^{*,+}_{-x,-r}(c)>a\}.
\end{equation}

\noindent Let $c\ge 0$ such that $\cS^{*,+}_{-x,-r}(c)>a$, i.e. $\cS^{*}_{-x,-r}(c+Y_{-x})>a+Y_{-r}$.   By Proposition \ref{p:properties dual} (ii) with $y=x$ there, it implies that $c+Y_{-x}\ge \cS_{r,x}(a+Y_{-r})$. Hence $c\ge \cS_{r,x}(a+Y_{-r})-Y_{-x}$ and $c\ge \cS^+_{r,x}(a)$ since $c\ge 0$. It proves \eqref{upper bound dual difference}.  We prove the reverse inequality. Let $c\ge 0$ such that $ \cS^{*,+}_{-x,-r}(c)\le a$ (if it exists). By definition, $\cS^*_{-x,-r}(c+Y_{-x})\le a+Y_{-r}$, hence $\cS_{r,x}(a+Y_{-r})\ge c+Y_{-x}$ by definition of the dual flow, so $\cS_{r,x}^+(a)=\cS_{r,x}(a+Y_{-r})-Y_{-x}\ge c$.  It completes the proof of \eqref{eq:dual difference}.

Notice that on $(-\infty,-\zeta^*)$, $\cS^+=\cS$ hence is a ${\rm BESQ}^{2-\delta_2^*-\delta_3}$ flow driven by $\cW$ which is $\cW_Y^+$ on this interval. On $(-\zeta^*,-r_0)$, $\cS^+$ is the dual of $\cS^{*,+}$ which is a ${\rm BESQ}^{\delta_3}$ flow driven by $-\cW^{*,+}_Y$. By Proposition \ref{p:BESQdual vary}, $\cS^+$ is a killed ${\rm BESQ}^{2-\delta_3}$ flow driven by $\cW^{+}_Y$ on this interval. Recall that $Y$, hence $\zeta^*$, is measurable with respect to $\cW_Y^{*,-}$, thus is independent of $\cW_{Y}^+$. We deduce (i) by the perfect flow property of $\cS^+$ at level $-\zeta^*$ (when it is finite). 
\end{proof}

See Figure~\ref{fig:decomp} for an illustration of Proposition~\ref{p:decomp} and \ref{p:dual S+}.

 \begin{figure}[htbp]
\centering
       \scalebox{0.7}{ 
        \def\svgwidth{\columnwidth}
        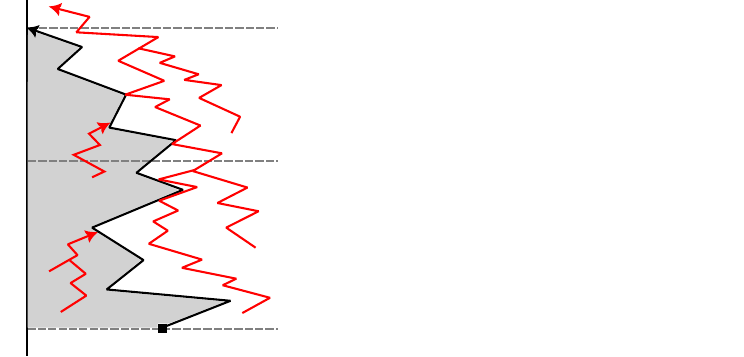
    }

\caption{The left picture describes the decomposition of the white noise $\cW$ and the flow $\cS$ by a (forward) line $Y$ as discussed in Proposition~\ref{p:decomp} in the case $\delta_2\le 0,\, \delta_3>0$. In this case flow lines in  $\cS^-$ will be equal to $Y$ once they hit $Y$, while flow lines in  $\cS^+$ is reflected at 0 in $[r_0,\zeta]$. The shaded area represents the noise $\cW_Y^-$.\\
The right picture illustrates the decomposition from a line $Y$ driven by $-\cW^*$ as discussed in Proposition~\ref{p:dual S+}. As shown in the proposition, flow lines in $\cS^+$ are killed at 0 between $-\zeta^*$ and $-r_0$, while flow lines of $\cS^-$ stays in the shaded area.}
\label{fig:decomp}
\end{figure} 

\begin{proposition}\label{p:decompdual_coalescence}
    Let $\cD$ be a dense countable set of $\bR_+\times \bR$. In the setting of Proposition \ref{p:dual S+}, almost surely: for every $-\zeta^*<r < x\le -r_0$ and $0\le a< Y_{-r}$,
    \begin{equation}\label{decompdual:coalescence}
        \cS_{r,x}^-(a)=\limsup_{\substack{(a_n,r_n) \in \cD \to (a,r)\\ r_n\in (r,x)\\ a_n<Y_{-r_n}}} \cS_{r_n,x}^-(a_n),
    \end{equation}
    and for every $-\zeta^*<r < x\le -r_0$ and $0< a\le Y_{-r}$, 
    \begin{equation}\label{decompdual:coalescence-}
        \cS_{r,x}^-(a-)=\liminf_{\substack{(a_n,r_n)\in \cD\to (a,r)\\ r_n\in (r,x) \\ a_n<Y_{-r_n} }} \cS_{r_n,x}^-(a_n).
    \end{equation}
\end{proposition}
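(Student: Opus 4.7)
The plan is to reduce both \eqref{decompdual:coalescence} and \eqref{decompdual:coalescence-} to the unrestricted coalescence identities \eqref{eq:coalescence} and \eqref{eq:coalescence-} applied to the full flow $\cS$. The key structural observation, inherited from Proposition~\ref{p:dual S+}, is that $\cS^-_{r,x}(a)=\cS_{r,x}(a)$ whenever $a<Y_{-r}$, while the dual flow line $Y$ is a.s.\ continuous and strictly positive on $[r_0,\zeta^*)$; hence the set $\{(b,s):b<Y_{-s}\}$ is an open neighbourhood of every interior point $(a,r)$ with $a<Y_{-r}$. The same identity, applied to $a'<a\le Y_{-r}$ and then letting $a'\uparrow a$, also yields $\cS^-_{r,x}(a-)=\cS_{r,x}(a-)$.

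For \eqref{decompdual:coalescence}, given $a<Y_{-r}$ and any sequence $(a_n,r_n)\in\cD$ converging to $(a,r)$, continuity of $Y$ at $-r$ forces $a_n<Y_{-r_n}$ for all $n$ large. Consequently the restricted limsup in \eqref{decompdual:coalescence} coincides with the unrestricted limsup in \eqref{eq:coalescence}---both in the indexing set up to finitely many terms and in the value of the argument, since $\cS^-_{r_n,x}(a_n)=\cS_{r_n,x}(a_n)$ whenever the constraint holds---and therefore equals $\cS_{r,x}(a)=\cS^-_{r,x}(a)$.

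For \eqref{decompdual:coalescence-}, restricting the liminf to a smaller set can only raise it, so \eqref{eq:coalescence-} immediately gives $\liminf_{\mathrm{restr.}}\ge\cS_{r,x}(a-)$. The matching upper bound I would establish by exhibiting an approximating sequence attaining $\cS_{r,x}(a-)$ inside the restricted set. When $a<Y_{-r}$ continuity of $Y$ again makes the constraint automatic and any sequence supplied by \eqref{eq:coalescence-} works. In the boundary case $a=Y_{-r}$ I would mirror the coalescence device used in the proof of Proposition~\ref{p:decomp_coalescence}: by \eqref{eq:leftcont} pick $a'<a$ with $\cS_{r,y}(a')=\cS_{r,x}(a-)$ for every $y\ge x$, so that by \eqref{eq:coal_leftcont} any $a_n\in(\cS_{r,r_n}(a'),\cS_{r,r_n}(a-)]$ with $r_n\in(r,x)$ automatically satisfies $\cS_{r_n,x}(a_n)=\cS_{r,x}(a-)$. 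As $r_n\downarrow r$ the endpoints converge to $a'$ and $a$ respectively by continuity in time of the flow lines, while Proposition~\ref{p:dual S+}(ii) combined with the left-continuous comparison gives $\cS_{r,r_n}(a-)\le Y_{-r_n}\to a$; I can therefore choose $a_n$ in the (eventually nonempty) intersection of that half-open interval with $[a-1/n,\,Y_{-r_n})$, forcing $a_n\to a$ and $a_n<Y_{-r_n}$. A final density step using \eqref{eq:rightcont} and the openness of both constraints moves $(a_n,r_n)$ into $\cD$ without altering the value $\cS_{r_n,x}(a_n)=\cS_{r,x}(a-)$.

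The main obstacle is precisely the boundary case $a=Y_{-r}$ in \eqref{decompdual:coalescence-}: one must simultaneously keep $a_n$ inside the coalescence window, strictly below $Y_{-r_n}$, and converging to $a$ from below. Joint continuity of $Y$ and of $\cS_{r,\cdot}(a')$, together with the comparison $\cS_{r,r_n}(a-)\le Y_{-r_n}$ inherited from Proposition~\ref{p:dual S+}(ii), leaves exactly enough room inside the window to carry out this construction.
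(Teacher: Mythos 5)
Your argument is correct and follows essentially the same route as the paper: \eqref{decompdual:coalescence} is reduced to \eqref{eq:coalescence} because the constraint $a_n<Y_{-r_n}$ is automatic near $(a,r)$ when $a<Y_{-r}$, and \eqref{decompdual:coalescence-} is obtained by the restriction inequality for the liminf plus an approximating sequence chosen just below $\cS_{r,\cdot}(a-)\le Y_{-\cdot}$ (via Proposition~\ref{p:dual S+}~(ii)), which is exactly the paper's ``follows the lines of the proof of \eqref{decomp:coalescence-}''. The only blemishes are cosmetic: the claim that $Y$ is strictly positive on $[r_0,\zeta^*)$ is not needed (and not true in general), and the final density step rests on the openness and continuity of the constraint curves rather than on \eqref{eq:rightcont}.
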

\begin{proof}
    Equation \eqref{decompdual:coalescence} is a consequence of \eqref{eq:coalescence}. Notice that we restricted to $a<Y_{-r}$.
    The proof of \eqref{decompdual:coalescence-} follows the lines of the proof of \eqref{decomp:coalescence-}. 
\end{proof}

\section{Spindles in coupled BESQ flows}\label{s:spindle}
  The purpose of this section is to partition $\bR_+ \times \bR$ into disjoint regions delimited by suitably chosen coupled BESQ flows, which are referred to as \emph{the blue and red lines}. Each region is called a \emph{spindle} and we will show that the spindles form a Poisson point process of Pitman--Yor $\besq^{-\delta}$-excursions with $\delta \in (0,2)$. 

\subsection{Coupled BESQ flows constructed from a PRBM}\label{s:PRBM}

Let $B=(B_t)_{t\ge 0}$ be a standard one-dimensional Brownian motion, $\rF$ its natural filtration  and $(\kL_t)_{t\ge 0}$ the continuous local time process of $B$ at position 0. The perturbed reflecting Brownian motion (PRBM) with parameter $\mu$, also called $\mu$-process, is the process $X$ defined as
$$ X_t := |B_t| - \mu \kL_t,\quad t\ge 0.$$
 When $\mu=1$, $X$ is distributed as a standard Brownian motion by L\'evy's identity. 
 
 Suppose $\mu>0$ so that $X$ is indeed recurrent. We will repeatedly use the fact that $X$ does not have increasing times since it behaves as a Brownian motion away from its infimum. Almost surely, for any $0\le t<t'$ such that $X_t<X_{t'}$, 
\begin{equation}\label{no_increase}
x\in (X_t,X_{t'}) \Rightarrow L(t,x)<L(t',x).
\end{equation}

\noindent However, when $\mu>1$, $X$ almost surely has decreasing times \cite[Proposition 6]{perman1997perturbed}, which are necessarily times when $X$ equals its running infimum. The semimartingale $X$ admits a bicontinuous version of its local times $L(t,x)$ at time $t$ and position $x$, defined via
$$ L(t,x) = \lim_{\varepsilon\rightarrow 0}\frac{1}{\varepsilon} \int_0^t \ind{x\le X_s\le x+\varepsilon}\dd t.$$
Let then $\tau_a^r:=\inf\{t\geq 0: L(t,r)>a\}$ be the right-continuous inverse local time of $X$.  The generalized Ray--Knight theorem \cite{CarmPetiYor94} gives a connection between the local times of $X$ and BESQ processes. Let 
\[
\delta:=\frac2\mu.
\]
In the notation of Section \ref{s:BESQ}, for $r\le 0$, $(L(\tau_a^r,\,r-h),h\ge 0)$ is a ${\rm BESQ}^{2-\delta}_a$ process absorbed at $0$, and  $(L(\tau_a^r,\,r+h),h\in [0,|r|])$ is a ${\rm BESQ}^{\delta}_a$ process on $[0,|r|]$. The local times $L(\tau_a^r,h)$ when $r,h\ge 0$ are the local times of a standard Brownian motion hence satisfy the standard Ray--Knight theorems, i.e. for $r\ge 0$, $(L(\tau_a^r,\,r+h),h\ge 0)$ is a ${\rm BESQ}^{0}_a$ process, and  $(L(\tau_a^r,\,r-h),h\in [0,r])$ is a ${\rm BESQ}^{2}_a$ process on $[0,r]$. It is known that such Ray--Knight theorems are connected to stochastic flows appearing in the context of continuous-state branching processes \cite{bertoin-legall00,lambert02}. Using the framework of Dawson and Li \cite{dawson2012stochastic}, the paper \cite{aidekon2024infinite} shows that this connection can be expressed in the following way. Recall the white noise $W$ defined via \eqref{WN}. Set 
\begin{equation}\label{def:S}
    \cR_{r,x}(a):=L(\tau_a^r,x),\qquad \cR^*_{r,x}(a):=L(\tau_a^{-r},-x)
\end{equation}

\noindent for all $a\ge 0$ and $r\le x$. The flow $\cR^*$ is the dual of the flow $\cR$ as in \eqref{def:dual} by \cite[Proposition 2.5]{aidekon2023stochastic}. The following result is a consequence of \cite[Theorem 5.1]{aidekon2024infinite}. Note that the part when $X$ is positive behaves as a Brownian motion, so that we can apply the results of \cite{aidekon2024infinite} with $\mu=1$ on the positive half-line. We let $W^*$ be the image of $W$ by the map $(a,r)\mapsto (a,-r)$.
\begin{theorem}(\cite[Theorem 5.1]{aidekon2024infinite})\label{t:rayknight}
Recall that $\delta=\frac2\mu$. 

(i) The collection $(\cR_{r,x}(a),\,x\ge r)_{(a,r)\in \bR_+\times\bR}$ is a ${\rm BESQ}( \delta \,|_0\, 0)$ flow,  driven by $W$.

(ii) The collection  $(\cR^*_{r,x}(a),\,x\ge r)_{(a,r)\in \bR_+\times\bR}$ is a  killed ${\rm BESQ}( 2 \,|_0\, 2-\delta)$ flow driven by $-W^*$. 

\end{theorem}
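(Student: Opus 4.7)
The plan is to invoke \cite[Theorem 5.1]{aidekon2024infinite}, which gives a Ray--Knight-type representation of BESQ flows via the local times of a PRBM with a single drift parameter, and to splice its two versions together at the spatial origin $x=0$. On $\{x<0\}$ the PRBM $X$ is a genuine $\mu$-process and its inverse local times produce ${\rm BESQ}^\delta$ excursions by the generalized Ray--Knight theorem of Carmona--Petit--Yor; on $\{x\geq 0\}$, the portions of $X$ above $0$ are Brownian excursions, so standard Ray--Knight gives ${\rm BESQ}^0$ behavior. Coalescence at the boundary $x=0$ through the perfect flow property then upgrades the two one-sided statements to the claimed ${\rm BESQ}(\delta\,|_0\,0)$ flow.

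\textbf{White noise identification.} First I would verify that $W$ defined in \eqref{WN} is a white noise. Writing Tanaka's decomposition $X_t = M_t + (1-\mu)\kL_t$ with the continuous martingale $M_t := \int_0^t \mathrm{sgn}(B_s)\,\dd B_s$ of quadratic variation $\langle M\rangle_t = t$, one has $W(g) = \int_0^\infty g(L(t,X_t),X_t)\,\dd M_t$. By It\^o's isometry its variance equals $\int_0^\infty g(L(t,X_t),X_t)^2\,\dd t$; the occupation time formula for $X$ (valid since $\langle X\rangle_t=t$) rewrites this as $\int_\bR \int_0^\infty g(L(t,x),x)^2\,\dd_t L(t,x)\,\dd x$, and the change of variables $a = L(t,x)$ yields $\int_\bR\int_0^\infty g(a,x)^2\,\dd a\,\dd x = \|g\|_{L^2}^2$. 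Gaussianity and independence of $W$ on disjoint Borel sets follow from the same isometry together with the martingale structure of $M$, identifying $W$ as a white noise with respect to the natural filtration $\rF$.

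\textbf{SDE for the flow lines.} For $(a,r)\in\bR_+\times\bR$ fixed, I would apply the Tanaka formula to the semimartingale $X$ at level $x$ and evaluate at $t=\tau_a^r$, obtaining
\[
L(\tau_a^r,x) - L(0,x) = -2(X_0 - x)^+ + 2(X_{\tau_a^r}-x)^+ - 2\int_0^{\tau_a^r}\mathbbm{1}_{\{X_s>x\}}\,\dd X_s.
\]
Splitting $\dd X_s = \dd M_s + (1-\mu)\dd\kL_s$, the martingale part becomes, after parametrizing $\ell = L(s,X_s)$ in the occupation-time representation, precisely $2\,W([0,\cR_{r,\cdot}(a)],\cdot)$. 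The bounded-variation contribution produces the drift: since $\kL$ increases only at times when $B_s=0$, i.e.\ $X_s=-\mu\kL_s\leq 0$, the term $\int\mathbbm{1}_{X_s>x}\,\dd\kL_s$ vanishes for $x\geq 0$, giving zero drift on the positive half-line, whereas for $x<0$ it contributes $\delta(x-r)$ with $\delta=2/\mu$. Differentiating in $x$ gives \eqref{eq:BESQflow} with drift function $\delta\,|_0\,0$; uniqueness of strong solutions then identifies $\cR_{r,\cdot}(a)$ as the corresponding flow line. The perfect flow property $\cR_{r,y}=\cR_{x,y}\circ\cR_{r,x}$ is a direct consequence of the identity $\tau^x_{L(\tau_a^r,x)}=\tau_a^r$ and the strong Markov property of $X$, and regularity of $a\mapsto\cR_{r,x}(a)$ is inherited from the bicontinuity of $(t,x)\mapsto L(t,x)$, completing (i).

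\textbf{Duality and main difficulty.} For (ii), \cite[Proposition 2.5]{aidekon2023stochastic} tells us that $\cR^*$ defined in \eqref{def:S} is the dual flow of $\cR$ in the sense of \eqref{def:dual}. Proposition~\ref{p:BESQdual vary} then applies with $\overline{\delta}(x)=\delta\mathbbm{1}_{x<0}$: since $\overline{\delta}^*(x)=\overline{\delta}(-x)=\delta\mathbbm{1}_{x>0}$, we find $2-\overline{\delta}^*$ equal to $2$ for $x<0$ and $2-\delta$ for $x>0$, matching the drift function $2\,|_0\,2-\delta$; and because $2-\delta\in(0,2)$, flow lines can be absorbed at $0$ on the right half-line, accounting for the killed qualifier. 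The main obstacle in the whole argument is the drift identification at the boundary $x=0$ in the third paragraph: one must verify that the interaction between $\kL$ and the event $\{X_s>x\}$ for $x$ near $0$ produces exactly the jump in the drift and no additional singular contribution, which is where the precise value $\delta=2/\mu$ is pinned down and where the two one-sided Ray--Knight identities are seamlessly concatenated.
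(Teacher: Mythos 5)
Your plan for part (ii) is exactly the paper's: identify $\cR^*$ as the dual of $\cR$ (via \cite[Proposition 2.5]{aidekon2023stochastic}) and apply Proposition~\ref{p:BESQdual vary} with $\overline{\delta}=\delta\,|_0\,0$, so that $2-\overline{\delta}^{\,*}=2\,|_0\,(2-\delta)$ with the killed qualifier. For part (i) you diverge from the paper only in that the paper does not reprove the Ray--Knight identification at all: it imports \cite[Theorem 5.1]{aidekon2024infinite} (with parameter $\mu$ on the negative half-line and $\mu=1$ on the positive half-line) and quotes \cite[Propositions 2.1 \& 2.6]{aidekon2023stochastic} for the regularity conditions of Definition~\ref{def:vary}, whereas you sketch the derivation itself, via Tanaka's formula and the occupation-time identification of $W$ in \eqref{WN}. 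That is a legitimate alternative, essentially reconstructing the cited proof; what it costs is the genuine technical content of that citation, namely the measurability work showing that the local-time field is a \emph{strong} solution of \eqref{eq:BESQflow} driven by $W$ with respect to the spatial filtration, and the a.e.\ identity $\ind{t\le \tau_a^r}=\ind{L(t,X_t)\le L(\tau_a^r,X_t)}$ that underlies your conversion of the martingale part into $2\int_r^x W([0,\cR_{r,s}(a)],\dd s)$; these are only gestured at in your sketch.

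Two intermediate claims in the reconstruction are wrong as stated, though both are repairable. First, the drift bookkeeping: for $x\in[r,0]$ the bounded-variation term $-2(1-\mu)\int_0^{\tau_a^r}\ind{X_s>x}\,\dd\kL_s$ equals $(\delta-2)\,x$ (use that $\dd\kL$ is carried by $\{X_s=-\mu\kL_s\}$ and that $I_{\tau_a^r}\le x$), not $\delta(x-r)$; the missing contribution $2x$ comes from the initial Tanaka term $-2(X_0-x)^+$, and only the sum of the two yields drift $\delta$ per unit length below $0$ (above $0$ both vanish, as you say). Attributing the whole drift to the $\kL$-term would produce $\delta-2<0$. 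Second, the identity $\tau^x_{L(\tau_a^r,x)}=\tau_a^r$ is false in general: since $X_{\tau_a^r}=r<x$, no local time accrues at $x$ immediately after $\tau_a^r$, so typically $\tau^x_{L(\tau_a^r,x)}>\tau_a^r$. The perfect flow property holds nonetheless because on $[\tau_a^r,\tau^x_{L(\tau_a^r,x)}]$ the path cannot cross level $x$ without accruing local time there, hence accrues no local time at any level $y\ge x$, giving $L(\tau^x_{L(\tau_a^r,x)},y)=L(\tau_a^r,y)$; alternatively one cites \cite{aidekon2023stochastic} as the paper does. A smaller point: Gaussianity of $W(g)$ does not follow from the It\^o isometry alone; one needs that the total bracket $\int_0^\infty g(L(t,X_t),X_t)^2\,\dd t=\|g\|_{L^2}^2$ is deterministic together with a Dambis--Dubins--Schwarz time change (applied to linear combinations to get joint Gaussianity and independence on disjoint sets).
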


 \noindent The fact that the flow in (i) satisfies the regularity conditions of Definition \ref{def:vary} comes for example from \cite[Propositions 2.1 \& 2.6]{aidekon2023stochastic}. Statement (ii) follows from (i) by duality, using Proposition \ref{p:BESQdual vary}.

If $\sigma_a^r$ denotes the left-continuous inverse local times of $X$, i.e.
\[
\sigma_a^r := \sup\{t\ge0\,:\, L(t,r)< a\}, \, a>0\vspace{-0.1cm}
\]

\noindent then for all $r\le x$, and $a> 0$, in the notation \eqref{def:left S},
\[
L(\sigma_a^r,x)=\cR_{r,x}(a-).\vspace{-0.1cm}
\]

\noindent Excursions of $X$ at level $r$ correspond to $a$ such that $\sigma_a^r<\tau_a^r$. If $a>0$ and the excursion on $(\sigma_a^r,\tau_a^r)$ is above $r$, then  $(a,r)$ is a bifurcation point for $\cR$ as in Definition \ref{def:bifur}. Note that one can have excursions with $a=0$, but we omit their discussion since such points won't be used in this paper.    
 
\medskip

 Fix $(b,s)\in\bR_+\times \bR$ and let $T:=\tau_b^s$ for brevity. We aim to establish a decomposition result similar to those of Section~\ref{s:decomp}, this time dividing the space-time plane using the line $x\in \bR\to L(T,x)$. In the notation of Section~\ref{s:decomp},
    we define two martingale measures $(W^T, W^{T,-})$ as follows:
\begin{itemize}
    \item On $(s, \infty)$, $(W^T, W^{T,-}) = (W_Y^+, W_Y^-)$ with $Y = \cR_{s,\cdot}(b) = L(T, \cdot)$. 
    \item On $(-\infty, s)$, $(W^T, W^{T,-}) = (W_Y^+, W_Y^-)$ with $Y = \cR^*_{-s,\cdot}(b) = L(T, -\cdot)$.
\end{itemize}
   Therefore, we can rewrite them as  
\begin{align}
 \label{eq:WT-}    W^{T,-}(\dd \ell, \dd x) &= W(\dd \ell, \dd x),\, & \ell\le L(T,x),\; x\in \bR, \\
\label{eq:WT}    W^T(\dd \ell, \dd x)  &= W(L(T,x)+\dd \ell, \dd x), & \ell\ge 0,\; x\in \bR.
\end{align}
Recall that $\rF= (\rF_t)_{t\ge 0}$ denotes the natural filtration of the Brownian motion $B$. We let 
\begin{equation}\label{def:I}
I_t:=\inf_{[0,t]} X=-\mu \kL_t,\quad M_t:=\sup_{[0,t]} X.
\end{equation}

\begin{proposition}\label{p:markov S}
     Fix $(b,s)\in\bR_+\times \bR$. Let $T:=\tau_b^s$ and define $(W^{T,-},W^{T})$ as above. Let $\delta'\le 0$ and $\cS'$ be a ${\rm BESQ}( \delta+\delta'\,|_0\,\delta')$ flow driven by $W$.
   
    (i) $W^T$ is a white noise  independent of $W^{T,-}$. In particular, $I_T$ is independent of $W^T$.

    (ii) We have equality of the $\sigma$-fields $\sigma(W^{T,-})=\rF_T$.

    (iii) The      flow $\cS' - L(T,\cdot)$ in the notation \eqref{eq:S-g} 
\noindent is a ${\rm BESQ}(\delta+\delta' \,|_{I_T}\,2+\delta'\,|_s\, \delta')$ flow driven by $W^T$ killed on $(I_T,s)$. 

    (iv) Let $L^*(T,r):=L(T,-r)$ for all $r\in \bR$. The    flow $\cR^* - L^*(T,\cdot)$ in the notation \eqref{eq:S-g}  is a killed ${\rm BESQ}(2 \,|_{-s}\, 0\,|_{-I_T}\, 2-\delta)$ flow driven by $-W^{T,*}$, where $W^{T,*}$ is the image of $W^T$ by the map $(a,r)\mapsto (a,-r)$.
\end{proposition}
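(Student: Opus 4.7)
My plan is to partition the space-time plane along the vertical line $\{x=s\}$ and apply the decomposition results of Section~\ref{s:decomp} separately on each half. The interface $L(T,\cdot)$ coincides with the forward flow line $\cR_{s,\cdot}(b)$ on $[s,\infty)$ and with the forward flow line $\cR^*_{-s,\cdot}(b)$ of the dual flow on $[-s,\infty)$ in dual coordinates. These fit the respective settings of Proposition~\ref{p:decomp} and Proposition~\ref{p:dual S+}, and the independence of $W|_{(s,\infty)}$ and $W|_{(-\infty,s)}$ allows the two local decompositions to be patched into a global one.

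For (i) and (iii), I would first apply Proposition~\ref{p:decomp} on $(s,\infty)$ with $Y=\cR_{s,\cdot}(b)$ and $\cS=\cS'$: in the proposition's notation this corresponds to $\delta_1=\delta$, $\delta_2=0$, $\delta_3=\delta'$ and $\zeta=\infty$, so $W^T|_{(s,\infty)}=W_Y^+$ is a white noise independent of $W^{T,-}|_{(s,\infty)}=W_Y^-$ and $\cS'-L(T,\cdot)$ restricted to $(s,\infty)$ is a ${\rm BESQ}^{\delta'}$ flow driven by $W_Y^+$. Symmetrically, I would apply Proposition~\ref{p:dual S+} on $(-\infty,s)$ with $Y=\cR^*_{-s,\cdot}(b)$ and $\delta_3=-\delta'\ge 0$; since the absorption time of $Y$ is $\zeta^*=-I_T$, the restriction of $\cS'-L(T,\cdot)$ to $(-\infty,s)$ has drift $\delta+\delta'\,|_{I_T}\,2+\delta'$ and is killed on $(I_T,s)$. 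Patching at $x=s$, using that flow lines do not bifurcate away from their starting points (Proposition~\ref{c: no middle bifurcation}), gives the global flow stated in (iii). Since $W|_{(s,\infty)}$ and $W|_{(-\infty,s)}$ are independent and each half of $W^T$ is a function of $W$ on the corresponding half, the two halves of $W^T$ are independent of each other, hence $W^T$ is a global white noise independent of $W^{T,-}$, which is (i). The independence of $I_T$ from $W^T$ then follows from (ii) since $I_T=-\mu\kL_T$ is $\rF_T$-measurable.

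For (ii), the inclusion $\sigma(W^{T,-})\subseteq \rF_T$ follows by evaluating the stochastic integral \eqref{WN}: because the Brownian bricklayer $t\mapsto(L(t,X_t),X_t)$ fills the region $\{\ell\le L(T,x)\}$ exactly as $t$ traverses $[0,T]$ and exits it afterwards, $W(g)$ for $g$ supported in this region reduces to $\int_0^T g(L(t,X_t),X_t)\,{\rm sgn}(B_t)\,\dd B_t$, which is $\rF_T$-measurable. The reverse inclusion follows because $B$ restricted to $[0,T]$ can be reconstructed from $W^{T,-}$ via the bricklayer parametrisation together with the local-time structure.

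For (iv), I would use duality (Proposition~\ref{p:BESQdual vary}) applied to the special case of (iii) with $\delta'=0$ and $\cS'=\cR$. A direct computation from the definition \eqref{def:dual} shows that the dual of the flow $\cR-L(T,\cdot)$ (driven by $W^T$, with drift pattern $\delta\,|_{I_T}\,2\,|_s\,0$) equals $\cR^*-L^*(T,\cdot)$ in the sense of \eqref{eq:S-g}; the dualised drift $2-\overline{\delta}^*$ is $2\,|_{-s}\,0\,|_{-I_T}\,2-\delta$ and the driving noise is $-W^{T,*}$, matching (iv). The main obstacle I anticipate is the boundary-case analysis when applying Proposition~\ref{p:dual S+} in (iii): the admissible parameters $(\delta_1^*,\delta_2^*,z)$ for the line $Y$ depend on the sign of $s$ because of the constraint $z\ge r_0$ in the proposition, and in (iv) one must verify that the partial killing of the flow from (iii) on $(I_T,s)$ transfers consistently through duality to produce a fully killed flow; this relies on the fact that on the intervals with drift $0$ and drift $\ge 2$ in the dual picture, killed and non-killed versions coincide on the a.s.\ flow-line behaviour.
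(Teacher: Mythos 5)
Your treatment of (i)--(iii) is essentially the paper's own argument: the same splitting of the plane along $L(T,\cdot)$, with Proposition~\ref{p:decomp} applied on $(s,\infty)$ to $Y=\cR_{s,\cdot}(b)$ (parameters $\delta_1=\delta$, $\delta_2=0$, $\delta_3=\delta'$) and Proposition~\ref{p:dual S+} applied on $(-\infty,s)$ to $Y=\cR^*_{-s,\cdot}(b)$ (parameters $\delta_1^*=2$, $\delta_2^*=2-\delta$, $\delta_3=-\delta'$), independence in (i) coming from Proposition~\ref{p:decomp}~(iii) and Proposition~\ref{p:dual S+}~(iv). Two small divergences: the paper glues the two halves in (iii) by the perfect flow property of $\cS'-L(T,\cdot)$ at level $s$, not by Proposition~\ref{c: no middle bifurcation}; and it obtains the independence of $I_T$ from $W^T$ by noting that $-I_T$ is the hitting time of $0$ by $\cR^*_{-s,\cdot}(b)$, hence $W^{T,-}$-measurable by Proposition~\ref{p:dual S+}~(iii), rather than routing through (ii) (your route is also consistent, since your proof of (ii) does not use (i)). The boundary issue you raise about the constraint $z\ge r_0$ when $s<0$ is real but is exactly the degenerate situation discussed right after the proposition in the paper. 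In (ii), your forward inclusion is the paper's; for the reverse inclusion, however, you cannot ``reconstruct $B$ restricted to $[0,T]$'' from $W^{T,-}$: by \eqref{WN} and Tanaka's formula, $W$ is a functional of $|B|$, so the signs of the Brownian excursions are not in $\sigma(W^{T,-})$. What the paper reconstructs is $(X_t,\,0\le t\le T)$, using that it is a measurable function of $L(T,\cdot)$ and of the flow lines $\cR_{r,\cdot}(a)$, $a<L(T,r)$ (citing \cite[Remark~2.8]{aidekon2023stochastic}), all of which are $W^{T,-}$-measurable by Propositions~\ref{p:decomp}~(ii) and \ref{p:dual S+}~(iii); your sentence should be amended accordingly.

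For (iv) you take a genuinely different route: the paper simply reruns the decomposition of (iii) with the roles of $\cR$ and $\cR^*$ exchanged (``similar to (iii)''), whereas you dualise the $\delta'=0$ case of (iii). This shortcut can be made to work, and its appeal is that (iv) then follows from (iii) with no new decomposition; but as written it uses Proposition~\ref{p:BESQdual vary} outside its stated hypotheses, since that proposition concerns deterministic drift functions and flows that are either fully killed or fully non-killed, while your primal flow has the random threshold $I_T$ and is killed only on $(I_T,s)$. Closing the gap requires exactly the two reductions you gesture at: (a) for $\delta'=0$ the partial killing is vacuous, because on the strips with drifts $2$ and $0$ killed and non-killed flow lines coincide, so the (iii)-flow agrees a.s.\ with the non-killed ${\rm BESQ}(\delta\,|_{I_T}\,2\,|_s\,0)$ flow; (b) the duality statement extends to the random threshold $I_T$ by conditioning, using that $I_T$ is independent of $W^T$ by (i); together with your elementary computation (from \eqref{def:dual} and \eqref{eq:S-g}, minding the truncation at $0$ and the empty-infimum cases) that the dual of $\cR-L(T,\cdot)$ is $\cR^*-L^*(T,\cdot)$, this yields (iv). These steps should be spelled out rather than asserted; the paper's route avoids them at the cost of repeating the argument of (iii).
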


\begin{proof}
(i)  The first statement is a consequence of Proposition \ref{p:decomp} (iii) and Proposition \ref{p:dual S+} (iv). Since $-I_T$ is the hitting time of $0$ by the process $x\mapsto \cR^*_{-s,x}(b)$, it is measurable with respect to $W^{T,-}$ by Proposition \ref{p:dual S+} (iii).

(ii) By definition of $W^{T,-}$, for any Borel set $A\subset \bR_+$ with finite Lebesgue measure, and any $s\le  x\le y$,
$$
    (W^{T,-})_{x,y}(A) = \int_{A\times [x,y]} \mathbbm{1}_{ [0,L(T,r)]}(\ell) W(\dd \ell,\dd r).
$$ 

\noindent Using \eqref{WN} with $g(\ell,r):=\mathbbm{1}_{ A\times[x,y]}(\ell,r)\mathbbm{1}_{[0, L(T,r)]}(\ell)$ there, noting that, for Lebesgue-a.e.\ $t>0$,
\[
g(L(t,X_t),X_t)=\mathbbm{1}_{ A\times[x,y]}(L(t,X_t),X_t)\mathbbm{1}_{\{t\in [0,T]\}},
\]
\noindent we deduce that  $(W^{T,-})_{x,y}(A)$ is also equal to $\int_0^T  \mathbbm{1}_{ A\times[x,y]}(L(t,X_t),X_t) {\rm sgn}(B_t) \dd B_t$; see \cite[equation (3.8)]{aidekon2024infinite} with $b<\min(s,0)$ there. This equality is also true when $x\le y\le s$ following the arguments of  \cite[Proposition 3.2]{aidekon2024infinite} (which readily contains the case $s\le 0$). We deduce the inclusion  $\sigma(W^{T,-})\subset \sigma(X_t,\,0\le t\le T)$. The reverse inclusion can be seen from the fact that, by \cite[Remark 2.8]{aidekon2023stochastic}, $(X_t,\, 0\le t\le T)$ is measurable w.r.t. to $L(T,\cdot)$ and $(\cR_{r,x}(a),\,a<L(T,r),\, r\le x)$.

(iii) The ${\rm BESQ}(\delta+\delta' \,|_{I_T}\,2+\delta'\,|_s\, \delta')$ flow driven by $W^T$ is well-defined since $I_T$ is independent of $W^T$. 
 We consider $(s,\infty)$.
    We apply Proposition \ref{p:decomp} to $Y_r=\cR_{s,r}(b)=L(T,r)$ and $\cS'$ in place of $\cS$, hence $z=0$, $\delta_1=\delta$, $\delta_2=0$, $\delta_3=\delta'$.     We find  by Proposition \ref{p:decomp} (i) that on $(s,\infty)$, $\cS'-L(T,\cdot)$ is a ${\rm BESQ}^{\delta'}$ flow driven by $W_Y^+=W^T$.
We reason similarly on $(-\infty,s)$. 
We apply Proposition \ref{p:dual S+} (i) to $Y_r=\cR^*_{-s,r}(b)$ and  $\cS'$ in place of $\cS$, hence $z=0$, $\delta_1^*=2$, $\delta_2^*=2-\delta$, $\delta_3=-\delta'$. Then $\cS' - L(T,\cdot)$ on $(-\infty,s)$ is a ${\rm BESQ}(\delta+\delta'\,|_{I_T}\,2+\delta')$ driven by $W^T$ killed on $(I_T,s)$. We conclude by the perfect flow property of  $\cS'-L(T,\cdot)$ at level $s$.

(iv) The proof is similar to (iii) so we feel free to omit it.
\end{proof}

We note that $I_T=s$ almost surely in the case $s<0$ and $b=0$. Recall the definition of a drift function at the beginning of Section~\ref{s:BESQ}. Thus in this case $\delta_1 \,|_{I_T}\,\delta_2\,|_s\, \delta_3 = \delta_1 \,|_s\, \delta_3$ for every $\delta_1,\delta_2,\delta_3\in\bR$. We point out that Proposition \ref{p:markov S} still holds in this case since it is a degenerate case in which there is no need to introduce the dual flow line.

\bigskip

\begin{definition}[Red and blue lines]
\label{def:red-blue}
 Recall that the white noise  $W$ is given by \eqref{WN}. The flow $\cR$  was defined in \eqref{def:S}. It is a ${\rm BESQ}(\delta\,|_0 \, 0)$ flow driven by $W$ by Theorem~\ref{t:rayknight}. We let $\cB=(\cB_{r,x}(a),\, x\ge r)_{(a,r)\in \bR_+\times \bR}$ be the ${\rm BESQ}(0\,|_0\, -\delta)$ flow driven by $W$. 
     We refer to flow lines of $\cR$ as \emph{red lines} and of $\cB$ as \emph{blue lines}.
\end{definition}

We will often consider the left-continuous version of $\cR$ in the notation \eqref{def:left S}, which is $\cR_{r,x}(a-)=L(\sigma_a^r,x)$. 
 By the comparison principle stated in Proposition \ref{p:perfect}, $\cB \le \cR$. Recall the definition of dual flows in \eqref{def:dual}. The flow $\cR^*$  of equation \eqref{def:S} is the dual of $\cR$, and we let $\cB^*$  denote the dual of $\cB$. We know from Proposition \ref{p:BESQdual vary} that $\cB^*$ is a ${\rm BESQ}(2+\delta\, |_0 \, 2)$ flow and $\cR^*$ is a killed ${\rm BESQ}(2\,|_0\, 2-\delta)$ flow. From the comparison principle, $\cR^*\le \cB^*$. We finish this section by the following simple observation. 
\begin{lemma}\label{l:basic blue red}
   Almost surely, for any $r<x$ and $b\ge 0$, we have $\cB^*_{-x,-y}(b)<\cB_{r,y}(a)$ for all $y\in (r,x)$ where $a=\cB^*_{-x,-r}(b)$.
\end{lemma}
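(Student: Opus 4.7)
The plan is to deduce the lemma directly from Proposition~\ref{p:properties dual}(i), applied to $\cS = \cB$ and $\cS^* = \cB^*$ (both are general ${\rm BESQ}(\overline{\delta})$ flows driven by $W$ and $-W^*$ respectively, so the proposition applies). Taking $a = \cB^*_{-x,-r}(b)$, the inequality $a \ge \cB^*_{-x,-r}(b)$ holds trivially, and the first assertion of Proposition~\ref{p:properties dual}(i) gives the non-strict inequality $\cB_{r,y}(a) \ge \cB^*_{-x,-y}(b)$ for $y \in [r,x]$, while the second assertion upgrades it to the strict inequality $\cB_{r,y}(a) > \cB^*_{-x,-y}(b)$ for $y \in (r,x)$ \emph{provided} $\cB_{r,y}(a) > 0$. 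Hence the entire proof reduces to checking this positivity.

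First, Proposition~\ref{p:properties dual}(i) also yields $\cB_{r,x}(a) > b \ge 0$, so $\cB_{r,x}(a) > 0$. As a side remark, this already rules out the degenerate case $a = 0$: by Definition~\ref{def:red-blue}, $\cB$ is a ${\rm BESQ}(0 \,|_0\, -\delta)$ flow, whose two dimensions $0$ and $-\delta$ are both $\le 0$, so in either regime a flow line starting from $0$ is absorbed and remains at $0$; in particular $\cB_{r,x}(0) = 0$, which would contradict $\cB_{r,x}(a) > 0$. Therefore $a > 0$.

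Second, the same absorption property of the blue flow propagates positivity backward in the spatial variable: if $\cB_{r,y_0}(a) = 0$ for some $y_0 \in [r,x]$, then by absorption $\cB_{r,y}(a) = 0$ for all $y \in [y_0,x]$, which in particular gives $\cB_{r,x}(a) = 0$, a contradiction. Consequently $\cB_{r,y}(a) > 0$ for every $y \in [r,x]$, and in particular for every $y \in (r,x)$, as required to invoke the strict form of Proposition~\ref{p:properties dual}(i). This yields the conclusion of the lemma.

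The argument is essentially routine once the right proposition is identified; there is no genuine obstacle. The only mild point of care is to notice that Proposition~\ref{p:properties dual}(i) requires $\cB_{r,y}(a) > 0$ at the \emph{interior} points $y \in (r,x)$, and this is supplied almost surely at once by combining the inequality $\cB_{r,x}(a) > b \ge 0$ with the absorption of $\cB$ at $0$; the global almost-sure quantifier over $r < x$ and $b \ge 0$ follows from the almost-sure quantifier in Proposition~\ref{p:properties dual}.
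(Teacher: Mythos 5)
Your proof is correct and follows essentially the same route as the paper: both reduce the lemma to a direct application of Proposition~\ref{p:properties dual}(i) with $\cS=\cB$, $\cS^*=\cB^*$ and $a=\cB^*_{-x,-r}(b)$. The only minor difference is how the hypothesis $\cB_{r,y}(a)>0$ for $y\in(r,x)$ is verified: the paper observes that $\cB^*_{-x,-y}(b)>0$ for all $y<x$ (the dual flow has dimensions $\ge 2$), so the non-strict inequality already forces $\cB_{r,y}(a)>0$, whereas you deduce it from $\cB_{r,x}(a)>b\ge 0$ combined with absorption of $\cB$ at $0$; both verifications are valid.
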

 \begin{proof}
It is a consequence of  Proposition~\ref{p:properties dual} (i), noting that $\cB^*_{-x,-y}(b)>0$ for any $y<x$ and $b\ge 0$. 
 \end{proof}

\subsection{Ancestors and spindles}

For $(a,r)\in (0,\infty)\times \bR$, let $\cT_{(a,r)}$ be the region 
\begin{equation}\label{eq:spindle}
    \cT_{(a,r)}:=\{(b,x)\in \bR_+\times (r,\infty)\,:\, \cR_{r,x}(a-) \le b< \cB_{r,x}(a)\}.
\end{equation}
 For any fixed point $(a,r)\in (0,\infty)\times \bR$, we have $\cT_{(a,r)}=\emptyset$ a.s. However, we are interested in the exceptional points where $\cT_{(a,r)}\ne \emptyset$.  
 The existence of  such exceptional points is characterized by \cite[Theorem 2.13]{AWYskew}, with dependence on the parameter difference $\delta$ between the ${\rm BESQ}(\delta\,|_0 \, 0)$ flow $\cR$ and the ${\rm BESQ}(0\,|_0\, -\delta)$ flow $\cB$.  Note that we have restricted our discussion to $a > 0$: when $a = 0$, as all flow lines of $\cB$ starting at zero remain identically  zero,  we always have $\cT_{(a,r)} = \emptyset$. 

\begin{lemma}[{\cite[Theorem 2.13]{AWYskew}}]
 The set of exceptional points $\{(a,r)\in (0,\infty)\times \bR\colon \cT_{(a,r)}\neq \emptyset\}$ is non-empty if and only if $\delta \in (0,2)$. Moreover, when this condition holds, it has Hausdorff dimension  $\min(2-\delta,\frac{3-\delta}{2})$ a.s.
\end{lemma}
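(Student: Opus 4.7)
The plan is to reduce the statement directly to \cite[Theorem 2.13]{AWYskew}, which treats couplings of $\besq$ flows with constant parameter difference. First I would unpack the definition \eqref{eq:spindle}: the condition $\cT_{(a,r)}\neq \emptyset$ is equivalent to the existence of some $x>r$ with $\cR_{r,x}(a-) < \cB_{r,x}(a)$, because one can then pick any $b$ in the non-degenerate interval $[\cR_{r,x}(a-),\cB_{r,x}(a))$. Since $\cB\le \cR$ by the comparison principle of Proposition~\ref{p:perfect}, this strict inequality forces $(a,r)$ to be a bifurcation point of $\cR$ at which the blue flow line starting from $(a,r)$ \emph{also} separates from the left-continuous red line. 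Such points are precisely the common (``semi-flat'') bifurcation points of the coupled flows studied in \cite{AWYskew}.

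Second, I would verify that our coupling falls within the scope of their theorem. On $(-\infty,0)$ the pair $(\cR,\cB)$ has parameters $(\delta,0)$, and on $(0,\infty)$ it has parameters $(0,-\delta)$; in both regions the constant parameter difference is $\delta$, which is the only quantity entering \cite[Theorem 2.13]{AWYskew}. Applying their result locally around each $r\in \bR$, together with the perfect flow property at $x=0$ in order to glue the two regimes, yields both the dichotomy in $\delta$ and the Hausdorff dimension formula.

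For the two extreme regimes, the obstruction is easy to see by hand: if $\delta=0$ then $\cR=\cB$ and no spindle can form, while if $\delta\ge 2$ then any blue flow line is killed at $0$ immediately upon a bifurcation of $\cR$ (the $\besq$ process with parameter $-\delta\le -2$ dies instantaneously from height $0$), hence again $\cT_{(a,r)}=\emptyset$. The substantive content is therefore the existence of exceptional points in the range $\delta\in (0,2)$ and the sharp dimension $\min(2-\delta,\tfrac{3-\delta}{2})$.

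The main obstacle, were one to prove this ab initio rather than cite, is the dimension calculation. The lower bound requires an energy / second-moment estimate over pairs of nearby candidate points $(a,r),(a',r')$, exploiting independence of the driving white noise $W$ across disjoint strips in space, together with sharp small-ball estimates for Bessel bridges encoding the joint bifurcation event of $\cR$ and $\cB$. The upper bound uses a covering argument: one controls the probability that a box $[a,a+\varepsilon]\times[r,r+\varepsilon]$ contains a common bifurcation. The regime $\delta\le 1$ is dominated by the size of the bifurcation set of $\cR$ alone, contributing dimension $2-\delta$; in the regime $\delta\ge 1$ the extra constraint that the blue flow also lift off costs a further codimension producing $\tfrac{3-\delta}{2}$, and the two exponents coincide at $\delta=1$. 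The careful matching of these two estimates is carried out in \cite{AWYskew} and I would simply quote it.
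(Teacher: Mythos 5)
Your route is the same as the paper's: the lemma is not proved there but quoted directly from \cite[Theorem 2.13]{AWYskew}, the only verification needed being, as you note, that $\cT_{(a,r)}\neq\emptyset$ amounts to a simultaneous bifurcation of $\cR$ and $\cB$ at $(a,r)$ and that only the parameter difference — constant equal to $\delta$ on both sides of $0$ — enters that theorem, so the reduction-by-citation is exactly what the paper does. The one inaccuracy is your ``by hand'' claim for $\delta\ge 2$: the blue line from $(a,r)$ starts at height $a>0$, and the relevant width process $\cB_{r,\cdot}(a)-\cR_{r,\cdot}(a-)$ started from $0$ is absorbed at $0$ for \emph{every} negative parameter (which is precisely why such points are exceptional), so non-existence for $\delta\ge 2$ is not elementary and must also be taken from the cited theorem — which, being an equivalence, still covers it, so your overall argument stands.
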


In the sequel, we always assume that 
\begin{equation}\label{eq:mu}
    \mu> 1 \quad \text{and thus} \quad  \delta:=\frac2\mu\in (0,2). 
\end{equation}

\begin{lemma}\label{lem:cT}
The following statements hold almost surely, for all $(a,r),(a',r')\in (0,\infty)\times \bR$ and $(b,x)\in \bR_+\times (r,\infty)$: 
\begin{enumerate}[(i)]
\item  We have $(b,x)\in \cT_{(a,r)}$ if and only if $(a,r)$ is an ancestor of $(b,x)$ for both $\cB$ and $\cR$. In particular, there is the identity
\begin{equation}\label{eq:spindle equiv}
    \cT_{(a,r)}=\{(b,x)\in \bR_+\times (r,\infty)\,:\, a=\cB^*_{-x,-r}(b)=\cR^*_{-x,-r}(b)\}.
\end{equation} 

\item   If $(b,x)\in \cT_{(a,r)}$, then the points  $(\cR^*_{-x,-y}(b),y)$ and $(\cB^*_{-x,-y}(b),y)$ for $y\in (r,x)$ also belong to $\cT_{(a,r)}$.
\item  $\cT_{(a,r)}$ is bounded.
    \item Either $\cT_{(a,r)}$ and $\cT_{(a',r')}$ are disjoint or one is included in the other.
   
\end{enumerate}
\end{lemma}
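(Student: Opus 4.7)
The plan is to translate the defining condition of $\cT_{(a,r)}$ into the language of the dual flows $\cR^*$ and $\cB^*$ via the ancestor characterization \eqref{equivalence bifurcation}, and then to exploit the dual comparison $\cR^*\le\cB^*$ (inherited from $\cB\le\cR$) together with the (almost) perfect flow property from Proposition~\ref{p:perfect}. Part (i) is immediate from this setup: the comparison $\cB\le\cR$ passes to the left-continuous versions, so that the sandwich $\cR_{r,x}(a-)\le b<\cB_{r,x}(a)$ forces both $\cR_{r,x}(a-)\le b<\cR_{r,x}(a)$ and $\cB_{r,x}(a-)\le b<\cB_{r,x}(a)$; that is, $(a,r)$ is an ancestor of $(b,x)$ for both flows. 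The converse is trivial, and \eqref{eq:spindle equiv} then follows by applying \eqref{equivalence bifurcation} separately to $\cR$ and $\cB$.

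For (ii), I would fix $(b,x)\in\cT_{(a,r)}$ and $y\in(r,x)$, set $b_1:=\cR^*_{-x,-y}(b)$, and show that $\cR^*_{-y,-r}(b_1)=\cB^*_{-y,-r}(b_1)=a$. Since $\cR^*_{-x,\cdot}(b)$ reaches the value $a>0$ at time $-r$ and $\cR^*$ is a killed flow (absorbed at $0$), this flow line must be strictly positive on the whole interval $[-x,-r]$; in particular $b_1>0$, and the perfect flow property yields $\cR^*_{-y,-r}(b_1)=\cR^*_{-x,-r}(b)=a$. For the blue equality I would sandwich: the dual comparison $\cR^*\le\cB^*$ gives $\cB^*_{-y,-r}(b_1)\ge\cR^*_{-y,-r}(b_1)=a$, while $b_1\le\cB^*_{-x,-y}(b)$ combined with monotonicity and the perfect flow property of $\cB^*$ yields $\cB^*_{-y,-r}(b_1)\le\cB^*_{-y,-r}\circ\cB^*_{-x,-y}(b)=\cB^*_{-x,-r}(b)=a$. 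Thus $(b_1,y)$ satisfies \eqref{eq:spindle equiv} and lies in $\cT_{(a,r)}$; the argument for $(\cB^*_{-x,-y}(b),y)$ is entirely symmetric.

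For (iii), the blue flow line $x\mapsto\cB_{r,x}(a)$ is a concatenation of squared Bessel processes of parameters $0$ and $-\delta<0$, both of which reach $0$ in finite time almost surely and are absorbed thereafter. Letting $x^*<\infty$ denote its absorption time, the inequality $b<\cB_{r,x}(a)=0$ is vacuous for $x\ge x^*$, while continuity of $\cB_{r,\cdot}(a)$ on the compact $[r,x^*]$ bounds the $b$-coordinate, so $\cT_{(a,r)}$ sits inside a bounded rectangle. For (iv), assume $\cT_{(a,r)}\cap\cT_{(a',r')}\ne\emptyset$ and, without loss of generality, $r\le r'$, and pick $(b,x)$ in the intersection. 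Two applications of (i) give $a=\cR^*_{-x,-r}(b)=\cB^*_{-x,-r}(b)$ and $a'=\cR^*_{-x,-r'}(b)=\cB^*_{-x,-r'}(b)>0$, whence the perfect flow property yields $a=\cR^*_{-r',-r}(a')=\cB^*_{-r',-r}(a')$. Now for any other $(b'',x'')\in\cT_{(a',r')}$, (i) yields $a'=\cR^*_{-x'',-r'}(b'')=\cB^*_{-x'',-r'}(b'')$; concatenating along the dual flows via perfect flow gives $\cR^*_{-x'',-r}(b'')=\cB^*_{-x'',-r}(b'')=a$, so $(b'',x'')\in\cT_{(a,r)}$ by \eqref{eq:spindle equiv}, proving $\cT_{(a',r')}\subseteq\cT_{(a,r)}$.

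The main technical obstacle is verifying that each invocation of the perfect flow property is legitimate: by Proposition~\ref{p:perfect}(i) this requires the intervening flow value to be strictly positive, which holds here because the standing hypotheses provide $a,a'>0$ and because a killed dual flow line that reaches a positive value cannot have hit $0$ earlier on the same branch.
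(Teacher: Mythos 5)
Your proof is correct, and while part (i) coincides with the paper's argument, parts (ii)--(iv) follow a genuinely different route. The paper proves (ii) by checking the defining inequalities \eqref{eq:spindle} directly, i.e.\ it sandwiches $\cR_{r,y}(a-)\le \cR^*_{-x,-y}(b)\le \cB^*_{-x,-y}(b)<\cB_{r,y}(a)$ using Proposition~\ref{p:properties dual}~(ii) for the lower bound and Lemma~\ref{l:basic blue red} for the strict upper bound; for (iv) it pulls the argument back to the forward flows, using (ii) to get $(a',r')\in\cT_{(a,r)}$ and then \eqref{eq:perfectflow-} together with the coalescence of $\cB$ to compare the two boundaries. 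You instead stay entirely at the level of the dual flows: both (ii) and (iv) are reduced to verifying the characterization \eqref{eq:spindle equiv} by concatenating dual flow lines through the intermediate level via the (almost) perfect flow property, with positivity of the intermediate value supplied by $a,a'>0$ and the absorption of the killed flow $\cR^*$. This is cleaner in that it bypasses Lemma~\ref{l:basic blue red}, Proposition~\ref{p:properties dual}~(ii), \eqref{eq:perfectflow-} and coalescence, but it costs you one extra ingredient the paper never states explicitly, namely monotonicity of a flow in its starting value at a fixed starting time (used for $\cB^*_{-y,-r}(b_1)\le \cB^*_{-y,-r}\circ\cB^*_{-x,-y}(b)$ and its mirror image); this is harmless, since order preservation follows from continuity in the time variable plus the coalescence property of Proposition~\ref{p:perfect}~(ii) (two lines started at the same time cannot cross without first meeting, after which they coincide), and it is in any case implicit in the definition of the dual as a generalized inverse. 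Your argument for (iii), via the a.s.\ finite absorption time of the blue line $\cB_{r,\cdot}(a)$ (a ${\rm BESQ}^0$ then ${\rm BESQ}^{-\delta}$ line), also differs from the paper's (which uses that $\cR_{r,x}(a)=\cR_{r,x}(a-)$ for $x$ large) and is fine; to get the ``almost surely for all $(a,r)$'' quantifier you should note that finiteness of the absorption time for every flow line simultaneously follows from instantaneous coalescence (Proposition~\ref{p:perfect}~(iii)) with the countable dense family, a point the paper glosses in its own version as well.
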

\begin{proof}
     By the comparison principle, we have $\cB_{r,x}(a-)\le \cR_{r,x}(a-)$ and $\cB_{r,x}(a)\le \cR_{r,x}(a)$. Therefore  (i) is a consequence of  the definition of $\cT_{(a,r)}$ in \eqref{eq:spindle} and of ancestors in Definition \ref{def:bifur}, while  equation \eqref{eq:spindle equiv} follows from \eqref{equivalence bifurcation}. 

    Recall that $\cR^*\le\cB^*$ and the definition of $\cT_{(a,r)}$ in \eqref{eq:spindle}. Then (ii) is a consequence of Lemma \ref{l:basic blue red} and  
     Proposition \ref{p:properties dual} (ii) applied to $\cS=\cR$, which shows that $\cR_{r,y}(a-)\le \cR^*_{-x,-y}(b)$.
     
     To see (iii), we use again that  $\cB_{r,x}(a) \le \cR_{r,x}(a)$ and notice that $\cR_{r,x}(a)= \cR_{r,x}(a-)$ for all $x$ big enough.
     
    To prove (iv), suppose without loss of generality that $r'\ge r$ and $(a,r)\neq (a',r')$. If $\cT_{(a,r)}$ and $\cT_{(a',r')}$ are not disjoint, then one can find a common point $(b,x)$ with $x>\max(r,r')$. By (i), $a=\cB^*_{-x,-r}(b)=\cR^*_{-x,-r}(b)$ and $a'=\cB^*_{-x,-r'}(b)=\cR^*_{-x,-r'}(b)$. We necessarily have $r'>r$ otherwise $r'=r$ and $a=a'$.   By (ii), $(a',r')\in \cT_{(a,r)}$, i.e. $\cR_{r,r'}(a-)\le a'<\cB_{r,r'}(a)$. Equation \eqref{eq:perfectflow-} implies that $\cR_{r,y}(a-)\le \cR_{r',y}(a'-)$ for all $y\ge r'$. Similarly, the coalescence property of $\cB$ shows that $\cB_{r,y}(a)\ge \cB_{r',y}(a')$ for all $y\ge r'$. It follows that $\cT_{(a',r')}\subseteq \cT_{(a,r)}$.
\end{proof}

\begin{definition}[Spindle]\label{def:spindle}
For $(a,r)\in (0,\infty)\times \bR$, the region $\cT_{(a,r)}$ is called a \emph{spindle}, if it is non-empty and maximal under set inclusion (meaning that there is no $(a',r')\in (0,\infty)\times \bR$ with $\cT_{(a,r)}\subsetneq \cT_{(a',r')}$). 

The point $(a,r)$ is called the \emph{bottom point} of the spindle $\cT_{(a,r)}$. 
    Letting $z = \inf \{s>r\colon \cB_{r,s}(a)=\cR_{r,s}(a-)\}$ and $c = \cB_{r,z}(a)$, the point $(c,z)$ is called the \emph{top point} of $\cT_{(a,r)}$.
    The \emph{left/red boundary} (resp.\ \emph{right/blue boundary}) of $\cT_{(a,r)}$ is defined as the set $\{(\cR_{r,s}(a-),s),\, s\in (r,z)\}$ (resp.\ $\{(\cB_{r,s}(a),s),\, s\in (r,z)\}$).
\end{definition}

We stress that, from this definition, each spindle contains its left boundary but not its right boundary, nor its top and bottom points. 
Due to Lemma~\ref{lem:cT}~(iv), if $\cT_{(a,r)}$ is  a spindle, then  for any $(a',r')\in (0,\infty)\times \bR$ with  $\cT_{(a',r')}\cap \cT_{(a,r)} \ne \emptyset$, there is $\cT_{(a',r')}\subseteq \cT_{(a,r)}$. 
In particular, different spindles are disjoint. Observe that, since the flow lines of $\cR$ are absorbed at $0$ on $\mathbb{R_+}$, the left-boundary of a spindle stays at $0$ if it hits $0$ at a positive level. We will repeatedly use that, since bottom points of spindles are bifurcation points for both flows $\cB$ and $\cR$, and in view of Proposition \ref{c: no middle bifurcation}:
\begin{lemma}\label{l:bottom_boundary}
Bottom points of spindles do not lie on the closure of any other spindle. More generally, bottom points do no belong to any flow line of $\cR$ or $\cB$ outside its starting point.
\end{lemma}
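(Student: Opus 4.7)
The proof reduces everything to Proposition~\ref{c: no middle bifurcation} via a single key observation: any bottom point $(a,r)$ is simultaneously a bifurcation point for $\cR$ and for $\cB$. Indeed, since $\cT_{(a,r)}\neq \emptyset$, there exists $(b,x)$ with $\cR_{r,x}(a-)\le b<\cB_{r,x}(a)$; combining this with the comparison principle $\cB\le \cR$ gives $\cB_{r,x}(a-)\le b<\cB_{r,x}(a)$ and $\cR_{r,x}(a-)\le b<\cR_{r,x}(a)$, so $(a,r)$ satisfies Definition~\ref{def:bifur} for both flows.

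Granted this, the ``more generally'' statement is immediate: if $(a,r)=(\cR_{r',r}(a''),r)$ for some $r'<r$, then Proposition~\ref{c: no middle bifurcation} applied to $\cR$ (and using $\cR_{r',r}(a'')=a>0$) forbids $(a,r)$ from being a bifurcation for $\cR$, contradicting the first paragraph. The statement for $\cB$ is symmetric.

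For the first statement, let $(a',r')\ne (a,r)$ be the bottom of another spindle. I would decompose $\overline{\cT_{(a',r')}}$ into four pieces: the spindle $\cT_{(a',r')}$ itself (which already contains its left boundary by \eqref{eq:spindle}), its right boundary, the top point $(c',z')$ and the bottom point $(a',r')$. If $(a,r)$ equals $(a',r')$, there is nothing to prove since $(a',r')\ne (a,r)$ by assumption. If $(a,r)$ lies on the right boundary or equals $(c',z')$, then $a=\cB_{r',r}(a')$ with $r>r'$, which contradicts the ``more generally'' statement already established.

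The main obstacle, and the only substantive case, is when $(a,r)\in \cT_{(a',r')}$, i.e.\ $r>r'$ and $\cR_{r',r}(a'-)\le a<\cB_{r',r}(a')$. My plan here is to produce a common point of $\cT_{(a,r)}$ and $\cT_{(a',r')}$, so Lemma~\ref{lem:cT}(iv) forces one to contain the other; since both are maximal as spindles, they must coincide, and the uniqueness of the bottom point yields $(a,r)=(a',r')$, contradiction. To exhibit the common point, I would pick any $(b,x)\in \cT_{(a,r)}$ and check both defining inequalities for $\cT_{(a',r')}$. Using $\cB_{r',r}(a')>a\ge 0$, the perfect flow property of $\cB$ and monotonicity of the flow in the initial value give
\[
\cB_{r',x}(a')=\cB_{r,x}\bigl(\cB_{r',r}(a')\bigr)\ge \cB_{r,x}(a)>b,
\]
while applying \eqref{eq:perfectflow-} to $\cR$ with $\cR_{r',r}(a'-)\le a$ gives
\[
\cR_{r',x}(a'-)\le \cR_{r,x}(a-)\le b.
\]
Hence $(b,x)\in \cT_{(a',r')}$, which concludes the plan.
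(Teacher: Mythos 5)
Your proof is correct and follows essentially the same route as the paper, which justifies the lemma in one line by observing that bottom points are bifurcation points for both $\cR$ and $\cB$ (your first paragraph) and invoking Proposition~\ref{c: no middle bifurcation}. Your additional handling of the case $(a,r)\in\cT_{(a',r')}$, via exhibiting a common point and using Lemma~\ref{lem:cT}(iv) together with maximality, correctly fills in a detail the paper leaves implicit (it later dismisses this case simply by disjointness of distinct spindles).
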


The concept of \emph{spindle} arises in the literature on interval partition evolutions \cite{Paper1-1,Paper1-2}, particularly in the context of L\'evy processes decorated by BESQ excursions. We adopt this terminology following the aforementioned references; indeed, we will later establish the exact correspondence between the two notions of spindles discussed here. Such a region is named for its spindle-like shape, as illustrated in Figure~\ref{fig:spindle}.

\begin{figure}[htbp]
\centering
       \scalebox{0.45}{ 
        \def\svgwidth{\columnwidth}
\begingroup%
  \makeatletter%
  \providecommand\color[2][]{%
    \errmessage{(Inkscape) Color is used for the text in Inkscape, but the package 'color.sty' is not loaded}%
    \renewcommand\color[2][]{}%
  }%
  \providecommand\transparent[1]{%
    \errmessage{(Inkscape) Transparency is used (non-zero) for the text in Inkscape, but the package 'transparent.sty' is not loaded}%
    \renewcommand\transparent[1]{}%
  }%
  \providecommand\rotatebox[2]{#2}%
  \newcommand*\fsize{\dimexpr\f@size pt\relax}%
  \newcommand*\lineheight[1]{\fontsize{\fsize}{#1\fsize}\selectfont}%
  \ifx\svgwidth\undefined%
    \setlength{\unitlength}{389.78207566bp}%
    \ifx\svgscale\undefined%
      \relax%
    \else%
      \setlength{\unitlength}{\unitlength * \real{\svgscale}}%
    \fi%
  \else%
    \setlength{\unitlength}{\svgwidth}%
  \fi%
  \global\let\svgwidth\undefined%
  \global\let\svgscale\undefined%
  \makeatother%
  \begin{picture}(1,0.97665404)%
    \lineheight{1}%
    \setlength\tabcolsep{0pt}%
    \put(0,0){\includegraphics[width=\unitlength,page=1]{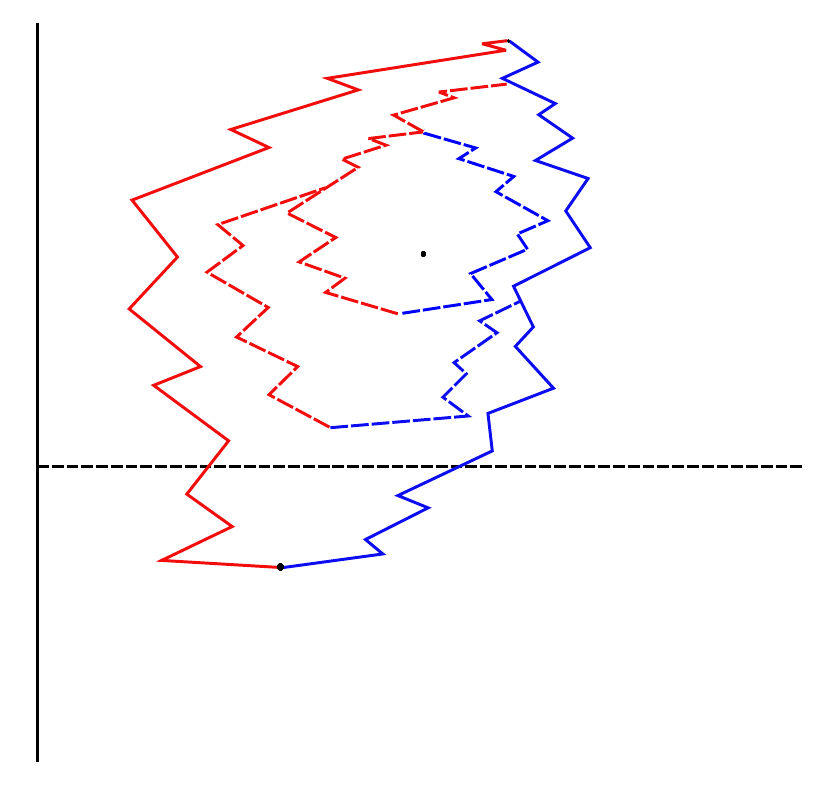}}%
    \put(0.30456335,0.22174434){\color[rgb]{0,0,0}\makebox(0,0)[lt]{\lineheight{1.25}\smash{\begin{tabular}[t]{l}\huge $(a,r)$\end{tabular}}}}%
    \put(0.48368272,0.68635862){\color[rgb]{0,0,0}\makebox(0,0)[lt]{\lineheight{1.25}\smash{\begin{tabular}[t]{l}\huge $(b,x)$\end{tabular}}}}%
    \put(0.56409679,0.94303094){\color[rgb]{0,0,0}\makebox(0,0)[lt]{\lineheight{1.25}\smash{\begin{tabular}[t]{l}\huge $(c,z)$\end{tabular}}}}%
    \put(0.00006432,0.38870287){\color[rgb]{0,0,0}\makebox(0,0)[lt]{\lineheight{1.25}\smash{\begin{tabular}[t]{l}\huge 0\end{tabular}}}}%
    \put(0.20166778,0.85876721){\color[rgb]{0,0,0}\makebox(0,0)[lt]{\lineheight{1.25}\smash{\begin{tabular}[t]{l}\huge \textcolor{red}{$\cR_{r,\cdot}(a-)$}\end{tabular}}}}%
    \put(0.6972097,0.83265179){\color[rgb]{0,0,0}\makebox(0,0)[lt]{\lineheight{1.25}\smash{\begin{tabular}[t]{l}\huge \textcolor{blue}{$\cB_{r,\cdot}(a)$}\end{tabular}}}}%
  \end{picture}%
\endgroup%

    }
    \hfill
    \scalebox{0.45}{ 
        \def\svgwidth{\columnwidth}
        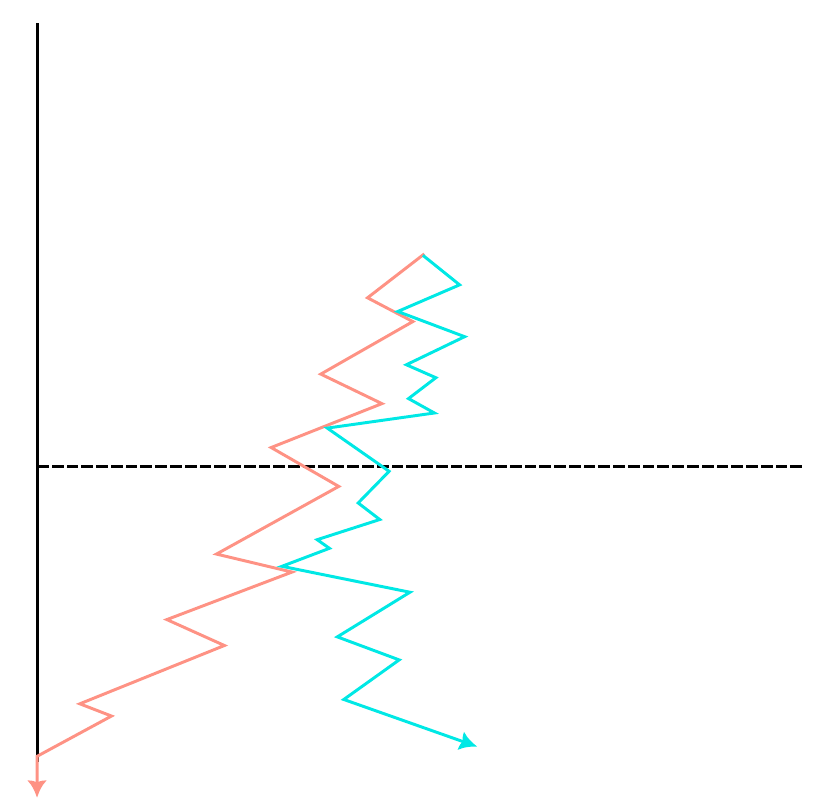
    }
    \caption{In the left figure, we illustrate several regions defined in \eqref{eq:spindle} that contain the point $(b,x)$. Among those, the spindle $\cT_{(a,r)}$ with bottom point $(a,r)$ and top point $(c,z)$ is the maximal one. 
    In the right figure, we depict the two dual flow lines $\cR^*_{-x,-\cdot}(b)$ and $\cB^*_{-x,-\cdot}(b)$ starting from $(b,x)$.  These two paths satisfy $\cR^*\le \cB^*$ and $\cR^*$ is absorbed at 0.
    By Proposition \ref{p:ancestor equiv}, the bottom point $(a,r)$ of the spindle $\cT_{(a,r)}$ is the point where these two dual flow lines intersect for the last time. It is a point where both $\cR$ and $\cB$ bifurcate.}
\label{fig:spindle}
\end{figure}

\begin{proposition}[Bottom points of spindles]\label{p:ancestor equiv}
     Almost surely, for every  $(b,x)\in \bR_+\times \bR$, we have
     \[
     r := \inf\{y\in (-\infty,x] \colon \cB^*_{-x,-y}(b) = \cR^*_{-x,-y}(b)\}>-\infty, 
     \]
     and $\cR^*_{-x,-y}(b) < \cB^*_{-x,-y}(b)$ for all $y<r$. If $r<x$, then $\cT_{(a,r)}$ is the spindle which contains $(b,x)$, where 
     \[
     a :=  \cB^*_{-x,-r}(b) = \cR^*_{-x,-r}(b).
     \] 
     In the other case, $r=x$ and $(b,x)$ does not belong to any spindle. 
 \end{proposition}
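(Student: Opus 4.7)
My plan is to introduce the shorthand $f(u):=\cB^*_{-x,u}(b)$ and $g(u):=\cR^*_{-x,u}(b)$ for $u\ge -x$, and study the closed set $\{u\ge -x\colon f(u)=g(u)\}$. Both $f,g$ are continuous with $f(-x)=g(-x)=b$, and the comparison principle (Proposition~\ref{p:perfect}(iv)), applied to $\cR^*\le \cB^*$ (which follows from $\cB\le \cR$ by duality, as recalled right after Definition~\ref{def:red-blue}), gives $g\le f$. The statement I want to prove is essentially a reformulation via \eqref{eq:spindle equiv}: finding the bottom of the spindle containing $(b,x)$ is finding the smallest $y$ at which both dual flow lines through $(b,x)$ coalesce.

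The main input for the lower bound $r>-\infty$ is that, by Theorem~\ref{t:rayknight}(ii), $\cR^*$ is a killed $\besq(2\,|_0\,2-\delta)$ flow, so $g$ is absorbed at $0$ on hitting it; since $\delta\in(0,2)$, the flow line $g$ reaches $0$ at some finite $u^*<\infty$ almost surely. On the other hand, $\cB^*$ is a $\besq(2+\delta\,|_0\,2)$ flow, whose flow lines have drift $\ge 2$ and therefore cannot return to $0$, so $f(u)>0$ for every $u>-x$. Consequently the set $\{u\ge -x:f(u)=g(u)\}$ is contained in $[-x,u^*)$ (one has strict inequality at $u^*$ where $g=0<f$), and it is closed by continuity of $f,g$. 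Pushing this back through $u=-y$, I get $A:=\{y\in(-\infty,x]\colon f(-y)=g(-y)\}$ closed, bounded below by $-u^*$, and containing $x$; hence $r=\inf A\in A$, $r>-\infty$, and the strict inequality $\cR^*_{-x,-y}(b)<\cB^*_{-x,-y}(b)$ for $y<r$ follows at once from $g\le f$ and $y\notin A$.

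In the case $r<x$, setting $a:=f(-r)=g(-r)$ I would first argue $a>0$ by observing that $r>-u^*$ forces $-r<u^*$, so $g(-r)>0$. Then $(b,x)\in \cT_{(a,r)}$ is immediate from \eqref{eq:spindle equiv}. To show that $\cT_{(a,r)}$ is actually a spindle, I suppose for contradiction $\cT_{(a,r)}\subsetneq \cT_{(a',r')}$. Since $(b,x)\in \cT_{(a',r')}$, \eqref{eq:spindle equiv} gives $r'\in A$, so $r'\ge r$. On the other hand $\cT_{(a',r')}\subset \bR_+\times(r',\infty)$, while $\cT_{(a,r)}$ contains boundary points $(\cR_{r,s}(a-),s)$ for $s>r$ arbitrarily close to $r$ (this is where one uses that $(a,r)$ is a genuine bifurcation point for both $\cR$ and $\cB$, so the two boundary flow lines really separate right above $r$), forcing $r'\le r$. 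Hence $r'=r$ and $a'=\cB^*_{-x,-r}(b)=a$, contradicting strict inclusion. The case $r=x$ is handled simply: any spindle $\cT_{(a',r')}$ containing $(b,x)$ must have $r'<x$ and $r'\in A$, impossible when $\inf A=x$.

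I expect the most delicate point to be not the pointwise reasoning above, which is essentially combinatorics of the two dual flow lines, but the upgrade from ``for each fixed $(b,x)$, almost surely'' to ``almost surely, for every $(b,x)$''. I plan to handle this by invoking the almost-sure regularity of the flows $\cR^*,\cB^*$ (joint continuity of $(b,x,y)\mapsto \cR^*_{-x,-y}(b),\cB^*_{-x,-y}(b)$ on the appropriate region, together with the coalescence/perfect-flow properties from Proposition~\ref{p:perfect}), which localizes all the a.s.\ events above to a single event depending only on the two flows. The extinction of $\cR^*$-lines and positivity of $\cB^*$-lines are flow-line properties that hold simultaneously for all starting points on a single null set, so the quantifier exchange is routine once phrased in terms of flow properties rather than fixed-point statements.
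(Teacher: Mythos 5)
Your argument is correct and essentially the paper's own proof: finiteness of $r$ and strict separation below $r$ come from the absorption at $0$ of the killed $\cR^*$-line versus the positivity of the $\cB^*$-line (dimension $\ge 2$), and membership plus maximality come from \eqref{eq:spindle equiv} together with the definition of $r$ as the infimum of the coincidence set. The only cosmetic difference is the maximality step: the paper shows any competitor $\cT_{(a',r')}$ containing $(b,x)$ satisfies $r'\ge r$ and is then contained in $\cT_{(a,r)}$ via the perfect flow property and Lemma~\ref{lem:cT}~(iv), whereas you exclude a strictly larger competitor by contradiction using that $\cT_{(a,r)}$ contains points at levels arbitrarily close to $r$ — a fact best justified by Lemma~\ref{lem:cT}~(ii) applied to $(b,x)\in\cT_{(a,r)}$ rather than by the bifurcation property alone.
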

\begin{proof}
Recall that $\cB^*$ is a ${\rm BESQ}(2+\delta\, |_0 \, 2)$ flow and $\cR^*$ is a killed ${\rm BESQ}(2\,|_0\, 2-\delta)$ flow, and that  $\cR^*\le \cB^*$ because of the comparison principle.
Since the flow lines behave as squared Bessel processes, we have a.s.\  $\cB^*_{-x, -y}(b)>0$ and $\cR^*_{-x, -y}(b) =0$ for all $y<x$ small enough. This implies that $r$ defined as above is finite and that $\cR^*_{-x,-y}(b) < \cB^*_{-x,-y}(b)$ for all $y<r$. By continuity we also have the identity $\cB^*_{-x,-r}(b) = \cR^*_{-x,-r}(b)$.

If $r<x$, then $\cB^*_{-x,-r}(b)>0$ and it follows from  \eqref{eq:spindle equiv} that $\cT_{(a,r)}$ contains $(b,x)$. 
Moreover, this $\cT_{(a,r)}$ is maximal for  inclusion, so it is a spindle by definition. Indeed, if $(a',r')\in (0,\infty)\times \bR$ satisfies $(b,x)\in \cT_{(a',r')}$, then we deduce from  \eqref{eq:spindle equiv} that 
$a' = \cB^*_{-x,-r'}(b) = \cR^*_{-x,-r'}(b)$. Then $r'\ge r$ by the definition of $r$, and hence $(a',r')\in \cT_{(a,r)}$ by \eqref{eq:spindle equiv} and the perfect flow property. By Lemma~\ref{lem:cT} (iv) we have $\cT_{(a',r')}\subseteq \cT_{(a,r)}$. 

In the case $r=x$, $\cR^*_{-x,-y}(b) < \cB^*_{-x,-y}(b)$ for any $y<x$ and $(b,x)$ does not belong to any spindle by another use of \eqref{eq:spindle equiv}. This completes the proof. 
\end{proof}

Consider the union of interiors in $\bR_+\times\bR$ of all spindles  
\[
\mathscr{P}
    :=
    \bigcup_{\text{spindle } \mathcal{T}_{(a,r)}}
    \overset{\circ}{\mathcal{T}}_{(a,r)}.
\]
We stress that the interior is taken relative to $\bR_+\times\bR$, in which a set is open if it is the intersection of $\bR_+\times\bR$ with some open set in $\bR^2$. Therefore $(b,x)\in \bR_+\times\bR$ is in $\mathscr{P}$ if and only if $(b,x)$ is contained in some spindle $\cT_{(a,r)}$ and one of the following two conditions holds:
    \begin{enumerate}[(i)]
        \item $b>\cR_{r,x}(a-)$;
    \item $b=\cR_{r,x}(a-)=0$ and $x> y:=\inf\{r'>r\vee 0\colon\, \cR_{r,r'}(a-)=0\}$.  In other words, when the left boundary coalesces with the vertical axis such that the top point is some $(0,z)$, then the interior also contains $(0,x)$ with $x\in (y,z)$.
    \end{enumerate}

 \noindent See Figure~\ref{fig:interior} for an illustration.

\begin{figure}[htbp]
\centering
       \scalebox{0.5}{
        \def\svgwidth{0.8\columnwidth}
\begingroup%
  \makeatletter%
  \providecommand\color[2][]{%
    \errmessage{(Inkscape) Color is used for the text in Inkscape, but the package 'color.sty' is not loaded}%
    \renewcommand\color[2][]{}%
  }%
  \providecommand\transparent[1]{%
    \errmessage{(Inkscape) Transparency is used (non-zero) for the text in Inkscape, but the package 'transparent.sty' is not loaded}%
    \renewcommand\transparent[1]{}%
  }%
  \providecommand\rotatebox[2]{#2}%
  \newcommand*\fsize{\dimexpr\f@size pt\relax}%
  \newcommand*\lineheight[1]{\fontsize{\fsize}{#1\fsize}\selectfont}%
  \ifx\svgwidth\undefined%
    \setlength{\unitlength}{160.27472909bp}%
    \ifx\svgscale\undefined%
      \relax%
    \else%
      \setlength{\unitlength}{\unitlength * \real{\svgscale}}%
    \fi%
  \else%
    \setlength{\unitlength}{\svgwidth}%
  \fi%
  \global\let\svgwidth\undefined%
  \global\let\svgscale\undefined%
  \makeatother%
  \begin{picture}(1,0.81211409)%
    \lineheight{1}%
    \setlength\tabcolsep{0pt}%
    \put(0,0){\includegraphics[width=\unitlength,page=1]{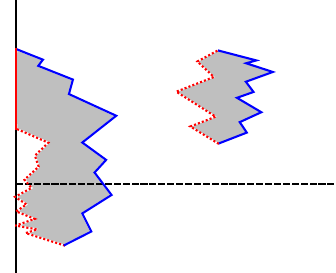}}%
    \put(0.14244647,0.02830817){\makebox(0,0)[lt]{\lineheight{1.25}\smash{\begin{tabular}[t]{l}\huge $(a,r)$\end{tabular}}}}%
    \put(0.00217451,0.41653918){\makebox(0,0)[lt]{\lineheight{1.25}\smash{\begin{tabular}[t]{l}\huge $y$\end{tabular}}}}%
    \put(0.00641124,0.65218334){\makebox(0,0)[lt]{\lineheight{1.25}\smash{\begin{tabular}[t]{l}\huge $z$\end{tabular}}}}%
    \put(0.59616615,0.34134258){\makebox(0,0)[lt]{\lineheight{1.25}\smash{\begin{tabular}[t]{l}\huge $(a',r')$\end{tabular}}}}%
    \put(0.60851761,0.69074007){\makebox(0,0)[lt]{\lineheight{1.25}\smash{\begin{tabular}[t]{l}\huge $(c',z')$\end{tabular}}}}%
    \put(-0.00127191,0.25830653){\makebox(0,0)[lt]{\lineheight{1.25}\smash{\begin{tabular}[t]{l}\huge 0\end{tabular}}}}%
  \end{picture}%
\endgroup%

    }
    \caption{Recall that in the definition of spindles, we include the left/red boundary, and exclude the right/blue boundary, as well as the top and bottom points. The interior of spindles is such that we also exclude the dashed red boundary in the picture. For the spindle $\cT_{(a',r')}$, its interior is the domain filled in gray, while for the spindle $\cT_{(a,r)}$, its interior also contains the solid red line, i.e. $\{0\}\times(y,z)$. }
    \label{fig:interior}
\end{figure}

\medskip
 
 Then we define the \emph{gasket} to be the complement of $\mathscr{P}$ in $\bR_+\times\bR$, denoted by
\begin{equation}\label{eq:gasket}
    \mathscr{K}:= \mathscr{P}^{c} = (\bR_+\times\bR)\setminus \mathscr{P}. 
\end{equation} 
We observe that $\mathscr{K}$ consists of points which do not belong to any spindle as well as the points $(\cR_{r,x}(a-),x)$,  $x\in (r, z)$ if $y\ge z$, and $(\cR_{r,x}(a-),x)$,  $x\in (r, y]$ if $y< z$
in the notation above. The next statement shows that the spindles form a partition of $\bR_+\times \bR$, up to a Lebesgue-negligible set.
\begin{corollary}\label{c:spindle zero measure}
    For any fixed $(b,x)\in (0,\infty)\times \mathbb{R}$, it is  almost surely in the interior of some spindle, i.e.\ $(b,x)\in \mathscr{P}$. Therefore the gasket $\mathscr{K}$ almost surely has Lebesgue measure 0.
\end{corollary}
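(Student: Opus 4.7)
The plan is to prove the stronger first assertion—that for any fixed $(b,x) \in (0,\infty) \times \bR$ one has $(b,x) \in \mathscr{P}$ almost surely—and then deduce $\mathrm{Leb}(\mathscr{K}) = 0$ a.s.\ by Fubini--Tonelli. The argument splits into two steps: first, produce a spindle containing $(b,x)$; second, verify that $(b,x)$ lies in its interior and not on its left (red) boundary.

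For the containment, by Proposition~\ref{p:ancestor equiv} it suffices to show that $r := \inf\{y \le x : \cB^*_{-x,-y}(b) = \cR^*_{-x,-y}(b)\}$ satisfies $r < x$ almost surely. I would apply Proposition~\ref{p:decomp} with $\cS = \cB^*$ and $Y = \cR^*_{-x,\cdot}(b)$, restricting to the initial interval on which $Y>0$ so that the killed-vs-non-killed distinction for $\cR^*$ is immaterial (this holds on a right-neighborhood of $-x$ since $Y_{-x}=b>0$). With $\delta_1 = 2$, $\delta_2 = 2-\delta$, and parameter shift $\delta_3 = \delta$, the flow $\cS^+$ is a ${\rm BESQ}^\delta$ flow driven by the independent white noise $\cW^+_Y$, and the flow line $\cS^+_{-x,\cdot}(0)$ coincides with the difference $y\mapsto \cB^*_{-x,-y}(b) - \cR^*_{-x,-y}(b)$. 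This flow line starts at $0$ from the deterministic and hence a.s.\ non-exceptional time $-x$, so for $\delta \in (0,2)$ it is a reflecting ${\rm BESQ}^\delta$ from $0$, whose zero set almost surely accumulates at the starting time. Consequently, the difference vanishes at $y$'s arbitrarily close to (and strictly less than) $x$, forcing $r < x$.

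Denote by $\cT_{(a,r)}$ the resulting spindle containing $(b,x)$. Since $b > 0$, the interior characterization given just before the corollary reduces to establishing $b > \cR_{r,x}(a-)$. Suppose otherwise, so that $b = \cR_{r,x}(a-)$. Then \eqref{eq:leftcont} yields some $a'<a$ with $\cR_{r,y}(a-) = \cR_{r,y}(a')$ for every $y \ge x$, and the instantaneous coalescence property (Proposition~\ref{p:perfect}(iii)) applied to the flow line $\cR_{r,\cdot}(a')$ with $\epsilon = (x-r)/2$ produces some $(a_n,r_n)$ in a fixed countable dense subset $\cD \subset \bR_+ \times \bR$ such that $\cR_{r,x}(a') = \cR_{r_n,x}(a_n)$; hence $b = \cR_{r_n,x}(a_n)$ for some index $n$. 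But for each fixed $(a_n,r_n) \in \cD$, the variable $\cR_{r_n,x}(a_n)$ is a value of a squared-Bessel-type process (possibly of varying dimension) whose law on $(0,\infty)$ is absolutely continuous (any atom lives at $0$), so $\bP(\cR_{r_n,x}(a_n) = b) = 0$ for our $b > 0$. A countable union bound over $\cD$ gives $\bP(b = \cR_{r,x}(a-)) = 0$.

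Combining the two steps, $\bP((b,x) \in \mathscr{P}) = 1$ for every fixed $(b,x) \in (0,\infty)\times\bR$. Fubini--Tonelli then yields $\bE[\mathrm{Leb}(\mathscr{K})] = \iint_{\bR_+ \times \bR} \bP((b,x) \in \mathscr{K})\,db\,dx = 0$ (the slice $\{b=0\}$ being Lebesgue-null in $\bR_+\times\bR$), hence $\mathrm{Leb}(\mathscr{K}) = 0$ almost surely. I expect the main obstacle to be the first step: correctly identifying $\cB^*_{-x,-\cdot}(b) - \cR^*_{-x,-\cdot}(b)$ as a reflecting ${\rm BESQ}^\delta$ flow line started at $0$ from a non-exceptional time (handling the $\cR^*$ killing carefully via a local/stopped version of Proposition~\ref{p:decomp}), and then invoking the fine Bessel fact that such a process accumulates zeros at its starting time.
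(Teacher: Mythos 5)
Your proposal is correct and takes essentially the same route as the paper: decompose $\cB^*$ along the dual line $\cR^*_{-x,\cdot}(b)$ via Proposition~\ref{p:decomp} so that the difference is a $\besq^\delta$ started from $0$, deduce from Proposition~\ref{p:ancestor equiv} that $(b,x)$ lies in a spindle, exclude the left boundary using instantaneous coalescence, and conclude by Fubini. The only cosmetic differences are that the paper locates the bottom point via the \emph{last} zero of the difference (noting it becomes a $\besq^2$ once $\cR^*_{-x,\cdot}(b)$ is absorbed) rather than via zeros accumulating at the starting time, and your atomless-marginal argument simply spells out the paper's one-line appeal to the instantaneously coalescing property.
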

 \begin{proof}
Let $Z_s:=\cB^*_{-x,-x+s}(b)-\cR^*_{-x,-x+s}(b)$, $s\ge 0$.
 By Proposition \ref{p:decomp} (i) with $\cS=\cB^*$, $Y=\cR^*_{-x,\cdot}(b)$ and $\cW=-W^*$, taking into account that $\cR^*$ is killed at $0$,  $Z$ is a  ${\rm BESQ}_0^\delta$ process until $\cR^*_{-x,-x+s}(b)$ hits $0$, after which time $Z$ becomes a ${\rm BESQ}^2$ process.  Therefore there exists a last time $s'>0$ such that $Z_{s'}=0$. The point $(a,r):=(\cR^*_{-x,-x+s'}(b),x-s')$ is the bottom point of the spindle containing $(b,x)$ by Proposition \ref{p:ancestor equiv}. Being on the (left-)boundary of the spindle $\cT_{(a,r)}$ would mean that $\cR_{r,x}(a-)=b$,  which has probability $0$ by the instantaneously coalescing property. So $(b,x)$ is in the interior of $\cT_{(a,r)}$. The last statement  of the corollary is a consequence of Fubini's theorem. 
\end{proof}

\subsection{Exploration with a PRBM}

 We consider the continuous space-filling  process associated with the PRBM $X$: 
 \[
 \varrho_t:=(L(t,X_t),X_t), \qquad t\ge 0. 
 \]
At any time $t\ge 0$, the range $\{\varrho_u,\,u\le t\}$ is  $\bigcup_{x\in [I_t,M_t]} [0,L(t,x)]\times\{x\}$ where $I_t=\inf_{[0,t]} X$ and $M_t=\sup_{[0,t]} X$ as defined in \eqref{def:I}. In words, it is the domain to the left of the line $x\in [I_t,M_t]\mapsto L(t,x)$. Observe that almost surely, for any $(b,x) \in (0,\infty)\times \bR$, and $t\ge 0$,\vspace{-0.1cm}
\begin{equation}\label{eq:range_explorer}
L(t,x)\ge b \Leftrightarrow (b,x) \in \{\varrho_u,\,u\le t\}.\vspace{-0.2cm}
\end{equation}

 We first deduce a domain Markov property intrinsic to  the exploration procedure, which is a consequence of Proposition \ref{p:markov S}. 
Recall that, for $(b,s)\in \bR_+\times \bR$, 
\[
\tau_b^s:=\inf\{t\geq 0: L(t,s)>b\}\vspace{-0.1cm}
\]
 is the right-continuous inverse local time of $X$ at level $s$.

\begin{proposition}\label{WN^t}
Fix $(b,s)\in \bR_+\times \bR$ and let $T=\tau_b^s$. Define $W^T$ as in Proposition \ref{p:markov S}. Recall the notation \eqref{eq:S-g}.
\begin{enumerate}[(i)]
    \item The flow $\cR - L(T,\cdot)$ is a ${\rm BESQ}(\delta\, |_{I_T}\, 2\, |_s\, 0)$ flow driven by $W^T$.
  
    \item The flow $\cB - L(T,\cdot)$ is a killed ${\rm BESQ}(0\, |_{I_T}\, 2-\delta\, |_s\, -\delta)$ flow driven by $W^T$.
\end{enumerate}
\end{proposition}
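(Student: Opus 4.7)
The plan is to observe that both $\cR$ and $\cB$ are exactly instances of the flow $\cS'$ in Proposition~\ref{p:markov S}~(iii) for particular choices of the drift $\delta'\le 0$, and then reconcile the ``killed on $(I_T,s)$'' condition produced by that proposition with the (killed or non-killed) statements in (i) and (ii). No new computation is needed beyond an identification of parameters and a bookkeeping of where killing is actually effective.

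For (i), I would take $\delta'=0$. By Theorem~\ref{t:rayknight}~(i) the red flow $\cR$ is a ${\rm BESQ}(\delta\,|_0\,0)$ flow driven by $W$, which is precisely the $\cS'$ of Proposition~\ref{p:markov S}~(iii) with $\delta+\delta'=\delta$ and $\delta'=0$. Applying that proposition yields that $\cR-L(T,\cdot)$ is a ${\rm BESQ}(\delta\,|_{I_T}\,2\,|_s\,0)$ flow driven by $W^T$, killed on $(I_T,s)$. Now the dimension on this middle interval is exactly $2$, and by the discussion immediately after Definition~\ref{def:BESQflow} the killed and non-killed ${\rm BESQ}^{\delta_0}$ flows coincide whenever $\delta_0\ge 2$: flow lines cannot touch $0$ away from their starting point. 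Thus the killing on $(I_T,s)$ is vacuous, and the statement of (i) follows.

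For (ii), I would take $\delta'=-\delta\le 0$. By Definition~\ref{def:red-blue} the blue flow $\cB$ is a ${\rm BESQ}(0\,|_0\,-\delta)$ flow driven by $W$, i.e.\ the $\cS'$ of Proposition~\ref{p:markov S}~(iii) with $\delta+\delta'=0$ and $\delta'=-\delta$. That proposition gives that $\cB-L(T,\cdot)$ is a ${\rm BESQ}(0\,|_{I_T}\,2-\delta\,|_s\,-\delta)$ flow driven by $W^T$, killed on $(I_T,s)$. Here the dimensions on $(-\infty,I_T)$ and on $(s,\infty)$ are $0$ and $-\delta$, both non-positive, so by Definition~\ref{def:BESQflow} flow lines are already absorbed at $0$ upon hitting it, making the killed and non-killed versions coincide on those two intervals. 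Consequently, ``killed on $(I_T,s)$'' agrees on every subinterval of $\bR$ with the killed ${\rm BESQ}(0\,|_{I_T}\,2-\delta\,|_s\,-\delta)$ flow in the sense of Definition~\ref{def:vary}, which is exactly the statement of (ii).

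The degenerate case $s<0$, $b=0$ mentioned after Proposition~\ref{p:markov S} (where $I_T=s$ a.s.\ and the middle interval collapses) is automatic, because the drift notation itself collapses $\delta_1\,|_{I_T}\,\delta_2\,|_s\,\delta_3$ to $\delta_1\,|_s\,\delta_3$. The only point of mild care, which I expect to be the main obstacle in a careful write-up, is the last paragraph of the previous plan: making sure the equivalence between ``killed on $(I_T,s)$'' and the killed/non-killed ${\rm BESQ}(\overline{\delta})$ flow of the stated drift function is drawn cleanly from Definitions~\ref{def:BESQflow} and~\ref{def:vary}, rather than re-proving anything about the SDE~\eqref{eq:BESQflow}.
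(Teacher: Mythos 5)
Your proposal is correct and is essentially the paper's own proof, which consists precisely of applying Proposition \ref{p:markov S} (iii) with $\cS'=\cR$ (i.e.\ $\delta'=0$) and $\cS'=\cB$ (i.e.\ $\delta'=-\delta$). Your additional bookkeeping reconciling ``killed on $(I_T,s)$'' with the stated (killed or non-killed) flows is a correct elaboration of a point the paper leaves implicit.
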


\noindent For the case $b=0$ and thus $I_T=s$, the process  $\cR - L(T,\cdot)$ is a  ${\rm BESQ}(\delta\, |_s\, 0)$ flow  while  $\cB - L(T,\cdot)$ is a killed ${\rm BESQ}(0\, |_s\, -\delta)$ flow.
\begin{proof}
 We apply Proposition \ref{p:markov S} (iii) to $\cS'=\cR$ (i.e $\delta'=0$) and $\cS'=\cB$ (i.e. $\delta'=-\delta$). 
 \end{proof}

The next proposition shows that the amount of time spent in the gasket is Lebesgue-negligible.
\begin{proposition}\label{c:ancestor-time_fixed}
Fix $t> 0$. Then almost surely $\varrho_t=(L(t,X_t),X_t)$ lies in the interior of some spindle, i.e.\ $\varrho_t \in \mathscr{P}$.
\end{proposition}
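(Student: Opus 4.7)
The strategy is to imitate the proof of Corollary~\ref{c:spindle zero measure} for the random point $\varrho_t$, using the simple Markov property of $B$ at the fixed time $t$ in place of the strong Markov property at an inverse local time.

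The first step is to establish the analog of Proposition~\ref{p:markov S} with the stopping time $T=\tau_b^s$ replaced by the deterministic time $t$: defining $W^{t,-}$ and $W^t$ as in \eqref{eq:WT-} and \eqref{eq:WT} with $T$ replaced by $t$, and using the independence of the Brownian increments after $t$, the same arguments as in Proposition~\ref{p:markov S}~(i)--(ii) show that $W^t$ is a white noise independent of $\sigma(W^{t,-})=\rF_t$. In the same spirit as Proposition~\ref{p:markov S}~(iv), the shifted dual flow $\cR^*-L^*(t,\cdot)$ is then a killed $\besq(2\,|_{-X_t}\,0\,|_{-I_t}\,2-\delta)$ flow driven by $-W^{t,*}$, and an analogous identification holds for $\cB^*-L^*(t,\cdot)$.

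Conditionally on $\rF_t$, one mimics the argument of Corollary~\ref{c:spindle zero measure}. The shifted dual flow line $(\cR^*-L^*(t,\cdot))_{-X_t,\cdot}(0)$ starts at height $0$ in the drift-$0$ region $(-X_t,-I_t)$ of the BESQ flow identified in the first step and, being killed, stays identically $0$; consequently $\cR^*_{-X_t,-X_t+s}(L(t,X_t))=L(t,X_t-s)$ for every $s\ge 0$. Applying Proposition~\ref{p:decomp}~(i) to $\cS=\cB^*$ with $Y=\cR^*_{-X_t,\cdot}(L(t,X_t))$, the process
\[
Z_s:=\cB^*_{-X_t,-X_t+s}\big(L(t,X_t)\big)-\cR^*_{-X_t,-X_t+s}\big(L(t,X_t)\big),\qquad s\ge 0,
\]
is a reflected $\besq^\delta$ process starting from $0$ on $[0,X_t-I_t]$ and becomes a $\besq^2$ process thereafter. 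Since the zero set of a reflected $\besq^\delta$ with $\delta\in (0,2)$ started at $0$ is a perfect set accumulating at $0$, and $\besq^\delta$ admits a continuous density at any fixed positive time, the last zero $s^*$ of $Z$ in $[0,X_t-I_t]$ almost surely lies in the open interval $(0,X_t-I_t)$. By Proposition~\ref{p:ancestor equiv} this provides the bottom point $(a,r):=(\cB^*_{-X_t,-X_t+s^*}(L(t,X_t)),\,X_t-s^*)$ of a spindle containing $\varrho_t$. The event $L(t,X_t)=\cR_{r,X_t}(a-)$ has probability $0$ by the instantaneous coalescence property~\eqref{eq:leftcont}, ruling out the possibility that $\varrho_t$ lies on the left boundary of this spindle; hence $\varrho_t\in \mathscr{P}$ almost surely.

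The main obstacle is the careful adaptation of Proposition~\ref{p:markov S} to the deterministic time $t$ with the $\rF_t$-measurable pair $(L(t,X_t),X_t)$ in the role of $(b,s)$; in particular, justifying that the shifted dual $\cR^*$ flow line starting from height $0$ at the random position $-X_t$ remains identically $0$ (so that $\cR^*_{-X_t,\cdot}(L(t,X_t))$ exactly traces the local-time profile $L(t,\cdot)$). Once this Markov decomposition is in place, the remainder of the argument is a direct translation of the proof of Corollary~\ref{c:spindle zero measure} to the random starting point $\varrho_t$.
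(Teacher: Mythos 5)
Your strategy---rerunning the proof of Corollary~\ref{c:spindle zero measure} from the random point $\varrho_t$ after conditioning at the deterministic time $t$---is genuinely different from the paper's, and it breaks down exactly at the step you yourself flag as ``the main obstacle'' and then assume. All of the decomposition machinery you invoke (Propositions~\ref{p:decomp}, \ref{p:dual S+}, and their combination in Proposition~\ref{p:markov S}) is proved for separating curves $Y$ that are (dual) flow lines started from \emph{deterministic} points, so that $Y$ is predictable with respect to the noise filtration in the space variable; for $T=\tau_b^s$ the curve $L(T,\cdot)$ is exactly such a flow line from the fixed point $(b,s)$, and that is what drives Proposition~\ref{p:markov S}. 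At a fixed time $t$ this structure is lost: $L(t,\cdot)$ is a flow line only from the random point $\varrho_t=(L(t,X_t),X_t)$ (and even that identification requires proving $\tau^{X_t}_{L(t,X_t)}=t$ a.s., i.e.\ immediate accumulation of local time at the current level, which you do not address), and this starting point depends on the noise on both sides of the curve. Consequently your claims that $W^t$ is a white noise independent of $\rF_t$ (and even that $\sigma(W^{t,-})=\rF_t$), that $\cR^*-L^*(t,\cdot)$ is a killed ${\rm BESQ}(2\,|_{-X_t}\,0\,|_{-I_t}\,2-\delta)$ flow driven by $-W^{t,*}$, and that, conditionally on $\rF_t$, the difference $Z$ of the two dual lines issued from $\varrho_t$ is a ${\rm BESQ}^\delta$ process, do not follow by ``the same arguments'': they would amount to redoing the Ray--Knight flow construction of \cite{aidekon2024infinite} for a PRBM started from an arbitrary initial local-time profile, which is a substantial new piece of work rather than a routine adaptation. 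The same problem recurs at the very end: excluding $L(t,X_t)=\cR_{r,X_t}(a-)$ ``by instantaneous coalescence'' is the paper's argument for a \emph{fixed} point $(b,x)$ in Corollary~\ref{c:spindle zero measure}; for the random point $\varrho_t$ it again requires the conditional structure you have not established.

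For comparison, the paper avoids fixed-time conditioning altogether: since the occupation measure of $\varrho$ is Lebesgue measure by the occupation times formula, Corollary~\ref{c:spindle zero measure} gives $\int_0^{\infty}\ind{\varrho_t\in\mathscr{P}^c}\dd t=0$ a.s.; taking expectations and noting that $\bP(\varrho_t\in\mathscr{P}^c)$ does not depend on $t>0$ by the scaling invariance \eqref{eq:scaling-inv} yields $\bP(\varrho_t\in\mathscr{P}^c)=0$ for every fixed $t>0$. If you wish to keep your route, the missing ingredient is precisely a Markov/decomposition property of the coupled flows at a deterministic time, and establishing it is comparable in difficulty to the results of Section~\ref{s:decomp} themselves; the soft Fubini-plus-scaling argument is what lets the paper bypass this.
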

 
The proof relies on the scaling invariance of our model. 
For $c>0$, consider the usual Brownian scaling with  $X^{(c)}_t=c^{-1} X_{c^2 t}$.  
Then the explorer process satisfies the identity  
\[
\varrho^{(c)}_t:= (L^{(c)}(t,X^{(c)}_t),  X^{(c)}_t)=c^{-1} \Big( L(c^2t,c^{-1} X_{c^2 t}), c^{-1} X_{c^2 t}\Big) =c^{-1} \varrho_{c^2 t}, \qquad t\ge 0. 
\]
This corresponds to scaling the space-time plane $\bR_+\times \bR$ by a factor $c$ which gives   
\begin{equation}\label{eq:scaling-inv}
    \cB^{(c)}_{r,x}(a)=c^{-1} \cB_{cr,cx}(ca), \quad \cR^{(c)}_{r,x}(a)=c^{-1} \cR_{cr,cx}(ca), \qquad \forall r\ge x, a\ge 0,  
\end{equation}
 and the law of the flows is invariant under this scaling.

\begin{proof}[Proof of Proposition \ref{c:ancestor-time_fixed}]
 The occupation field of $\varrho$ is given by the Lebesgue measure on $\bR_+\times \bR$, as implied by the occupation times formula of $X$. 
Hence, Corollary \ref{c:spindle zero measure} implies that 
\[ \int_{\bR_+} \ind{\varrho_t \in \mathscr{P}^c} \dd t=0 \quad \text{a.s.} \]
Taking expectation and observing that, by scaling, $\bP(\varrho_t \in \mathscr{P}^c)$ does not depend on $t>0$, we deduce that $\bP(\varrho_t \in \mathscr{P}^c)=0$ for every $t> 0$. 
\end{proof}

 The next result will be instrumental in our understanding of the exploration of spindles. Recall the definition of the right boundary of a spindle in Definition \ref{def:spindle}.  We say that $t\ge 0$ is an exit time of a spindle $\cT$,  if 
 \begin{itemize}
     \item $\varrho_t$ is the top point or on the right boundary of  $\cT$
     \item and $\varrho_u\in \cT$ for all $u<t$ close enough to $t$.
 \end{itemize}

\begin{lemma}\label{l: no max right}
     Almost surely, for every $t\ge 0$: 
     
     (i) if  $\varrho_t$ is the top point or on the right boundary of a spindle, then there exist $u$ arbitrarily close to $t$ such that $X_u>X_t$;
     
      if $t$ is some exit time of a spindle, then:
     
     (ii) $X_u\le X_t$ for all $u<t$ close enough to $t$;
     
     (iii) there exist $u>t$ arbitrarily close to $t$ such that $X_u<X_t$;

     (iv)  there exist $u>t$ arbitrarily close to $t$ such that $X_u>X_t$.
\end{lemma}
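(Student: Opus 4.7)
The plan is to analyze the local behavior of the PRBM $X$ at time $t$ via the coupling with the flows $\cR$ and $\cB$. Write $x=X_t$; whenever $\varrho_t$ lies on the right/blue boundary or top of a spindle $\cT_{(a,r)}$, we have $L(t,x)=\cB_{r,x}(a)$ and $(a,r)$ is the spindle ancestor of $(L(t,x),x)$ by Proposition~\ref{p:ancestor equiv}. The toolkit is the Markov decomposition at inverse local times (Proposition~\ref{WN^t}), the dual-flow strict separation (Proposition~\ref{p:properties dual} and Lemma~\ref{l:basic blue red}), and the scaling invariance~\eqref{eq:scaling-inv}.

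For (i) I argue by contradiction: suppose $X_u\le x$ on some neighborhood $(t-\epsilon,t+\epsilon)$, so that $t$ is a local maximum of $X$. Such maxima form an a.s.\ countable set, enumerable as $t^{q_1,q_2}=\mathrm{argmax}_{[q_1,q_2]}X$ over rational pairs $q_1<q_2$. For each such pair, pick rationals $b>0$ and $s\in\mathbb{R}$ with $\tau_b^s<t^{q_1,q_2}$. Proposition~\ref{WN^t} gives $W^{\tau_b^s}$ independent of $\cF_{\tau_b^s}$, and the flows restart as BESQ flows driven by this independent noise. Conditionally on $\cF_{\tau_b^s}$, the scaling density argument behind Corollary~\ref{c:spindle zero measure} yields probability zero for $\varrho_{t^{q_1,q_2}}$ landing on the boundary of any spindle rather than in its interior. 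A countable union over $(q_1,q_2)$ and $(b,s)$ concludes.

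For (ii) the top-point case is immediate, since $\varrho_u\in\cT_{(a,r)}$ forces $X_u\in(r,z)$ with $X_t=z$. For the right-boundary case with $x\in(r,z)$, suppose toward contradiction that $X_{u_n}>x$ along $u_n\uparrow t$; after refining the sequence using the continuity of $X$ we may assume $X>x$ on $(u_n,t)$, so that $L(u_n,x)=L(t,x)=\cB_{r,x}(a)$ (as $X$ touches $x$ in $(u_n,t]$ only at $t$). Using the coupling of the flows via the common white noise $W$ and a Ray--Knight-type identification of the local time profile on the excursion of $X$ above $x$, the trajectory $y\mapsto L(u_n,y)$ on $[x,X_{u_n}]$ is forced to agree with the flow line $\cB_{x,\cdot}(\cB_{r,x}(a))=\cB_{r,\cdot}(a)$, yielding $L(u_n,X_{u_n})=\cB_{r,X_{u_n}}(a)$ and contradicting $\varrho_{u_n}\in\cT_{(a,r)}$.

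For (iii) and (iv): statement (iv) follows from (i) combined with (ii), which forbids $X_u>X_t$ in a left neighborhood of the exit time, forcing the $u$ supplied by (i) into the right neighborhood. For (iii), suppose $X_u\ge x$ on $(t,t+\epsilon)$; then $X$ opens an excursion at or above $x$ at $t$. Applying Proposition~\ref{WN^t} at $T=t$ (valid since $L(t,x)=\cB_{r,x}(a)>0$ ensures that $t$ equals a left inverse local time at $x$), the future white noise $W^T$ is independent of $\cF_T$, and a scaling argument analogous to Corollary~\ref{c:spindle zero measure} applied to the restart configuration rules out the excursion scenario as a null event. The main obstacle is the Ray--Knight-type identification in part (ii), where one must rigorously match the local time profile on the excursion interval to the blue flow line using the perfect flow property and the common-white-noise coupling; the other parts reduce to Markov plus scaling density arguments already developed in Section~\ref{s:PRBM} and Corollary~\ref{c:spindle zero measure}.
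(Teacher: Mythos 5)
The central gap is in parts (i) and (iii): you transfer fixed-time/fixed-point null statements (Corollary \ref{c:spindle zero measure}, Proposition \ref{c:ancestor-time_fixed}) and the fixed-$(b,s)$ decomposition of Proposition \ref{WN^t} to \emph{random} times --- the argmax $t^{q_1,q_2}$ in (i) and the exit time $t$ itself in (iii). Those results are proved for deterministic times (Fubini plus scaling) and deterministic $(b,s)$; they give no control at random times built from the same white noise, and such times can be exceptional: at every $s_{\cT}$ the explorer lies on the gasket, even though $\bP(\varrho_t\in\rK)=0$ for each fixed $t$. Conditioning on $\cF_{\tau_b^s}$ does not rescue this, because both the argmax time and the spindle boundaries after $\tau_b^s$ are functionals of the future noise, so there is no independence to exploit. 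The paper's proof of (i) supplies exactly the missing mechanism: the putative local maximum is sandwiched between rational inverse local times $\tau_{b_1}^x<t<\tau_{b_2}^x$, the level $X_t$ is identified via \eqref{no_increase} as the coalescence level of the two red lines $\cR_{x,\cdot}(b_1)$ and $\cR_{x,\cdot}(b_2)$, and the event that a blue line meets a red line precisely at that level is excluded case by case using the noise decomposition across a flow line (Proposition \ref{p:decomp}) and independence. For (iii) no such machinery is needed and your appeal to Proposition \ref{WN^t} at $T=t$ is both unlicensed and unnecessary: once (ii) is known, $X_u\ge X_t$ on a right neighborhood would make $t$ a point of increase of $X$, which is a.s.\ impossible.

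In (ii), the step where you ``force'' $y\mapsto L(u_n,y)$ on $[x,X_{u_n}]$ to agree with the blue flow line $\cB_{x,\cdot}(\cB_{r,x}(a))$ is incorrect: by Theorem \ref{t:rayknight} the local time profiles of $X$ are \emph{red} flow lines, and $\cB\le\cR$ with strict inequality in general, so no Ray--Knight identification can produce the blue line. What is available, and what the paper uses, is an inequality: since $\varrho_t=(b,x)$ is not a bifurcation point of $\cB$ (Proposition \ref{c: no middle bifurcation}), one has $\cB_{x,y}(b)=\cB_{x,y}(b-)\le\cR_{x,y}(b-)=L(\sigma_b^x,y)\le L(t,y)$ for $y\ge x$, and evaluating at the last visit $v<t$ of the level $X_u$ yields $L(v,X_u)\ge\cB_{r,X_u}(a)$, contradicting $\varrho_v\in\cT$. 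Note also that your reduction to a sequence with $X>x$ on $(u_n,t)$ is not automatic --- the path may cross $x$ in every left neighborhood of $t$ while remaining inside the spindle, so $L(u_n,x)=L(t,x)$ can fail. Part (iv) as you derive it (from (i) and (ii)) coincides with the paper's argument, but it only stands once (i) and (ii) are repaired along the lines above.
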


\begin{proof}
     Consider any time $t\ge 0$ such that $\varrho_t$ is the top point or on the right boundary of a certain spindle $\cT_{(a,r)}$ with bottom point $(a,r)\in (0,\infty)\times \bR$. In particular
    \begin{equation}\label{proof:exit_blue}
    L(t,X_t)=\cB_{r,X_t}(a).
    \end{equation}

Suppose by contradiction that $X_u\le X_t$ for all $u$ in a neighbourhood $(t_1,t_2)$ of $t$. Without loss of generality, one can suppose that  $t_1$ and $t_2$ are of the form $t_1=\tau_{b_1}^{x}$, $t_2=\tau_{b_2}^{x}$ for some $x<X_t$ and $b_2>b_1$. By \eqref{no_increase},  the flow lines $\cR_{x,\cdot}(b_1)=L(\tau_{b_1}^x,\cdot)$ and $\cR_{x,\cdot}(b_2)=L(\tau_{b_2}^x,\cdot)$ meet for the first instance at level $X_t$ since $X_t$ is the maximum of $X$ on $(\tau_{b_1}^x,\tau_{b_2}^x)$, i.e.
\begin{equation}\label{eq:X_t-R}
    X_t = \inf\{y\ge x\colon \cR_{x,y}(b_1) = \cR_{x,y}(b_2) \}, 
\end{equation}

\noindent with common value $\cR_{x,X_t}(b_1) = \cR_{x,X_t}(b_2) = L(t,X_t)$. In view of \eqref{proof:exit_blue}, it implies that
\begin{equation}\label{proof:equal_R_B}
\cR_{x,X_t}(b_1) = \cR_{x,X_t}(b_2)=\cB_{r,X_t}(a).
\end{equation}
We will show that \eqref{eq:X_t-R} and \eqref{proof:equal_R_B} are in contradiction.
  Since the dimension of $\cB$ is strictly less than that of $\cR$, by the comparison principle, a blue flow line  remains to the left of a red flow line after they meet. If \eqref{eq:X_t-R} and \eqref{proof:equal_R_B} both hold, then it implies that, for all $z<X_t$ sufficiently close to $X_t$, one of the following three cases holds :
\begin{equation*}
    \text{(a) } \cB_{r,z}(a)>\cR_{x,z}(b_2),
    \quad  \text{(b) } \cR_{x,z}(b_1) < \cB_{r,z}(a) \le \cR_{x,z}(b_2),
    \quad \text{(c) } \cB_{r,z}(a)\le \cR_{x,z}(b_1).
\end{equation*}
By the perfect flow property of $\cR$, we suppose without loss of generality that $x$ is a rational number strictly greater than $r$ and  that one of the cases holds for all $z\in [x,X_t)$. By \eqref{eq:rightcont} and the coalescence property of $\cR$, one can also suppose that $b_1$ and $b_2$ are rational numbers and $\cB_{r,x}(a) \notin \{b_1,b_2\}$. 
Finally,  one can choose a rational $b_3>\cB_{r,x}(a)$ close enough so that the inequalities in (a), (b), (c) hold with $\cB_{x,z}(b_3)$ in place of $\cB_{r,z}(a)$, and $\cB_{r,X_t}(a)=\cB_{x,X_t}(b_3)$. 

Suppose that (a) holds. By \eqref{proof:equal_R_B} and our choice of $b_3$, $\cB_{x,X_t}(b_3)=\cR_{x,X_t}(b_2)$.
Applying Proposition~\ref{p:decomp} to make a decomposition  with respect to $Y = \cR_{x,\cdot}(b_2)$, we obtain independent $W^+$ and $W^-$ as in Proposition~\ref{p:decomp} (iii). 
Then $\max(\cB_{x,\cdot}(b_3)-\cR_{x,\cdot}(b_2),0)$ is measurable with respect to $W^+$ while  $\cR_{x,\cdot}(b_2)-\cR_{x,\cdot}(b_1)$ is measurable with respect to $W^-$.  
Hence $\max(\cB_{x,\cdot}(b_3)-\cR_{x,\cdot}(b_2),0)$ is independent of $X_t$ in view of \eqref{eq:X_t-R}.
It contradicts the fact that $X_t$ is the hitting time of $0$ by $\max(\cB_{x,\cdot}(b_3)-\cR_{x,\cdot}(b_2),0)$.
In case (b),   we use similarly that $\cR_{x,\cdot}(b_2)-\cB_{x,\cdot}(b_3)$ cannot equal $0$ at the hitting time  of $0$ by $\cB_{x,\cdot}(b_3)-\cR_{x,\cdot}(b_1)$.  In case (c), we use that $\cR_{x,\cdot}(b_1)-\cB_{x,\cdot}(b_3)$ cannot equal $0$ at the hitting time  of $0$ by $\cR_{x,\cdot}(b_2)-\cR_{x,\cdot}(b_1)$. This completes the proof of (i).

    We turn to the proof of (ii), hence we suppose that $t$ is an exit time of a spindle $\cT$ and we let $t_1<t$ such that $\varrho\in \cT$ on $(t_1,t)$. Note that if $\varrho_t$ is the top point of $\cT$, then $X_u\in \cT$ implies $X_u\le X_t$ so the statement is clear in this case. So we can suppose that $\varrho_t$ is on the right boundary of $\cT$. 
    Suppose by contradiction that $X_u>X_t$ for some $u\in (t_1,t)$. Write $\varrho_t=(b,x)$. Since $\varrho_t$ in on the right boundary of $\cT$, we have $b>0$. In particular $t\ge \sigma_b^x$. By Proposition \ref{c: no middle bifurcation}, $\varrho_t$ is not a bifurcation point for $\cB$. By the comparison principle applied to the left-continuous versions of $\cR$ and  $\cB$, we deduce that $\cB_{x,X_u}(b)\le \cR_{x,X_u}(b-)=L(\sigma_b^x,X_u)\le L(t,X_u)$. Choosing $v\in [u,t)$ as the last visit time of $X_u$ before time $t$, we get that $X_v=X_u$ and $L(v,X_u)=L(t,X_u) \ge \cB_{x,X_u}(b)$ which contradicts $\varrho_v \in \cT$. This proves (ii). The statement (iii) follows from (ii)  since $X$ does not have points of increase. Statement (iv) is a consequence of (i) and (ii).
\end{proof}

\subsection{Exploring a spindle}\label{s:explore}
Knowing that $\varrho$ is a space-filling curve that will eventually cover the full half plane, we now study how it explores each spindle. 
Informally speaking, the exploration has two stages described as follows. 
The process $\varrho$ enters each spindle through the bottom point, staying in the spindle for a while until it exits the spindle by the top point. 
Now it has explored the left part and it will return to the spindle afterwards. 
For the return, $\varrho$ enters the spindle from the right boundary, makes an excursion inside the spindle and exits again from the entrance point. 
The exploration process $\varrho$ repeats such excursions iteratively until the right part of the spindle is fully filled. This procedure is illustrated in Figure~\ref{f:exploration}. 
We will formalize this intuitive description with precise mathematical statements in this subsection.

 For each spindle $\cT$, define 
\begin{equation}
    s_{\cT} := \inf\{ t\ge 0 \colon   \varrho_t\in \overset{\circ}{\cT}\} 
    \quad \text{and} \quad 
    t_{\cT} := \inf\{ t> s_{\cT} \colon \varrho_t \not\in \cT\}. 
\end{equation}
So $s_{\cT}$ is the hitting time of the interior of the spindle in $\bR_+\times\bR$  and $t_{\cT}$ is the first exit time of the spindle after entering. 
At time $t\ge 0$, we say that a spindle $\cT$ has been
\begin{itemize}
    \item  \emph{discovered}  if $t>s_{\cT}$; 
    \item \emph{partially explored} if  $t\in (s_{\cT}, \sup\{t\ge 0 \colon \varrho_{t} \in \cT\})$; 
\item \emph{ fully explored} if $t\ge \sup\{t\ge 0 \colon \varrho_{t} \in \cT\}$. 
 \end{itemize} 
Recall that, for each $r\in \bR$, $(\sigma_a^r, a\ge 0)$ and $(\tau_a^r, a\ge 0)$ denote the left- and right-continuous inverse local times at level $r$ of $X$ respectively.

\begin{proposition}[Exploration of the left part]\label{prop:explore}
With probability 1, the following statement holds for every spindle $\cT$: if $(a,r)$ denotes its bottom point and $(c,z)$ its top point, then

(i)   
  $s_{\cT}=\sigma_a^r$; 

(ii) $t_{\cT}$ is the smallest time  $t>s_{\cT}$ when $\varrho$ reaches the top of the spindle, thus $\varrho_{t_{\cT}}= (c,z)$;

(iii) the end of the exploration of the spindle happens at time $\tau_a^r$, when $X$ completes an excursion above $r$, hence \ $\sup\{t\ge 0 \colon \varrho_{t} \in \cT\} = \tau_a^r$. 
\end{proposition}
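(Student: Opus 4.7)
The plan is to verify the three claims in turn, using monotonicity of $L$ in $t$, the strong Markov--type decomposition of Proposition~\ref{WN^t}, the comparison principle $\cB \le \cR$, and the Ray--Knight identification of the excursion of $X$ above $r$ on $(\sigma_a^r, \tau_a^r)$ with the increment $\cR_{r,\cdot}(a) - \cR_{r,\cdot}(a-)$ of red flow lines.

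For (i), $s_\cT \ge \sigma_a^r$ follows from the observation that $\overset{\circ}{\cT}$ sits strictly above the left boundary $\cR_{r,\cdot}(a-) = L(\sigma_a^r,\cdot)$; monotonicity of $t \mapsto L(t,x)$ prevents $\varrho_t$ from reaching the interior for $t<\sigma_a^r$ in the generic case $b>\cR_{r,X_t}(a-)$. The exceptional case $b=\cR_{r,X_t}(a-)=0$ with $X_t>y$ is handled by noting that the PRBM has no point of increase, so any visit of $X$ to a level $x>0$ at time $t<\sigma_a^r$ forces strictly positive local time at $x$ to accumulate by $\sigma_a^r$, contradicting $\cR_{r,x}(a-)=0$. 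The reverse bound $s_\cT \le \sigma_a^r$ holds because during the excursion of $X$ above $r$ starting at $\sigma_a^r$, $X$ returns to already visited levels arbitrarily soon, strictly increasing $L(t,X_t)$ above $\cR_{r,X_t}(a-)$, while the excess stays below the blue gap $\cB_{r,X_t}(a)-\cR_{r,X_t}(a-)$ when $t-\sigma_a^r$ is sufficiently small, placing $\varrho_t$ in $\overset{\circ}{\cT}$.

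For (ii), let $M=\sup_{[\sigma_a^r,\tau_a^r]} X$, which by Ray--Knight coincides with the coalescence level $\inf\{x>r: \cR_{r,x}(a)=\cR_{r,x}(a-)\}$. The comparison principle gives $\cB_{r,M}(a) \le \cR_{r,M}(a)=\cR_{r,M}(a-)$, and the definition of $z$ then forces $z \le M$; in particular $t_z:=\inf\{t>\sigma_a^r: X_t=z\}$ is finite with $t_z<\tau_a^r$. At $t_z$, no local time has been accumulated at $z$ during the excursion, so $L(t_z,z)=\cR_{r,z}(a-)=c$ and $\varrho_{t_z}=(c,z)$, the top of $\cT$; since the top lies outside $\cT$, we obtain $t_\cT \le t_z$. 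The reverse inequality reduces to the strict bound $L(t,X_t) < \cB_{r,X_t}(a)$ for every $t \in (\sigma_a^r,t_z)$. To obtain it I would apply Proposition~\ref{WN^t} at $T=\tau_b^r$ and let $b \uparrow a$: this identifies both the blue gap $\cB_{r,\cdot}(a)-\cR_{r,\cdot}(a-)$ and the red growth $\cR_{r,\cdot}(a)-\cR_{r,\cdot}(a-)$ as excursions of BESQ flows above $\cR_{r,\cdot}(a-)$ driven by the post--$\sigma_a^r$ white noise, and then compare the two excursions, which share the ancestor $(a,r)$ and first coalesce at the top $(c,z)$, to rule out any earlier crossing of the blue boundary by $\varrho$.

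For (iii), for $t>\tau_a^r$ with $X_t \in (r,z)$ the comparison principle forces $L(t,X_t) \ge L(\tau_a^r,X_t) = \cR_{r,X_t}(a) \ge \cB_{r,X_t}(a)$, so $\varrho_t \notin \cT$, while if $X_t \notin (r,z)$ the $x$-coordinate of $\varrho_t$ lies outside the spindle. For the matching lower bound on $\sup\{t: \varrho_t \in \cT\}$, the structural exploration pattern shows that in the final descent of $X$ down to $r$, $\varrho$ re-enters $\cT$ through the right blue boundary at a point close to $(a,r)$ and remains inside $\cT$ while approaching $(a,r)$ continuously as $t \uparrow \tau_a^r$, yielding the supremum $\tau_a^r$. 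The main obstacle is the strict inequality in (ii): this is where a delicate coupling of the blue gap and red growth excursions, both driven by $W^{\sigma_a^r}$, is needed.
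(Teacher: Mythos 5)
The decisive step of (ii) --- and the step that (i)'s upper bound and (iii) ultimately lean on --- is the strict inequality $L(t,X_t)<\cB_{r,X_t}(a)$ for all $t\in(\sigma_a^r,t_z)$, where $t_z$ is the first hitting time of level $z$ after $\sigma_a^r$; this is exactly what your proposal does not supply. Comparing the blue-gap excursion $\cB_{r,\cdot}(a)-\cR_{r,\cdot}(a-)$ with the red-growth excursion $\cR_{r,\cdot}(a)-\cR_{r,\cdot}(a-)$ (both driven by the post-$\sigma_a^r$ noise via Proposition~\ref{WN^t}) cannot yield it: the comparison principle gives blue gap $\le$ red growth, so the inequality points the wrong way, and, more fundamentally, the red growth at a level $y$ is the \emph{total} local time accumulated at $y$ over the whole excursion $(\sigma_a^r,\tau_a^r)$, whereas what must be controlled is the local time accumulated at the \emph{current} level by an \emph{intermediate} time $t<t_z$. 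No level-by-level comparison of terminal quantities sees this temporal structure, and the scenario to exclude is precisely that $L(t,X_t)$ reaches the blue boundary before the first passage to $z$. The paper closes this with dual flow lines: for rational $x\in(r,z)$ and $b=\cR_{r,x}(a-)$, the first hitting time of $x$ after $\sigma_a^r$ is $\tau_b^x$ (no rational is a local maximum of $X$), the local-time profile at that time is the dual line $L(\tau_b^x,y)=\cR^*_{-x,-y}(b)$ for $y\le x$, and then $\cR^*_{-x,-y}(b)\le\cB^*_{-x,-y}(b)<\cB_{r,y}(a)$ by the comparison principle and Lemma~\ref{l:basic blue red}; monotonicity of $L$ in time transfers the bound to every $t\in(\sigma_a^r,\tau_b^x)$, and letting $x\to z$ gives (ii). Some first-passage (dual Ray--Knight) ingredient of this kind is indispensable; your sketch explicitly flags this as ``the main obstacle'' but leaves it open, and the same missing bound is needed to justify your claim in (i) that ``the excess stays below the blue gap'' for $t$ close to $\sigma_a^r$, since both the excess and the gap vanish at $\sigma_a^r$.

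Two further points. For the lower bound in (iii), your appeal to the ``structural exploration pattern'' is both circular --- the re-entry description is the content of Proposition~\ref{p:right return}, which is proved after, and by means of, the present proposition --- and inaccurate as stated: the explorer does \emph{not} remain inside $\cT$ during the final approach to $(a,r)$; at the last visit time $t<\tau_a^r$ of a level $x\in(r,z)$ one has $L(t,x)=L(\tau_a^r,x)=\cR_{r,x}(a)\ge\cB_{r,x}(a)$, so $\varrho_t\notin\cT$ at times arbitrarily close to $\tau_a^r$. What is actually needed (and what the paper proves) is only that for every $t<\tau_a^r$ some points of $\cT$ at levels just above $r$ remain unvisited, because $L(t,x)=L(t_{\cT},x)<\cB_{r,x}(a)$ for $x>r$ small, while all of $\cT$ is covered by time $\tau_a^r$; this forces visits to $\cT$ in $[t,\tau_a^r]$. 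Finally, in the exceptional case of (i) you invoke the absence of points of increase, but the passage in question is downward: the relevant fact is the absence of decrease times at positive levels (decrease times of $X$ occur only at its running infimum, which is $\le 0$), applied to a level in $(y,x)$ where the left boundary already vanishes --- a minor, fixable slip, unlike the gap in (ii).
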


By Proposition \ref{prop:explore}, we can intuitively divide a spindle $\cT$ into a ``left'' part and a ``right'' part; to be precise, the former is the range of $\varrho$ in the time interval $(s_{\cT},t_{\cT})$, while the latter is its complement in $\cT$. 

\begin{proof}
Recall the characterization of $\overset{\circ}{\cT}$ before \eqref{eq:gasket}. Suppose by contradiction that $\varrho_t\in \overset{\circ}{\cT}$ for some $t< \sigma_a^r$. Then one can find $t'<\sigma_a^r$ such that $\varrho_{t'}\in \overset{\circ}{\cT}$ and $L(t',X_{t'})>0$. For
 $x>r$, $L(t',x)\le L(\sigma_a^r,x)=\cR_{r,x}(a-)$. Applying it to $x=X_{t'}$ yields that $\varrho_{t'}$ is necessarily on the left boundary of $\cT$, which contradicts $\varrho_{t'}\in \overset{\circ}{\cT}$ and $L(t',X_{t'})>0$. It proves that  $s_{\cT}\ge \sigma_a^r$.  Since $L(\sigma_a^r,x)=\cR_{r,x}(a-)<\cR_{r,x}(a)=L(\tau_a^r,x)$ for $x\in (r,z)$, at time $\sigma_a^r$, $X$ starts an excursion above $r$. Since one can find $t>\sigma_a^r$ arbitrarily close to $\sigma_a^r$ such that $L(t,X_t)>L(\sigma_a^r,X_t)$, we get (i). 

Let $x\in (r,z)\cap \mathbb{Q}$ and $b=\cR_{r,x}(a-)=L(\sigma_a^r,x)$. With probability $1$, no rational is a local maximum of $X$. Applying it to $x$, we see that  $\tau_{b}^{x}=\inf\{t>\sigma_a^r \colon X_t=x\}$ since $X$ will accumulate some local time as soon as it crosses $x$, see \eqref{no_increase}. Write $t'=\tau_b^x$ for brevity, in particular $t'>\sigma_a^r$. 
We know that $a=\cR^*_{-x,-r}(b)=\cB^*_{-x,-r}(b)$ by equation \eqref{eq:spindle equiv}. 
Since 
\begin{equation}\label{eq proof explore}
\cR^*_{-x,-y}(b)=L(\tau_{b}^{x},y)=L(t',y) \quad \text{for }y\le x, 
\end{equation}
we have $a=L(t',r)$ and hence deduce that $t'\in (\sigma_a^r,\tau_a^r)$, on which time interval $X$ is making an excursion above $r$.

By Lemma \ref{l:basic blue red}, we have $\cB^*_{-x,-y}(b)<\cB_{r,y}(a)$ for $y\in (r,x)$. Recall that $\cR^*\le \cB^*$ by the comparison principle; then we have $\cR^*_{-x,-y}(b)<\cB_{r,y}(a)$. Together with \eqref{eq proof explore}, it shows that for all $t\in (\sigma_a^r, t')$, $L(t,X_t)\le L(t',X_t)<\cB_{r,X_t}(a)$ while  $L(t,X_t)\ge L(\sigma_a^r,X_t)=\cR_{r,X_t}(a-)$. In other words, $\varrho \in \cT$ on $(\sigma_a^r,t')$. It follows that $t_{\cT} \ge t'$. Making $x\to z$, we get that $t_{\cT}=\inf\{t>\sigma_a^r \colon X_t=z\}$.  We deduce  (ii).

At time $\tau_a^r$, $L(\tau_a^r,x)=\cR_{r,x}(a)\ge \cB_{r,x}(a)$  for any $x\ge r$ hence the whole spindle has been visited. If $t\in (\sigma_a^r,\tau_a^r)$, we have $L(t,x)=L(t_{\cT},x)$ for all $x>r$ sufficiently close to $r$. By (ii), $L(t_{\cT},y)<\cB_{r,y}(a)$ for $y\in (r,z)$. Hence for $x>r$ small enough, $L(t,x)<\cB_{r,x}(a)$ and any point $(b,x)$ with $b\in (L(t,x),\cB_{r,x}(a))$ remains  unvisited by $\varrho$. This completes the proof of (iii).
 \end{proof}

The next proposition describes excursions of $\varrho$  in the right part of a spindle $\cT$, i.e.\ after time $t_{\cT}$.
We summarize our observations of the exploration procedure in Figure~\ref{f:exploration}. 

\begin{proposition}[Exploration of the right part]
\label{p:right return}
    Almost surely: for every  $t\ge 0$ and every spindle $\cT$,  setting $(a,r)$ and $(c,z)$ for its bottom and top points,  if $t> t_{\cT}$ and $\varrho_t\in \cT$, then there exist $t_0<t<t_1$ and $r_0\in (X_t,z)$ such that:
    
    (i)  $\varrho_{t_0}=\varrho_{t_1}=(\cB_{r,{r_0}}(a),r_0)$;

    (ii) by time $t$, $\varrho$ has visited all points $((\mathcal{B}_{r,x}(a), x)$ for $x \in [r_0, z]$, but has not visited any points $(\mathcal{B}_{r,x}(a), x)$ for $x \in (r, r_0)$; 

    (iii) for any $s\in (t_0,t_1)$, we have $\varrho_s \in \cT$  and $X_s\in (r,r_0)$. 
\end{proposition}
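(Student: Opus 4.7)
The plan is to identify a single ``excursion'' of the explorer $\varrho$ into $\cT$ that contains time $t$. I set
\[
r_0:=\inf\{x\in (r,z] \colon L(t,x)\ge \cB_{r,x}(a)\}, \quad t_0:=\sup\{s<t\colon X_s=r_0\}, \quad t_1:=\inf\{s>t\colon X_s=r_0\}.
\]
Two basic bounds on $r_0$ must be established: $r_0>X_t$ follows from $\varrho_t\in \cT$ (which gives $L(t,X_t)<\cB_{r,X_t}(a)$) together with continuity of $L(t,\cdot)-\cB_{r,\cdot}(a)$ in space; $r_0<z$ follows from the strict inequality $L(t,z)>c=\cB_{r,z}(a)$, itself a consequence of Lemma~\ref{l: no max right} applied at $t_{\cT}$: since $X$ oscillates across $z$ immediately after $t_{\cT}$ (items (iii) and (iv) at the exit time $t_{\cT}$), the contact set $\{s\colon X_s=z\}$ accumulates at $t_{\cT}$ from the right, so local time at $z$ strictly exceeds $c$ for any $s>t_{\cT}$.

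The heart of the argument for (ii) is the monotonicity of $\sigma(x):=\inf\{s\colon L(s,x)\ge \cB_{r,x}(a)\}$ in $x\in(r,z)$. At $\sigma(x)$ one has $X_{\sigma(x)}=x$ and $L(\sigma(x),x)=\cB_{r,x}(a)=:b_x$, so $\sigma(x)=\sigma_{b_x}^x$ and by Theorem~\ref{t:rayknight} the profile satisfies $L(\sigma(x),y)=\cR_{x,y}(b_x-)$ for $y\ge x$. The comparison principle (Proposition~\ref{p:perfect}(iv)) gives $\cR_{x,y}(b_x-)\ge \cB_{x,y}(b_x-)$, and the crucial observation is that by Proposition~\ref{c: no middle bifurcation} applied to $\cB$, the point $(b_x,x)=(\cB_{r,x}(a),x)$ is not a bifurcation point of $\cB$, so $\cB_{x,y}(b_x-)=\cB_{x,y}(b_x)=\cB_{r,y}(a)$ by the perfect flow property. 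Hence $L(\sigma(x),y)\ge \cB_{r,y}(a)$, i.e.\ $\sigma(y)\le \sigma(x)$ for $y\ge x$. This forces $\{x\in(r,z]\colon L(t,x)\ge \cB_{r,x}(a)\}=[r_0,z]$, which is (ii). For (i), since $X\ne r_0$ on $(t_0,t_1)$ (isolated endpoint visits contribute nothing to local time), $L(\cdot,r_0)$ is constant on $[t_0,t_1]$, yielding $L(t_0,r_0)=L(t,r_0)=L(t_1,r_0)=\cB_{r,r_0}(a)$; finiteness of $t_1$ follows from $L(\tau_a^r,r_0)=\cR_{r,r_0}(a)>\cB_{r,r_0}(a)$, which forces $X$ to revisit $r_0$ strictly before $\tau_a^r$.

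For (iii), the containment $X_s\in (r,r_0)$ on $(t_0,t_1)$ is immediate from continuity of $X$, $X_t\in(r,r_0)$, the definitions of $t_0$ and $t_1$, and the fact that $(t_0,t_1)\subset(\sigma_a^r,\tau_a^r)$ keeps $X>r$. To show $\varrho_s\in \cT$ on $(t_0,t_1)$, I will define $t_1':=\inf\{s>t\colon \varrho_s\notin \cT\}$ and prove $t_1'=t_1$. By continuity of $\varrho$, the point $\varrho_{t_1'}$ must lie on the blue boundary at some $r_0'\in(r,z)$: the top is excluded by $L(t_1',z)>c$, the bottom by $X_{t_1'}>r$ during the excursion above $r$, and the left red boundary since it lies in $\cT$. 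Continuity of $X$ together with $X\ne r_0$ on $(t,t_1')$ forces $r_0'\le r_0$; if $r_0'<r_0$, then the monotonicity of $\sigma$ places $\sigma(x)\in(t,t_1']$ for every $x\in(r_0',r_0)$, so $\varrho_{\sigma(x)}$ would lie on the blue boundary (hence outside $\cT$) at a time strictly inside $(t,t_1')$, contradicting $\varrho_s\in \cT$ on that interval. Hence $r_0'=r_0$ and $t_1'=t_1$; a symmetric argument identifies $t_0$ with $\sup\{s<t\colon\varrho_s\notin \cT\}$, completing (iii). The main obstacle will be executing this monotonicity–contradiction argument cleanly while carefully handling the left- versus right-continuous versions of the flows at the special points on the blue boundary.
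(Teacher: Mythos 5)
Your mechanism for (ii) is correct and genuinely different from the paper's: where the paper relies on the dual-flow Lemma~\ref{l: green line} (via Lemma~\ref{lem:cT}~(ii) and Proposition~\ref{p:properties dual}), you obtain the monotonicity of $\sigma(x)=\inf\{s\colon L(s,x)\ge \cB_{r,x}(a)\}$ directly from the identity $L(\sigma^x_{b},\cdot)=\cR_{x,\cdot}(b-)$, the comparison principle for left-continuous versions, Proposition~\ref{c: no middle bifurcation} applied to $\cB$, and the perfect flow property. That chain is valid, it gives $S_t:=\{x\in(r,z]\colon L(t,x)\ge\cB_{r,x}(a)\}=[r_0,z]$, and as a byproduct it gives $r_0>X_t$ (your stated justification of $r_0>X_t$ by ``continuity near $X_t$'' alone is insufficient, since it does not exclude points of $S_t$ below $X_t$, but upward-closedness of $S_t$ does). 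Parts (i) and the bound $r_0<z$ are fine.

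The genuine gap is in (iii). You obtain $X_s\in(r,r_0)$ on $(t_0,t_1)$ from the inclusion $(t_0,t_1)\subset(\sigma_a^r,\tau_a^r)$, and you also exclude the bottom point as a possible value of $\varrho_{t_1'}$ by asserting $X_{t_1'}>r$ ``during the excursion above $r$''; both rest on $t_1\le\tau_a^r$, which you justify only by the claim $\cR_{r,r_0}(a)>\cB_{r,r_0}(a)$. That strict inequality is nowhere proved and is not clearly true: by Proposition~\ref{p:decomp}~(i) applied with $Y=\cB_{r,\cdot}(a)$, the difference $\cR_{r,\cdot}(a)-\cB_{r,\cdot}(a)$ is a flow line of a ${\rm BESQ}(\delta\,|\,0)$-type flow started from $0$ at the bifurcation level, and since $\delta\in(0,2)$ nothing prevents it from vanishing at levels inside $(r,z)$ — in particular at $r_0$. (Finiteness of $t_1$ is not the issue; recurrence of $X$ gives that for free.) So as written the argument is circular: $t_1\le\tau_a^r$ is essentially part of what (iii) asserts. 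The fix is to argue as the paper does, deriving $s<\tau_a^r$ from the existence of unexplored spindle points: if the explorer left $\cT$ at some $s\in(t,t_1)$, then $\sup_{[t,s]}X<r_0$, so for $x$ just below $r_0$ one has $L(s,x)=L(t,x)<\cB_{r,x}(a)$, hence $\cT$ is not fully explored at time $s$ and Proposition~\ref{prop:explore}~(iii) forces $s<\tau_a^r$ (this also rules out $\varrho_{t_1'}$ being the bottom point, since $t_1'=\tau_a^r$ would give $L(\tau_a^r,x)=L(t,x)<\cB_{r,x}(a)\le\cR_{r,x}(a)$ for such $x$, a contradiction). With that repair, your $t_1'$/monotonicity contradiction does close (note also that excluding the left boundary needs the crossing argument $L(t_1',x)\ge L(t_{\cT},x)>\cR_{r,x}(a-)$ via \eqref{no_increase}, not merely the remark that the left boundary lies in $\cT$), but the strict-inequality shortcut you propose is a real missing step, not a formality.
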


We will use the following observation in the proof.
\begin{lemma}\label{l: green line}
    Almost surely, for any $t\ge 0$,  $x\in \bR$ and any spindle $\cT=\cT_{(a,r)}$: if $\cT$ has been discovered before time $t$ and $(L(t,x),x)\in \cT$ then
    
    (i) $t\in (\sigma_a^r,\tau_a^r)$,
    
    (ii) $\cB_{r,y}(a)>L(t,y)$ for all $y\in (r,x]$.
\end{lemma}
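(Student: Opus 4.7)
For (i), the lower bound $t > \sigma_a^r$ is immediate from the hypothesis that $\cT$ has been discovered before $t$, i.e.\ $t > s_\cT$, combined with Proposition~\ref{prop:explore}(i), which identifies $s_\cT = \sigma_a^r$. For the upper bound $t < \tau_a^r$ I would argue by contradiction: if $t \ge \tau_a^r$, then the monotonicity of $s \mapsto L(s,x)$ together with \eqref{def:S} yields
\[
L(t,x) \;\ge\; L(\tau_a^r,x) \;=\; \cR_{r,x}(a) \;\ge\; \cB_{r,x}(a),
\]
using the comparison principle $\cB \le \cR$, which contradicts $(L(t,x),x) \in \cT$.

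For (ii), the case $y = x$ is immediate from the definition \eqref{eq:spindle} of $\cT$. Fix $y \in (r,x)$ and write $b := L(t,x)$. The key observation is that $t \le \tau_b^x$: indeed, $\tau_b^x = \inf\{s : L(s,x) > b\}$, so $L(t,x) = b$ forbids $t > \tau_b^x$ by monotonicity of $L(\cdot,x)$. Hence, again by monotonicity of local time in time, and using the identification $L(\tau_b^x, y) = \cR^*_{-x,-y}(b)$ that follows directly from \eqref{def:S} (valid because $y < x$), we get
\[
L(t,y) \;\le\; L(\tau_b^x, y) \;=\; \cR^*_{-x,-y}(b).
\]

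It remains to prove the strict inequality $\cR^*_{-x,-y}(b) < \cB_{r,y}(a)$. From $(b,x) \in \cT_{(a,r)}$, the characterization \eqref{eq:spindle equiv} of spindles through the two dual flows gives $a = \cB^*_{-x,-r}(b)$. Combining the comparison principle $\cR^* \le \cB^*$ with Lemma~\ref{l:basic blue red} applied to the dual blue flow line starting at $(b,x)$, we conclude
\[
\cR^*_{-x,-y}(b) \;\le\; \cB^*_{-x,-y}(b) \;<\; \cB_{r,y}(a), \qquad y \in (r,x),
\]
which finishes (ii). No serious obstacle arises here: the whole argument is a short chain of the monotonicity of local time, the dual-flow description of $L(\tau_b^x,\cdot)$, and the strict separation of a blue flow line from the dual blue flow line meeting it at $(a,r)$, already packaged in Lemma~\ref{l:basic blue red}.
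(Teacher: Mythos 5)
Your proof is correct and follows essentially the same route as the paper: part (i) is the same observation (the spindle still has unexplored points at level $x$, made quantitative via $L(t,x)<\cB_{r,x}(a)\le\cR_{r,x}(a)=L(\tau_a^r,x)$), and part (ii) uses the same key step $L(t,y)\le L(\tau_b^x,y)=\cR^*_{-x,-y}(b)$. The only cosmetic difference is that where the paper concludes by citing Lemma~\ref{lem:cT}~(ii), you inline its proof (Lemma~\ref{l:basic blue red} together with $\cR^*\le\cB^*$), which is the same argument.
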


\begin{proof}
Statement (i) is a consequence of Proposition \ref{prop:explore}, noting that $\cT$ has unexplored points at level $x$ at time $t$ by \eqref{eq:range_explorer}.  To prove (ii), let $b=L(t,x)$ and observe that  $\tau_{b}^{x}\ge t$. It implies 
\begin{equation}\label{eq:proof green line + 1}
    \cR^*_{-x,-y}(b)=L(\tau_b^x,y)\ge L(t,y) \quad \text{for all } y\le x.
\end{equation}

\noindent Since $(b,x)\in \cT$, we conclude with Lemma \ref{lem:cT} (ii).
 \end{proof}

\begin{proof}[Proof of Proposition~\ref{p:right return}]
By Proposition \ref{prop:explore} (ii), we have $L(t_{\cT},z)=L(\sigma_a^r,z)$. By definition of the top point, it holds that $\cR_{r,z}(a-)=\cB_{r,z}(a)$. Then, since $\cR_{r,z}(a-)=L(\sigma_a^r,z)$, we have $L(t_{\cT},z)=\cB_{r,z}(a)$. Observe that  $X$ accumulates local time at level $z$ immediately after $t_{\cT}$ by Lemma \ref{l: no max right} (ii) and (iv) and \eqref{no_increase}. It follows that $ L(t,z)>  L(t_{\cT},z)=\cB_{r,z}(a)$, as $t>t_{\cT}$. Let $r_0:=\inf\{y\in (r,z)\colon L(t,y)=\cB_{r,y}(a)\}$. By continuity, $L(t,r_0)=\cB_{r,r_0}(a)$ and $r_0<z$. By Lemma \ref{l: green line}~(ii) (applied with $x = X_t$), we also have $r_0> X_t$ and one cannot have $L(t,x)<\cB_{r,x}(a)$ for some $x\in (r_0,z)$: this would imply that $(L(t,x),x)\in \cT$, which would contradict  $L(t,r_0)=\cB_{r,r_0}(a)$ by Lemma~\ref{l: green line} (ii) with $y=r_0$. In view of \eqref{eq:range_explorer}, it already yields  (ii).

Let $t_0 = \sup\{u<t \colon X_u=r_0\}$ and $t_1 = \inf\{u>t \colon X_u=r_0\}$. Then $L(t_0,r_0)=L(t_1,r_0)=L(t,r_0)$, which equals $\cB_{r,r_0}(a)$ by definition of $r_0$. Hence (i) holds. 

By definition of $t_0$ and $t_1$, we also have $X<r_0$ on $(t_0,t_1)$. Since $X_t<r_0<z=X_{t_{\cT}}$ and $t>t_{\cT}$, we necessarily have $t_0\ge t_{\cT}$ by definition of $t_0$ and (ii). 
We now show that $\varrho_s \in \cT$ for all $s\in (t_0,t_1)$, which will also imply that $X>r$ on this time interval.  
Suppose by contradiction that it fails, and pick $s\in (t_0,t_1)$  such that $\varrho_s\notin \cT$. As $X<r_0$ on $(t_0,t_1)$, we also have $\sup_{ [s\wedge t,s\vee t]}X<r_0$. Therefore, for all $x<r_0$ close enough to $r_0$, we have $L(s,x)=L(t,x)<\cB_{r,x}(a)$ and $(L(s,x), x)\in \cT$. Since the spindle has not been fully explored yet at time $s$, we have $s<\tau_a^r$ by Proposition~\ref{prop:explore}~(iii) and hence deduce that $X_s\in (r,r_0)$. Applying Lemma~\ref{l: green line}~(ii) to $s$ and $x>X_s$ close enough to $r_0$ with $(L(s,x), x)\in \cT$, we have $L(s,X_s)< \cB_{r,X_s}(a)$, which contradicts  to $\varrho_s\notin\cT$.
This completes the proof.
\end{proof}


 \begin{figure}[htbp]
\centering
       \scalebox{0.4}{ 
        \def\svgwidth{0.9\columnwidth}
\begingroup%
  \makeatletter%
  \providecommand\color[2][]{%
    \errmessage{(Inkscape) Color is used for the text in Inkscape, but the package 'color.sty' is not loaded}%
    \renewcommand\color[2][]{}%
  }%
  \providecommand\transparent[1]{%
    \errmessage{(Inkscape) Transparency is used (non-zero) for the text in Inkscape, but the package 'transparent.sty' is not loaded}%
    \renewcommand\transparent[1]{}%
  }%
  \providecommand\rotatebox[2]{#2}%
  \newcommand*\fsize{\dimexpr\f@size pt\relax}%
  \newcommand*\lineheight[1]{\fontsize{\fsize}{#1\fsize}\selectfont}%
  \ifx\svgwidth\undefined%
    \setlength{\unitlength}{202.29766125bp}%
    \ifx\svgscale\undefined%
      \relax%
    \else%
      \setlength{\unitlength}{\unitlength * \real{\svgscale}}%
    \fi%
  \else%
    \setlength{\unitlength}{\svgwidth}%
  \fi%
  \global\let\svgwidth\undefined%
  \global\let\svgscale\undefined%
  \makeatother%
  \begin{picture}(1,1.1019338)%
    \lineheight{1}%
    \setlength\tabcolsep{0pt}%
    \put(0,0){\includegraphics[width=\unitlength,page=1]{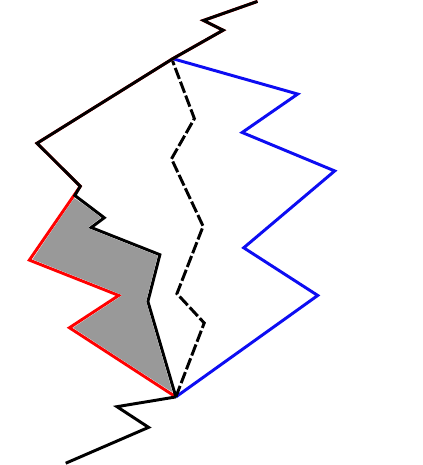}}%
    \put(0.38347143,0.09657674){\makebox(0,0)[lt]{\lineheight{1.25}\smash{\begin{tabular}[t]{l}\huge $(a,r)$\end{tabular}}}}%
    \put(0.32261268,0.98828517){\makebox(0,0)[lt]{\lineheight{1.25}\smash{\begin{tabular}[t]{l}\huge $(c,z)$\end{tabular}}}}%
    \put(-0.00042807,0.41096293){\makebox(0,0)[lt]{\lineheight{1.25}\smash{\begin{tabular}[t]{l}\huge \textcolor{red}{$L(\sigma_a^r,\cdot)$}\end{tabular}}}}%
    \put(0.66499662,0.48352537){\makebox(0,0)[lt]{\lineheight{1.25}\smash{\begin{tabular}[t]{l}\huge \textcolor{blue}{$\cB_{r,\cdot}(a)$}\end{tabular}}}}%
    \put(0.07928729,0.07378618){\makebox(0,0)[lt]{\lineheight{1.25}\smash{\begin{tabular}[t]{l}\huge $L(t,\cdot)$\end{tabular}}}}%
    \put(0,0){\includegraphics[width=\unitlength,page=2]{exploration3.pdf}}%
    \put(0.2910208,0.55877021){\makebox(0,0)[lt]{\lineheight{1.25}\smash{\begin{tabular}[t]{l}\huge $\varrho_t$\end{tabular}}}}%
  \end{picture}%
\endgroup%

    }
    \qquad
    \scalebox{0.4}{ 
        \def\svgwidth{0.9\columnwidth}
        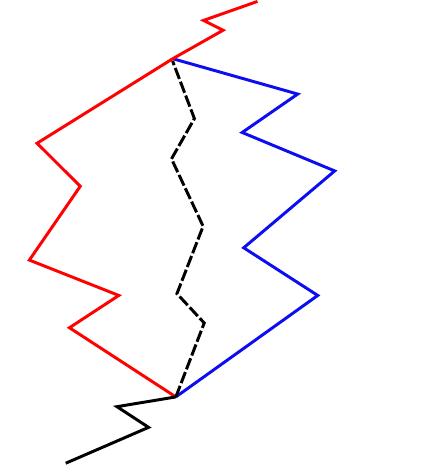
    }
    \caption{The dashed line divides the spindle $\cT=\cT_{(a,r)}$ into its left and right parts. \\ The left picture shows how $\varrho$ explores the left part as described in Proposition~\ref{prop:explore}. The process $\varrho$ enters the interior of the spindle $\cT$ at time $s_\cT=\sigma_a^r$, stays in $\cT$ until time $t_\cT$, when it exits from the top point $(c,z)$ of $\cT$. Note that $L(\sigma_a^r,x)=\cR_{r,x}(a-)$ for $x\ge r$, and the grey area represents the range of $\varrho$  over the time interval $[\sigma_a^r,t]$.
    \\ The right picture shows how the right part is explored, as explained in Proposition~\ref{p:right return}. The explorer $\varrho$ will re-enter and re-exit $\cT$ for a number of times, until completing the exploration of $\cT$ at time $\tau_a^r$. At each time $t_0$ when $\varrho$ re-enters $\cT$, $\varrho$ has already explored the part of the blue line above $r_0=X_{t_0}$. Subsequently, $X$ makes an excursion below $r_0$ which  ends at time $t_1$. For every $t_0< t< t_1$, $\varrho_t\in\cT$,  exploring the cyan area, and $\varrho$ exits the spindle at time $t_1$ with $\varrho_{t_0} = \varrho_{t_1}$. } 
\label{f:exploration}
\end{figure}

Let $\mathcal{E}$ (for ``explore'') denote the set of all times when $\varrho$ is exploring a spindle, i.e.
\begin{equation}\label{def:explore}
  \mathcal{E}:=\{t\ge 0 \colon \varrho_t \in \cT \textrm{ for some spindle }\cT,\, \text{ and } t>s_{\cT} \}.  
\end{equation}
 The set $\cE$ is open. Indeed, if $\varrho_t\in \cT$ and $t>s_{\cT}$, then either $\varrho_t$ is in the left part of $\cT$, so that $t\in (s_{\cT},t_{\cT})\subseteq \cE$, or $\varrho_t$ is in the right part of $\cT$, hence $t\in (t_0,t_1)\subseteq \cE$ in the notation of Proposition \ref{p:right return}. We end this subsection with a discussion on the structure of the complementary set $\mathcal{E}^c=\{t\ge 0 : t\notin \mathcal{E}\}$, which will play a crucial role in our analysis in Section~\ref{sec:regenerative}.   Observe that the bottom point   of a spindle cannot be in the interior of another spindle since spindles are disjoint, nor can it be on the left boundary of a spindle by Lemma \ref{l:bottom_boundary}. Therefore bottom  points of spindles do not belong to any spindle and $\cE^c$ includes all times $s_{\cT}$.  Recall the definition of the gasket $\rK$ in \eqref{eq:gasket}. We get
 \begin{equation}\label{eq:bottom_gasket}
     \{\varrho_{s_{\cT}}\colon \textrm{spindle } \cT\} \subset \rK,\quad \{s_{\cT} \colon \textrm{spindle } \cT\}\subset \cE^c.
 \end{equation}

\begin{lemma}\label{l:exit_explore}
    Almost surely, for every $t\ge 0$ that is an exit time of some spindle $\cT$, we have $t\in\cE^c$.
\end{lemma}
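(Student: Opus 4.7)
The plan is to argue by contradiction. Suppose $t$ is an exit time of $\cT$ and $t\in\cE$, so there exists a spindle $\cT'$ with $\varrho_t\in\cT'$ and $t>s_{\cT'}$. First, since $\varrho_t$ sits on the right boundary or at the top of $\cT$ and these are excluded from $\cT$ by Definition \ref{def:spindle}, we have $\varrho_t\notin\cT$. Together with $\varrho_t\in\cT'$ this forces $\cT'\neq\cT$, and then maximality of spindles (remarks after Definition \ref{def:spindle}) gives $\cT\cap\cT'=\emptyset$.

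The main step is to exploit the exit-time hypothesis: by definition $\varrho_u\in\cT$ for all $u<t$ close enough to $t$. I would then split according to where $t$ lies inside the exploration interval $(s_{\cT'},\tau_{a'}^{r'})$ of $\cT'$, using the two exploration propositions to show that, whichever sub-case holds, $\varrho_u$ is also in $\cT'$ for $u<t$ close to $t$, contradicting disjointness.

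Concretely, there are three sub-cases. If $s_{\cT'}<t<t_{\cT'}$, then Proposition \ref{prop:explore} (more precisely, the argument in its proof showing that $\varrho_s\in\cT'$ for all $s\in(s_{\cT'},t_{\cT'})$) gives $\varrho_u\in\cT'$ for $u<t$ close to $t$. If $t=t_{\cT'}$, then $\varrho_t$ is the top of $\cT'$, which is not in $\cT'$, contradicting the assumption $\varrho_t\in\cT'$. If $t>t_{\cT'}$, then Proposition \ref{p:right return} applied to $\cT'$ produces $t_0<t<t_1$ with $\varrho_s\in\cT'$ for every $s\in(t_0,t_1)$, hence for $u<t$ close to $t$. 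In the first and third sub-cases, pairing $\varrho_u\in\cT'$ with $\varrho_u\in\cT$ contradicts $\cT\cap\cT'=\emptyset$; the second sub-case is directly impossible.

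The main obstacle is verifying that the case split is exhaustive and that each invocation of the exploration propositions really yields $\varrho_u\in\cT'$ on a full left neighborhood of $t$. Sub-case (i) requires reading off from the proof of Proposition \ref{prop:explore} (not just the statement) that $\varrho$ remains in $\cT'$ throughout the left-exploration interval; sub-case (iii) requires the strict inequalities $t_0<t<t_1$ in Proposition \ref{p:right return} so that a nontrivial left neighborhood of $t$ is available. Once these two points are confirmed, the contradiction is immediate and the lemma follows.
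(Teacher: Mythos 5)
Your proof is correct, and it takes a genuinely different route from the paper's. The paper's argument is flow-theoretic: writing $(a',r')$ for the bottom point of $\cT'$, it establishes (using Proposition~\ref{c: no middle bifurcation}, the inequality \eqref{eq:perfectflow-}, and the comparison principle, with Propositions~\ref{prop:explore}~(ii) and~\ref{p:right return}~(ii) invoked only inside a sub-case) that $(a',r')$ lies strictly to the right of the boundary curve $\cB_{r,\cdot}(a)\cup\cR^*_{-r,-\cdot}(a)$, picks a rational-ish $b<a'$ in the same region, deduces $\cR_{r',x}(b)\ge\cB_{r,x}(a)$ for $x\ge\max(r,r')$, and finally combines this with Lemma~\ref{l: no max right}~(ii) to conclude $L(t,x)<L(\sigma_{a'}^{r'},x)$ for $x<X_t$ near $X_t$, hence $t<\sigma_{a'}^{r'}=s_{\cT'}$, which directly puts $t$ in $\cE^c$. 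Your argument instead assumes $t>s_{\cT'}$ for contradiction and works purely with the exploration description: in the left-part window $(s_{\cT'},t_{\cT'})$ one has $\varrho\in\cT'$ directly from the definition of $t_{\cT'}$ as an infimum; $t=t_{\cT'}$ is impossible since $\varrho_{t_{\cT'}}$ is the top point and hence excluded from $\cT'$; and for $t>t_{\cT'}$ Proposition~\ref{p:right return} furnishes the excursion window $(t_0,t_1)\ni t$ with $\varrho\in\cT'$ there, where $t_0<t$ is strict because $r_0>X_t$. Combined with the exit-time hypothesis giving $\varrho\in\cT$ on a left neighborhood of $t$, one contradicts disjointness of distinct spindles. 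The obstacles you flag are all resolvable exactly as above; the case split is exhaustive given $t>s_{\cT'}$; and there is no circularity since both Propositions~\ref{prop:explore} and~\ref{p:right return} are proved independently of this lemma. Your route is shorter and conceptually cleaner; the paper's longer route yields the slightly more explicit output $t<s_{\cT'}$ and leans more on the flow machinery developed in Section~\ref{s:review}, but for the stated lemma both are adequate.
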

\begin{proof}
    The result holds by definition of $\cE$ if $\varrho_t$ does not lie in any spindle. The remaining case is that $\varrho_t\in\cT'$ for another spindle $\cT'$. 
    Let $(a,r)$ and $(a',r')$ be  the bottom points of $\cT$ and $\cT'$ respectively. Since $t$ is an exit time of  $\cT$, necessarily $\varrho_t$ lies on the right boundary of $\cT$ or is its top point, hence $X_t>r$ and $L(t,X_t)=\cB_{r,X_t}(a)$. Since $\varrho_t\in \cT'$, $\cR_{r',X_t}(a'-)\le L(t,X_t)<\cB_{r',X_t}(a')$. In particular,
    $$
    \cB_{r',X_t}(a') >\cB_{r,X_t}(a).
    $$
    \noindent In the case $r'\ge r$, the equation above and the coalescence property show that $a'>\cB_{r,r'}(a)$; we now prove that $a'>\cR^*_{-r,-r'}(a)$ in the case $r'<r$, and therefore justify the inclusion 
        \begin{equation}\label{eq:a'r'}
                (a',r')\in\{(b,x):r\le x<X_t,\, b>\cB_{r,x}(a)\}\cup\{(b,x):x<r,\, b>\cR^*_{-r,-x}(a)\}.
        \end{equation}
    Since $a'>0$, we restrict to the case $\cR^*_{-r,-r'}(a)>0$. Suppose $a'\le \cR^*_{-r,-r'}(a)$ by contradiction. 
        By  definition of the dual in \eqref{def:dual}, $\cR_{r',r}(b)\le a$ for all $b<\cR^*_{-r,-r'}(a)$. It follows that $\cR_{r',r}(a'-)\le a$ by its definition \eqref{def:left S}. Therefore, \eqref{eq:perfectflow-} yields
\vspace{-0.1cm}
    \begin{equation}\label{eq:exit_explore1}
        \cR_{r',x}(a'-)\le \cR_{r,x}(a-),\quad  \forall\, x\ge r.
    \end{equation}
Since $r<X_t$ and $\cB_{r,X_t}(a)<\cB_{r',X_t}(a')$, for all 
$r''\in(r,X_t)$ close enough to $X_t$, we have $\cB_{r,r''}(a)<\cB_{r',r''}(a')$. Moreover $\cR_{r,r''}(a-)<\cB_{r,r''}(a)$ for all $r''\in (r,X_t)$ by Proposition \ref{prop:explore} (ii) if $\varrho_t$ is the top point of $\cT$ and Proposition \ref{p:right return} (ii) if $\varrho_t$ is on its right boundary. We deduce that for all $r''<X_t$ close enough to $X_t$, 
$$
\cR_{r,r''}(a-)<\cB_{r,r''}(a)<\cB_{r',r''}(a').
$$

\noindent In view of this equation and \eqref{eq:exit_explore1}, any point $(a'',r'')$ with $a''\in (\cR_{r,r''}(a-),\cB_{r,r''}(a))$ would belong to $\cT\cap\cT'$, see \eqref{eq:spindle}. It gives the desired contradiction since spindles are disjoint.
We have proved \eqref{eq:a'r'}. 
    
    Since the set on the right-hand side of \eqref{eq:a'r'} is a domain, there exists $b<a'$ close enough to $a'$ such that $(b,r')$ is also in this domain. 
    In the case $r'\ge r$, since $b>\cB_{r,r'}(a)$, the perfect flow property shows that $\cB_{r',x}(b)\ge \cB_{r,x}(a)$ for all $x\ge r$, and since $\cB\le \cR$ by the comparison principle, we also have $\cR_{r',x}(b)\ge \cB_{r,x}(a)$. In the case $r'<r$, $\cR_{r',r}(b)>a$ because $b>\cR^*_{-r,-r'}(a)$, therefore $\cR_{r',x}(b)\ge \cR_{r,x}(a)\ge \cB_{r,x}(a)$ for all $x\ge r$ by the same arguments. So we proved that
    \begin{equation}\label{eq:proof_exit_explore}
        \cR_{r',x}(b)\ge \cB_{r,x}(a),\, \forall\, x\ge \max(r,r').
    \end{equation}

\noindent Since $t$ is an exit time of $\cT$, by Lemma~\ref{l: no max right} (ii) we can find some $t'<t$ such that $\varrho_u\in\cT$ for all $u\in[t',t)$ and $X_{t'}< X_t$. Let $x\in (X_{t'},X_t)$ and $u_x:=\sup\{u \in[t',t)\colon X_u=x\}$.  Since $\varrho_{u_x}\in\cT$, we have $L(t,x)=L(u_x,x)=L(u_x,X_{u_x})<B_{r,x}(a)$. By \eqref{eq:proof_exit_explore}, $L(t,x)<\cR_{r',x}(b)$ which is smaller than $\cR_{r',x}(a'-)$ in view of $b<a'$. Recall that $\cR_{r',x}(a'-)=L(\sigma_{a'}^{r'},x)$ as seen in Section \ref{s:PRBM}. Together with the former inequality, we deduce that $t<\sigma_{a'}^{r'}=s_{\cT'}$ by Proposition~\ref{prop:explore} (i), hence $t\notin \cE$ by definition of $\cE$. 
\end{proof}

\begin{proposition}\label{p:dense times}
    Almost surely,  we have  $\cE^c=\overline{\{s_{\cT},\, \cT \textrm{ spindle}\}} $. Moreover, this set has no isolated points: for any spindle $\cT$, there exist spindles $\cT'$ with $s_{\cT'}<s_{\cT}$ arbitrarily close to $s_{\cT}$.
\end{proposition}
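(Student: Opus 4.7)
The easy inclusion $\overline{\{s_{\cT}\}} \subseteq \cE^c$ is immediate: $\cE$ is open (as noted after~\eqref{def:explore}) and $s_{\cT}\in \cE^c$ for every spindle by~\eqref{eq:bottom_gasket}, so taking closures gives the inclusion. For the rest of the proposition I would first prove the identity $\cE^c=\overline{\{s_{\cT}\}}$ and then derive the no-isolated-points statement from it using Lemma~\ref{l:bottom_boundary}.

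\textbf{Deriving the no-isolated-points statement from the identity.} Granting the identity $\cE^c=\overline{\{s_{\cT}\}}$, fix a spindle $\cT$ and suppose for contradiction that $(s_{\cT}-\epsilon, s_{\cT})\cap \{s_{\cT'}:\cT'\text{ spindle}\}=\emptyset$ for some $\epsilon>0$. By the identity, $(s_{\cT}-\epsilon, s_{\cT})\cap \cE^c=\emptyset$, so this entire interval is contained in a single maximal component $(\alpha,s_{\cT})\subseteq\cE$ with $\alpha\le s_{\cT}-\epsilon$. On this component, by Propositions~\ref{prop:explore}--\ref{p:right return} and the pairwise disjointness of spindles (Lemma~\ref{lem:cT}~(iv) combined with the maximality of Definition~\ref{def:spindle}), $\varrho$ explores a single spindle $\cT'\ne\cT$ that it exits at time $s_{\cT}$, which would force the bottom point $\varrho_{s_{\cT}}=(a,r)$ of $\cT$ to lie on the closure of $\cT'$, contradicting Lemma~\ref{l:bottom_boundary}.

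\textbf{The identity.} It remains to show $\cE^c\subseteq \overline{\{s_{\cT}\}}$. Let $t\in\cE^c$; we may assume $t\ne s_{\cT'}$ for every spindle $\cT'$. Since $\cE$ has full Lebesgue measure (Corollary~\ref{c:spindle zero measure}), for every $\epsilon>0$ there is a maximal open interval $(\alpha,\beta)\subseteq\cE$ with $\alpha\in (t,t+\epsilon)$ (and similarly approaching $t$ from the left). By the same spindle-disjointness argument as above, a short right neighborhood of $\alpha$ lies in a single spindle $\cT^{\alpha}$, and by Propositions~\ref{prop:explore}--\ref{p:right return} either $\alpha=s_{\cT^{\alpha}}$ (done) or $\alpha=t_0$ is the left endpoint of a return interval of $\cT^{\alpha}$. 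In the latter case the plan is to iterate: $\alpha$ itself lies in $\cE^c$ (by Lemma~\ref{l:exit_explore}) and is not a bottom time, so the same reasoning applied at $\alpha$ together with the scaling invariance~\eqref{eq:scaling-inv} around $\varrho_{\alpha}\in \rK$ and Corollary~\ref{c:spindle zero measure} produces a further sequence of maximal open intervals of $\cE$ accumulating at $\alpha$; the iteration eventually exhibits bottom times arbitrarily close to $t$.

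\textbf{Main obstacle.} The hardest step will be showing that the iteration in the third paragraph terminates, that is, ruling out pathological infinite chains of return-entry left endpoints without any bottom time in between. This is where I would use the scaling invariance~\eqref{eq:scaling-inv} and the strong Markov property of $X$ to reduce the local behavior around a gasket point to the global structure of the flow, together with the fact that, by Corollary~\ref{c:spindle zero measure}, in any arbitrarily short time-window $\cE$ has full Lebesgue measure and thus infinitely many maximal components whose endpoints can be analyzed.
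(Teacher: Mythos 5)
Your first two paragraphs are sound: the inclusion $\overline{\{s_{\cT}\}}\subseteq\cE^c$ is immediate from \eqref{eq:bottom_gasket} and the openness of $\cE$, and your derivation of the no-isolated-points claim from the identity works, because each maximal component of $\cE$ is exactly an interval $(s_{\cT'},t_{\cT'})$ or a return interval $(t_0,t_1)$ of a single spindle (discussion after \eqref{def:explore}, Lemma~\ref{l:exit_explore}), so its right endpoint is a time at which $\varrho$ sits on the top point or right boundary of that spindle, and Lemma~\ref{l:bottom_boundary} forbids this point from being the bottom point of $\cT$. (Minor point: the full-measure statement for $\cE$ in time should be cited as Proposition~\ref{c:ancestor-time_fixed} plus Fubini rather than Corollary~\ref{c:spindle zero measure}, which is a statement about the plane.)

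The genuine gap is exactly where you flag it: the reverse inclusion $\cE^c\subseteq\overline{\{s_{\cT}\}}$. Your iteration over left endpoints of components of $\cE$ can keep returning re-entry endpoints $t_0$, and you give no argument that it ever produces an entrance time, nor that the points it generates stay near $t$; and the tools you propose cannot close it. The scaling invariance \eqref{eq:scaling-inv} is a global scaling about the origin, not a statement about the conditional law of the flows near a random gasket point: at a re-entry endpoint $t_0$ the explorer sits on the right boundary of an already discovered spindle, so the local configuration there is certainly not distributed like the one at time $0$, and the paper only establishes Markov/regeneration-type statements at the special times $\tau_b^s$ (Proposition~\ref{WN:Green}, Proposition~\ref{c:measurability}) and $\kappa_u$ (Proposition~\ref{p:rhohat kappa}, which moreover is proved after and partly by means of the present proposition). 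The missing idea is the link between the vertical coordinate $X$ and discovery times that the paper isolates as Lemma~\ref{lem:not-exploring}: if $t\in\cE^c$, $u>t$, $\varrho_u$ lies in the interior of a spindle $\cT$ and $X_u>X_t$, then $s_{\cT}\in[t,u)$ (a consequence of Lemma~\ref{l: green line}). With it, entrance times near $t$ are produced directly by a two-case analysis: if $X$ exceeds $X_t$ at times arbitrarily close to $t$ from the right, combine the lemma with Proposition~\ref{c:ancestor-time_fixed}; otherwise $t$ is not an exit time by Lemma~\ref{l: no max right}~(iv), so for rational $u<t$ close to $t$ the first exit time $v$ of the spindle containing $\varrho_u$ satisfies $v\in(u,t)$, lies in $\cE^c$ by Lemma~\ref{l:exit_explore}, and one finds $w\in(v,t)$ with $X_w>X_v$ and $\varrho_w$ in a spindle interior, to which the lemma applies at $v$, yielding an entrance time in $(u,t)$. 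Without an ingredient of this kind, your third and fourth paragraphs remain a plan rather than a proof.
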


\begin{proof}

We first prove the following lemma.
\begin{lemma}\label{lem:not-exploring}
Almost surely for every $u>t\ge 0$: if ($t\in \mathcal{E}^c$, $\varrho_u$ is in the interior of some spindle $\cT$ and  $X_u>X_t$), then $s_{\cT} \in [t,u)$.
\end{lemma}
\begin{proof}[Proof of the lemma]
 Let $(a,r)$ be the bottom point of $\cT$. We have $s_{\cT}=\sigma_a^r<u<\tau_a^r$ since $\varrho_u$ is in the interior of $\cT$. Suppose $s_{\cT}<t$, hence $X_t\in (r,X_u)$. By Lemma \ref{l: green line} (ii) with $(u,X_u,X_t)$ in place of $(t,x,y)$,  we would have $\cB_{r,X_t}(a)>L(u,X_t)\ge L(t,X_t)\ge L(\sigma_a^r,X_t)=\cR_{r,X_t}(a-)$, hence $\varrho_t\in \cT$ with $t>s_{\cT}$, which yields a contradiction to $t\not\in \mathcal{E}$.
\end{proof}

Let us go back to the proof of the proposition. Since $\cE^c$ includes all times $s_{\cT}$ by \eqref{eq:bottom_gasket} and is a closed set, we deduce the inclusion  $\overline{\{s_{\cT},\, \cT \textrm{ spindle}\}} \subseteq \cE^c$. We now prove the reverse  inclusion, i.e., almost surely, for every $t\ge 0$: if $t\notin \cE$, then $t$ is a point or an accumulation point of the set $\{s_{\cT},\, \cT \textrm{ spindle}\}$.    
    First suppose that there exist $u>t$ arbitrarily close to $t$ such that $X_u>X_t$. Then by Proposition \ref{c:ancestor-time_fixed}, there exist $u'>t$ arbitrarily close to $t$ such that $\varrho_{u'}$ is in the interior of some spindle and $X_{u'}>X_t$. The claim therefore readily follows from Lemma~\ref{lem:not-exploring} in this case.
    
     We next consider the complementary event  when $X_u\le X_t$ for every $u>t$ close enough to $t$.  By Lemma \ref{l: no max right} (iv), $t$ cannot be an exit time. 
     For any rational $u<t$,  $\varrho_{u}$ is in some spindle $\cT$ by Proposition \ref{c:ancestor-time_fixed}, and we let $v>u$ be the time when $\varrho_v$ first exits
    $\cT$ after time $u$. 
   Since $t$ is not an exit time, we must have $v<t$ and thus by another use of Proposition \ref{c:ancestor-time_fixed} and Lemma \ref{l: no max right} (iv), there exists $w \in (v,t)\cap\mathbb{Q}$ such that $X_{w}>X_v$ and $\varrho_w$ is in the interior of some spindle $\cT'$. Recall that exit times are not in $\cE$ by Lemma \ref{l:exit_explore}, hence $v\notin \cE$.
   Applying Lemma~\ref{lem:not-exploring} to $v$ and $w$, we deduce
   that $s_{\cT'} \in [v,w)$. In particular, $s_{\cT'}\in (u,t)$. Since $u$ can be chosen arbitrarily close to $t$, it proves the claim. 

    We have proved so far $\overline{\{s_{\cT},\, \cT \textrm{ spindle}\}} = \cE^c$. Consider now some time $s_{\cT}$. 
    For the second statement of the proposition, by Lemma~\ref{l:exit_explore}, it is enough to prove that there are exit times  $t<s_{\cT}$   arbitrarily close to $s_{\cT}$. If it were not the case,  then $s_{\cT}$ would be itself the exit time of some spindle $\cT'$.
     In particular, the bottom point of $\cT$ would lie on the right boundary of $\cT'$ or be its top point, which  contradicts Lemma~\ref{l:bottom_boundary}. 
    \end{proof}

\begin{corollary}\label{c:s-t}
    Almost surely, we also have $\cE^c=\overline{\{t_{\cT},\, \cT \textrm{ spindle}\}}$. 
\end{corollary}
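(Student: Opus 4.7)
The plan is to establish both inclusions. The direction $\overline{\{t_{\cT}\colon \cT\ \textrm{spindle}\}}\subseteq \cE^c$ is immediate: by Proposition~\ref{prop:explore}~(ii), $\varrho_{t_\cT}$ is the top point of $\cT$ and $\varrho_u\in \cT$ for $u<t_\cT$ close to $t_\cT$, so $t_\cT$ is an exit time of $\cT$, and Lemma~\ref{l:exit_explore} yields $t_\cT\in \cE^c$. Since $\cE^c$ is closed, the closure inclusion follows.

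For the reverse inclusion, I would exploit Proposition~\ref{p:dense times}, which identifies $\cE^c$ with $\overline{\{s_{\cT}\colon \cT\ \textrm{spindle}\}}$. It therefore suffices to show that every $s_\cT$ is a limit of exit times $t_{\cT'}$. Given a spindle $\cT$ with bottom point $(a,r)$, the second statement of Proposition~\ref{p:dense times} provides spindles $\cT_n$ with $s_{\cT_n}<s_\cT$ and $s_{\cT_n}\to s_\cT$. I then plan to show that $s_{\cT_n}<t_{\cT_n}<s_\cT$, so that $t_{\cT_n}\to s_\cT$ by squeezing between $s_{\cT_n}$ and $s_\cT$.

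The only substantive step is the upper bound $t_{\cT_n}<s_\cT$. Suppose for contradiction that $t_{\cT_n}\ge s_\cT$. Since $\varrho_u\in \cT_n$ for every $u\in (s_{\cT_n},t_{\cT_n})$ and $\varrho_{t_{\cT_n}}$ is the top point of $\cT_n$ (Proposition~\ref{prop:explore}~(ii)), one obtains $\varrho_{s_\cT}\in \overline{\cT_n}$. However $\varrho_{s_\cT}=(a,r)$ (since $s_\cT=\sigma_a^r$), and $\cT_n\neq \cT$ because the bottom point, being $\varrho_{s_\cT}$, is uniquely determined by $s_\cT$, while $s_{\cT_n}\neq s_\cT$ by construction. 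This contradicts Lemma~\ref{l:bottom_boundary}, which forbids bottom points from lying in the closure of any other spindle.

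I do not anticipate a serious obstacle: the proof is essentially parallel to the second half of Proposition~\ref{p:dense times}, with Lemma~\ref{l:bottom_boundary} again playing the pivotal role. The main point worth double-checking is the case $t_{\cT_n}=s_\cT$, which is handled uniformly by the above argument because the top point of $\cT_n$ belongs to $\overline{\cT_n}$.
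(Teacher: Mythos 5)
Your proof is correct and follows the same route as the paper: the forward inclusion via Lemma~\ref{l:exit_explore}, and the reverse via Proposition~\ref{p:dense times}. The paper simply asserts that $t_{\cT_k}\to s_\cT$ without detail, while you supply the justification (that $t_{\cT_n}<s_\cT$ by Lemma~\ref{l:bottom_boundary}, since otherwise the bottom point $\varrho_{s_\cT}$ of $\cT$ would lie in $\overline{\cT_n}$), which is the intended argument filled in.
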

  \begin{proof}
    One inclusion is clear since $t_{\cT} \notin \cE$ for any spindle $\cT$ by Lemma \ref{l:exit_explore}. For the other, by Proposition
    \ref{p:dense times},  for any spindle $\cT$, there is a sequence $s_{\cT_k}, k\ge 1$ strictly increasing towards $s_{\cT}$. Then the sequence $t_{\cT_k}, k\ge 1$ would also converge to $s_{\cT}$. This implies  $\overline{\{s_{\cT},\, \cT \textrm{ spindle}\}} \subseteq \overline{\{t_{\cT},\, \cT \textrm{ spindle}\}}$ hence the other inclusion by Proposition \ref{p:dense times}. 
  \end{proof}

We show that the range of $\varrho$ is the gasket $\rK$.
\begin{proposition}\label{p:explore-complement}
   Almost surely,  $\{\varrho_t,\, t\in \mathcal{E}^c\}=\rK$, where $\mathscr{K}$ is the gasket defined in \eqref{eq:gasket}. 
\end{proposition}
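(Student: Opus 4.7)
The statement breaks into two inclusions. For the easy direction $\varrho(\mathcal{E}^c)\subseteq\rK$, I would argue by contradiction: suppose $t\in\mathcal{E}^c$ but $\varrho_t\in\mathscr{P}$, so $\varrho_t$ lies in $\overset{\circ}{\cT}$ for some spindle $\cT$. Then $\varrho_t\in\cT$, and since $s_{\cT}$ is by definition the first time $\varrho$ reaches $\overset{\circ}{\cT}$, we have $t\ge s_{\cT}$. The case $t=s_{\cT}$ is ruled out because $\varrho_{s_{\cT}}$ is the bottom point of $\cT$ (Proposition~\ref{prop:explore}(i)), and bottom points lie in no spindle interior (Lemma~\ref{l:bottom_boundary}). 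Hence $t>s_{\cT}$, which forces $t\in\mathcal{E}$, a contradiction.

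For the reverse inclusion $\rK\subseteq\varrho(\mathcal{E}^c)$, I would first record that $\varrho$ is space-filling: since $I_t\to-\infty$, $M_t\to+\infty$ and $L(t,x)\to\infty$ for each fixed $x$ as $t\to\infty$, the range $\{\varrho_t:t\ge 0\}$ exhausts $\mathbb{R}_+\times\mathbb{R}$, so every $(b,x)\in\rK$ has at least one visit time. Because spindles are maximal under inclusion, they are pairwise disjoint (Lemma~\ref{lem:cT}(iv) together with Definition~\ref{def:spindle}); therefore $(b,x)$ either lies in no spindle at all or in a unique spindle $\cT=\cT_{(a,r)}$. In the latter case the condition $(b,x)\in\rK=\mathscr{P}^c$ excludes $(b,x)$ from $\overset{\circ}{\cT}$, so it must sit on the left boundary, giving $b=\cR_{r,x}(a-)=L(\sigma_a^r,x)$.

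In the first subcase (no containing spindle) any visit time $t$ with $\varrho_t=(b,x)$ automatically lies in $\mathcal{E}^c$, since $\varrho_t$ is not in any spindle. In the second subcase I would take $t=\sigma_b^x:=\sup\{u\ge 0\colon L(u,x)<b\}$: at this time $X_t=x$ and $L(t,x)=b$, hence $\varrho_{\sigma_b^x}=(b,x)$. To see $\sigma_b^x\in\mathcal{E}^c$, suppose instead that $\sigma_b^x\in\mathcal{E}$; as $\cT$ is the unique spindle containing $(b,x)$, this forces $\sigma_b^x>s_{\cT}=\sigma_a^r$. On the other hand, $L(\sigma_a^r,x)=b$ together with the monotonicity of $L(\cdot,x)$ yields $\sigma_b^x\le\sigma_a^r$; equality would require $X_{\sigma_a^r}=x$, whereas $X_{\sigma_a^r}=r$ and $x>r$ inside $\cT$. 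This contradiction gives $\sigma_b^x\in\mathcal{E}^c$, completing the main argument.

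The main obstacle I foresee is the degenerate subcase $b=0$ within Case B, where $\sigma_b^x$ is no longer a useful time and the only visit of $(0,x)$ occurs at the first-passage time $\tau_0^x$, which necessarily exceeds $s_{\cT}$ and therefore appears to lie in $\mathcal{E}$ on the spindle to whose left boundary $(0,x)$ belongs. Resolving this should use $\cB\le\cR$ and the absorption behaviour of the $\besq^0$/$\besq^{-\delta}$ flows to force $\cB_{r,x}(a)=0$ whenever $\cR_{r,x}(a-)=0$ strictly inside the spindle, thereby identifying any such $(0,x)\in\rK$ with the top point $(c,z)$ of the relevant $\cT$; Lemma~\ref{l:exit_explore} then supplies $t_{\cT}\in\mathcal{E}^c$ with $\varrho_{t_{\cT}}=(0,z)$. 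Carrying out this bookkeeping at the vertical axis carefully is where I expect the bulk of the technical work.
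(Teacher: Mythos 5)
Your proof of the inclusion $\{\varrho_t : t\in\cE^c\}\subseteq\rK$ is correct and a bit more direct than the paper's: you argue the contrapositive pointwise, whereas the paper first records $\{\varrho_{s_\cT}:\cT\textrm{ spindle}\}\subseteq\rK$ (via \eqref{eq:bottom_gasket}) and then closes up using Proposition~\ref{p:dense times} together with the continuity of $\varrho$ and the closedness of $\rK$. For the reverse inclusion in the case $b>0$, the time $\sigma_b^x$ you single out is exactly the hitting time of $(b,x)$ by $\varrho$, so your argument coincides with the paper's; both show that this time is strictly smaller than $s_\cT=\sigma_a^r$, hence not in $\cE$.

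The sketch you give for the $b=0$ points on the left boundary, however, is wrong. You claim that $\cR_{r,x}(a-)=0$ strictly inside the spindle forces $\cB_{r,x}(a)=0$, so that any such $(0,x)\in\rK$ must be the top point $(c,z)$. The implication is backwards: by the very definition of the top level $z$, one has $\cR_{r,x}(a-)<\cB_{r,x}(a)$ for every $x\in(r,z)$, so if $\cR_{r,x}(a-)=0$ at some level $x$ with $x<z$, then $\cB_{r,x}(a)>0$, not $0$. The comparison principle gives only $\cB_{r,x}(a)\le\cR_{r,x}(a)$, and the bifurcation at $(a,r)$ makes $\cR_{r,x}(a)>\cR_{r,x}(a-)$, so there is no way to squeeze $\cB_{r,x}(a)$ down to $\cR_{r,x}(a-)=0$. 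In fact, if we write $y$ for the first level above $r\vee 0$ where $\cR_{r,\cdot}(a-)$ vanishes, the point $(0,y)$ does lie in $\rK$ and on the left boundary of $\cT_{(a,r)}$, and it is generically \emph{not} the top point ($z>y$ and the blue boundary is still strictly positive at level $y$). The paper avoids this issue entirely: it only proves $\rK\cap\big((0,\infty)\times\bR\big)\subseteq\varrho(\cE^c)$ pointwise, then notes that $\cE^c$ is closed and $\varrho$ is continuous and proper, so $\varrho(\cE^c)$ is a closed set, and finally takes closures to absorb the $b=0$ boundary. If you want a genuinely pointwise argument for $(0,y)$ you would instead need to observe that $y$ coincides with $\max_{[0,\sigma_a^r]}X$, so the unique visit time $\tau_0^y$ of $(0,y)$ occurs \emph{before} $s_\cT=\sigma_a^r$ (the maximum is attained strictly earlier), placing it in $\cE^c$ — but this is a different mechanism from the one you propose, and the closure argument is cleaner.
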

\begin{proof}
 Recall  the characterization of the gasket $\rK$ after \eqref{eq:gasket}. For any point $(b,x)\in \mathscr{K}$ with $b>0$,
 the hitting time of $(b,x)$ by $\varrho$ is not in $\cE$. This is immediate when $(b,x)$ does not belong to any spindle. The remaining situation is when $(b,x)$ belongs to the left boundary of some spindle $\cT=\cT_{(a,r)}$. Observing that $x> r$ and  $L(s_\cT,x)=\cR_{r,x}(a-)= b$, equation \eqref{eq:range_explorer} shows that  $\varrho$ has already visited $(b,x)$  at time $s_{\cT}$. Therefore the hitting time of $(b,x)$ is strictly smaller than $s_{\cT}$. It implies that $\rK\cap (0,\infty)\times \bR \subset \{\varrho_t,\, t\in \mathcal{E}^c\}$.  Since $\cE^c$ is closed and $\varrho$ is continuous and proper, the set on the right-hand side is closed. Taking the closure, we deduce that $\rK\subset \{\varrho_t,\, t\in \mathcal{E}^c\}$.

     Conversely, $ \{\varrho_{s_{\cT}}\colon \cT \textrm{ spindle}\} \subset \rK$ by \eqref{eq:bottom_gasket}. Taking the closure  and recalling that $\rK$ is closed, Proposition \ref{p:dense times} implies the reverse inclusion. 
\end{proof}

\subsection{Spindles discovered before a given time}

In this subsection we consider the collection of all spindles discovered before a given time in the terminology of Section \ref{s:explore}. Such spindles can be either partially or fully explored.
The main object is to establish in Proposition~\ref{p:green} a decomposition according to the boundary line between the discovered and undiscovered regions, analogous to Proposition~\ref{WN^t}.
Recall the notation \eqref{def:I}.
\begin{definition}[Boundary of discovered spindles]\label{green}
Let $t\ge 0$. We define a process $\cG^t =(\cG^t_x,\, x\ge I_t)$ as follows: for each $x\ge I_t$, 
\begin{enumerate}[(1)]
\item if  $(L(t,x),x)$ is in a spindle $\cT_{(a,r)}$ discovered before time $t$, then $\cG_x^t = \cB_{r,x}(a)$. 
\item Otherwise, $\cG_x^t = L(t,x)$.
\end{enumerate}
\end{definition}

Note that $\cG^t \ge L(t,\cdot)$ by definition. In (1), we could equivalently say that the spindle is partially explored at time $t$ since it is discovered but not fully explored.
Let us make a few more comments on the definition of $\cG^t$. See Figure~\ref{fig:green construction} for an illustration.
\begin{remark}\label{r:green line}
\begin{enumerate}[(i)]
    \item 
    The graph $(\cG^t_x, x)_{x \geq I_t}$ describes the right boundary of the partially explored spindles. The unexplored regions within these spindles are precisely the connected components of 
    \begin{equation}\label{eq:G_t-un_exp}
            \{(b,x)\in \bR_+\times (I_t,\infty) : L(t,x) < b < \cG^t_x\}.
    \end{equation}
    Let us justify this. At time $t$. for a partially explored spindle $\cT_{(a,r)}$ with top point $(c,z)$, let $r_1 := \inf\{x > r : L(t,x) \ge \cB_{r,x}(a)\}$.  
    Then $r_1 = z$ if $\varrho$ is exploring the left part at time $t$, and $r_1\in (r,z)$ if it is exploring the right part.  For $x \in (r, r_1)$, we have $L(t,x) < \cB_{r,x}(a)$, so $(L(t,x), x) \in \cT_{(a,r)}$ and thus $\cG^t_x = \cB_{r,x}(a)$ by definition. 
    Therefore, the set 
    $\{(b,x)\in \bR_+\times \bR : x \in (r, r_1), L(t,x) < b < \cG^t_x\}$is a connected component of  the set in \eqref{eq:G_t-un_exp} and is also the unexplored part of $\cT_{(a,r)}$ (by Proposition~\ref{p:right return}).
    Conversely, any such connected component is the unexplored part of a spindle, as $L(t,x) < b < \cG^t_x$ implies that $(L(t,x), x)$ lies in a partially explored spindle, say $\cT$, and that $(b,x)$ in the same spindle $\cT$ remains unvisited by $\varrho$ at time $t$. The connected component containing $(b,x)$ is the unexplored part of $\cT$ by the former reasoning. 
   
    \item We have $\cG^t_x=L(t,x)$ for all $x\ge H_t$, where 
    \begin{equation}\label{def:H}
        H_t:= \inf\{x\ge X_t \colon \cG^t_x=L(t,x) \}.
    \end{equation}
     Indeed, if $\cG^t_x>L(t,x)$ for a certain $x>H_t$, then $(L(t,x),x)$ is in some spindle $\cT=\cT_{(a,r)}$ discovered before time $t$ by definition of $\cG^t$. By Lemma \ref{l: green line}, $t\in(\sigma_a^r,\tau_a^r)$ and $\cB_{r,y}(a) >L(t,y)$ for every $y\in (r,x]$. Observe that $t>s_{\cT}$ implies that $L(t,y)\ge L(s_\cT,y)=\cR_{r,y}(a-)$ for such $y$. We deduce that $(L(t,y),y)\in \cT$ by definition of $\cT_{(a,r)}$ in \eqref{eq:spindle}, hence $\cG^t_y=\cB_{r,y}(a)>L(t,y)$ for every $y\in (r,x]$ by definition of $\cG^t$. Moreover $X$ is making an excursion above $r$ on the time interval $(\sigma_a^r,\tau_a^r)$ by Proposition \ref{prop:explore} (iii), hence $t\in(\sigma_a^r,\tau_a^r)$ shows that $r<X_t$. Taking $y=H_t\ge X_t>r$ yields $\cG^t_{H_t}>L(t,H_t)$ which contradicts the definition of $H_t$ in \eqref{def:H}.
    
    \item The domain $\{(b,x)\in \bR_+\times \bR\colon 0 \le b<\cG^t_{x}\}$, is left-continuous and non-decreasing in time $t\ge 0$. It absorbs a spindle instantaneously after $\varrho$ discovers it, while remaining unchanged when $\varrho$ is in the interior of a spindle. 

    \item  There  may be points $(L(t,x), x)$, $x\in (I_t,M_t)$ in the notation \eqref{def:I}, lying in a spindle undiscovered at time $t$, hence for which $\cG^t_x=L(t,x)$. It happens when $\varrho$ touches the left boundary of a spindle $\cT_{(a,r)}$ before its discovery time $s_{\cT_{(a,r)}}=\sigma_a^r$, i.e.\ $L(t,x)=L(\sigma_a^r,x)$ with $t<\sigma_a^r$ and $x>r$. 
    Such points must satisfy $x \geq X_t$.  
     Indeed, if $I_t < x < X_t$, then after time $t$, $X$ must cross level $x$ to reach the value $X_{\sigma_a^r} = r<x$ at time $\sigma_a^r$, so $L(\cdot,x)$ must also accumulate at this crossing time, which implies $L(t,x)<L(\sigma_a^r,x)$.
\end{enumerate}
\end{remark}

\begin{figure}[htbp]
\centering
       \scalebox{0.4}{ 
        \def\svgwidth{0.75\columnwidth}
\begingroup%
  \makeatletter%
  \providecommand\color[2][]{%
    \errmessage{(Inkscape) Color is used for the text in Inkscape, but the package 'color.sty' is not loaded}%
    \renewcommand\color[2][]{}%
  }%
  \providecommand\transparent[1]{%
    \errmessage{(Inkscape) Transparency is used (non-zero) for the text in Inkscape, but the package 'transparent.sty' is not loaded}%
    \renewcommand\transparent[1]{}%
  }%
  \providecommand\rotatebox[2]{#2}%
  \newcommand*\fsize{\dimexpr\f@size pt\relax}%
  \newcommand*\lineheight[1]{\fontsize{\fsize}{#1\fsize}\selectfont}%
  \ifx\svgwidth\undefined%
    \setlength{\unitlength}{129.80881939bp}%
    \ifx\svgscale\undefined%
      \relax%
    \else%
      \setlength{\unitlength}{\unitlength * \real{\svgscale}}%
    \fi%
  \else%
    \setlength{\unitlength}{\svgwidth}%
  \fi%
  \global\let\svgwidth\undefined%
  \global\let\svgscale\undefined%
  \makeatother%
  \begin{picture}(1,1.56326938)%
    \lineheight{1}%
    \setlength\tabcolsep{0pt}%
    \put(0,0){\includegraphics[width=\unitlength,page=1]{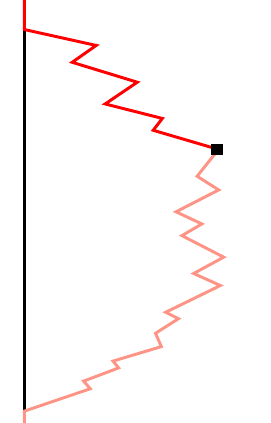}}%
    \put(0.68129584,0.97484826){\makebox(0,0)[lt]{\lineheight{1.25}\smash{\begin{tabular}[t]{l}\Huge $\varrho_t$\end{tabular}}}}%
    \put(-0.00201666,0.02379298){\makebox(0,0)[lt]{\lineheight{1.25}\smash{\begin{tabular}[t]{l}\Huge $I_t$\end{tabular}}}}%
    \put(0,0){\includegraphics[width=\unitlength,page=2]{green_construction1.pdf}}%
  \end{picture}%
\endgroup%

    }
    \hfill
    \scalebox{0.4}{ 
        \def\svgwidth{0.75\columnwidth}
\begingroup%
  \makeatletter%
  \providecommand\color[2][]{%
    \errmessage{(Inkscape) Color is used for the text in Inkscape, but the package 'color.sty' is not loaded}%
    \renewcommand\color[2][]{}%
  }%
  \providecommand\transparent[1]{%
    \errmessage{(Inkscape) Transparency is used (non-zero) for the text in Inkscape, but the package 'transparent.sty' is not loaded}%
    \renewcommand\transparent[1]{}%
  }%
  \providecommand\rotatebox[2]{#2}%
  \newcommand*\fsize{\dimexpr\f@size pt\relax}%
  \newcommand*\lineheight[1]{\fontsize{\fsize}{#1\fsize}\selectfont}%
  \ifx\svgwidth\undefined%
    \setlength{\unitlength}{129.80881939bp}%
    \ifx\svgscale\undefined%
      \relax%
    \else%
      \setlength{\unitlength}{\unitlength * \real{\svgscale}}%
    \fi%
  \else%
    \setlength{\unitlength}{\svgwidth}%
  \fi%
  \global\let\svgwidth\undefined%
  \global\let\svgscale\undefined%
  \makeatother%
  \begin{picture}(1,1.56326938)%
    \lineheight{1}%
    \setlength\tabcolsep{0pt}%
    \put(0,0){\includegraphics[width=\unitlength,page=1]{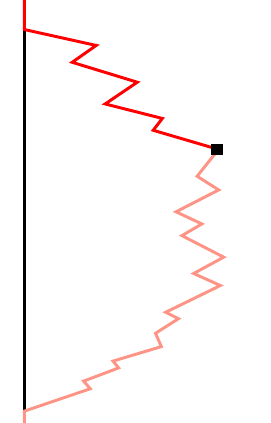}}%
    \put(0.68129584,0.97484826){\makebox(0,0)[lt]{\lineheight{1.25}\smash{\begin{tabular}[t]{l}\Huge $\varrho_t$\end{tabular}}}}%
    \put(-0.00201666,0.02379298){\makebox(0,0)[lt]{\lineheight{1.25}\smash{\begin{tabular}[t]{l}\Huge $I_t$\end{tabular}}}}%
    \put(0,0){\includegraphics[width=\unitlength,page=2]{green_construction2.pdf}}%
  \end{picture}%
\endgroup%

    }
    \hfill
    \scalebox{0.4}{ 
        \def\svgwidth{0.75\columnwidth}
\begingroup%
  \makeatletter%
  \providecommand\color[2][]{%
    \errmessage{(Inkscape) Color is used for the text in Inkscape, but the package 'color.sty' is not loaded}%
    \renewcommand\color[2][]{}%
  }%
  \providecommand\transparent[1]{%
    \errmessage{(Inkscape) Transparency is used (non-zero) for the text in Inkscape, but the package 'transparent.sty' is not loaded}%
    \renewcommand\transparent[1]{}%
  }%
  \providecommand\rotatebox[2]{#2}%
  \newcommand*\fsize{\dimexpr\f@size pt\relax}%
  \newcommand*\lineheight[1]{\fontsize{\fsize}{#1\fsize}\selectfont}%
  \ifx\svgwidth\undefined%
    \setlength{\unitlength}{130.14702744bp}%
    \ifx\svgscale\undefined%
      \relax%
    \else%
      \setlength{\unitlength}{\unitlength * \real{\svgscale}}%
    \fi%
  \else%
    \setlength{\unitlength}{\svgwidth}%
  \fi%
  \global\let\svgwidth\undefined%
  \global\let\svgscale\undefined%
  \makeatother%
  \begin{picture}(1,1.55925101)%
    \lineheight{1}%
    \setlength\tabcolsep{0pt}%
    \put(0,0){\includegraphics[width=\unitlength,page=1]{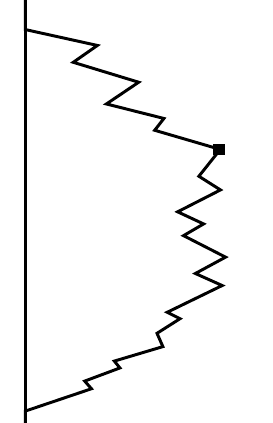}}%
    \put(0.68212413,0.97231491){\makebox(0,0)[lt]{\lineheight{1.25}\smash{\begin{tabular}[t]{l}\Huge $\varrho_t$\end{tabular}}}}%
    \put(-0.00201568,1.22013017){\makebox(0,0)[lt]{\lineheight{1.25}\smash{\begin{tabular}[t]{l}\Huge $H_t$\end{tabular}}}}%
    \put(0,0){\includegraphics[width=\unitlength,page=2]{green_construction3.pdf}}%
    \put(0.00058733,0.0237311){\makebox(0,0)[lt]{\lineheight{1.25}\smash{\begin{tabular}[t]{l}\Huge $I_t$\end{tabular}}}}%
    \put(0,0){\includegraphics[width=\unitlength,page=3]{green_construction3.pdf}}%
    \put(0.67168562,1.25974192){\makebox(0,0)[lt]{\lineheight{1.25}\smash{\begin{tabular}[t]{l}\Huge \textcolor{green}{$\cG^t$}\end{tabular}}}}%
    \put(0.49929225,0.61966755){\makebox(0,0)[lt]{\lineheight{1.25}\smash{\begin{tabular}[t]{l}\Huge $L(t,\cdot)$\end{tabular}}}}%
  \end{picture}%
\endgroup%

    }
    \caption{In the left picture, we represent some spindles hitting $L(t,\cdot)$. In the middle picture, we only keep spindles which are partially explored at time $t$, hence discarding for example the top spindle as in Remark~\ref{r:green line} (iv), then trace the parts of the  blue/right boundaries of the remaining spindles which lie at the right of $L(t,\cdot)$. In the right picture, $\cG^t$ is the concatenation of all the corresponding blue lines, merging with $L(t,\cdot)$ above $H_t$ by Remark~\ref{r:green line} (ii).  }
    \label{fig:green construction}
\end{figure}

We introduce the process
$\cY:=(\cY_x,\, x\ge I_T)$, where
\begin{equation}\label{def:Y}
\cY_x = \cG^T_{x}-L(T,x),\qquad x\ge I_T.
\end{equation}
\begin{proposition}\label{p:green}
Fix $(b,s)\in \bR_+\times \bR$. Let $T=\tau_b^s$ and $W^T$ given by \eqref{eq:WT}.  The process $\cY$ is  a non-killed ${\rm BESQ}(2-\delta\, |_{s}\, -\delta)$ process, which is the flow line starting at $(0,I_T)$ of the non-killed version of the flow $\cB-L(T,\cdot)$ given by Proposition~\ref{WN^t} 
In particular, $\cY$ is driven by $W^T$.
\end{proposition}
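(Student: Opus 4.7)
\medskip

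Write $\tilde\cY$ for the flow line starting at $(0,I_T)$ of the non-killed version $\tilde\cB$ of $\cB-L(T,\cdot)$; by Proposition~\ref{WN^t}~(ii), $\tilde\cB$ is a non-killed $\mathrm{BESQ}(0\,|_{I_T}\,2-\delta\,|_s\,-\delta)$ flow driven by $W^T$, so $\tilde\cY$ is a $\mathrm{BESQ}(2-\delta\,|_s\,-\delta)$ process starting at $0$. The plan is to verify that $\cY$ solves the same SDE and invoke pathwise uniqueness. The main ingredients are: (a) $\cY$ matches a killed flow line of $\cB-L(T,\cdot)$ on each of its excursion intervals away from $0$, so it solves the right SDE there; and (b) the zero set of $\cY$ on $(I_T,s)$ is Lebesgue-negligible, so nothing is lost in passing from the excursion pieces to the full SDE.

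First I would verify continuity of $\cY$ and the boundary value $\cY_{I_T}=0$. Continuity on the set where $\cY>0$ is clear since $\cG^T$ follows a continuous blue boundary $\cB_{r,\cdot}(a)$ of a partially explored spindle $\cT_{(a,r)}$; at the endpoints of such a region $\cG^T$ merges continuously with $L(T,\cdot)$ because $L(T,r)=a=\cB_{r,r}(a)$ at the bottom, and $L(T,r_1)=\cB_{r,r_1}(a)$ at the exit point $r_1$ (cf.\ Remark~\ref{r:green line}(i)). For $\cY_{I_T}=0$, note that any spindle partially explored at time $T$ has $T\in(\sigma_a^r,\tau_a^r)$, hence $X>r$ throughout this interval and $I_T\le r$; therefore $(0,I_T)$ lies in no partially explored spindle, and $\cG^T_{I_T}=L(T,I_T)=0$.

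Next I would decompose the open set $\{x\ge I_T:\cY_x>0\}$ into its maximal intervals. On each such interval $(r,r_1)$, continuity of $x\mapsto(L(T,x),x)$ together with disjointness of spindles force the curve to lie in a single partially explored spindle $\cT_{(a,r)}$, whose bottom must be at the left endpoint $r$ (otherwise the left endpoint would lie strictly inside the spindle, contradicting $\cY=0$ there since $\cB_{r',\cdot}(a')>\cR_{r',\cdot}(a'-)$ inside the spindle). Hence $\cY_x=\cB_{r,x}(a)-L(T,x)$ on $[r,r_1]$; since $L(T,r)=a$, this is precisely the flow line of the \emph{killed} flow $\cB-L(T,\cdot)$ starting at $(0,r)$, which by Proposition~\ref{WN^t}~(ii) satisfies the $\mathrm{BESQ}(2-\delta\,|_s\,-\delta)$ SDE driven by $W^T$ on $[r,r_1]$. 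Assembling these excursion contributions and using that the $W^T$-integral against $[0,\cY]$ vanishes on $\{\cY=0\}$, one obtains
\[
\cY_x = 2\int_{I_T}^x W^T([0,\cY_{r'}],\dd r') + \int_{I_T}^x \bar\delta(r')\,\dd r'
\]
with $\bar\delta=(2-\delta)\mathbbm{1}_{(-\infty,s)}-\delta\,\mathbbm{1}_{(s,\infty)}$, provided the drift accumulated on $\{\cY=0\}\cap(I_T,s)$ is zero.

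The hardest step is therefore to show that the Lebesgue measure of $\{x\in(I_T,s):\cY_x=0\}$ is zero almost surely. I would prove this by Fubini and the following fixed-$x$ argument: for deterministic $x$, Corollary~\ref{c:spindle zero measure} says that conditionally on $I_T<x<s$, the point $(L(T,x),x)$ a.s.\ lies in the interior of some spindle $\cT_{(a,r)}$; then the strict inequality $L(T,x)>\cR_{r,x}(a-)=L(\sigma_a^r,x)$ forces $T>\sigma_a^r$ while $L(T,x)<\cR_{r,x}(a)=L(\tau_a^r,x)$ forces $T<\tau_a^r$, so $\cT_{(a,r)}$ is partially explored at time $T$ and $\cY_x>0$. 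Once this is proved, the pathwise uniqueness of the $\mathrm{BESQ}$ SDE \eqref{eq:besselprocess} (for dimension $2-\delta\in(0,2)$ starting from $0$) gives $\cY=\tilde\cY$ on $[I_T,s]$, and the same argument with pathwise uniqueness of $\mathrm{BESQ}^{-\delta}$ from the common value $\cY_s=\tilde\cY_s$ (both absorbed at $0$) yields $\cY=\tilde\cY$ on $[s,\infty)$.
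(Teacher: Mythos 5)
Your overall strategy (show $\cY$ solves the same SDE as the flow line $\widetilde\cY$ of the non-killed version of $\cB-L(T,\cdot)$ and invoke pathwise uniqueness) is different from the paper's, and most of its ingredients are sound: the excursion-interval analysis is essentially correct (it is in effect the paper's ``easy'' direction $\cY\le\widetilde\cY$, which the paper gets from Lemma~\ref{l: green line}~(ii) plus the perfect flow property), and the assembly/uniqueness step would go through if your step (b) were available. But step (b) — Lebesgue-negligibility of $\{x\in(I_T,s):\cY_x=0\}$ — is a genuine gap as argued. You invoke Corollary~\ref{c:spindle zero measure} for the point $(L(T,x),x)$, but that corollary is proved only for \emph{fixed deterministic} points $(b,x)$, whereas $L(T,x)$ is random and built from the same white noise; its proof (duality from $(b,x)$ plus instantaneous coalescence) does not transfer to this random point without a new argument. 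The issue is not cosmetic: for $x\in(I_T,s)$, $\cY_x=0$ exactly when $(L(T,x),x)$ lies in the gasket, so your claim is a one-dimensional slice statement of the same nature as Corollary~\ref{c:zero measure 1D}, which the paper explicitly says cannot be deduced from Corollary~\ref{c:spindle zero measure} alone and which it only obtains \emph{downstream} of Proposition~\ref{p:green} (via Theorems~\ref{thm:PPP}, \ref{thm:Levy} and \ref{thm:RK-IPE}). Using it here is therefore unproven and, along the paper's own route, circular. (Note also that the cheap inequality $\cY\le\widetilde\cY$ goes the wrong way: it shows the zero set of $\widetilde\cY$ is contained in that of $\cY$, not the reverse.)

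By contrast, the paper avoids any SDE/uniqueness or measure-zero argument: it proves $\cY=\widetilde\cY$ by a two-sided pathwise comparison. The inequality $\cY\le\widetilde\cY$ is your excursion argument; the inequality $\cY\ge\widetilde\cY$ is obtained by working with $\widetilde\cY$ (which is already known to be a genuine $\besq(2-\delta\,|_s\,-\delta)$ process), setting $r_x:=\sup\{y\le x:\widetilde\cY_y=0\}$, checking $r_x\in(I_T,s)$ when $\widetilde\cY_x>0$, and showing that the point $(L(T,x),x)$ then lies in $\cT_{(a_x,r_x)}$ with $a_x=L(T,r_x)$, whence $\cG^T_x\ge\cB_{r_x,x}(a_x)$ and $\cY_x\ge\widetilde\cY_x$. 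Your proof can be repaired by replacing step (b) and the uniqueness step with this second comparison; as written, the measure-zero claim is the missing piece.
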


\begin{proof}
Let  $\cB^T:=\cB-L(T,\cdot)$. 
Recall by Proposition \ref{WN^t} that $\cB^T$ is a killed ${\rm BESQ}(0\, |_{I_T}\, 2-\delta\, |_{s}\, -\delta)$ flow driven by $W^T$; then we denote by  $\widetilde{\cB}^T$ the non-killed version of $\cB^T$ and  by $\widetilde{\cY}$ the flow line of $\widetilde{\cB}^T$ starting from $(0,I_T)$.  We want to show that $\cY=\widetilde\cY$. 

For any $x\ge I_T$ with $\cY_x>0$, by definition of $\cG^T$, $(L(T,x),x)$ is in some spindle $\cT_{(a,r)}$ which is partially explored at time $T$, with $L(T,r) =a$ and $L(T,x)<\cG^T_x=\cB_{r,x}(a)$. By Lemma \ref{l: green line} (ii), it implies that $\cB_{r,y}(a)-L(T,y)>0$ for all $y\in (r,x]$. This is to say, the flow line of $\cB^T$ starting from $(0,r)$ does not touch $0$ on $(r,x]$ and thus $\widetilde{\cB}^T_{r,x}(0)=\cB^T_{r,x}(0)$. We conclude that 
\[
\widetilde{\cB}^T_{r,x}(0)=\cB^T_{r,x}(0) =\cB_{r,x}(L(T,r))-L(T,x) =\cB_{r,x}(a)-L(T,x)= \cG^T_x - L(T,x) = \cY_x. 
\]
Since $\widetilde{\cY}_r\ge 0=\widetilde{\cB}^T_{r,r}(0)$, the perfect flow property of $\widetilde{\cB}^T$ yields $\widetilde{\cY}_x\ge \widetilde{\cB}^T_{r,x}(0)=\cY_x$. This proves  $\widetilde \cY\ge \cY$.

Note that $\cY_{I_T}=\widetilde{\cY}_{I_T}=0$. It remains to show $\cY_x\ge \widetilde{\cY}_x$ for any $x>I_T$. Let $r_x:=\sup\{y\in[I_T,x]: \widetilde{\cY}_y=0\}$. Since $\widetilde \cY\ge \cY\ge 0$, we have $\cY_x=\widetilde{\cY}_x=0$ if $r_x=x$. So we only need to consider the case $r_x<x$, in which
\begin{equation}\label{eq:Ytilde nonzero}
    \widetilde{\cY}_y>0, \;\forall \,y\in (r_x,x].
\end{equation}
We first observe that $r_x< X_T=s$ in this case. Indeed, if $r_x\ge s$, then since $\widetilde{\cB}^T$ restricted on $[s,\infty)$ is a ${\rm BESQ}^{-\delta}$ flow, it would imply  $\widetilde{\cY}_y=0$ for all $y\ge r_x$: in particular, we would have $\cY_{x}=\widetilde{\cY}_x=0$, a contradiction to $r_x<x$. 
Moreover, \eqref{eq:Ytilde nonzero} implies $r_x > I_T$ almost surely, since $\widetilde{\cY}$ touches $0$ infinitely often near $I_T$. So we must have $r_x \in (I_T,s)$. 

Now let $a_x := L(T,r_x)$. Since $r_x \in (I_T,X_T)$, we have  $a_x>0$ by \eqref{no_increase}, which implies $T \geq \sigma_{a_x}^{r_x}$. 
As $\widetilde{\cY}_{r_x} =0$, the perfect flow property of $\widetilde{\cB}^T$ gives $\widetilde{\cY}_y = \widetilde{\cB}^T_{r_x,y}(0)$ for all $y \geq r_x$. From \eqref{eq:Ytilde nonzero}, we have for $y \in (r_x,x]$:
\begin{equation}\label{eq:green_proof}
0 <\widetilde{\cY}_y  = \widetilde{\cB}^T_{r_x,y}(0) = \cB^T_{r_x,y}(0) = \max(\cB_{r_x,y}(a_x) - L(T,y),0).
\end{equation}
In particular, we have $ \cB_{r_x,y}(a_x) - L(T,y)>0$. So we conclude that 
\[
\cR_{r_x,x}(a_x-) = L(\sigma_{a_x}^{r_x},x) \leq L(T,x) < \cB_{r_x,x}(a_x).
\]
This shows $(L(T,x),x)\in \cT_{(a_x,r_x)}$ in the notation of \eqref{eq:spindle}.  Since $\cT_{(a_x,r_x)}$ is contained in a spindle, we deduce that $(L(T,x),x)$ belongs to a spindle discovered before time $T$, and thus $\cG^T_x\ge \cB_{r_x,x}(a_x)$ by definition of $\cG^T$. It follows that $\cY_x=\cG^T_x-L(T,x)\ge \cB^T_{r_x,x}(0)=\widetilde{\cY}_x$ where the last equality comes from \eqref{eq:green_proof} with $y=x$. The proof is complete.
 \end{proof}

Fix $(b,s)\in \bR_+\times \bR$. With $T=\tau_b^s$, define $W^{T,-},W^T$ as in \eqref{eq:WT-} and \eqref{eq:WT}. 
Recall by Proposition \ref{p:markov S} that $W^{T}$ is a white noise independent of $W^{T,-}$, and that $L(T,\cdot)$ and $I_T$ are measurable w.r.t.\ $W^{T,-}$. 
 Let $\widehat{W}^{T,-}$ be $W^{T,-}_\cY$ as defined  in  \eqref{eq:W-} with  $\cW=W^T$ and $Y=\cY$ given by \eqref{def:Y}, i.e.  
\[
\widehat{W}^{T,-} (\dd \ell, \dd x) = W^T (\dd \ell, \dd x), \qquad \ell \le  \cY_x, x\ge I_T. 
\]
We furthermore define
\begin{equation}\label{eq:G^bs}
\rG^{b,s}:=\sigma(W^{T,-}, \widehat{W}^{T,-}).
\end{equation} 
We refer to Figure \ref{f:Gbs} for a description of this $\sigma$-field. Intuitively, $\rG^{b,s}$ collects the information at the left of
the line $\cG^T$. For instance, we will show in Proposition \ref{c:measurability} that all spindles discovered before time $T$ are measurable w.r.t. $\rG^{b,s}$.  
For future purpose (in particular used in the proof of Theorem~\ref{thm:PPP}), we also consider sometimes 
\begin{equation}\label{eq:G^bs_y}
\rG^{b,s}_y:=\sigma\Big(W^{T,-}, \widehat{W}_{|(-\infty, y)}^{T,-}\Big), \qquad y\in \mathbb{R}, 
\end{equation} 
where $\widehat{W}_{|(-\infty, y)}^{T,-}$ is the trace of $\widehat{W}^{T,-}$ on $\mathbb{R}_+\times(-\infty, y)$. Informally speaking, $\rG^{b,s}_y$ is $\rF_{T}$ together with the $\sigma$-field at the left of $(\cG^{T}_s,\, s\le y)$, where we recall that $\rF$ is the natural filtration of the Brownian motion.

 \begin{figure}[htbp]
\centering
       \scalebox{0.4}{
        \def\svgwidth{\columnwidth}
        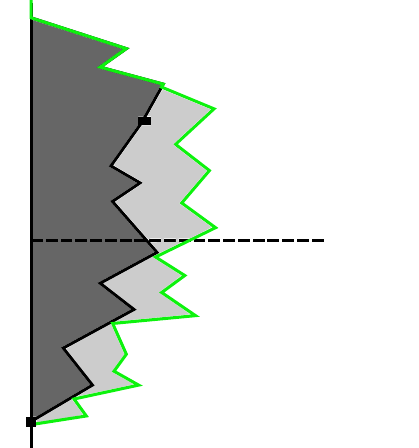
    }
\caption{For $T=\tau_b^s$, the two lines $L(T,\cdot)$ and $\cG^T$ are respectively drawn in black and in green in this figure. The dark and light grey areas represent the martingale measures $W^{T,-}$ and $\widehat{W}^{T,-}$.}
\label{f:Gbs}
\end{figure}

We define now the noise which will drive the flows at the right of $\cG^T$. To ensure some invariance in distribution illustrated in the upcoming proposition, we will rather work with a translation of this noise. With the hitting time $H_T$ defined as in \eqref{def:H}, we set
\begin{equation}\label{eq:W-hat}
   \widehat{W}^T (\dd \ell, \dd x) = 
   \begin{cases}
        W(\dd \ell, H_T+ \dd x) &  \ell\ge 0,  x< I_T-H_T,\\
        W^T(\cY_{x+H_T} + \dd \ell, H_T +\dd x), & \ell\ge 0,  x> I_T-H_T.
    \end{cases}
\end{equation}
In other words, in the notation \eqref{eq:WN translation}, $\widehat{W}^T$ is $\theta_{-H_T}W$ on $(-\infty,I_T-H_T)$ and is $\theta_{-H_T}W^{T,+}_\cY$ on $(I_T-H_T,\infty)$, where $W^{T,+}_\cY$ is defined as in  \eqref{eq:W+} with $\cW=W^T$ and $Y=\cY$.

In the notations \eqref{eq:S translation} and \eqref{eq:S-g}, we introduce the flows $\widehat{\cB}^T:=\theta_{-H_T}(\cB - \cG^T) $ and $\widehat{\cR}^T:=\theta_{-H_T} (\cR - \cG^T) $, where we implicitly set $\cG^T_x:=0$ for all $x<I_T$.

\begin{proposition}[Decomposition at $\cG^T$]\label{WN:Green}
Fix $(b,s)\in \bR_+\times\bR$ and let $T=\tau_b^s$. Let $\widehat{W}^T$ be defined as in \eqref{eq:W-hat}. 
Then $\widehat{W}^T$ is a white noise on $\bR_+\times \bR$, independent of $\rG^{b,s}$. 
Moreover,  the flows $\widehat{\cB}^T$ and $\widehat{\cR}^T $ defined above are respectively a ${\rm BESQ}(0\,|_{0}\,-\delta)$ flow and  a  ${\rm BESQ}(\delta\,|_{0}\,0)$ flow driven by $\widehat W^T$. In particular, $(\widehat{\cB}^T,\widehat{\cR}^T)$ has the same distribution as $(\cB,\cR)$.
\end{proposition}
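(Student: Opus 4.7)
The plan is to decompose $W$ in two successive stages, first along $L(T,\cdot)$ (provided by Proposition~\ref{p:markov S}: $W^{T,-}$ and $W^T$ are independent with $I_T$ measurable w.r.t.\ $W^{T,-}$), and then along $\cG^T=L(T,\cdot)+\cY$. For the second stage I would apply Proposition~\ref{p:decomp} to $\cS=\widetilde{\cB}^T$, the non-killed version of the flow $\cB-L(T,\cdot)$ from Proposition~\ref{WN^t}, and $Y=\cY$, relying on Proposition~\ref{p:green} which identifies $\cY$ as a ${\rm BESQ}(2-\delta\,|_s\,-\delta)$ flow line of $\widetilde{\cB}^T$ starting at $(0,I_T)$, driven by $W^T$. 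With the choice $\delta_1=2-\delta$, $\delta_2=-\delta$, $z=s$, $r_0=I_T$ and $\delta_3=0$, the absorption time of $\cY$ coincides with $H_T$ by~\eqref{def:H}, and Proposition~\ref{p:decomp}~(iii) splits $W^T|_{[I_T,\infty)}$ into two independent noises $\widehat{W}^{T,-}=(W^T)^-_{\cY}$ and $(W^T)^+_{\cY}$.

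Next, I would establish the noise statement by conditioning on $W^{T,-}$: then $I_T$ is deterministic, $W^T$ remains a white noise, and its three mutually disjoint pieces $W^T|_{(-\infty,I_T)}$, $(W^T)^-_{\cY}$ and $(W^T)^+_{\cY}$ are mutually independent (the first by disjoint support, the latter two by Proposition~\ref{p:decomp}~(iii)). By the definition~\eqref{eq:W-hat}, $\widehat{W}^T$ is the translation by $-H_T$ of the concatenation $W^T|_{(-\infty,I_T)}\cup (W^T)^+_{\cY}$, where $W^T|_{(-\infty,I_T)}=W|_{(-\infty,I_T)}$ because $L(T,\cdot)\equiv 0$ on this range. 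Since $H_T$ is $\rG^{b,s}$-measurable (determined by $L(T,\cdot)$ and $\cY$), conditioning on $\rG^{b,s}$ shows that $\widehat{W}^T$ is a white noise on $\bR_+\times\bR$ independent of $\rG^{b,s}$.

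For the flow identification, the definition~\eqref{eq:S-g} yields the identities $\cB-\cG^T=\widetilde{\cB}^T-\cY$ and $\cR-\cG^T=\widetilde{\cR}^T-\cY$, with $\widetilde{\cR}^T$ defined analogously. Proposition~\ref{p:decomp}~(i) with $\delta_3=0$ then shows that $\widetilde{\cB}^T-\cY$ restricted to $[I_T,\infty)$ is a ${\rm BESQ}(0\,|_{H_T}\,-\delta)$ flow driven by $(W^T)^+_{\cY}$; on $(-\infty,I_T)$ we have $\cB-\cG^T=\cB$, which has drift $0$ (since $I_T\le 0$ and hence this range lies in $(-\infty,0)$) and is driven by $W$. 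The two pieces glue continuously at $I_T$ because $\cG^T_{I_T}=0$, and the perfect flow property extends across this level, so $\cB-\cG^T$ is a ${\rm BESQ}(0\,|_{H_T}\,-\delta)$ flow driven by the pre-translation of $\widehat{W}^T$. Translating by $-H_T$ shifts the breakpoint to $0$, giving that $\widehat{\cB}^T$ is a ${\rm BESQ}(0\,|_0\,-\delta)$ flow driven by $\widehat{W}^T$. The same argument with $\delta_3=\delta$ applied to $\widetilde{\cR}^T$ gives that $\widehat{\cR}^T$ is a ${\rm BESQ}(\delta\,|_0\,0)$ flow driven by $\widehat{W}^T$, and the equality in distribution with $(\cB,\cR)$ is then immediate.

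The main delicacy is the bookkeeping needed to show that the three-breakpoint drift function of $\widetilde{\cB}^T$ on $(-\infty, I_T, s, \infty)$, after subtraction of $\cY$ and gluing with $\cB$ on $(-\infty,I_T)$, collapses to a clean two-breakpoint drift $(0\,|_{H_T}\,-\delta)$ (and analogously for $\cR$). This hinges on the almost-sure fact that $I_T\le 0$: so the natural drifts of $\cB$ and $\cR$ on $(-\infty,I_T)\subset (-\infty,0)$ are $0$ and $\delta$ respectively, matching the $\delta_3=0$ and $\delta_3=\delta$ coming from Proposition~\ref{p:decomp}. Finally, the random starting level $I_T$ (measurable w.r.t.\ $W^{T,-}$ but not a stopping time for the $\cW$-filtration) must be handled through a conditioning argument on $W^{T,-}$ when invoking Proposition~\ref{p:decomp}.
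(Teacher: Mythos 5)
Your proof follows essentially the same two-stage decomposition as the paper: first along $L(T,\cdot)$ via Proposition~\ref{p:markov S}, then along $\cY$ via Propositions~\ref{p:green} and~\ref{p:decomp}, with the parameter choice $\delta_1=2-\delta$, $\delta_2=-\delta$, $z=s$, $r_0=I_T$, $\delta_3=0$ (resp.\ $\delta_3=\delta$) and the identification $\zeta=H_T$, then gluing across $I_T$ and translating by $-H_T$. The only real point of divergence is cosmetic: you work throughout with the non-killed $\widetilde{\cB}^T$ and posit the identity $\cB-\cG^T=\widetilde{\cB}^T-\cY$, whereas the paper applies Proposition~\ref{p:decomp}~(i) to the killed $\cB-L(T,\cdot)$ and records in a remark that the killed and non-killed versions give the same $\cS^+$ because, starting above $\cY$, the flow lines of either version coalesce with $\cY$ once they touch it and thereafter agree. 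Your claimed identity carries exactly that same burden of justification, so you would want to spell it out rather than assert it; once that is done, the two arguments are interchangeable.
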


\begin{proof}
 Recall that $W^T$  is independent of $(W^{T,-},I_T)$. By Proposition~\ref{p:green}, the process $\cY$ in \eqref{def:Y} is
 a non-killed $\besq$ flow line driven by $W^T$ starting from $(0,I_T)$. Let $W_\cY^{T,+}$ be the white noise  on $\bR_+ \times [I_T, \infty)$ defined as in  \eqref{eq:W+} with $\cY$ and $W^T$. Applying Proposition~\ref{p:decomp} (iii), the white noise $W^{T,+}_{\cY}$   is independent of $\widehat{W}^{T,-}=W^{T,-}_{\cY}$. In view of the definition of $\rG^{b,s}$ in \eqref{eq:G^bs}, we deduce that the concatenation 
\[
\begin{cases}
    W^{T,+}_\cY &\textrm{on  }  (I_T, \infty), \\
    W^T=W &\textrm{on } (-\infty, I_T),
\end{cases}
\]
 yields a white noise on $\bR_+\times \bR$ independent of $\rG^{b,s}$. By  Proposition~\ref{p:decomp} (ii),  $\cY$  is measurable w.r.t. $\rG^{b,s}$. Since $H_T=\inf\{x\ge s\colon \cY_x=0\}$ by definition \eqref{def:H}, so is $H_T$. Translating the latter white noise by $-H_T$ spatially as in \eqref{eq:WN translation}, we still get a white noise on $\bR_+\times \bR$ independent of $\rG^{b,s}$, which 
coincides with $\widehat{W}^T$ defined as in \eqref{eq:W-hat}.

 We first consider $(I_T,\infty)$. By Proposition \ref{WN^t} (ii), on $(I_T,\infty)$, $\cB-L(T,\cdot)$ is a killed ${\rm BESQ}( 2-\delta\, |_{s}\, -\delta)$ flow driven by $W^T$, while by Proposition \ref{p:green}, the process $\cY$ is the flow line starting at $(0,I_T)$ of the non-killed ${\rm BESQ}( 2-\delta\, |_{s}\, -\delta)$ flow driven by $W^T$, which vanishes when hitting 0 at $H_T\ge s$. By Proposition~\ref{p:decomp}~(i) with $(\cB-L(T,\cdot),\cY)$ in place of  $(\cS,Y)$, $\cB-\cG^T$ is a ${\rm BESQ}(0\, |_{H_T}\, -\delta)$ flow driven by $W_\cY^{T,+}$. 
 Although the proposition was stated rigorously for the non-killed version of $\cB - L(T, \cdot)$, the conclusion holds because the flow lines of both processes stay below $\cY$ after their first meeting point.
 
 Similarly, $\cR - L(T,\cdot)$ is on $(I_T,\infty)$ a ${\rm BESQ}(2\, |_{s}\, 0)$ flow driven by $W_\cY^{T,+}$ by Proposition \ref{WN^t} (i), hence $\cR-\cG^T$ is a ${\rm BESQ}(\delta\,|_{H_T}\,0)$ flow driven by $\theta_{H_T}\widehat{W}^T$ by applying Proposition \ref{p:decomp} (i) to $(\cR-L(T,\cdot),\cY)$ in place of  $(\cS,Y)$.

Finally, on $(-\infty,I_T)$, $\cG^T=0$ and we recall that  $\cB$, resp. $\cR$, is a ${\rm BESQ}^0$, resp. ${\rm BESQ}^{\delta}$ flow driven by $W=\theta_{H_T}\widehat{W}^T$.
Applying the perfect flow property at level $I_T$ and translating both flows by $-H_T$ yields the result.
\end{proof}

\begin{proposition}\label{c:measurability}
Fix $(b,s)\in (0,\infty)\times \bR$ and let $T=\tau_b^s$. Then the collection of spindles $\cT$ discovered before time $T$, their entrance times $s_{\cT}$ and their first exit times $t_{\cT}$ are measurable with respect to $\rG^{b,s}$. 

More precisely, with $\rG_s^{b,s}$ denoting the $\sigma$-field defined in \eqref{eq:G^bs_y}, the following are $\rG_s^{b,s}$-measurable: 
\begin{enumerate}[(i)]
\item the bottom points $(a,r)$ of any spindle discovered before time $T$,
\item the processes $(\cR_{r,x}(a-),\, x\ge r)$ for $(a,r)$ as in (i),
\item the spindles discovered before time $T$ which do not contain $(b,s)$.
\end{enumerate}
\end{proposition}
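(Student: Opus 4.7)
My plan is to identify each bottom via the dual-flow characterization of Proposition~\ref{p:ancestor equiv} applied to countable dense test points, and to verify that the resulting dual flows remain in a region whose underlying noise is captured by $\rG_s^{b,s}$. The key structural observation is that any spindle $\cT_{(a,r)}$ discovered before $T$ with $r \geq s$ is already fully explored at time $T$: its excursion $(\sigma_a^r, \tau_a^r)$ above $r$ stays strictly above $r$, but $X_T = s \leq r$, so $T \notin (\sigma_a^r, \tau_a^r)$; combined with $\sigma_a^r < T$, this forces $\tau_a^r \leq T$. Hence such a spindle lies entirely below $L(T, \cdot)$. Moreover, a partially explored spindle $\cT_{(a,r)}$ not containing $(b, s)$ has its unexplored strip confined to $x<s$: otherwise the right boundary at level $s$ would exceed $b = L(T, s)$, so $(b, s)$ would lie in the spindle.

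I next show that the relevant flow data is $\rG_s^{b,s}$-measurable. By Proposition~\ref{p:markov S}~(ii), $L(T, \cdot)$ is $W^{T,-}$-measurable. By Proposition~\ref{p:green}, $\cY = \cG^T - L(T, \cdot)$ is a flow line of a ${\rm BESQ}$-flow driven by $W^T$, so Proposition~\ref{p:decomp}~(ii) gives that $\cY|_{[I_T, s)}$ is $\widehat{W}^{T,-}|_{(-\infty,s)}$-measurable, and hence $\cG^T|_{(-\infty, s)}$ is $\rG_s^{b,s}$-measurable. Iterating Propositions~\ref{p:decomp}~(ii) and \ref{p:dual S+}~(iii)---first with $Y = L(T, \cdot)$ viewed as a forward (on $(s,\infty)$) or dual (on $(-\infty,s)$) flow line of $\cR$, and then with $Y = \cY$ on the region between $L(T,\cdot)$ and $\cG^T$ restricted to $(-\infty, s)$---I obtain that the restrictions of each of $\cR, \cB, \cR^*, \cB^*$ to the region
\[
\Omega_s := \{(b'', x'') : x'' \geq s,\ b'' \leq L(T, x'')\} \cup \{(b'', x'') : x'' < s,\ b'' \leq \cG^T_{x''}\}
\]
are all $\rG_s^{b,s}$-measurable.

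For (i), I fix a countable dense set $\cD \subset \bR_+ \times \bR$ and, for each $(b', x') \in \cD$ with $0 < b' < L(T, x')$, define the candidate bottom
\[
R(b', x') := \inf\{y \leq x' : \cR^*_{-x', -y}(b') = \cB^*_{-x', -y}(b')\}, \qquad A(b', x') := \cR^*_{-x', -R(b',x')}(b'),
\]
which by Proposition~\ref{p:ancestor equiv} equals the bottom of the spindle containing $(b', x')$ whenever such a spindle exists and $(b',x')$ lies in its interior. I then check that for any spindle $\cT_{(a, r)}$ discovered before $T$, one can choose a dense test point $(b', x')$ in its interior such that both dual flows remain in $\Omega_s$: if $r \geq s$ the whole spindle is in the fully-explored region and any such $(b',x')$ works, while if $r < s$ one picks $(b', x')$ in the left part with $x' < s$ (possible since the left boundary runs through $(r, \min(z,s))$), and the flows $\cR^*,\cB^*$ from $(b',x')$ stay in $\cT_{(a, r)} \subseteq \{b'' \leq \cG^T_{x''}\}$ by Lemma~\ref{lem:cT}~(ii) and Lemma~\ref{l:basic blue red}, with $y \in (r, x') \subseteq (-\infty, s)$. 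Combining with the coalescence results of Propositions~\ref{p:decomp_coalescence} and \ref{p:decompdual_coalescence}, which allow recovery of flow values via countable limits, I obtain $(A, R)$ as a $\rG_s^{b,s}$-measurable function. Statement (ii) then follows because $\cR_{r, \cdot}(a-)$ lies below $L(T, \cdot)$ and is recovered by coalescence from the restriction of $\cR$ to $\Omega_s$; (iii) follows because for a spindle not containing $(b, s)$, the right boundary $\cB_{r, \cdot}(a)$ is entirely in $\Omega_s$ by the first paragraph, so the whole spindle $\cT_{(a, r)} = \{(b'', x'') : \cR_{r, x''}(a-) \leq b'' < \cB_{r, x''}(a)\}$ is $\rG_s^{b,s}$-measurable.

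The main obstacle is verifying that the $\cB^*$-flow from a suitably chosen test point stays in a region whose noise is $\rG_s^{b,s}$-measurable; this rests on the containment of $\cB^*$ inside its surrounding spindle (Lemma~\ref{lem:cT}~(ii) and Lemma~\ref{l:basic blue red}) and on the structural fact that unexplored parts of the spindles needed to detect bottoms lie strictly to the left of $s$, so the decomposition arguments can be carried out using only the $\rG_s^{b,s}$-visible portion $\cG^T|_{(-\infty,s)}$ of the boundary line.
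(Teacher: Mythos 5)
Your two structural observations (spindles with bottom level $r\ge s$ discovered before $T$ are fully explored; a partially explored spindle avoids $(b,s)$ iff its unexplored strip lies at levels $<s$) are correct, and the overall skeleton (dense test points, Proposition~\ref{p:ancestor equiv}, decomposition plus coalescence) is the same as the paper's. The genuine gap is the pivotal claim that ``the restrictions of $\cR,\cB,\cR^*,\cB^*$ to $\Omega_s$ are $\rG_s^{b,s}$-measurable'', obtained by ``iterating'' Propositions~\ref{p:decomp}~(ii) and \ref{p:dual S+}~(iii). Those propositions decompose a flow relative to a \emph{single} flow line $Y$ of a BESQ flow with matched parameters, whereas the upper boundary of your region $\Omega_s$ is the concatenation of $L(T,\cdot)$ (for $x\ge s$) with $\cG^T$ (for $x<s$), and $\cG^T$ is not a flow line of $\cR$ or $\cB$: it is $L(T,\cdot)+\cY$, with $\cY$ a flow line of the \emph{difference} flow $\cB-L(T,\cdot)$ driven by $W^T$ (Proposition~\ref{p:green}). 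To control flow lines living between $L(T,\cdot)$ and $\cG^T$ one must pass to the difference flows and their duals relative to $\cY$, prove in each configuration an identity of the type ``dual of the shifted flow $=$ shifted dual'' (cf.\ \eqref{eq:dual difference}), and verify the parameter hypotheses of Propositions~\ref{p:decomp} and \ref{p:dual S+}; none of this is supplied. Worse, with your choice of test point ``in the left part'' of a partially explored spindle, the point lies below $L(T,\cdot)$ but the dual line $\cB^*_{-x',-\cdot}(b')$ can re-enter the unexplored strip, i.e.\ cross $L(T,\cdot)$ at random levels; the capped version $\min(\cB^*_{-x',-\cdot}(b'),L(T,\cdot))$, which is what Proposition~\ref{p:decomp}~(ii) actually gives you from $W^{T,-}$, can spuriously meet $\cR^*_{-x',-\cdot}(b')$ before the true bottom level, so $R(b',x')$ computed from the $\rG_s^{b,s}$-visible data is not obviously the bottom. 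This is precisely the difficulty the paper's proof is organized around: it never follows the dual lines through the strip, but caps the blue dual line from the test point at $L(T,\cdot)$, records only the last contact point $(b_0,s_0)$, and then locates the bottom of the partially explored spindle through the contact set $\{\cG^T=L(T,\cdot)\}$, i.e.\ the zero set of $\cY$, which is measurable from $\widehat{W}^{T,-}_{|(I_T,s)}$. Your outline either needs this device or a full development of the difference-flow dual machinery with a patching argument across the crossings of $L(T,\cdot)$.

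A second omission: the proposition also asserts that the entrance times $s_\cT$ and the first exit times $t_\cT$ are $\rG^{b,s}$-measurable. For $s_\cT=\sigma_a^r$ this follows easily from Proposition~\ref{p:markov S}~(ii) once $(a,r)$ is known, but $t_\cT$ is nowhere addressed in your argument, and it does not follow from knowing the spindle as a subset of the plane: for the spindle containing $(b,s)$ one has $t_\cT>T$, so its measurability with respect to $\rG^{b,s}$ is a statement about the future of the exploration. The paper needs a separate argument here (reconstructing $\tau_{a_y}^y=\int L(\tau_{a_y}^y,x)\,\dd x$ via the occupation times formula and a decomposition at $\cG^T$, then letting $y\to z$), and without something of this kind your proof does not cover the first assertion of the statement.
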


Note that, we restrict $b$ to $(0, \infty)$ in Proposition~\ref{c:measurability} to simplify the proof. This excludes the degenerate case $b=0$, which would require a separate discussion and is not useful for our later purposes.

\begin{proof}
Since any spindle discovered before time $T=\tau_b^s$ contains  some rational point $(b',s')$ which satisfies $\tau_{b'}^{s'}\le \tau_b^s$, it suffices to fix some point $(b',s')\in (0,\infty)\times \bR$ and consider the spindle containing $(b',s')$, denoted by $\cT=\cT_{(a,r)}$ with bottom point $(a,r)$.   
Recall that $\rF$ denotes the natural filtration of the Brownian motion $B$. Since $\{\tau_{b'}^{s'}\le \tau_b^s\}\in \rF_{\tau_b^s}$,  Proposition \ref{p:markov S} (ii) entails that 
$\{\tau_{b'}^{s'}\le \tau_b^s\}\in\rG^{b,s}_s$.
We argue on the event $\{\tau_{b'}^{s'}\le \tau_b^s\}$ from now on. 
We can also restrict to the case $s'< s$; otherwise, it means that $\cT$ contains only points $(b',s')$ with $s'>s$, which implies by Proposition~\ref{prop:explore} that $\cT$ is fully explored by time $T$ and hence $\mathcal{F_T} = \sigma(W^{T,-})$-measurable.

Note that, since $\tau_{b'}^{s'}\le T=\tau_{b}^{s}$,  for all $x\le s'$, 
\begin{equation}\label{eq:meas_R}
     L(\tau_{b'}^{s'},x)=\cR^*_{-s',-x}(b') \le \cR^*_{-s,-x}(b)=L(T,x).
\end{equation} 
Let 
\begin{align}\label{eq:meas_x0}
s_0 &:=\sup\{x\le s'\colon \cB^*_{-s',-x}(b')= \cR^*_{-s,-x}(b)\}, \\  b_0 &:=\cB^*_{-s',-s_0}(b')= \cR^*_{-s,-s_0}(b)=L(T,s_0). \nonumber
\end{align}
Equation \eqref{eq:meas_R} applied to $x=s'$ yields that $\cR^*_{-s,-s'}(b)\ge b'$, and since $(b',s')$ and $(b,s)$ are fixed, almost surely
$\cR^*_{-s,-s'}(b)>b'$. Together with the fact that $\cR^*_{-s,-I_T}(b)=0$, it  implies that $s_0$ is indeed finite and $s_0\in (I_T,s')$.
We will show later that  $(b_0,s_0)$ almost surely lies in a spindle partially explored at time $T$, and we let $(a_0,r_0)$ be its bottom point. By Proposition~\ref{p:ancestor equiv}, 
 \[
     \cR^*_{-s_0,-r_0}(b_0)=\cB^*_{-s_0,-r_0}(b_0)=a_0
 \]
 and $\cB^*_{-s_0,-x}(b_0)>\cR^*_{-s_0,-x}(b_0)$ for all $x< r_0$. By the perfect flow property, $\cB^*_{-s',-x}(b')=\cB^*_{-s_0,-x}(b_0)$ and $\cR^*_{-s,-x}(b)=\cR^*_{-s_0,-x}(b_0)$. We proved that 
  \begin{equation}\label{eq:a0r0}
     \cR^*_{-s,-r_0}(b)=\cB^*_{-s',-r_0}(b')=a_0
 \end{equation}
 and $\cB^*_{-s',-x}(b')>\cR^*_{-s,-x}(b)$ for all $x<r_0$.
 Then \eqref{eq:meas_R} entails $\cB^*_{-s',-x}(b')>\cR^*_{-s',-x}(b')$ for all $x<r_0$.  As $(a,r)$ is the bottom point of the spindle containing $(b',s')$, another use of Proposition \ref{p:ancestor equiv} yields that
 \begin{equation}\label{proof_meas_r}
 r=\inf\{x\in [r_0,s']:\cR^*_{-s',-x}(b')=\cB^*_{-s',-x}(b')\}
 \end{equation}

\noindent and $a=\cR^*_{-s',-r}(b')=\cB^*_{-s',-r}(b')$. 
Note that $r\neq s_0$ a.s., since by independence one cannot have $\cB^*_{-s',\cdot}(b')-\cR^*_{-s',\cdot}(b')=0$ at the hitting time of $0$ by $\cR^*_{-s,\cdot}(b)-\cB^*_{-s',\cdot}(b)$ (apply Proposition~\ref{p:decomp} with $\cS=\cR^*$ and $Y=\cB^*_{-s',\cdot}(b')$).

If $r>s_0$ (hence $(a,r)\neq (a_0,r_0)$), one has $a=\cB^*_{-s',-r}(b')<\cR^*_{-s,-r}(b)=L(T,r)$ by definition of $s_0$, so $T\ge \tau_a^r$ and $\cT$ has been fully explored at time $T$. 

If $r<s_0$, then $(b_0,s_0)=(\cB^*_{-s',-s_0}(b'),s_0) \in \cT$ by Lemma \ref{lem:cT} (ii), hence $(a,r)=(a_0,r_0)$; as we admitted that $\cT_{(a_0,r_0)}$ is partially explored at time $T$,  then $\cT=\cT_{(a,r)}=\cT_{(a_0,r_0)}$ is partially explored. 
Let
\begin{align*}
    r_1:=\inf\{x>r_0\colon \cG^T_x= L(T,x)\},\quad
    a_1:=\cG^T_{r_1}=L(T,r_1).
\end{align*}

\noindent By Remark \ref{r:green line} (i), the spindle $\cT_{(a_0,r_0)}$ contains $(b,s)$ if and only if $r_1>s$. See Figure \ref{f:proof_measurability} for some illustrations of the different cases.

 \begin{figure}[htbp]
\centering
       \scalebox{0.4}{ 
        \def\svgwidth{0.8\columnwidth}
        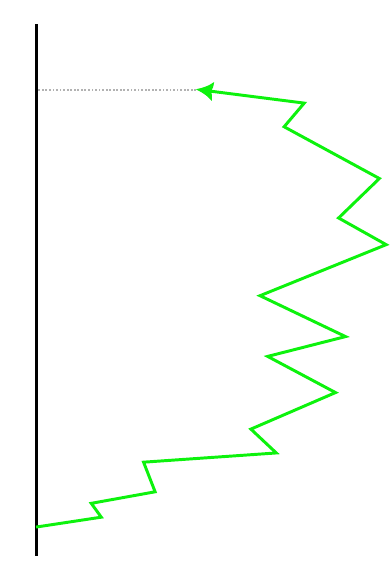
    }
    \hfill
     \scalebox{0.4}{ 
        \def\svgwidth{0.8\columnwidth}
        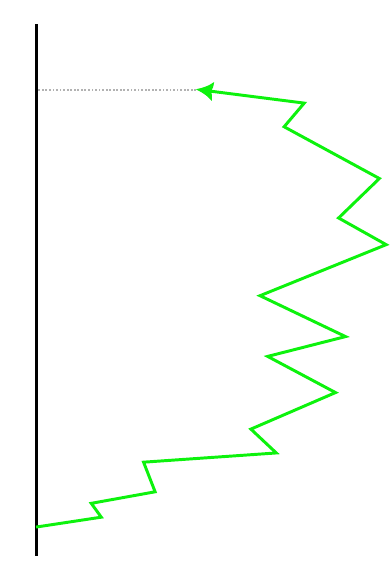
    }
    \scalebox{0.4}{ 
        \def\svgwidth{0.8\columnwidth}
        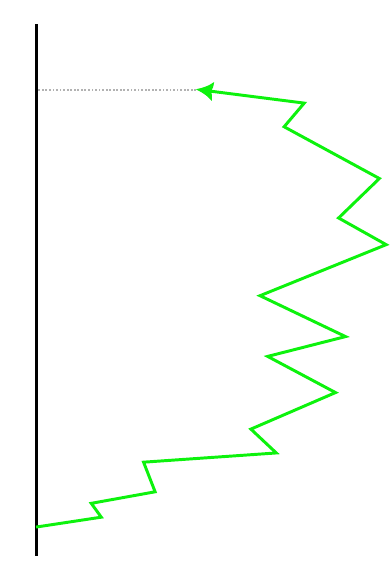
    }
\caption{The pictures illustrate the cases $(a_0,r_0)\neq (a,r)$ (left), $(a_0,r_0)=(a,r),\, r_1<s$ (middle) and $(a_0,r_0)= (a,r),\, r_1>s$ (right) corresponding respectively to whether the spindle $\cT$ is fully or only partially explored and whether $(b,s)\in \cT_{(a_0,r_0)}$. }
\label{f:proof_measurability}
\end{figure}

We prove below the following statements.
\begin{enumerate}[(I)]
    \item The bottom point $(a,r)$ is $\rG_s^{b,s}$-measurable.
    \item The entrance time $s_{\cT}=\sigma_a^r$ and the process $L(\sigma_a^r,x)=\cR_{r,x}(a-)$, $x\ge r$ are $\rG_s^{b,s}$-measurable.
    \item  If $(a,r)\neq (a_0,r_0)$, the process $(\cB_{r,x}(a),\,x\ge r)$ is $\rG_s^{b,s}$-measurable.
    \item If $(a,r)= (a_0,r_0)$, the process $(\cB_{r,x}(a),\,x\in [r,r_1])$ is $\rG^{b,s}$-measurable and even $\rG^{b,s}_s$-measurable if $r_1<s$.
    \item  If $(a,r)= (a_0,r_0)$ and $r_1<s$, the process $(\cB_{r,x}(a),\,x\ge r_1)$ is $\rG_s^{b,s}$-measurable.
    \item  If $(a,r)= (a_0,r_0)$ and $r_1>s$, the process $(\cB_{r,x}(a),\,x\ge r_1)$ is $\rG^{b,s}$-measurable.
    \item The first exit time $t_{\cT}$ is $\rG^{b,s}$-measurable.
\end{enumerate}

\noindent These statements imply the proposition. Indeed, the measurability of $(\cR_{r,x}(a-),\, x\ge r)$ ,  $(\cB_{r,x}(a),\, x\ge r)$, $s_{\cT}=\sigma_a^r$  and  $t_{\cT}$  w.r.t.\ $\mathscr{G}^{b,s}$ are respectively consequences of (II), (III+IV+V+VI), (II) and (VII). Statements  (i) and (ii) of the proposition are consequences respectively of (I) and  (II). The spindles in  (iii) are either spindles that were fully explored, or spindles that are partially explored except the one containing $(b,s)$. Their measurability is guaranteed respectively  by  (I+II+III) and (I+II+IV+V). We turn to the proofs of (I)--(VII). In the course of the proof of (I), we will prove the claim that $(b_0,s_0)$ is in a partially explored spindle $\cT_{(a_0,r_0)}$.

(I)   Recalling that $\tau_{b'}^{s'}\le T$, the processes $L(\tau_{b'}^{s'},\cdot)$ and $L(T,\cdot)$ are measurable w.r.t. $\rF_{T}=\sigma(W^{T,-})$. Then both $s_0$ and the process  $(\cB^*_{-s',-x}(b'),\, x\in [s_0,s'])$ are measurable w.r.t.\ $W^{T,-}$  by Proposition \ref{p:decomp} (ii) applied to $\cS=\cB^*$ and $Y=\cR^*_{-s,\cdot}(b)$ (the fact that $Y$ is killed does not matter). By Proposition \ref{p:green}, $\cY$ is a  ${\rm BESQ}(2-\delta\, |_{s}\, -\delta)$ flow line driven by $W^T$. 
Since $s_0$ is $W^{T,-}$-measurable, it is independent of $W^T$. We notice that $s_0<s$ and hence obtain $\cY_{s_0}>0$ a.s., i.e. $\cG^T_{s_0}>L(T,s_0)=b_0$. 
By definition of $\cG^T$, it implies that $(b_0,s_0)$ is in some spindle which is partially explored at time $T$ indeed. By the representation in Remark \ref{r:green line} (i), its bottom point $(a_0,r_0)$ satisfies
\begin{align*}\vspace{-0.1cm}
r_0 =\sup\{x \in(I_T,s_0):\cG^T_x=L(T,x)\}, \qquad 
a_0 = L(T,r_0).\vspace{-0.1cm}
\end{align*}
 By Proposition \ref{p:decomp} (ii) applied to $Y=\cY$, $r_0$ is measurable w.r.t.\ $(s_0,I_T)$  and $\widehat{W}^{T,-}_{|(I_T,s)}$. We deduce that $(a_0,r_0)$ is measurable w.r.t.\ $\rG^{b,s}_s$. 
If  $\cR^*_{-s',-r_0}(b')=\cR^*_{-s,-r_0}(b)$, then equations \eqref{eq:a0r0} and \eqref{proof_meas_r} show that $r=r_0$, hence we have $(a,r)=(a_0,r_0)$. Otherwise, we know that $r>s_0$ therefore \eqref{proof_meas_r} becomes\vspace{-0.1cm}
\[
 r=\inf\{x\in [s_0,s']:\cR^*_{-s',-x}(b')=\cB^*_{-s',-x}(b')\}.\vspace{-0.2cm}
 \]

\noindent It shows the measurability of $(a,r)$ w.r.t.\ $\rG_s^{b,s}$.

(II)  Since $(a,r)$ is $\rG^{b,s}_s$-measurable and $\sigma_a^r<T$, we have that $\sigma_a^r$ and $L(\sigma_a^r,x)=\cR_{r,x}(a-)$, $x\ge r$ are measurable w.r.t.\ $\rG^{b,s}_s$ by Proposition \ref{p:markov S} (ii).

(III)  In the case $(a,r)\neq (a_0,r_0)$, $r>s_0$ a.s.  hence $a=\cB^*_{-s',-r}(b')<\cR^*_{-s,-r}(b)$ by \eqref{eq:meas_x0}.  The measurability of $(\cB_{r,x}(a),\,x\in [r,s])$ (starting from a random point) w.r.t.\ $\rG^{b,s}_s$ follows from equation~\eqref{decompdual:coalescence} and Proposition~\ref{p:dual S+}~(iii) applied to $Y=\cR^*_{-s,\cdot}(b)$ and $\cS=\cB$. 
Recall that $\cB\le \cR$ by the comparison principle. Moreover $\cB_{r,s}(a)\le \cR^*_{-s,-s}(b)=b$ by Proposition~\ref{p:dual S+}~(ii). 
The measurability of  $\cB_{r,x}(a)=\cB_{s,x}(\cB_{r,s}(a))$ for $x\ge s$ follows from   Proposition~\ref{p:decomp}~(ii) and equation \eqref{decomp:coalescence} with $\cS=\cB$ and $Y=\cR_{s,\cdot}(b)$.

 (IV)   By definition of $\cG^T$ in Definition \ref{green}, we have $\cB_{r_0,x}(a_0)=\cG^T_x$ for $x\in [r_0,r_1]$. Since $L(T,\cdot)$ is measurable w.r.t.\ $W^{T,-}$ by Proposition~\ref{p:markov S}~(ii) and $\cY=\cG^T-L(T,\cdot)$ is driven by\ $\widehat{W}^{T,-}$ by Proposition~\ref{p:decomp}~(ii) applied to $Y=\cY$, we deduce that $(\cB_{r,x}(a),\, x\in [r,r_1])$ is measurable w.r.t.\ $\rG^{b,s}$  and $(\cB_{r,x}(a),\, x\in [r,\min(r_1,s)])$  is measurable w.r.t.\ $\rG^{b,s}_s$. Note that the event $\{r_1<s\}$ is also measurable w.r.t.\ $\rG^{b,s}_s$.

 (V) If $(a,r)= (a_0,r_0)$ and $r_1<s$,   $a_1=L(T,r_1)=\cR^*_{-s,-r_1}(b)>0$, so by Proposition \ref{c: no middle bifurcation}, $\cB_{r_1,x}(a_1-)=\cB_{r_1,x}(a_1)$. By Proposition~\ref{p:dual S+}~(iii) and equation \eqref{decompdual:coalescence-} applied to $\cS=\cB$ and $Y=\cR^*_{-s,\cdot}(b)$, we have that $(\cB_{r,x}(a),\,x\in [r_1,s])=(\cB_{r_1,x}(a_1),\, x\in [r_1,s])$  is measurable w.r.t.\ $\rG^{b,s}_s$. The measurability extends to $x\ge s$ as in (III). 

(VI)   Since $r_1>s$, $a_1=L(T,r_1)=\cR_{s,r_1}(b)$. We have either $a_1=\cB_{r,r_1}(a)=\cR_{s,r_1}(b)=0$ hence $\cB_{r,x}(a)=0$ for all $x\ge r_1$; or $a_1=\cB_{r,r_1}(a)=\cR_{s,r_1}(b)>0$ and we apply  Proposition \ref{p:decomp} (ii) and equation \eqref{decomp:coalescence} with $\cS=\cB$ and $Y=\cR_{s,\cdot}(b)$ to get the measurability of $(\cB_{r,x}(a),\, x\ge r_1)=(\cB_{r_1,x}(a_1),\, x\ge r_1)$ w.r.t.\ $\rG^{b,s}$.

(VII) Let $(c,z)$ be the top point  of $\cT$, which is measurable w.r.t. $\rG^{b,s}$ by the statements (I)-(VI). By Proposition \ref{prop:explore} (ii), $t_{\cT}=\inf\{t\ge \sigma_a^r\colon X_t=z\}$. Therefore $\min(t_{\cT},T)$ is  measurable w.r.t. $(\sigma_a^r,z)$ and $\rF_T$, hence w.r.t. to $\rG^{b,s}$. It already deals with the case $t_{\cT}<T$. In the remaining case, it means that $(b,s)$ is in the spindle $\cT$ and the explorer has not reached the top of the spindle yet. For $y\in (s,z)$, $a_y:=L(T,y)<\cB_{r,y}(a)=\cG^T_y$ by definition of $\cG^T$. In the notation of Proposition \ref{p:markov S} (iv), we have $L(\tau_{a_y}^y,x)-L(T,x)=\cR^*_{-y,-x}(a_y)-L^*(T,-x)$ for $x\le y$.  By Proposition \ref{p:markov S} (iv), $\cR^*-L^*(T,\cdot)$ is on $(-\infty,-I_T)$ a ${\rm BESQ}(2\,|_{-s} 0)$ flow driven by $-W^{T,*}$. By Proposition \ref{p:dual S+} (iii) and equation \eqref{decomp:coalescence} applied to $Y=\cG^T-L(T,\cdot)$ and $\cS=\cR^*-L^*(T,\cdot)$, the process $L(\tau_{a_y}^y,x)-L(T,x)$, $x \in (I_T,y)$ is measurable w.r.t.\ $\rG^{b,s}$.    
For $x\ge y$, $L(\tau_{a_y}^y,x)=\cR_{y,x}(a_y)=\cR_{s,x}(b)=L(T,x)$ by the perfect flow property, therefore it is also measurable w.r.t. $ \rG^{b,s}$ by Proposition \ref{p:markov S} (ii). By the occupation times formula, $\tau_{a_y}^y=\int_{I_T}^{+\infty} L(\tau_{a_y}^y,x)\dd x$ is $\rG^{b,s}$-measurable. 
  Making $y\to z$, hence $(a_y,y)\to (c,z)$, yields that $t_{\cT}$ is also $\rG^{b,s}$-measurable. This completes the proof. 
\end{proof}

We finish the section with a lemma on spindles discovered after time $T$.
\begin{lemma}\label{l:spindles after T}
    Fix $(b,s)\in (0,\infty)\times\bR$ and let $T=\tau_b^s$. Almost surely, bottom points of spindles discovered after time $T$ do not lie on $(\cG^T_x,x)_{x\ge I_T}$.
\end{lemma}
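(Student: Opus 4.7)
The plan is to argue by contradiction. Suppose there exists a spindle $\cT_{(a,r)}$ with bottom point $(a,r) \in (0,\infty)\times \bR$ which is discovered after time $T$, i.e.\ $s_{\cT_{(a,r)}} = \sigma_a^r > T$ by Proposition~\ref{prop:explore}~(i), and suppose moreover that $(a,r)$ lies on the graph of $\cG^T$, so that $r \ge I_T$ and $a = \cG^T_r$. According to Definition~\ref{green}, exactly one of two alternatives occurs: either $\cG^T_r = L(T,r)$, or $(L(T,r),r)$ belongs to some spindle $\cT_{(a',r')}$ discovered before time $T$ and $\cG^T_r = \cB_{r',r}(a')$. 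I will show each alternative yields a contradiction.

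In the first case $\cG^T_r = L(T,r)$, we have $a = L(T,r) > 0$. Since the Brownian local time $L(\cdot, r)$ is continuous and non-decreasing in $t$, and reaches the value $a$ at time $T$, we have
\[
\sigma_a^r = \sup\{ t\ge 0 : L(t,r) < a\} \le T,
\]
which directly contradicts the assumption $\sigma_a^r > T$.

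In the second case, since the spindle $\cT_{(a',r')}$ consists of points $(b,x)$ with $x > r'$ (Definition~\ref{def:spindle}), the condition $(L(T,r),r) \in \cT_{(a',r')}$ forces $r > r'$. Then $a = \cB_{r',r}(a')$ means that the bottom point $(a,r)$ lies on the blue flow line $\cB_{r',\cdot}(a')$ \emph{strictly after} its starting point $(a',r')$. By Lemma~\ref{l:bottom_boundary}, bottom points of spindles cannot belong to any flow line of $\cR$ or $\cB$ outside its starting point, so this is a contradiction.

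Since both cases are impossible, no bottom point of a spindle discovered after $T$ lies on the graph of $\cG^T$, proving the lemma. The argument is essentially structural, reducing to the two building blocks already established: the dichotomy in the definition of $\cG^T$, and the fact (Lemma~\ref{l:bottom_boundary}) that bottom points avoid the forward/dual flow lines. No obstacle beyond correctly invoking these is anticipated; the only mildly delicate point is verifying in the first case that $L(T,r)=a$ together with continuity and monotonicity of $t \mapsto L(t,r)$ genuinely forces $\sigma_a^r \le T$, which is immediate but worth stating explicitly.
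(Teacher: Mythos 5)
Your proof is correct. The structure — split according to the two cases of Definition~\ref{green} — matches the paper's, and case 2 (where $\cG^T_r>L(T,r)$) is argued identically via Lemma~\ref{l:bottom_boundary}. In case 1 ($\cG^T_r=L(T,r)$), though, the paper further splits into $x>s$ and $x\in(I_T,s)$: for $x>s$ it invokes Lemma~\ref{l:bottom_boundary} again (since $L(T,x)=\cR_{s,x}(b)$ lies on a red flow line, so there is no bottom point there at all), and only for $x\in(I_T,s)$ does it use the discovery-time bound $\sigma_a^x<T$. You instead run the discovery-time argument uniformly for all $r\ge I_T$: $a=L(T,r)>0$ plus monotonicity of $t\mapsto L(t,r)$ forces $\sigma_a^r\le T$, which is all that is needed. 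This is a genuine (mild) simplification — the paper's $x>s$ branch gives the stronger statement that no bottom point lies there, whereas your version only contradicts the discovery-time assumption, but that weaker conclusion suffices. One small point worth noting, consistent with the paper's $\sigma_a^x<T$: for $r\neq s$ the process $X$ does not visit $r$ at time $T$ (since $X_T=s$), so local time at $r$ is flat on a left neighbourhood of $T$ and in fact $\sigma_a^r<T$; and $r=s$ would force $(a,r)=(b,s)$, which almost surely is not a bottom point. Your non-strict $\sigma_a^r\le T$ is enough under the interpretation $s_\cT>T$ of "discovered after $T$", but the strict inequality handles the borderline case cleanly under either reading.
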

\begin{proof}
    If $\cG^T_x>L(T,x)$, $(\cG^T_x,x)$ is on the blue boundary of a spindle hence cannot be the bottom point of a spindle by Lemma \ref{l:bottom_boundary}. In the remaining case $\cG^T_x=L(T,x)$. If $x>s$, $L(T,x)=\cR_{s,x}(b)$ and we use again Lemma \ref{l:bottom_boundary}  to see that $(\cG^T_x,x)$ is not the bottom point of a spindle. When $x\in (I_T,s)$, $\sigma_a^x<T$ with $a=L(T,x)$, hence  any spindle of the form $\cT_{(a,x)}$ has been discovered  before time $T$.
\end{proof}

\section{Embedding of the marked L\'evy process}\label{sec:main}

\subsection{The regenerative property}\label{sec:regenerative}

Recall from Proposition \ref{prop:explore} that, for every spindle $\cT$, after time $s_{\cT}$ when the exploration process $\varrho$ first discovers $\cT$ at its bottom point, $\varrho$ stays in the spindle, exploring the left part until it reaches the top point at time $t_{\cT}$.  We consider the collection of the time intervals $(s_{\cT},t_{\cT})$ and define
\begin{equation}\label{def:A-kappa}
    \mathcal{O}:=\bigcup_{\text{spindle } \cT} (s_{\cT},t_{\cT}), \quad
A_t:=\int_0^t \ind{s\in \mathcal{O}} \dd s,\quad 
\kappa_u:=\inf\{t\ge 0: A_t>u\}.
\end{equation}
The time-change $\kappa$ ignores the times when $\varrho$ is in the right part of a spindle, such that each jump of $\kappa$ corresponds to an excursion described in Proposition~\ref{p:right return}, as indicated by the following lemma.  Recall the set $\mathcal{E}$ defined in \eqref{def:explore}.

\begin{lemma}\label{lem:kappa-jump}
     Almost surely, for every $u>0$ with $\kappa_{u-} < \kappa_u$, 
    there exists a spindle $\cT$ such that $\varrho$ is making an excursion in the right part of $\cT$ during the time interval $[\kappa_{u-},\kappa_u]$, as described in Proposition~\ref{p:right return}. In particular, $\varrho_{\kappa_{u-}} = \varrho_{\kappa_{u}}$ is on the right boundary of $\cT$, $\varrho_t\in \overset{\circ}{\cT}$ for every $t\in (\kappa_{u-} , \kappa_u)$ and $\kappa_{u-},\kappa_u\notin\cE$.

\end{lemma}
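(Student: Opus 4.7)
The plan is as follows. First, I would note that $\kappa_{u-}<\kappa_u$ is equivalent to $A$ being constant on $[\kappa_{u-},\kappa_u]$, which, since $\mathcal{O}$ is open (so any nonempty open subset has positive Lebesgue measure), amounts to $(\kappa_{u-},\kappa_u)\subseteq \mathcal{O}^c$; since $\mathcal{O}^c$ is closed, this extends to $[\kappa_{u-},\kappa_u]\subseteq \mathcal{O}^c$. A preliminary observation is that the intervals $(s_\cT,t_\cT)$ defining $\mathcal{O}$ are pairwise disjoint: on each such interval $\varrho$ lies in $\cT$ by Proposition~\ref{prop:explore}, and distinct (maximal) spindles are disjoint by Lemma~\ref{lem:cT}~(iv).

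Next, to produce a candidate right-part excursion, I would apply Corollary~\ref{c:spindle zero measure} together with Fubini: since $(\kappa_{u-},\kappa_u)$ has positive length, there exists $t^*$ in this interval with $\varrho_{t^*}\in \overset{\circ}{\cT}$ for some spindle $\cT=\cT_{(a,r)}$. Since $\varrho_{t^*}\in \overset{\circ}{\cT}$ forces $t^*>s_\cT$, since $t^*\notin \mathcal{O}$ excludes $t^*\in (s_\cT,t_\cT)$, and since $\varrho_{t_\cT}$ is the top point rather than an interior point, we must have $t^*>t_\cT$. Proposition~\ref{p:right return} then supplies $t_0<t^*<t_1$ and $r_0\in (X_{t^*},z)$ with $\varrho_s\in \cT$, $X_s\in (r,r_0)$ for $s\in (t_0,t_1)$, and $\varrho_{t_0}=\varrho_{t_1}=(\cB_{r,r_0}(a),r_0)$. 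Using \eqref{no_increase} and the fact that $X$ has crossed each level $X_s\in (r,r_0)$ between $\sigma_a^r=s_\cT$ and $s$, I would upgrade this to $\varrho_s\in \overset{\circ}{\cT}$ throughout $(t_0,t_1)$. Disjointness of spindles then yields $(t_0,t_1)\subseteq \mathcal{O}^c$: if $s\in (t_0,t_1)\cap (s_{\cT'},t_{\cT'})$, then $\varrho_s\in \cT\cap \cT'$ forces $\cT'=\cT$, contradicting $t_0>t_\cT$. Hence $(t_0,t_1)\subseteq (\kappa_{u-},\kappa_u)$.

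The main step, and the one I expect to be the main obstacle, is the reverse inclusion. Assume toward contradiction that $\kappa_{u-}<t_0$ (the case $\kappa_u>t_1$ is symmetric). I would first show $t_0\in \mathcal{E}^c$: suppose $\varrho_{t_0}\in \cT''$ for a spindle $\cT''$ with $t_0>s_{\cT''}$. Since $t_0\notin \mathcal{O}$ and the top point $\varrho_{t_{\cT''}}$ does not belong to $\cT''$, one has $t_0>t_{\cT''}$, so Proposition~\ref{p:right return} applied to $(\cT'',t_0)$ yields an open interval $(t_0''',t_1''')\ni t_0$ with $\varrho_s\in \cT''$ throughout; for $s>t_0$ small enough, $\varrho_s\in \cT\cap \cT''$ forces $\cT''=\cT$, and then $\varrho_{t_0}\in \cT''=\cT$ contradicts $\varrho_{t_0}$ lying on the (excluded) right boundary of $\cT$. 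Next, I would observe that no bottom point $s_{\cT'''}$ can lie in $(t_0,t_1)$, since the nonempty intersection $(s_{\cT'''},s_{\cT'''}+\epsilon)\cap \mathcal{O}$ would otherwise contradict $(t_0,t_1)\subseteq \mathcal{O}^c$. Combined with $\mathcal{E}^c=\overline{\{s_\cT\}}$ (Proposition~\ref{p:dense times}), the sequence of bottom points converging to $t_0$ must therefore approach from the left, so one can choose $s_{\cT''}\in (\kappa_{u-},t_0)$. Then the nonempty intersection $(s_{\cT''},t_{\cT''})\cap (\kappa_{u-},\kappa_u)\supseteq (s_{\cT''},\min(t_{\cT''},\kappa_u))$ contradicts $(\kappa_{u-},\kappa_u)\subseteq \mathcal{O}^c$.

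Having established $(\kappa_{u-},\kappa_u)=(t_0,t_1)$, the remaining conclusions would follow directly: $\varrho_{\kappa_{u-}}=\varrho_{\kappa_u}=(\cB_{r,r_0}(a),r_0)$ lies on the right boundary of $\cT$, $\varrho_s\in \overset{\circ}{\cT}$ for $s\in (\kappa_{u-},\kappa_u)$ by the earlier analysis, $\kappa_u=t_1\in \mathcal{E}^c$ by Lemma~\ref{l:exit_explore} applied to the exit time $t_1$ of $\cT$, and $\kappa_{u-}=t_0\in \mathcal{E}^c$ by the argument above.
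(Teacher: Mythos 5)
Your proposal is correct and follows essentially the same route as the paper: find a time in the constancy interval at which $\varrho$ is in the interior of a spindle, rule out the left part, invoke Proposition~\ref{p:right return} to produce $(t_0,t_1)$, and then use $\cE^c=\overline{\{s_{\cT},\,\cT\text{ spindle}\}}$ (Proposition~\ref{p:dense times}) together with Lemma~\ref{l:exit_explore} to identify $(t_0,t_1)=(\kappa_{u-},\kappa_u)$. The only differences are cosmetic: the paper obtains $t_0\notin\cE$ immediately from $\varrho_{t_0}=\varrho_{t_1}$ and $t_1\notin\cE$ rather than re-applying Proposition~\ref{p:right return}, and it pins down the endpoints directly by showing $A_t<A_{t_0}$ for $t<t_0$ and $A_t>A_{t_1}$ for $t>t_1$ instead of your contradiction with $(\kappa_{u-},\kappa_u)\subseteq\mathcal{O}^c$.
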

\begin{proof}
     By Proposition~\ref{c:ancestor-time_fixed}, one can find $t\in (\kappa_{u-},\kappa_u)$ such that $\varrho_t$ is in the interior of some spindle $\cT$. Moreover,  $\varrho_t$ is exploring the right part of $\cT$, because otherwise $A$ is strictly increasing at $t$ and $\kappa$ cannot have a jump. Then there exist $t_0<t<t_1$ given by Proposition~\ref{p:right return}, such that $ \varrho_{t_0} = \varrho_{t_1}$ lies on the right boundary of $\cT$ and $\varrho$ is inside $\cT$ during the time interval $(t_0, t_1)$. 
     Hence $A$ stagnates during $(t_0, t_1)$, and we must have $(t_0, t_1) \subseteq (\kappa_{u-},\kappa_u)$. 
     On the other hand, we observe that $t\in\cE$ for all $t\in(t_0,t_1)$ while  $t_1\notin \cE$ by 
     Lemma \ref{l:exit_explore}, and therefore $t_0\notin \cE$ since $\varrho_{t_0}=\varrho_{t_1}$ and $t_0\le t_1$.  By Proposition~\ref{p:dense times}, there exist two sequences of spindles $(\cT_{1,n})_{n\ge 1}$ and $(\cT_{2,n})_{n\ge 1}$ such that $s_{\cT_{n,1}}\uparrow t_0$ and $s_{\cT_{n,2}}\downarrow t_1$ as $n\to +\infty$. We deduce that $A_t<A_{t_0}$ for all $t<t_0$ while $A_t>A_{t_1}$ for all $t>t_1$, which yields that $t_0=\kappa_{u-}$ and $t_1=\kappa_u$.
   
\end{proof}

\begin{lemma} \label{l:kappa_A} Almost surely:

(i) The process $\varrho\circ \kappa$ is continuous.

(ii) For any spindle $\cT$, $\kappa(A_{s_{\cT}})=s_{\cT}$ and $\kappa(A_{t_{\cT}})=t_{\cT}$.
\end{lemma}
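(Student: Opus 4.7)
For (i), the plan is to exploit that $\kappa$ is the right-continuous inverse of the continuous, non-decreasing function $A$. Thus $\kappa$ is right-continuous, and its jumps occur precisely at values $u$ where $A$ is constant on $(\kappa_{u-},\kappa_u)$. At such a jump, Lemma~\ref{lem:kappa-jump} provides $\varrho_{\kappa_{u-}}=\varrho_{\kappa_u}$, ensuring that the left and right limits of $\varrho\circ\kappa$ agree at $u$; away from jumps, the continuity of $\varrho\circ\kappa$ follows from continuity of $\varrho$ composed with that of $\kappa$.

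For (ii), I note that $\kappa(A_t)=t$ holds if and only if $A_{t+\varepsilon}>A_t$ for every $\varepsilon>0$. The case $t=s_{\cT}$ is immediate: since $(s_{\cT},t_{\cT})\subseteq\mathcal{O}$, we get $A_{s_{\cT}+\varepsilon}-A_{s_{\cT}}\ge\min(\varepsilon,\,t_{\cT}-s_{\cT})>0$.

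For $t=t_{\cT}$, the strategy is to produce, for every $\varepsilon>0$, a spindle $\cT'$ with $s_{\cT'}\in(t_{\cT},t_{\cT}+\varepsilon)$; this forces $(s_{\cT'},t_{\cT'})\cap(t_{\cT},t_{\cT}+\varepsilon)$ to have positive Lebesgue measure inside $\mathcal{O}$. Let $(c,z)$ denote the top point of $\cT$, so that $\varrho_{t_{\cT}}=(c,z)$ by Proposition~\ref{prop:explore}~(ii). Since $t_{\cT}$ is an exit time, Lemma~\ref{l: no max right}~(iv) supplies $u>t_{\cT}$ arbitrarily close with $X_u>z$. Combining continuity of $X$ with Proposition~\ref{c:ancestor-time_fixed} applied to the countable set of rationals, I can almost surely select a rational $q\in(t_{\cT},t_{\cT}+\varepsilon)$ satisfying $X_q>z$ and $\varrho_q\in\overset{\circ}{\cT'}$ for some spindle $\cT'$. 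Since $t_{\cT}\in\cE^c$ by Lemma~\ref{l:exit_explore}, the auxiliary Lemma~\ref{lem:not-exploring} (within the proof of Proposition~\ref{p:dense times}) applied with $t_{\cT}$ and $q$ then yields $s_{\cT'}\in[t_{\cT},q)$.

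The main obstacle will be ruling out $s_{\cT'}=t_{\cT}$. If this equality held, then $\varrho_{s_{\cT'}}=(a',r')$, the bottom of $\cT'$, would coincide with $(c,z)=(\cB_{r,z}(a),z)$, making $(c,z)$ a bifurcation point of $\cB$. When $c>0$, Proposition~\ref{c: no middle bifurcation} precisely forbids this; when $c=0$, Definition~\ref{def:bifur} excludes bifurcation points with vanishing first coordinate. Hence $s_{\cT'}>t_{\cT}$, completing the argument.
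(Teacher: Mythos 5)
Your proof is correct and takes essentially the same route as the paper: part (i) is exactly the paper's appeal to Lemma~\ref{lem:kappa-jump}, and for (ii) both arguments reduce to showing that $A$ strictly increases immediately after $s_{\cT}$ and after $t_{\cT}$, the latter because discovery times of new spindles accumulate at $t_{\cT}$ from the right. The only difference is cosmetic: the paper gets $s_{\cT_n}\downarrow t_{\cT}$ by citing Proposition~\ref{p:dense times} (via $t_{\cT}\in\cE^c$), whereas you re-derive that density in place from Lemma~\ref{l: no max right}~(iv), Proposition~\ref{c:ancestor-time_fixed} and Lemma~\ref{lem:not-exploring}, and your exclusion of $s_{\cT'}=t_{\cT}$ via Proposition~\ref{c: no middle bifurcation} is precisely the content of Lemma~\ref{l:bottom_boundary}, which could be cited directly.
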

\begin{proof}
Statement (i) follows from Lemma \ref{lem:kappa-jump}. Let us prove (ii). Since $A_t>A_{s_{\cT}}$ for any $t>s_{\cT}$ by definition of $A$, we have $\kappa(A_{s_{\cT}})=s_{\cT}$. 
 Since $t_\cT\notin\cE$, by Proposition~\ref{p:dense times}, there exists a sequence of spindles $(\cT_n)_{n\ge 1}$ such that $s_{\cT_n}\downarrow t_\cT$. It implies that for all $t>t_\cT$, there exists some $s_{\cT_n}\in(t_\cT,t)$ such that $A_t> A_{s_{\cT_n}}\ge A_{t_\cT}$, where the first inequality is strict since $t\mapsto A_t$ is strictly increasing on $(s_{\cT_n},\min\{t,t_{\cT_n}\})$.
Therefore $\kappa(A_{t_{\cT}})=t_{\cT}$. 
\end{proof}

Define $\mathcal{A}$ to be the closure of the jump times of $\kappa$, i.e. 
 \begin{equation}\label{eq:cA}
     \cA=\overline{\{ u\ge 0: \kappa_u \ne \kappa_{u-} \}}. 
 \end{equation}

Recall that our model is invariant with the space-time plane scaled by a factor $c>0$ as in \eqref{eq:scaling-inv}, which corresponds to $\varrho^{(c)}_t =c^{-1} \varrho_{c^2 t}$. 
Then $\cA^{(c)} = c^{-2}\cA$ is associated with $\varrho^{(c)}$. 
We deduce the following result with $b=c^{-2}$:
 
 \begin{lemma}[Scaling-invariance]\label{l:scaling A}
For any $b>0$, the set $\{b u \colon  u\in \cA\}$ has the same distribution as $\cA$.    
 \end{lemma}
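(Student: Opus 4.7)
The plan is to make precise the scaling-invariance argument sketched in the paragraph preceding the lemma. First I would fix $c>0$ and consider the rescaled process $\varrho^{(c)}_t = c^{-1}\varrho_{c^2 t}$, which by Brownian scaling of $B$ (and hence of $X$ and its local times) has the same distribution as $\varrho$. Correspondingly, the rescaled flows $\cB^{(c)}, \cR^{(c)}$ defined by \eqref{eq:scaling-inv} have the same joint distribution as $(\cB,\cR)$, and spindles for these rescaled flows are exactly the images of spindles for $(\cB,\cR)$ under the map $(b,x)\mapsto (c^{-1}b,c^{-1}x)$: if $\cT=\cT_{(a,r)}$ is a spindle for $(\cB,\cR)$, then $\cT^{(c)} := c^{-1}\cT$ is a spindle for $(\cB^{(c)},\cR^{(c)})$ with bottom point $(c^{-1}a,c^{-1}r)$.

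Next I would track how the quantities in \eqref{def:A-kappa} transform. From Proposition~\ref{prop:explore} applied to $\varrho^{(c)}$, the entrance and exit times satisfy $s^{(c)}_{\cT^{(c)}} = c^{-2} s_\cT$ and $t^{(c)}_{\cT^{(c)}} = c^{-2}t_\cT$, so the open set $\mathcal{O}$ scales as $\mathcal{O}^{(c)} = c^{-2}\mathcal{O}$. Hence
\begin{equation*}
A^{(c)}_t = \int_0^t \mathbf{1}_{s\in\mathcal{O}^{(c)}}\,\dd s = c^{-2}A_{c^2 t},\qquad \kappa^{(c)}_u = c^{-2}\kappa_{c^2 u},
\end{equation*}
and thus the closure of the jump times of $\kappa^{(c)}$, which is $\cA^{(c)}$, equals $c^{-2}\cA$.

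Finally, since $\varrho^{(c)} \stackrel{d}{=} \varrho$ and $\cA$ is a measurable functional of $\varrho$ (via the spindle structure, which is itself a measurable functional of the flows driven by the white noise $W$ constructed from $B$), we obtain $\cA^{(c)} \stackrel{d}{=} \cA$, i.e.\ $c^{-2}\cA \stackrel{d}{=} \cA$. Choosing $c := b^{-1/2}$ gives the desired identity in distribution.

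I do not expect any serious obstacle here; the only small point to check carefully is the measurability of $\cA$ as a functional of $(\cB,\cR)$ (equivalently of $\varrho$), but this follows from the explicit description of spindles in Section~\ref{s:spindle} together with Proposition~\ref{prop:explore} identifying $s_\cT$ and $t_\cT$ in terms of $\varrho$.
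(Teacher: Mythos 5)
Your proposal is correct and follows essentially the same route as the paper, which deduces the lemma directly from the scaling relation $\varrho^{(c)}_t=c^{-1}\varrho_{c^2t}$, the induced identity $\cA^{(c)}=c^{-2}\cA$, and the equality in law $\varrho^{(c)}\overset{d}{=}\varrho$, taking $b=c^{-2}$. Your write-up merely makes explicit the intermediate scalings of the spindles, of $s_\cT$, $t_\cT$, $\mathcal{O}$, $A$ and $\kappa$, which the paper leaves implicit.
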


The next result gives a characterization of the set $\cA$. 

\begin{proposition}\label{p:charact A}
 Almost surely:

(i) We have the identity $\cA=\{A_t\colon t\notin \mathcal{E}\}$. In particular,
\begin{equation}\label{eq:Set-A}
\mathcal{A} = \overline{\{A_{s_{\cT}}\colon \, \cT \textrm{ spindle} \}} = \overline{\{A_{t_{\cT}}\colon\, \cT \textrm{ spindle} \}}. 
\end{equation}

(ii) $\mathcal{A}^c = \bigcup_{\textrm{spindle }\cT } (A_{s_{\cT}}, A_{t_{\cT}})$. 

(iii) $\mathcal{A}$ has Lebesgue measure zero and has no isolated points. 

(iv) $\{\kappa_{u-},\kappa_{u}, \,u\in \cA\}=\cE^c$. 

\end{proposition}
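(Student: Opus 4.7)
The plan is to prove Part~(ii) first and deduce the other parts.

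\textbf{Preliminaries.} Two building blocks are needed. First, the intervals $\{(s_\cT,t_\cT)\}_\cT$ are pairwise disjoint, hence exactly the connected components of $\mathcal{O}$: if $s_{\cT_1}<s_{\cT_2}$ and these overlapped, then $s_{\cT_2}\in(s_{\cT_1},t_{\cT_1})\subset\mathcal{O}\subset\cE$, contradicting $s_{\cT_2}\in\cE^c$ (which holds by Lemma~\ref{l:bottom_boundary}, since bottoms do not lie on any other spindle's closure). Second, for each $t\in\cE^c$ one has $\kappa_{A_t}=t$: by Corollary~\ref{c:s-t} pick $t_{\cT_n}\downarrow t$; since $t$ cannot lie in $(s_{\cT_n},t_{\cT_n})\subset\cE$, one must have $s_{\cT_n}\ge t$, so $\mathcal{O}$-intervals accumulate immediately to the right of $t$, forcing $A$ to strictly increase in every right neighbourhood of $t$.

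\textbf{Easy inclusion of (ii).} Since $A$ is linear with slope $1$ on $[s_\cT,t_\cT]\subset\mathcal{O}$, $\kappa$ is linear on $[A_{s_\cT},A_{t_\cT}]$, so no jump of $\kappa$ lies in the open interval $(A_{s_\cT},A_{t_\cT})$; being open it is therefore disjoint from $\cA$, giving $\bigsqcup_\cT(A_{s_\cT},A_{t_\cT})\subset\cA^c$.

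\textbf{Reverse inclusion, the main step.} Fix a connected component $(u_1,u_2)$ of $\cA^c$. On it $\kappa$ is continuous and strictly increasing with image an open interval $(t_0,t_1)$ on which $A$ is strictly increasing. The hard point is to show $(t_0,t_1)\subset(s_\cT,t_\cT)$ for some spindle $\cT$. First, $\mathcal{O}\cap(t_0,t_1)$ is dense in $(t_0,t_1)$: otherwise a positive-length open subinterval of $\mathcal{O}^c\cap(t_0,t_1)$ would produce a flat interval of $A$ and hence a jump of $\kappa$ at a level in $(u_1,u_2)\subset\cA^c$. Writing $\mathcal{O}\cap(t_0,t_1)=\bigsqcup_i(s_{\cT_i},t_{\cT_i})\cap(t_0,t_1)$ via the disjointness step, at most one component can be non-empty: two distinct ones would have adjacent endpoints $t_{\cT_i}<s_{\cT_{i+1}}$ strictly (by Lemma~\ref{l:bottom_boundary}, equality would force the top of $\cT_i$ to coincide with the bottom of $\cT_{i+1}$), creating a positive-length gap inside $\mathcal{O}^c\cap(t_0,t_1)$ and contradicting density. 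The unique non-empty component must then fill $(t_0,t_1)$ entirely (else the leftover at either end is again a positive-length subinterval of $\mathcal{O}^c$), so $(t_0,t_1)\subset(s_\cT,t_\cT)$. Maximality of $(u_1,u_2)$ combined with the easy inclusion forces $(u_1,u_2)=(A_{s_\cT},A_{t_\cT})$, proving (ii).

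\textbf{Deduction of (i), (iii), (iv).} From (ii), $\cA=\mathbb{R}_+\setminus\bigsqcup_\cT(A_{s_\cT},A_{t_\cT})$. The preliminary $\kappa_{A_t}=t\in\cE^c$ prevents $A_t$ from belonging to any $(A_{s_\cT},A_{t_\cT})$, giving $A(\cE^c)\subset\cA$; conversely any $u\in\cA$ arises as $u=A_t$ for some $t\in\cE^c$, by extracting a convergent subsequence of endpoints $\kappa_{u_n^-}\in\cE^c$ for jumps $u_n\to u$ and using continuity of $A$. The identities $\cA=\overline{\{A_{s_\cT}\}}=\overline{\{A_{t_\cT}\}}$ then follow from $\cE^c=\overline{\{s_\cT\}}=\overline{\{t_\cT\}}$ and continuity of $A$, finishing (i). For (iii), $\sum_\cT(A_{t_\cT}-A_{s_\cT})=\sum_\cT(t_\cT-s_\cT)=|\mathcal{O}|$ shows $\cA^c$ has full Lebesgue measure in $[0,A_\infty]$, so $\cA$ is Lebesgue negligible; no-isolated-points follows from $s_{\cT_n}\uparrow t$ with $A_{s_{\cT_n}}<A_t$ strictly, the inequality holding because $(s_{\cT_n},t_{\cT_n})\subset(s_{\cT_n},t)\cap\mathcal{O}$ has positive length. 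For (iv), Lemma~\ref{lem:kappa-jump} gives $\kappa_{u^-},\kappa_u\in\cE^c$ at jumps, and at a continuity point $u\in\cA$ part~(i) writes $u=A_t$ with $\kappa_u=\kappa_{u^-}=t\in\cE^c$; conversely each $t\in\cE^c$ equals $\kappa_{A_t}$ at a continuity point or an endpoint of a jump of $\kappa$ at level $A_t$.
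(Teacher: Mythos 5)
Your proposal breaks at two points, both connected to the same missing dynamical input. First, the preliminary claim that $\kappa_{A_t}=t$ for every $t\in\cE^c$ is false. Take a jump $u$ of $\kappa$ and set $t=\kappa_{u-}$: by Lemma~\ref{lem:kappa-jump} this is the start of an excursion into the \emph{right} part of a spindle, $t\in\cE^c$, and no interval $(s_{\cT},t_{\cT})$ meets $(\kappa_{u-},\kappa_u)$ (entrance times lie in $\cE^c$ and bottom points cannot sit on a right boundary, Lemma~\ref{l:bottom_boundary}), so $A$ is constant on $[\kappa_{u-},\kappa_u]$ and $\kappa_{A_t}=\kappa_u>t$. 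The flaw in your justification is the phrase ``pick $t_{\cT_n}\downarrow t$'': Corollary~\ref{c:s-t} only says exit times are dense in $\cE^c$, not that they approach $t$ from the right, and at the points $t=\kappa_{u-}$ above they do not. Since this claim is what you use to get $A(\cE^c)\subset\cA$ in (i) and in the converse direction of (iv), those deductions are unsupported as written (the use in (i) can be repaired, since $A_t\in(A_{s_\cT},A_{t_\cT})$ together with strict monotonicity of $A$ on $(s_\cT,t_\cT)$ already forces $t\in(s_\cT,t_\cT)\subset\cE$; but the use in (iv) needs the paper's case analysis $\kappa_{u-}\le t\le\kappa_u$ instead).

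Second, the main step of your proof of (ii) does not go through. From the fact that two distinct components $(s_{\cT_i},t_{\cT_i})$ meet $(t_0,t_1)$ you pass to ``two adjacent ones'' whose gap lies in $\mathcal{O}^c$; but the components of an open set need not have adjacent neighbours, and a priori the gap between any two chosen components could be filled by infinitely many further $\mathcal{O}$-components together with a Lebesgue-null piece of $\cE^c$, in which case density of $\mathcal{O}$ in $(t_0,t_1)$ is not contradicted. What actually excludes this scenario is the dynamical fact that if some exit time $t_{\cT_i}$ lies in the open interval $(t_0,t_1)$ (which happens as soon as two components meet it), then by Lemma~\ref{l: no max right}~(iii) and Proposition~\ref{p:right return} the explorer re-enters the right part of $\cT_i$ at times arbitrarily close to $t_{\cT_i}$, producing positive-length intervals on which $A$ is flat inside $(t_0,t_1)$ --- contradicting strict increase of $A$ there. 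This return-to-the-spindle argument is precisely the key step of the paper's proof of the inclusion $\{A_t\colon t\notin\cE\}\subseteq\cA$ (part (i), proved before (ii) in the paper), and your route omits it; without it the ``adjacency'' argument is a genuine gap, not a shortcut.
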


\begin{proof}

    (i)  Let $u\ge 0$ such that $\kappa_{u-}\neq \kappa_u$. 
    We have $u=A_{\kappa_u}$ by continuity of $A$ and $\kappa_u\not\in \mathcal{E}$ by Lemma~\ref{lem:kappa-jump}. 
    Recall that $\mathcal{E}^c$ is closed. 
     Since $A_\infty=\infty$ and $A$ is continuous, the set $\{A_t\colon t\notin \mathcal{E}\}$ is a closed set, and we deduce that  $\cA\subseteq \{A_t\colon t\notin \mathcal{E}\}$. 
    
    To prove the reverse inclusion, let us consider any $t\notin \mathcal{E}$.  By Corollary 
    \ref{c:s-t}, there are spindles $\cT'$ with  first exit times $t_{\cT'}$ arbitrarily close to $t$.  By Lemma \ref{l: no max right} (iii), there exists $u>t_{\cT'}$ arbitrarily close to $t_{\cT'}$ such that $X_u<X_{t_{\cT'}}$, implying that $\varrho$ returns to $\cT'$ at times arbitrarily close to $t_{\cT'}$. Thus,  
    there exists  $s>t_{\cT'}$ arbitrarily close to $t_{\cT'}$ such that 
    $\varrho_s\in \cT'$. For such $s$, we have $\kappa_v>\kappa_{v-}$ where $v=A_s$, hence $A_s\in \cA$. Since $s$ can be taken arbitrarily close to $t$, by continuity of $A$ and the fact that $\cA$ is closed, we deduce that $A_t \in \cA$. This proves that $\{A_t\colon t\notin \mathcal{E}\}\subseteq \cA$. Equation \eqref{eq:Set-A} then follows from Proposition \ref{p:dense times} and Corollary~\ref{c:s-t}.

    (ii) If $\varrho_t$ is in the right part of a spindle, then $A_t=A_{t_1}$ in the notation of Proposition \ref{p:right return}, and $t_1\notin \mathcal{E}$ by Lemma \ref{l:exit_explore}. It implies that $A_t \in \cA$ by (i).  Therefore $\cA$ coincides with the set of $A_t$'s for $t\notin \mathcal{E}$ or $t$ in the right part of a spindle, i.e. $\cA=\{A_t,\, t\notin \mathcal{O}\}$ in the notation of \eqref{def:A-kappa}. Since $A$ is strictly increasing on the open set $\mathcal{O}$, we have $\cA=A(\mathcal{O}^c)=(A(\mathcal{O}))^c$, where $A(C)$ denotes the image of $C$ by the map $t\mapsto A_t$.

    (iii) 
    By Proposition \ref{c:ancestor-time_fixed}, the set $\{ t : t \notin \mathcal{E} \}$ has Lebesgue measure zero. From (i) and the absolute continuity of $t \mapsto A_t$, it follows that $\cA$ also has Lebesgue measure zero. Moreover, $\cA$ has no isolated points due to \eqref{eq:Set-A} and the fact that $\{ A_{s_{\cT}} : \cT \text{ spindle} \}$ has no isolated points. Indeed, $\{ s_{\cT} : \cT \text{ spindle} \}$ has no isolated points by Proposition \ref{p:dense times}, and $t \mapsto A_t$ is strictly increasing on this set.

    (iv) For any $u>0$ such that $\kappa_{u-}\neq \kappa_u$, we have 
    $\kappa_u,\kappa_{u-}\in \cE^c$ by Lemma \ref{lem:kappa-jump}. For $u\in \cA$ such that $\kappa_{u-}=\kappa_u$, $u$ is an accumulation point of the set $\{v\colon \kappa_{v-}\neq \kappa_v\}$ by definition of $\cA$. Since $\cE^c$ is closed, and $\kappa$ is continuous at $u$, we deduce that $\kappa_u \in \cE^c$. This implies $\{\kappa_{u-}, \kappa_u\}\subseteq \cE^c$.  
    
    For the converse inclusion, let $t\in \cE^c$ and $u =A_t$. Then $u\in \cA$ by (i).  By definition of $\kappa$, $\kappa_{u-}\le t \le \kappa_u$. In the case $\kappa_{u-}=\kappa_u$, we have $\kappa_u=t$. Otherwise $\kappa_{u-}<\kappa_u$ and   $(\kappa_{u-},\kappa_u)\subseteq \cE$ by Lemma \ref{lem:kappa-jump}, hence $t=\kappa_{u-}$ or $t=\kappa_u$. This completes the proof. 
 \end{proof}


\begin{lemma}\label{lem:kappa-u-fix}
Fix $u> 0$. 
Almost surely, we have $u\in \cA^c$. 
\end{lemma}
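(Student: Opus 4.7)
The plan is to combine the scaling invariance of $\cA$ with the fact that $\cA$ has Lebesgue measure zero almost surely.

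First, I would note that by Lemma~\ref{l:scaling A}, for every $b>0$ the set $b\cA=\{bu\colon u\in\cA\}$ has the same distribution as $\cA$. In particular, for any $u>0$ and $b>0$,
\[
\bP(u\in\cA)=\bP(u\in b\cA)=\bP(u/b\in\cA).
\]
Taking $u=1$ and letting $b$ range over $(0,\infty)$ shows that the function $v\mapsto \bP(v\in\cA)$ is constant on $(0,\infty)$; call this constant $c$.

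Next, I would invoke Proposition~\ref{p:charact A}~(iii), which states that $\cA$ has Lebesgue measure zero almost surely. Then by Fubini's theorem, for any $T>0$,
\[
cT=\int_0^T \bP(v\in\cA)\,\dd v=\bE\!\left[\int_0^T \ind{v\in\cA}\,\dd v\right]=\bE[\mathrm{Leb}(\cA\cap[0,T])]=0,
\]
so $c=0$. Hence $\bP(u\in\cA)=0$ for every fixed $u>0$, which is the desired conclusion. There is no real obstacle here: all the work has been done in Proposition~\ref{p:charact A}~(iii) and Lemma~\ref{l:scaling A}, and the lemma follows from a two-line Fubini-plus-scaling argument.
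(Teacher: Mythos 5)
Your proof is correct and follows essentially the same route as the paper: scaling invariance of $\cA$ (Lemma~\ref{l:scaling A}) makes $u\mapsto\bP(u\in\cA)$ constant on $(0,\infty)$, and Fubini combined with the fact that $\cA$ has Lebesgue measure zero (Proposition~\ref{p:charact A}~(iii)) forces that constant to be zero. The only difference is the order of the two steps (the paper applies Fubini first and scaling second), which is immaterial.
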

\begin{proof}
Since $\cA$ almost surely has Lebesgue measure zero by Proposition~\ref{p:charact A} (iii), we deduce by Fubini's theorem that 
\[
\int_0^{\infty} \bP(u\in \cA) \dd u 
=\mathbb{E}\Big[\int_0^{\infty} \ind{u\in \cA} \dd u\Big] =0 . 
\]
So $\bP(u\in \cA) = 0$ for Lebesgue-almost every $u>0$. 
    Because of the scaling-invariance, we have  
$\bP(u\in \cA)  = \bP(1\in \cA)$ for every $u>0$. 
So  $\bP(u\in \cA)=0$  for every $u>0$. 
\end{proof}

 We can now give further properties of $\cA$.  
To this end, let us introduce a new filtration. 
Recall the $\sigma$-algebra $\rG^{b,s}$ defined in \eqref{eq:G^bs}. For fixed $u\ge 0$, we define 
\begin{equation}\label{eq:G_u}
    \rG_u := \left\{E\in \rF_\infty : E\cap \{\kappa_u \le \tau_b^s\} \in \rG^{b,s},\, \forall\, (b,s)\in (0,\infty)\times \bR  \right\}.
\end{equation}

\begin{lemma}\label{l:Gu-filtration}
With the above definition, $(\rG_u,\,u\ge 0)$ defines a filtration. The random variable $\kappa_u$,  the collection of spindles $\cT$ discovered before $\kappa_u$, their entrance times $s_\cT$ and exit times $t_{\cT}$ together with their images $A_{s_{\cT}}$, $A_{t_{\cT}}$ by $A$, are measurable w.r.t. $\rG_u$.
\end{lemma}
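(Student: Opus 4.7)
The strategy is to reduce every assertion to Proposition \ref{c:measurability}, which describes the spindle data at the fixed random time $\tau_b^s$ with respect to $\rG^{b,s}$, using the defining property of $\rG_u$.

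\emph{Step 1.} Fix $u\ge 0$ and $(b,s)\in(0,\infty)\times\bR$. I first argue that on the event $\{\kappa_u\le \tau_b^s\}$ the process $A|_{[0,\tau_b^s]}$ is $\rG^{b,s}$-measurable. Indeed, any spindle $\cT$ that contributes to $A_t$ for some $t\le \tau_b^s$ satisfies $s_{\cT}<\tau_b^s$, hence is discovered before time $\tau_b^s$, and Proposition \ref{c:measurability} places its bottom point, $s_\cT$ and $t_\cT$ in $\rG^{b,s}$; the identity
\[
A_t=\sum_{\cT\,:\,s_{\cT}<\tau_b^s}\bigl|(s_\cT,t_\cT)\cap(0,t]\bigr|,\qquad t\in[0,\tau_b^s],
\]
then yields the measurability of $A|_{[0,\tau_b^s]}$. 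Next, Lemma \ref{lem:kappa-u-fix} gives $u\notin\cA$ almost surely; combined with the description $\cA^c=\bigcup_\cT(A_{s_\cT},A_{t_\cT})$ of Proposition \ref{p:charact A}, this forces $A_{\kappa_u}=u$ with $A$ strictly increasing across $\kappa_u$, so almost surely
\[
\{\kappa_u\le \tau_b^s\}=\{A_{\tau_b^s}\ge u\}.
\]
By completeness, $\{\kappa_u\le \tau_b^s\}\in\rG^{b,s}$, and on this event $\kappa_u=\inf\{t\in[0,\tau_b^s]\colon A_t\ge u\}$ is an explicit $\rG^{b,s}$-measurable function of $A|_{[0,\tau_b^s]}$.

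\emph{Step 2 (filtration property).} For $0\le u\le v$ and $E\in \rG_u$, the monotonicity of $\kappa$ gives $\{\kappa_v\le \tau_b^s\}\subseteq\{\kappa_u\le \tau_b^s\}$, so
\[
E\cap\{\kappa_v\le \tau_b^s\}=\bigl(E\cap\{\kappa_u\le \tau_b^s\}\bigr)\cap\{\kappa_v\le \tau_b^s\}\in\rG^{b,s},
\]
using the definition of $\rG_u$ for the first factor and Step 1 applied with $v$ for the second. Hence $E\in \rG_v$.

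\emph{Step 3 (measurability of the listed objects).} Step 1 gives $\{\kappa_u\le r\}\cap\{\kappa_u\le \tau_b^s\}\in\rG^{b,s}$ for every $r\ge 0$ and every $(b,s)$, so $\kappa_u$ is $\rG_u$-measurable. Each spindle contains a point with rational coordinates, so the collection of spindles $\cT$ with $s_{\cT}\le \kappa_u$ is enumerated by rational $(b',s')$; on $\{\kappa_u\le \tau_b^s\}$ the event ``the spindle through $(b',s')$ is discovered before $\kappa_u$'' reads $s_{\cT}\le \kappa_u$, which is $\rG^{b,s}$-measurable by Proposition \ref{c:measurability} and Step 1. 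The same argument places the bottom points, $s_\cT$, $t_\cT$, and the values $A_{s_\cT}$, $A_{t_\cT}$ (evaluations of $A|_{[0,\tau_b^s]}$) in $\rG_u$.

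The main obstacle is Step 1's identification of $\{\kappa_u\le \tau_b^s\}$ as an element of $\rG^{b,s}$: since $\kappa_u$ a priori depends on the whole trajectory of $X$, Proposition \ref{c:measurability} alone is insufficient. The key additional soft input is $u\notin\cA$ almost surely (Lemma \ref{lem:kappa-u-fix}), which rules out boundary behaviour of $\kappa$ at the level $u$ and enables the rewriting of the event in terms of the $\rG^{b,s}$-measurable quantity $A_{\tau_b^s}$. Once Step 1 is secured, Steps 2 and 3 are purely formal.
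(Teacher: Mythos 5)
Your Steps 1 and 2 are essentially the paper's own argument: the identity $\{\kappa_u\le \tau_b^s\}=\{u\le A_{\tau_b^s}\}$ on the almost sure event $\{u\in\cA^c\}$ (Lemma \ref{lem:kappa-u-fix} plus Proposition \ref{p:charact A}\,(ii)), with $A_{\tau_b^s}$ written as a sum over spindles discovered before $\tau_b^s$ and Proposition \ref{c:measurability} giving its $\rG^{b,s}$-measurability, and then the purely formal filtration argument. For $\kappa_u$ you take a slightly more direct route than the paper: instead of establishing $\{\kappa_u\le\min(\tau_b^s,\tau_{b'}^{s'})\}\in\rG^{b,s}\cap\rG^{b',s'}$ and representing $\kappa_u$ as a supremum of $\tau_{b_p}^{s_p}\ind{\kappa_u>\tau_{b_p}^{s_p}}$ over a dense countable set, you read $\kappa_u$ off the whole path $A|_{[0,\tau_b^s]}$ on $\{\kappa_u\le\tau_b^s\}$; since that path is a countable sum of $\rG^{b,s}$-measurable contributions from spindles discovered before $\tau_b^s$, this shortcut is legitimate and buys a shorter argument for $\{\kappa_u\le r\}\in\rG_u$.

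There is, however, one genuine flaw in Step 3: you justify the $\rG_u$-measurability of $A_{t_{\cT}}$ by calling it an ``evaluation of $A|_{[0,\tau_b^s]}$'', but $t_{\cT}$ need not lie in $[0,\tau_b^s]$. On $\{\kappa_u\le\tau_b^s\}$ the spindle $\cT_0$ whose left part is being explored at time $\kappa_u$ can satisfy $s_{\cT_0}<\kappa_u\le\tau_b^s<t_{\cT_0}$; this happens exactly when $(b,s)$ lies in the left part of $\cT_0$ and the explorer has not yet reached its top at time $\tau_b^s$ — the very case treated separately in part (VII) of the proof of Proposition \ref{c:measurability}. In that case $A_{t_{\cT_0}}$ is not a value of $A$ on $[0,\tau_b^s]$, so your one-line justification does not apply. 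The gap is easily repaired: since $A$ increases at unit rate on $(s_{\cT},t_{\cT})$, one has $A_{t_{\cT}}=A_{s_{\cT}}+(t_{\cT}-s_{\cT})$, and $t_{\cT}$ is $\rG^{b,s}$-measurable by Proposition \ref{c:measurability} even when $t_{\cT}>\tau_b^s$, so the required event $\{A_{t_{\cT}}<v\}\cap\{s_{\cT}\le\kappa_u\}\cap\{\kappa_u\le\tau_b^s\}$ still lies in $\rG^{b,s}$. (The paper circumvents the issue differently, writing $\{A_{t_{\cT}}<v\}\cap\{s_{\cT}\le\kappa_u\}$ as a countable union over rational $w<\min(u,v)$ of $\{s_{\cT}\le\kappa_w\}\cap\{t_{\cT}-s_{\cT}<v-w\}$.) With this repair, and spelling out that ``$s_{\cT}\le\kappa_u$'' for the spindle through a rational $(b',s')$ is decided inside $\rG^{b,s}$ by comparing the $\rG^{b,s}$-measurable quantities $s_{\cT}$ and $\kappa_u\ind{\kappa_u\le\tau_b^s}$, your argument is complete.
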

\begin{proof}
We first prove that for any $u\ge 0$ and $(b,s), (b',s') \in (0,\infty)\times \bR$,
    \begin{align}
        \{\kappa_u \le \tau_b^s\} & \in \rG^{b,s} \label{eq:kappau-bs},\\
        \label{eq:kappau-bs-b's'}
    \{\kappa_u\le \min(\tau_b^s,\tau_{b'}^{s'})\} &\in \rG^{b,s}\cap\rG^{b',s'}.
    \end{align}
The case $u=0$ is trivial since $\kappa_0=0$, so we assume $u>0$. On the event $\{u\in \cA^c\}$, $t\to A_t$ is strictly increasing around $t=\kappa_u$ and hence $\{\kappa_u \le \tau_b^s\} = \{ u\le A_{\tau_b^s}\}$. Observing that a.s.  
\begin{equation}\label{eq:A-taubs}
A_{\tau_b^s}=\sum_{\cT'\colon s_{\cT'}<\tau_b^s} (\min(t_{\cT'},\tau_{b}^s)-s_{\cT'}),
\end{equation}
 we deduce by Proposition \ref{c:measurability} that $ \{ u\le A_{\tau_b^s}\}\in \rG^{b,s}$. We conclude with Lemma \ref{lem:kappa-u-fix}. Furthermore, replacing  $\tau_b^s$ in \eqref{eq:A-taubs} with $\min(\tau_b^s,\tau_{b'}^{s'})$ shows that, up to a negligible set, 
\[
    \{\kappa_u\le \min(\tau_b^s,\tau_{b'}^{s'})\}=\Big\{u\le A_{\min(\tau_b^s,\tau_{b'}^{s'})}\Big\} \in \rG^{b,s}\cap\rG^{b',s'}.
\]
We turn now to the proof of the lemma.
In view of \eqref{eq:kappau-bs}, it is plain to check that $\rG_u$ is a $\sigma$-algebra. We next prove that $\rG_u\subseteq \rG_v$ for $v\ge u$. 
    Let $E\in \rG_u$ and fix $(b,s)\in (0,\infty)\times \bR$. We need to show that $E\cap \{\kappa_v\le \tau_b^s\} \in \rG^{b,s}$. 
    We observe that $E\cap  \{\kappa_u \le \tau_b^s \} \in \rG^{b,s}$ by definition of $\rG_u$ and that $\{\kappa_v \le \tau_b^s\}\in \rG^{b,s}$ by \eqref{eq:kappau-bs}. Therefore
    \[
     E\cap \{\kappa_v \le \tau_b^s\} =E\cap  \{ \kappa_u \le \tau_b^s\}\cap  \{\kappa_v \le \tau_b^s\}\in \rG^{b,s}. 
    \]
 It completes the proof of $(\rG_u,\, u\ge 0)$ being a filtration. Let $(b',s')\in (0,\infty)\times \bR$ and $t>0$. Notice that for any  $(b,s)\in (0,\infty)\times \bR$, $\{ t<\tau_{b'}^{s'}<\tau_b^s  \} \in \rF_{\tau_b^s} \subseteq \rG^{b,s}$. Together with \eqref{eq:kappau-bs} and \eqref{eq:kappau-bs-b's'}, we deduce that 
 \[
 \{\kappa_u>\tau_{b'}^{s'}>t\} \cap \{\kappa_u\le \tau_b^s\} = \{ t<\tau_{b'}^{s'}<\tau_b^s  \} \cap \{\kappa_u\le \tau_b^s\}\cap \{\kappa_u\le \min\{\tau_b^s,\tau_{b'}^{s'}\} \,\}^c \in \rG^{b,s}.
 \]
 
 \noindent It follows by definition of $\rG_u$ that $\{\kappa_u>\tau_{b'}^{s'}>t\}\in\rG_u$ for all $t>0$. Therefore $\tau_{b'}^{s'}\ind{\kappa_u>\tau_{b'}^{s'}}$ is measurable w.r.t.\ $\rG_u$. For $u>0$, writing $\kappa_u=\sup\{\tau_{b_p}^{s_p}\ind{\kappa_u>\tau_{b_p}^{s_p}}\colon(b_p,s_p)\in \cD  \}$ where $\cD$ is a countable dense set in $(0,\infty)\times \bR$, we conclude that $\kappa_u$ is $\rG_u$-measurable. 
 
      Now let $\cT$ be the spindle containing $(b',s')$.
      We have $\{s_{\cT}\le \kappa_u\}=\{A_{s_{\cT}}\le u\}$ a.s. and, on the event $\{s_{\cT}\le \kappa_u\le \tau_b^s\}$, $A_{s_{\cT}}=\sum_{\cT'\colon t_{\cT'}<\min(\tau_b^s,\tau_{b'}^{s'})} (t_{\cT'}-s_{\cT'})$, which is $\rG^{b,s}$-measurable by Proposition \ref{c:measurability}.  Hence 
      $\{s_{\cT}\le \kappa_u\}\in \rG_u$ by definition of $\rG_u$. We can also check that for any measurable set $C$, $\{(\cT,s_{\cT},t_{\cT}) \in C\}\cap\{s_{\cT}\le \kappa_u\}\in \rG_u$ by another use of Proposition \ref{c:measurability}.  For $v\le u$,  $\{A_{s_{\cT}}\le v\}\cap\{s_\cT\le \kappa_u\}=\{s_{\cT}\le \kappa_v\}$ a.s. which is in $\rG_v\subseteq\rG_u$. It proves the measurablity of $A_{s_\cT}$ w.r.t. $\rG_u$ for the spindle $\cT$ discovered before $\kappa_u$. Similarly, for any $v> 0$, $\{A_{t_{\cT}}< v\}\cap\{s_\cT\le \kappa_u\} = \cup_{w\in\mathbb{Q}_+, w< \min(u,v)}(\{s_\cT\le \kappa_w\}\cap\{t_\cT-s_\cT< v-w\})$ a.s. which is in $\rG_u$.  The proof is complete.
\end{proof}

We define for each $u\ge 0$,
\begin{align}\label{eq:D_u}
    D_u:=\inf\{s>u: s\in \cA\}.
\end{align}
Observe that $D_u$ is a.s. finite since $\cA$ is unbounded by Lemma \ref{l:scaling A} and that, for a fixed  $u>0$, since $u\in \cA^c$ a.s. by Lemma \ref{lem:kappa-u-fix}, we have $\kappa_u \in (s_{\cT}, t_{\cT})$ for some spindle $\cT$ by Proposition \ref{p:charact A} (ii) and $D_u = A_{t_\cT}>u$. Using that $D_u$ is  $\rG_u$-measurable by Lemma~\ref{l:Gu-filtration}, we deduce that it is a stopping time w.r.t. the filtration $(\rG_v)_{v\ge 0}$.  The following proposition will be proved in the next section.
\begin{proposition}[Regenerative property]\label{cA:range}
Let $\cA$ be defined as in \eqref{eq:cA}. 
Then $\cA$ is a $(\rG_u)_{u\ge 0}$-progressive set.  
Moreover, the set $\mathcal A$ is regenerative, in the sense that, for every $u\ge 0$, the set 
    $$
    \cA\circ \theta_{D_u}:=\cA\cap[D_u,+\infty)-D_u 
    $$
    is independent of  $\rG_{D_u}$  
    and has the same distribution as $\cA$.
\end{proposition}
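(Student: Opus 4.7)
The progressiveness of $\cA$ with respect to $(\rG_u)_{u \ge 0}$ will follow from the characterization $\cA = \overline{\{A_{s_\cT} : \cT \text{ spindle}\}}$ given in Proposition \ref{p:charact A}, combined with Lemma \ref{l:Gu-filtration}. For each $u \ge 0$, the spindles discovered before $\kappa_u$ together with their $A$-values $A_{s_\cT}$ are $\rG_u$-measurable, and the closure of $\{A_{s_\cT} : s_\cT \le \kappa_u\} = \{A_{s_\cT} : A_{s_\cT} \le u\}$ recovers $\cA \cap [0, u]$. Standard arguments about closed sets and the right-continuity of the filtration then yield the progressive measurability of $\ind{t \in \cA}$.

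For the regenerative property, I fix $u > 0$; by Lemma \ref{lem:kappa-u-fix} we have $u \in \cA^c$ almost surely, so $\kappa_u$ lies strictly between $s_\cT$ and $t_\cT$ for a unique spindle $\cT$ with top point $(c, z)$, and $D_u = A_{t_\cT}$ with $\kappa_{D_u} = t_\cT$. First I will verify that $T := t_\cT = \tau_c^z$ a.s. using Proposition \ref{prop:explore} (ii) together with the fact that the PRBM accumulates local time at $z$ immediately after $t_\cT$ since it has no points of increase. This allows me to apply Proposition \ref{WN:Green} at $(b, s) = (c, z)$, which decomposes $W$ into $W^{T,-}$ (whose $\sigma$-field $\rG^{c,z}$ will be shown to contain $\rG_{D_u}$ via Proposition \ref{c:measurability} and the measurability statements in Lemma \ref{l:Gu-filtration}) and a fresh noise $\widehat{W}^T$ independent of $\rG^{c,z}$, from which the shifted flows $(\widehat{\cB}^T, \widehat{\cR}^T)$ reconstruct a fresh configuration distributed as $(\cB, \cR)$.

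The next step is to identify the spindles discovered after $T$ with those of this fresh configuration, using Lemma \ref{l:spindles after T}: such spindles lie strictly above the green line $\cG^T$ and their bottom points are genuine bifurcation points for $(\widehat{\cB}^T, \widehat{\cR}^T)$. Writing $\widehat{A}$ for the time-change functional associated with the fresh exploration, I aim to establish
\[
\cA \circ \theta_{D_u} \cap (0, \infty) = \overline{\{\widehat{A}_{s_{\widehat{\cT''}}} : \widehat{\cT''} \text{ spindle of the fresh configuration}\}}.
\]
The right-hand side is a measurable functional of $\widehat{W}^T$, hence independent of $\rG^{c,z} \supseteq \rG_{D_u}$, and is distributed as $\cA$ by the distributional identity in Proposition \ref{WN:Green}, yielding both conclusions of the regenerative property.

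The main obstacle is the above identity. The complication is that after $t_\cT$ the explorer $\varrho$ can briefly re-enter old (partially explored) spindles during excursions $(t_0, t_1)$ with $A_{t_0} = A_{t_1}$, producing atoms of $\cA$ that do not directly correspond to entrance times of new spindles. To handle this, I will invoke Proposition \ref{p:dense times} on sequences of new spindles with $s_{\cT''} \uparrow t_0$, together with the continuity of $A$, to show that such atoms are in fact limits of values $A_{s_{\cT''}}$ for new spindles $\cT''$. A related subtlety is that the PRBM after $T$ is not a fresh PRBM started at $z$ (as $B_T \neq 0$ in general), but the explorer's navigation through the fresh region—and hence the order in which new spindles are discovered—is determined by the fresh flows via the Ray--Knight encoding underlying Proposition \ref{WN:Green}, so the distributional match still goes through.
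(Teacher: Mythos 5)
Your outline follows the paper's general strategy (decompose along the green line, identify the post-$D_u$ configuration with a fresh pair of flows, read off $\cA\circ\theta_{D_u}$ from the fresh spindles), but it has a genuine gap at its central step: you apply Proposition~\ref{WN:Green} at $(b,s)=(c,z)$, where $(c,z)$ is the \emph{random} top point of the spindle straddling $\kappa_u$. Proposition~\ref{WN:Green} (like Propositions~\ref{p:markov S} and~\ref{c:measurability}) is proved only for a fixed deterministic $(b,s)$ and $T=\tau_b^s$; nothing in it entitles you to conclude that $\widehat W^{T}$ is a white noise independent of ``$\rG^{c,z}$'' when $(c,z)$ is chosen by looking at the configuration, nor is $\rG^{c,z}$ even a well-defined object of the paper for a random point. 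The claimed inclusion $\rG_{D_u}\subseteq\rG^{c,z}$ suffers from the same problem. This transfer from fixed points to the random time is precisely the technical heart of the matter: in the paper it is carried out in Proposition~\ref{p:rhohat kappa}, by approximating with a countable dense family $(b_p,s_p)$, using the specific form of the filtration $\rG_u$ in \eqref{eq:G_u} to localize events onto $\rG^{b_p,s_p}$ on $\{\kappa_u\le\tau_{b_p}^{s_p}\}$, and exploiting that $\cG^{\kappa_u}=\cG^{T_p}$ on the relevant events. Without an argument of this type your independence claim and the distributional identity for $(\widehat\cB^T,\widehat\cR^T,\widehat\varrho^T)$ are unproven (note also that the law of the time-changed explorer requires the forward-local-time-flow identification of Proposition~\ref{p:rhohat}~(iii), which you only gesture at).

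A second, smaller gap: even granting independence of the post-$T$ data from $\rG_u$ for each \emph{fixed} $u$, the statement requires independence from $\rG_{D_u}$, and $D_u$ is a random (stopping) time. The paper bridges this by approximating $D_u$ from above by countable-valued $(\rG_v)$-stopping times $T_n$, applying the fixed-time regeneration at each value, and using that $\cA$ has no isolated points together with $[u,D_u)\subset\cA^c$ to get $D_{T_n}\downarrow D_u$ before passing to the limit; your proposal replaces this with the unjustified inclusion $\rG_{D_u}\subseteq\rG^{c,z}$. By contrast, your treatment of the identification $\cA\circ\theta_{D_u}=\overline{\{\widehat A_{s_{\widehat\cT}}\}}$ (handling the excursions back into old spindles via Proposition~\ref{p:dense times}) and your progressiveness argument are in the right spirit, though the paper gets progressiveness more cheaply from the adaptedness and right-continuity of $\kappa$.
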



\subsection{A time-changed  exploration process and the proof of Proposition~\ref{cA:range}}
\label{s:time-change}

Consider a time $T$ of the form either $T=\tau_b^s$
for fixed $(b,s)\in (0,\infty)\times\bR$ or $T=\kappa_u$ for a fixed $u> 0$.  
We now define an exploration process $\widehat{\rho}^T$ of the part to the right of $\cG^T$: 
 it is obtained from $(\varrho_s, s\ge T)$ by neglecting the parts when $\varrho$ is visiting spindles discovered prior to $T$. Recall that the right boundary of partially explored spindles is given by $\cG^T$ by Remark \ref{r:green line} (i).   Therefore we define\footnote{we hide the dependence on $T$ in the notation $\Lambda,\lambda$ for clarity}
$$
\Lambda_u:=\int_T^u \ind{L(v,X_v)\ge  \cG^T_{X_v}} \dd v,\quad
\lambda_t := \inf\{u\ge T: \Lambda_u > t \}, \qquad u\ge T,\ t\ge 0,
$$
where we used the convention $\cG^T_x=0$ for $x<I_T$. We observe that $\Lambda$ is the occupation time process of $\varrho$ in the random area $\{(b,x)\in \bR_+\times \bR: b\ge \cG^T_x\}$. In other words,  $\Lambda$ records the time spent by $\varrho$ in spindles discovered after time $T$. Note that the time $\lambda_t$ is not  an $(\mathscr{F}_t)_{t\ge 0}$-stopping time. 
By Corollary~\ref{c:spindle zero measure}, a.s. $(b,s)$ is in some spindle.  Similarly, by Lemma \ref{lem:kappa-u-fix}, and  Proposition \ref{p:charact A} (ii),  $u\in (A_{s_{\cT}},A_{t_{\cT}})$ for some spindle $\cT$. Hence in both cases $T=\tau_b^s$ and $T=\kappa_u$, $\varrho_T$ is in some spindle $\cT$ and $\Lambda_u=0$ for all $u\ge T$ until the first exit time of $\cT$ after $T$. This exit time is in $\cE^c$ by Lemma \ref{l:exit_explore} and Proposition \ref{p:dense times} implies the explorer enters new spindles at times  arbitrarily close. It implies the identity 
\begin{equation}\label{eq:lambda0}
    \lambda_0
     = \inf\{u\ge T\,:\, \varrho_{u}\notin \cT\}.
    \end{equation}

In the notation \eqref{def:H}, we have $X_{\lambda_0}=H_T$, $L(\lambda_0, x)=\cG^T_{x}$ for all $x\ge H_T$ and $L(\lambda_0, x)\le \cG^T_{x}$ for all $x\in \bR$. Define the processes
\begin{align}
    \widehat{X}_t^T &:= X_{\lambda_t}-H_T,\qquad t\ge 0, \label{eq:Xhat}\\
    \widehat{L}^T(t,x) &:= \big( L(\lambda_t,x+H_T)-\cG^T_{x+H_T} \big) \vee 0 , \qquad t\ge 0, x\in \bR, \label{eq:Lhat}\\
    \widehat{\varrho}^T_t &:= (\widehat{L}^T(t,\widehat{X}^T_t),\widehat{X}^T_t)=(L(\lambda_t,X_{\lambda_t})-\cG^T_{X_{\lambda_t}},X_{\lambda_t}-H_T), \qquad t\ge 0.\label{eq:rhohat}
\end{align}

The following proposition establishes a Markov-type property for the exploration process which will be at the heart of the proof of Proposition~\ref{cA:range}.

\begin{proposition}\label{p:rhohat} 
Let $(b,s)\in (0,\infty)\times \bR$ and $T=\tau_b^s$. Define $\widehat{W}^T,\widehat{\cB}^T, \widehat{\cR}^T$ as in  Proposition~\ref{WN:Green}.  With the notation above, almost surely:   
\begin{enumerate}[(i)]
    \item The process $\widehat{X}^T$ admits $\widehat{L}^T$ as its local times. 
    \item $\widehat{\varrho}^T$ is continuous and $\widehat{L}^T$ is bicontinuous. 
    \item The joint distributions of $(\widehat{W}^T,\widehat{\varrho}^T,\widehat{\cB}^T,\widehat{\cR}^T)$ and $(W,\varrho,\cB,\cR)$ are equal. 
    \item $(\widehat{\varrho}^T,\widehat{\cB}^T,\widehat{\cR}^T)$ is independent of $\rG^{b,s}$.
\end{enumerate}
\end{proposition}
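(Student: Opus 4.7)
The plan is to identify the time-changed exploration process $\widehat{\varrho}^T$ as a fresh explorer of a PRBM driven by the white noise $\widehat{W}^T$, which Proposition~\ref{WN:Green} already shows to be independent of $\rG^{b,s}$. Statements (i)--(ii) will then follow from a direct continuity analysis of the time change, (iii) from a Ray--Knight identification, and (iv) from measurability.

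First I would prove (i) and (ii). The process $\Lambda$ is a continuous non-decreasing integral, so $\lambda$ is right-continuous and its jumps $[u_1,u_2]$ correspond exactly to maximal intervals on which $\varrho$ stays strictly to the left of the graph of $\cG^T$. Each such interval is contained in a partially explored spindle $\cT$, and Proposition~\ref{p:right return} (applied with $t_0=u_1$ and $t_1=u_2$) yields $\varrho_{u_1}=\varrho_{u_2}$ on the graph of $\cG^T$, while $L(u_1,\cdot)$ and $L(u_2,\cdot)$ coincide outside the unexplored part of $\cT$. Consequently the definitions \eqref{eq:Xhat}--\eqref{eq:rhohat} collapse these excursions to a single point, and the continuity of $\widehat{X}^T$ and $\widehat{\varrho}^T$ as well as the bicontinuity of $\widehat{L}^T$ are inherited from the corresponding properties of $X$ and $L$. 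That $\widehat{L}^T$ is indeed the local time process of $\widehat{X}^T$ then follows from the occupation times formula applied to $X$ restricted to the random time set $\{t\ge T\colon L(t,X_t)\ge \cG^T_{X_t}\}$.

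The hard part will be statement (iii). The strategy is to show that $\widehat{X}^T$ is a PRBM with parameter $\mu$ whose associated white noise via \eqref{WN} coincides with $\widehat{W}^T$. This requires tracking the stochastic integral in \eqref{WN} through the time change $\lambda$, verifying that the excursions of $\varrho$ inside partially explored spindles (which by Proposition~\ref{p:right return} form closed loops returning to the graph of $\cG^T$) contribute nothing to the resulting martingale, and that the reflection structure $X=|B|-\mu\kL$ transfers under $\lambda$ to an analogous identity for $\widehat{X}^T$ with the same coefficient $\mu$. Once this Brownian representation is in hand, Proposition~\ref{WN:Green} identifies $\widehat{\cR}^T$ and $\widehat{\cB}^T$ with the BESQ flows driven by $\widehat{W}^T$, in exact correspondence with the relation between $(\cR,\cB)$ and $W$ given by Theorem~\ref{t:rayknight} and \eqref{def:S}. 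The joint law of $(\widehat{W}^T,\widehat{\varrho}^T,\widehat{\cB}^T,\widehat{\cR}^T)$ therefore coincides with that of $(W,\varrho,\cB,\cR)$, which is (iii). Finally, (iv) is immediate once (iii) is known, since $\widehat{\varrho}^T$, $\widehat{\cB}^T$, $\widehat{\cR}^T$ are all measurable functions of $\widehat{W}^T$ and $\widehat{W}^T$ is independent of $\rG^{b,s}$ by Proposition~\ref{WN:Green}.
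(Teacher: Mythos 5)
Your treatment of (i), (ii) and (iv) follows the paper's own route (occupation-times formula for (i); collapsing the jumps of $\lambda$ via Proposition~\ref{p:right return} for (ii), though note the paper also needs Proposition~\ref{p:dense times} to identify the jump endpoints of $\lambda$ with the times $t_0,t_1$ of that proposition; and for (iv) measurability of the hatted objects w.r.t.\ $\widehat{W}^T$ plus Proposition~\ref{WN:Green}). The problem is (iii), which is exactly where the real work lies, and there your proposal is not a proof but a statement of what would need to be proved. You propose to show directly that $\widehat{X}^T$ is a PRBM with parameter $\mu$ whose associated white noise via \eqref{WN} is $\widehat{W}^T$, by ``tracking the stochastic integral through the time change $\lambda$'' and checking that the excised excursions contribute nothing and that the perturbation structure $X=|B|-\mu\kL$ transfers. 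No argument is given for any of these steps, and the route has a genuine obstruction: $\lambda_t$ is \emph{not} a stopping time of the Brownian filtration (the paper points this out explicitly), because the excision rule $L(v,X_v)<\cG^T_{X_v}$ involves $\cG^T$, i.e.\ blue boundaries $\cB_{r,\cdot}(a)$ of partially explored spindles, which are driven by noise that $X$ has not yet visited at time $T$. So the time change is anticipating, and the standard machinery for carrying a stochastic integral or a semimartingale decomposition through a time change does not apply; establishing that the remaining path is again a PRBM with the \emph{same} $\mu$ and that its white noise is precisely $\widehat{W}^T$ is the entire difficulty, not a verification.

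The paper avoids this semimartingale analysis altogether. It takes the law of $(\widehat{W}^T,\widehat{\cB}^T,\widehat{\cR}^T)$ from Proposition~\ref{WN:Green} and reduces (iii) to a purely pathwise identity: $\widehat{\cR}^T$ is the forward local-time flow of $\widehat{X}^T$, i.e.\ $\widehat{\cR}^{T}_{x,y}(a)=\widehat{L}^T(\widehat{\tau}^{T,x}_a,y)$, which is proved by the elementary identification $\tau^{x'}_{a+\cG^T_{x'}}=\lambda_{\widehat{\tau}^{T,x}_a}$ with $x'=x+H_T$. Combined with the fact (from \cite[Proposition 2.5 \& Remark 2.8]{aidekon2023stochastic}) that a PRBM is a measurable function of its forward local-time flow, this transfers the joint law of $(W,X,\cB,\cR)$ to the hatted quadruple without ever showing directly that $\widehat{X}^T$ is a PRBM. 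Note that this pathwise identity is also missing from your plan: even if you had the Brownian representation of $\widehat{X}^T$, you would still need to identify $\widehat{\cR}^T$ (defined as a shift of $\cR-\cG^T$) with the local-time flow of the time-changed process in order to get the \emph{joint} law in (iii); your phrase ``in exact correspondence'' is precisely the assertion \eqref{eq:loctimeflow Xhat} that has to be proved. As it stands, (iii) has a genuine gap, and the most economical fix is to adopt the paper's local-time-flow argument rather than attempt the non-adapted time-change computation.
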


\begin{proof}
\begin{enumerate}[(i)]
    \item By the (extended) occupation times formula (see \cite[Exercise 1.15, Chapter VI]{RevuzYor} for example) and a change of variables, we have for every $r>0$ and positive Borel function $f$ on $\bR$,
    \begin{align*}
        &\int_0^r f(\widehat{X}^T_t+H_T) \dd t = \int_0^r f(X_{\lambda_t}) \dd t 
        = \int_{\lambda_0}^{\lambda_r} f(X_u) \ind{L(u,X_u)\ge \cG^T_{X_u}} \dd u\\
        = &\int_{\bR} \dd x \int_{\lambda_0}^{\lambda_r} f(x) \ind{L(u,x)\ge \cG^T_x} \dd_u L(u,x)\\
        = &\int_{\bR} f(x) \big( (L(\lambda_r,x)-\cG^T_x) \vee 0 \big) \dd x \\
        = &\int_{\bR} f(x)\widehat{L}^T(r,x-H_T)\dd x = \int_{\bR} f(x+H_T)\widehat{L}^T(r,x)\dd x
    \end{align*}
    where we used in the second equality that $L(u,x)\le \cG^T_x$ for all $u\le \lambda_0$. Therefore $\widehat{X}^T$ admits $\widehat{L}^T$ as its local times. 
    
    \item 
    Since  $t\mapsto \varrho_t$ and $x\mapsto \cG^T_x$ are continuous, in order to prove the continuity of $\widehat{\varrho}^T$, we only need to prove that $\varrho_{\lambda_{t-}}=\varrho_{\lambda_t}$ for every $t\ge 0$, and thus only for $t> 0$ satisfying $\lambda_{t-}< \lambda_{t}$. 
    Let  $u\in (\lambda_{t-},\lambda_t)$ such that $\varrho_{u}$ is in the interior of some spindle.  Since $t\mapsto\lambda_t$ is strictly increasing,  $\lambda_{t-}>\lambda_0$. 
    By definition of $\lambda$ and Remark \ref{r:green line} (i), $\varrho_{u}$ is in a spindle that was discovered before time $T$, and by \eqref{eq:lambda0} and $u>\lambda_0$, it is in the right part of that spindle. Let $u_0,u_1>T$ be the pair $t_0$ and $t_1$ associated to $u$ as given in Proposition \ref{p:right return}. Since $u_0,u_1\in \mathcal{E}^c$, by Proposition~\ref{p:dense times}, there exist spindles discovered at times   arbitrarily close to $u_0$ and $u_1$ respectively. This yields  $\lambda_{t-}=u_0$ and $\lambda_t=u_1$. Proposition \ref{p:right return} (i) implies that  $\varrho_{\lambda_{t-}}=\varrho_{\lambda_t}$.

    We now prove the bicontinuity of $\widehat{L}^T$. Since $(t,x)\mapsto L(t,x)$ and $x\mapsto \cG^T_x$ are continuous, it suffices to prove that $L(\lambda_{t-},x)=L(\lambda_t,x)$ for all $t\ge 0$ and $x\in\bR$ satisfying $L(\lambda_t,x)> \cG^T_{x}$. With the notation of the last paragraph, $\lambda_{t-}=u_0$,  $\lambda_t=u_1$, and on the time interval $(u_0,u_1)$, $\varrho$ only visits points inside the spindle. So $L(u_1,X_s)\le \cG^T_{X_s}$ for all $s\in (u_0,u_1)$. Hence for any $x\in \bR$ such that $L(u_1,x)>\cG^T_x$, $X$ cannot visit $x$ on $(u_0,u_1)$ and thus $L(u_0,x)=L(u_1,x)$ as desired.

    \item From (i) and the definitions of $\varrho$, $\widehat{\varrho}^T$, it suffices to prove $(\widehat{W}^T,\widehat{X}^T,\widehat{\cB}^T,\widehat{\cR}^T)\overset{(d)}{=}(W,X,\cB,\cR)$. By the definitions of the flows $\cR$ and $\cB$ and Proposition \ref{WN:Green}, $(\widehat{W}^T,\widehat{\cB}^T,\widehat{\cR}^T)$ has the distribution of $(W,\cB,\cR)$. We show now that $\widehat{\cR}^T$ is the forward local time flow of $\widehat{X}^T$ in the terminology of \cite{aidekon2023stochastic}, i.e. for $a\ge 0$ and $y\ge x$,
\begin{equation}\label{eq:loctimeflow Xhat}
    \widehat{\cR}^{T}_{x,y}(a)=\widehat{L}^T (\widehat{\tau}^{T,x}_a,y) 
\end{equation}
        
    \noindent where $\widehat{\tau}^{T,x}_a:=\inf\{t>0: \widehat{L}^T (t,x)>a\}$  is the right-continuous inverse of $\widehat{L}^T(\cdot,x)$. It would complete the proof since by \cite[Proposition 2.5 \& Remark 2.8]{aidekon2023stochastic},  $\widehat{X}^T$ is a measurable function of its forward local time flow. Note in particular that $(\widehat{\varrho}^T,\widehat{\cB}^T,\widehat{\cR}^T)$ is therefore measurable w.r.t. $\widehat{W}^T$. It remains to prove   \eqref{eq:loctimeflow Xhat}.

    Recall that $\widehat{\cR}^{T}_{x,y}(a) = \cR_{x',y'}(a+\cG^T_{x'})-\cG^T_{y'} = L(\tau^{x'}_{a+\cG^T_{x'}},y')-\cG^T_{y'}$ where $(x',y')=(x+H_T,y+H_T)$. We shall prove that $\tau^{x'}_{a+\cG^T_{x'}} = \lambda_{\widehat{\tau}^{T,x}_a}$, so that 
    \[
       \widehat{L}^T (\widehat{\tau}^{T,x}_a,y)= L(\lambda_{\widehat{\tau}^{T,x}_a},y')-\cG^T_{y'} 
       = L(\tau^{x'}_{a+\cG^T_{x'}},y')-\cG^T_{y'} =\widehat{\cR}^{T}_{x,y}(a), 
    \]
    giving \eqref{eq:loctimeflow Xhat}.
    For any $t>\widehat{\tau}^{T,x}_a$, we have $L(\lambda_t,x')-\cG^T_{x'}>a$ and thus $\lambda_t>\tau^{x'}_{a+\cG^T_{x'}}$. Taking $t\downarrow \widehat{\tau}^{T,x}_a$ yields $\lambda_{\widehat{\tau}^{T,x}_a}\ge \tau^{x'}_{a+\cG^T_{x'}}$. Finally, by definition of $\widehat{L}^T$ and its bicontinuity,  
    \[    a=\widehat{L}^T(\widehat{\tau}^{T,x}_a,x)\ge L(\lambda_{\widehat{\tau}^{T,x}_a},x')-\cG^T_{x'},
    \]
    hence $\lambda_{\widehat{\tau}^{T,x}_a}\le \tau_{a+\cG^T_{x'}}^{x'}$ which gives the other direction. 
 
\item We noticed in (iii) that $(\widehat{\varrho}^T,\widehat{\cB}^T,\widehat{\cR}^T)$ is measurable w.r.t.\ $\widehat{W}^T$. The statement follows from the independence of $\widehat{W}^T$ and $\rG^{b,s}$ by Proposition \ref{WN:Green}.
\end{enumerate}    
\end{proof}

We now deal with the case $T=\kappa_u$. As in Proposition \ref{WN:Green}, we let  $\widehat{\cB}^T = \theta_{-H_T}(\cB-\cG^T)$ and $\widehat{\cR}^T = \theta_{-H_T}(\cR-\cG^T)$ in the notation \eqref{eq:S translation} and \eqref{eq:S-g}.
\begin{proposition}\label{p:rhohat kappa}
Let $u>0$ and $T=\kappa_u$. Then 
\begin{enumerate}[(i)]
    \item $(\widehat{\varrho}^T,\widehat{\cB}^T,\widehat{\cR}^T,\widehat{L}^T)$ is independent of $\rG_u$ and has the same law as $(\varrho,\cB,\cR,L)$. 

    \item Almost surely, no spindle discovered after time $T$ has its bottom point on $(\cG^T_x,x)_{x\ge I_T}$.
\end{enumerate} 
\end{proposition}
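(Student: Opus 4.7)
For part (i), the plan is to approximate $\kappa_u$ from above by inverse local times $\tau_{b_n}^{s_n}$ with $(b_n, s_n)$ in a countable dense subset $\cD \subset (0,\infty)\times\bR$, and invoke Proposition~\ref{p:rhohat}. By Lemma~\ref{lem:kappa-u-fix} and Proposition~\ref{p:charact A}~(ii), almost surely there is a unique spindle $\cT$ with $\kappa_u \in (s_\cT, t_\cT)$; in particular $\varrho_{\kappa_u}$ lies in the interior of $\cT$, and Proposition~\ref{prop:explore}~(ii) shows that $\varrho$ sweeps continuously through further points of $\cT$ just after $\kappa_u$. One can thus select a random sequence $(b_n, s_n) \in \cD$ with $\tau_{b_n}^{s_n} \downarrow \kappa_u$.

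The key technical step is to show that along this sequence the objects entering the definition of $\widehat{\varrho}^T$ are continuous in $T$: $\cG^{\tau_{b_n}^{s_n}}_x \to \cG^{\kappa_u}_x$ pointwise, $H_{\tau_{b_n}^{s_n}} \to H_{\kappa_u}$, and $(\widehat\varrho^{\tau_{b_n}^{s_n}}, \widehat\cB^{\tau_{b_n}^{s_n}}, \widehat\cR^{\tau_{b_n}^{s_n}}, \widehat L^{\tau_{b_n}^{s_n}})$ converges locally uniformly to $(\widehat\varrho^{\kappa_u}, \widehat\cB^{\kappa_u}, \widehat\cR^{\kappa_u}, \widehat L^{\kappa_u})$. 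The subtle point is that any spindle discovered in the shrinking window $(\kappa_u, \tau_{b_n}^{s_n}]$ must become negligible: its bottom point $\varrho_{s_\cT}$ converges to $\varrho_{\kappa_u}$ by continuity of $\varrho$, and nested-spindle size bounds force the contribution to $\cG^{\tau_{b_n}^{s_n}}$ to vanish in the limit. Granted this convergence, Proposition~\ref{p:rhohat} applied at each $\tau_{b_n}^{s_n}$ gives the law of $(\varrho, \cB, \cR, L)$ and independence from $\rG^{b_n, s_n}$, and these properties propagate in the limit.

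For any $E \in \rG_u$, the definition~\eqref{eq:G_u} ensures $E \cap \{\kappa_u \le \tau_b^s\} \in \rG^{b,s}$ for every $(b,s) \in \cD$. The $\omega$-dependence of the approximating sequence—the main obstacle of the argument—is handled by a monotone-class / countable-partition scheme: enumerate $\cD$ deterministically as $\{(b_n, s_n)\}_{n\ge 1}$, decompose $\Omega$ according to the smallest $n$ for which $\tau_{b_n}^{s_n} \in (\kappa_u, \kappa_u + \varepsilon)$ (an event measurable with respect to $\rG^{b_n, s_n}$ by Lemma~\ref{l:Gu-filtration}), apply Proposition~\ref{p:rhohat} on each piece, and let $\varepsilon \downarrow 0$ by dominated convergence.

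Part (ii) is a direct adaptation of Lemma~\ref{l:spindles after T} to $T = \kappa_u$. If a spindle $\cT'$ with bottom point $(a',r')$ satisfies $s_{\cT'} > \kappa_u$ and $a' = \cG^{\kappa_u}_{r'}$, then either $\cG^{\kappa_u}_{r'} > L(\kappa_u, r')$—so $(a',r')$ lies on the blue boundary of a spindle partially explored at time $\kappa_u$, contradicting Lemma~\ref{l:bottom_boundary}—or $a' = L(\kappa_u, r')$, which (splitting on the sign of $r' - X_{\kappa_u}$ as in the proof of Lemma~\ref{l:spindles after T}) forces $\sigma_{a'}^{r'} \le \kappa_u$, contradicting $s_{\cT'} = \sigma_{a'}^{r'} > \kappa_u$ from Proposition~\ref{prop:explore}~(i).
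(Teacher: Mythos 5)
Your overall architecture is the same as the paper's (a deterministic countable dense family $(b_p,s_p)$, a partition of $\Omega$ according to which index is selected, an application of Proposition~\ref{p:rhohat} on each piece, then a passage to the limit), but the step you yourself call ``the key technical step'' is exactly where your argument has a gap. You assert that $\cG^{\tau_{b_n}^{s_n}}\to\cG^{\kappa_u}$, $H_{\tau_{b_n}^{s_n}}\to H_{\kappa_u}$ and that the hatted quadruples converge locally uniformly, justified by the claim that spindles discovered in the shrinking window $(\kappa_u,\tau_{b_n}^{s_n}]$ are ``negligible'' by ``nested-spindle size bounds''. This is not a proof, and it misdiagnoses the mechanism: since $u\in\cA^c$ a.s., $\kappa_u$ lies in $(s_{\cT},t_{\cT})$ for some spindle $\cT$, and the explorer stays in the interior of $\cT$ on that whole interval, so \emph{no} spindle at all is discovered between $\kappa_u$ and any approximating time $T_p$ provided $T_p<t_{\cT}$; in that case one has the exact identities $\cG^{T_p}=\cG^{\kappa_u}$, $H_{T_p}=H_{\kappa_u}$ and hence $(\widehat{\varrho}^{T_p},\widehat{\cB}^{T_p},\widehat{\cR}^{T_p},\widehat{L}^{T_p})=(\widehat{\varrho}^{\kappa_u},\widehat{\cB}^{\kappa_u},\widehat{\cR}^{\kappa_u},\widehat{L}^{\kappa_u})$. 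This is the paper's device: it partitions on the events $\{p(k)=p\}=\{\kappa_u\le T_p<t_{\cT},\ T_p \text{ minimal}\}$, checks via Lemma~\ref{l:Gu-filtration}, \eqref{eq:kappau-bs}--\eqref{eq:kappau-bs-b's'} and Proposition~\ref{c:measurability} that $E\cap\{p(k)=p\}\in\rG^{b_p,s_p}$ for $E\in\rG_u$, and then needs no convergence argument whatsoever. Your scheme can be repaired along the same lines (for small $\varepsilon$ your selected $T_n$ eventually falls below $t_{\cT}$, so the quadruples coincide except on an event of vanishing probability), but as written the asserted continuity of $T\mapsto(\cG^T,H_T,\widehat\varrho^T,\dots)$ at $T=\kappa_u$ is unproven, and note also that the hatted flows involve a spatial translation by $H_T$ and subtraction of $\cG^T$, so even pointwise convergence of $\cG^T$ would not by itself give the convergence in law you need; the exact equality is what makes the argument go through. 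A further bookkeeping point: you invoke Lemma~\ref{l:Gu-filtration} for the measurability of the event $\{\tau_{b_n}^{s_n}\in(\kappa_u,\kappa_u+\varepsilon),\ n\text{ minimal}\}$ with respect to $\rG^{b_n,s_n}$; that lemma does not directly give this, and the required manipulations are precisely the $\{\kappa_u\le\tau_b^s<t_\cT\}$-type computations the paper carries out explicitly.

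For (ii), your direct adaptation of Lemma~\ref{l:spindles after T} to $T=\kappa_u$ is also incomplete in the case $a'=L(\kappa_u,r')$ with $r'>X_{\kappa_u}$: the lemma's argument there uses that $L(\tau_b^s,x)=\cR_{s,x}(b)$ lies on a forward red flow line, and for $T=\kappa_u$ (not an inverse local time) you would have to argue separately that $(L(\kappa_u,x),x)$ for $x>X_{\kappa_u}$ lies on a red flow line (or its left-continuous version) away from its starting point before invoking Lemma~\ref{l:bottom_boundary}. The paper sidesteps this entirely: on $\{p(k)=p\}$ one has $\cG^{\kappa_u}=\cG^{T_p}$ and no spindle is discovered in $[\kappa_u,T_p]$, so (ii) follows immediately from Lemma~\ref{l:spindles after T} applied at the fixed inverse local time $T_p$. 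I would recommend adopting that reduction rather than re-proving the lemma at the random time $\kappa_u$.
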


\begin{proof}
We prove (i) by approximation. Almost surely, by Lemma \ref{lem:kappa-u-fix}, $u\in \cA^c$, so $u\in (A_{s_{\cT}},A_{t_{\cT}})$ for some spindle $\cT$ by Proposition \ref{p:charact A} (ii). Let $(b_p,s_p)_{p\ge 1}$ be a countable deterministic set dense in $(0,\infty)\times \mathbb{R}$ and $t_p:=A_{T_p}$, $T_p=\tau_{b_p}^{s_p}$. 
For each $k\ge 1$, let $p(k)$ be the index in $\{1,2,\ldots,k\}$ which realizes the minimum of $T_p$ over $p\in \{1,2,\ldots,k\}$ such that $\kappa_u \le T_p <t_{\cT}$, and $p(k)=\infty$ if no such $p$ exists. Hence   $ T_{p(k)} = \inf\{T_p \colon 1\le p\le k,\, \kappa_u\le T_p <t_{\cT}\}$, with the convention that $\inf \emptyset =\infty$ and $T_{\infty} = \infty$. Since $\{T_p:p\ge 1\}$ is dense in $\bR_+$, there must exist $K$ large enough with $p(K)<+\infty$.

For every $(b,s),(b',s')\in\bR_+\times\bR$, $\{\tau_b^s\le\tau_{b'}^{s'}\}\in \rF_{\tau_b^s}\cap\rF_{\tau_{b'}^{s'}} \subseteq \rG^{b,s}\cap\rG^{b',s'}$ by Proposition \ref{p:markov S} (ii). Moreover, if $\cT'$ is the spindle containing $(b,s)$, the event $\{s_{\cT'}<\kappa_u<t_{\cT'}\}\in \rG_u$ by Lemma \ref{l:Gu-filtration}, hence   $\{s_{\cT'}<\kappa_u<t_{\cT'}\}\cap\{\kappa_u\le \tau_b^s\}\in \rG^{b,s}$. By Proposition \ref{c:measurability}, $\{ \tau_b^s < t_{\cT'}\}\in \rG^{b,s}$. We deduce that $\{\kappa_u\le \tau_b^s<t_{\cT}\}\in \rG^{b,s}$ where we recall that $\cT$ is the spindle associated with $\kappa_u$. By \eqref{eq:kappau-bs} and \eqref{eq:kappau-bs-b's'}, 
\[
\{\tau_b^s<\kappa_u\le \tau_{b'}^{s'}\} = \big(\{\kappa_u\le \tau_{b'}^{s'}\}\backslash \{\kappa_u\le \min\{\tau_{b'}^{s'},\tau_b^s\}\, \}\big) \in \rG^{b',s'}.
\]
It implies that for any $p=1,\ldots,k$,
\[
\{p(k)=p\} = \{\kappa_u\le T_p<t_\cT\} \cap \bigg(\bigcap_{q\in \{1,\ldots,k\}\backslash\{p\}} \big(\{T_q<\kappa_u\le T_p\} \cup \{T_q>T_p\} \big)\bigg) \in \rG^{b_p,s_p}. \vspace{-0.1cm}
\]
 Let $E\in \rG_u$.  By definition of $\rG_u$ in \eqref{eq:G_u}, $E\cap \{\kappa_u\le T_p \} \in \rG^{b_p,s_p}$ .    Then $E_p:=E\cap  \{p=p(k)\}\in \rG^{b_p,s_p}$.  On $E_p$, $\cG^{T}=\cG^{T_p}$, and thus $(\widehat{\varrho}^T,\widehat{\cB}^T,\widehat{\cR}^T,\widehat{L}^T)=(\widehat{\varrho}^{T_p},\widehat{\cB}^{T_p},\widehat{\cR}^{T_p},\widehat{L}^{T_p})$
which is distributed as $(\varrho,\cB,\cR,L)$ by Proposition \ref{p:rhohat} (i)-(iii) and is independent of $\rG^{b_p,s_p}$ by Proposition \ref{p:rhohat} (iv).  We deduce that for every bounded measurable functional $h$,
\begin{align*}
    &\bE\left[ h(\widehat{\varrho}^T,\widehat{\cB}^T,\widehat{\cR}^T,\widehat{L}^T)\mathbbm{1}_{E\, \cap \{p(k)<+\infty\}}\right]\\
    =\,&\sum_{p=1}^k  \bE\left[ h(\widehat{\varrho}^{T_p},\widehat{\cB}^{T_p},\widehat{\cR}^{T_p},\widehat{L}^{T_p})\mathbbm{1}_{E_p}\right] =   
    \bE\left[ h(\varrho,\cB,\cR,L)\right] \,\bP(E\cap \{p(k)<+\infty\}).
\end{align*}
We complete the proof of (i) by letting $k\to\infty$. Since $\cG^T=\cG^{T_p}$ on the event $\{p(k)=p\}$, (ii) holds because of Lemma~\ref{l:spindles after T}. 
\end{proof}

\begin{proof}[Proof of Proposition~\ref{cA:range}]
  The right-continuous process $(\kappa_u)_{u\ge 0}$ is $(\rG_u)_{u\ge 0}$-adapted, and thus $(\rG_u)_{u\ge 0}$-progressive. It follows that $\cA = \overline{\{u\ge 0\colon \kappa_u\ne \kappa_{u-}\}}$ is $(\rG_u)_{u\ge 0}$-progressive. Fix $u>0$ and let $T=\kappa_u$. We have already observed that $\kappa_u\in (s_{\cT},t_{\cT})$ for some spindle $\cT$ a.s.\ and $D_u=A_{t_{\cT}}$.
In view of \eqref{eq:Set-A} and Lemma \ref{l:Gu-filtration}, we also obtain that $\cA\cap [0,D_u]$ is measurable w.r.t.\ $\rG_u$.

We turn to the proof of the regenerative property. Denote by $\mathscr{T}(\cR,\cB)$, resp. $\mathscr{T}(\widehat{\cR}^T,\widehat{\cB}^T)$ the spindles constructed from the flows $\cR$ and $\cB$, resp. $\widehat{\cR}^T$ and $\widehat{\cB}^T$. By construction and Proposition~\ref{p:rhohat kappa} (ii), the spindles $\mathscr{T}(\cR,\cB)$ discovered by $\varrho$ after time $\lambda_0$ are (translations by $\cG^T$ of) the spindles $\mathscr{T}(\widehat{\cR}^T,\widehat{\cB}^T)$ discovered by $\widehat{\varrho}^T$ (in the same chronological order), and the time spent in the left parts of $\mathscr{T}(\cR,\cB)$ by $\varrho$ after time $\lambda_0$ is the time spent in the left parts of $\mathscr{T}(\widehat{\cR}^T,\widehat{\cB}^T)$ by $\widehat{\varrho}^T$. Therefore, by \eqref{def:A-kappa} and \eqref{eq:Set-A}, the set  $\cA\circ \theta_{D_u}$ is the set $\cA$ constructed from  $(\widehat{\varrho}^T,\mathscr{T}(\widehat{\cR}^T,\widehat{\cB}^T))$ instead of $(\varrho, \mathscr{T}(\cR,\cB))$. By Proposition~\ref{p:rhohat kappa} (i), $\cA\circ \theta_{D_u}$ is independent of $\rG_u$ and has the same law as $\cA$. 

We now prove that $\cA\circ \theta_{D_u}$ is independent of $\rG_{D_u}$. Since $D_u$  is a $(\rG_u)_{u\ge 0}$-stopping time,  there exists a sequence of $(\rG_u)_{u\ge 0}$-stopping times $(T_n,\, n\ge 1)$ taking countable values decreasing to $D_u$. It follows by the last paragraph that $\cA\circ\theta_{D_{T_n}}$ is independent of $\rG_{T_n}$, hence of $\rG_{D_u}$. Since $\cA$ does not have isolated points by Proposition~\ref{p:charact A} (iii) and $[u,D_u)\subset \cA^c$, there exist $v\in \cA$, $v>D_u$ arbitrarily close to $D_u$. For any such $v$, $T_n\in [D_u,v)$ for $n$ large enough, for which $D_{T_n}\le v$. It proves that $D_{T_n}\downarrow D_u$. Passing to the limit, we deduce that  $\cA\circ \theta_{D_{u}}$ is also independent of $\rG_{D_u}$.
\end{proof}

\subsection{Representation of spindles as a marked L\'evy process}
\label{sec:markedLevy}

Since the set $\cA$ is unbounded (consequence of Lemma~\ref{l:scaling A}), regenerative and progressively measurable w.r.t. $(\rG_u)_{u\ge 0}$ (Proposition~\ref{cA:range}) and has no isolated points (Proposition~\ref{p:charact A} (iii)), then by \cite[Theorem~2.1]{Bertoin1999} there exists a strictly increasing subordinator  $\eta$ (without killing) whose range equals $\cA$ a.s.\ and the process 
\begin{equation}\label{def:L(u)}
   \cL (u):= \inf \{\ell \ge 0\colon \eta_\ell >u\}, \qquad u\ge 0,
\end{equation}

\noindent is $(\rG_{u})$-adapted and a.s.\ continuous. 
Such a subordinator (in fact, its inverse) is unique up to a linear scaling of time, which is equivalent to multiplication of a constant to $\cL$; see \cite[Section 2.1]{Bertoin1999}.

Note that
 \begin{equation}\label{eq:bijection subor spindles}
     \cA^c = \bigcup_{\ell:\,  \Delta\eta_\ell>0} (\eta_{\ell-},\eta_\ell) = \bigcup_{\cT \text{spindle}} (A_{s_\cT},A_{t_\cT}),
 \end{equation}

\noindent where the second equality follows from Proposition~\ref{p:charact A} (ii). This identity provides  a bijection between the spindles $\cT$ and the jumps of the subordinator $\eta$.  Recall  the definition of the gasket $\rK$ in \eqref{eq:gasket}.

\begin{proposition}\label{p:ki}
Define a mapping $\ki \colon [0,\infty) \to \bR_+\times \bR$ by 
\begin{equation}\label{def:ki}
\ki:=\varrho\circ \kappa\circ \eta.
\end{equation}

(i) The range of $\ki$ is the gasket $\rK$.

(ii) The jump times of the process $\ki$  coincide with those of $\eta$.

(iii) For a jump time $\ell$ of $\eta$, if $\cT$ is the spindle associated through \eqref{eq:bijection subor spindles}, we have
\begin{equation}\label{eq:xi-X}
        (\eta_{\ell-},\eta_{\ell})=(A_{s_{\cT}},A_{t_{\cT}}),\;(\ki_{\ell-},\ki_{\ell})=(\varrho_{s_{\cT}},\varrho_{t_{\cT}}). 
    \end{equation}
\end{proposition}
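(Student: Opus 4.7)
The argument rests on four ingredients already available: Lemma~\ref{l:kappa_A} (continuity of $\varrho\circ\kappa$ and the identities $\kappa(A_{s_\cT})=s_\cT$, $\kappa(A_{t_\cT})=t_\cT$), Proposition~\ref{p:charact A}(iv) ($\{\kappa_u,\kappa_{u-}:u\in\cA\}=\cE^c$), Proposition~\ref{p:explore-complement} ($\varrho(\cE^c)=\rK$), and the bijection \eqref{eq:bijection subor spindles} between jumps of $\eta$ and spindles. The strategy is to prove (iii) first, then deduce (ii) as a corollary, and finally use continuity of $\varrho\circ\kappa$ together with the density of $\{\eta_\ell\}$ in $\cA$ to establish (i).

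For (iii), fix a jump time $\ell$ of $\eta$. Since $\eta$ is an increasing cadlag process whose range has closure $\cA$, the interval $(\eta_{\ell-},\eta_\ell)$ is one of the maximal connected components of $\cA^c$, with $\eta_{\ell-},\eta_\ell\in\cA$. By the bijection \eqref{eq:bijection subor spindles}, this component coincides with $(A_{s_\cT},A_{t_\cT})$ for a unique spindle $\cT$, and monotonicity of $\eta$ forces $\eta_{\ell-}=A_{s_\cT}$ and $\eta_\ell=A_{t_\cT}$. Lemma~\ref{l:kappa_A}(ii) then gives $\ki_\ell=\varrho(\kappa(A_{t_\cT}))=\varrho_{t_\cT}$; and by continuity of $\varrho\circ\kappa$ (Lemma~\ref{l:kappa_A}(i)), $\ki_{\ell-}=\lim_{\ell'\uparrow\ell}(\varrho\circ\kappa)(\eta_{\ell'})=(\varrho\circ\kappa)(\eta_{\ell-})=\varrho_{s_\cT}$, which are the top and bottom points of $\cT$ by Proposition~\ref{prop:explore}.

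Statement (ii) now follows: continuity of $\varrho\circ\kappa$ guarantees that any jump of $\ki=(\varrho\circ\kappa)\circ\eta$ must occur at a jump of $\eta$, and conversely (iii) shows that at each jump of $\eta$ the process $\ki$ jumps from the bottom point of some $\cT$ to its top point, which differ (their vertical coordinates are the distinct values $r$ and $z$ of Definition~\ref{def:spindle}). For (i), I read the statement as the identity $\overline{\{\ki_\ell:\ell\ge 0\}}=\rK$ (equivalently, the set of visited points $\{\ki_\ell,\ki_{\ell-}:\ell\ge 0\}=\rK$). For the inclusion $\subseteq$: any $\eta_\ell$ lies in the closed set $\cA$, so $\kappa(\eta_\ell)\in\cE^c$ by Proposition~\ref{p:charact A}(iv), whence $\ki_\ell\in\varrho(\cE^c)=\rK$; closedness of $\rK$ yields the closure. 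For the reverse inclusion, continuity of $\varrho\circ\kappa$ forces $\varrho(\kappa_u)=\varrho(\kappa_{u-})$ for every $u\in\cA$, so combining Proposition~\ref{p:charact A}(iv) and Proposition~\ref{p:explore-complement},
\[
(\varrho\circ\kappa)(\cA)=\varrho\bigl(\{\kappa_u:u\in\cA\}\bigr)=\varrho\bigl(\{\kappa_u,\kappa_{u-}:u\in\cA\}\bigr)=\varrho(\cE^c)=\rK;
\]
together with $\overline{\{\eta_\ell:\ell\ge 0\}}=\cA$ and continuity of $\varrho\circ\kappa$, this gives $\rK=(\varrho\circ\kappa)(\cA)\subseteq\overline{(\varrho\circ\kappa)(\{\eta_\ell\})}=\overline{\{\ki_\ell\}}$.

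The only delicate point is the identification step in (iii), where one must rule out $\eta_{\ell-}=A_{t_\cT}$; this is handled by monotonicity of $\eta$ and the structure of $\cA^c$ as a disjoint union in \eqref{eq:bijection subor spindles}. A minor subtlety in (i) is that $\{\ki_\ell\}$ genuinely omits the bottom points of spindles (these appear only as left limits at jumps of $\eta$), which is why the natural interpretation of (i) involves either the closure of the range or the set of visited points including left limits; the continuity of $\varrho\circ\kappa$ makes both interpretations collapse to $\rK$.
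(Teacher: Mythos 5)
Your proof is correct and follows essentially the same approach as the paper: establish (iii) directly from the bijection \eqref{eq:bijection subor spindles} together with Lemma~\ref{l:kappa_A}(ii) and the continuity of $\varrho\circ\kappa$, deduce (ii) from (iii) plus continuity, and obtain (i) by combining $\rK=\{\varrho_{\kappa_u}\colon u\in\cA\}$ (Propositions~\ref{p:explore-complement} and~\ref{p:charact A}(iv)) with the density of $\{\eta_\ell\}$ in $\cA$ and the continuity of $\varrho\circ\kappa$. You simply spell out the details (e.g.\ why $\varrho(\kappa_u)=\varrho(\kappa_{u-})$, and the closure interpretation of ``range'') that the paper leaves implicit; the paper handles the latter by remarking after the proposition that the closure is shown to be superfluous in Section~\ref{s:shredded}.
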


We will see in Section \ref{s:shredded} that the range of 
$\ki$ is exactly $\{\ki(\ell),\,\ell\in \bR_+\}$, i.e. taking the
closure is superfluous. In words, $\ki$ is a process lying in the gasket, jumping at the jump times of $\eta$, from the bottom point to the top point of the associated spindle.

\begin{proof}[Proof of Proposition~\ref{p:ki}]
    By Proposition \ref{p:explore-complement} and Proposition \ref{p:charact A} (iv), we have $\rK=\{\varrho_{\kappa_u}\colon u\in \cA\}$.  We then apply that the range of $\eta$ is $\cA$ and that $\varrho\circ \kappa$ is continuous by Lemma \ref{l:kappa_A}  to prove (i). Moreover the process $\ki$  can only jump  at the jump times of $\eta$ by continuity of $\varrho\circ \kappa$. It remains to prove (iii) which will also imply that jump times of $\eta$ are jump times of $\ki$. Let $\ell$ be a jump time of $\eta$. We have $\eta_{\ell-}=A_{s_{\cT}}$ and $\eta_\ell=A_{t_{\cT}}$ by definition of $\cT$. By continuity of $\varrho\circ \kappa$, the left and right limits of $\ki$ at $\ell$ are respectively $\varrho \circ \kappa(A_{s_{\cT}})$ and $\varrho \circ \kappa(A_{t_{\cT}})$. We then apply Lemma \ref{l:kappa_A} (ii).
\end{proof}

The goal of this section is to show that the process  $\xi := X\circ \kappa\circ \eta$, i.e.\ the vertical coordinate of $\ki$, is a L\'evy process (Theorem \ref{thm:Levy}).

\medskip

Let us define a point process by marking up each jump of $\eta$ with an excursion obtained from the corresponding spindle. 
To this end, we introduce the space $\mathscr{E}$ of non-negative continuous 
excursions away from zero, i.e.\
\[
\mathscr{E} = \left\{ f\colon \bR_+\to \bR_+ \text{ is continuous, and} \;  \exists \, z\ge 0 \text{ s.t. } f(y)= 0,  \forall y\ge z\right\}.
\] 
For $f\in \mathscr{E}$, set $\zeta (f):= \sup \{ t\ge 0\colon f(t)>0\}$. We add a cemetery point $\partial$.
Define a point process $\fN= (\mathbf{e}_\ell,\, \ell \ge 0)$ on $\bR_+\times \mathscr{E}$ as follows:
\begin{enumerate}
    \item if $\eta_{\ell-}=\eta_{\ell}$, then $\mathbf{e}_\ell := \partial$;
    \item if $\eta_{\ell-}<\eta_{\ell}$, we let $\mathbf{e}_\ell:=\Big(\max(\cB_{r,r+x}(a)-\cR_{r,r+x}(a),0)\Big)_ {x\ge0}$ where $(a,r)$ is the bottom point of the spindle $\cT$ of \eqref{eq:xi-X}. Then $\zeta(\mathbf{e}_\ell)=\Delta \xi_\ell$ is the height of $\cT$, i.e. $z-r$ where $(c,z)$ is the top point of $\cT$.
\end{enumerate}
See Figure~\ref{fig:markedLevy} for an illustration. 

Recall that $\delta\in (0,2)$. Pitman and Yor \cite[Section~3]{PitmYor82} construct an excursion measure $\nu^{(-\delta)}_{\mathtt{BESQ}}$ associated with $\besq^{- \delta}$, which is a $\sigma$-finite measure on 
the space of excursions $\mathscr{E}$. 
In this paper we set $\nu^{(-\delta)}_{\mathtt{BESQ}}$ re-scaled such that 
\begin{equation}\label{eq:nu_BESQ}
    \nu^{(-\delta)}_{\mathtt{BESQ}} (f\colon \zeta(f) \ge y) = \frac{\delta }{2^{\frac\delta 2+1} \Gamma(1-\frac\delta 2)\Gamma(1+\frac\delta 2)}  y^{-1-\frac\delta 2}, \qquad y>0.
\end{equation}
\noindent Note that a $\rm BESQ^{-\delta}$ process may be seen as a $\rm BESQ^{4+\delta}$ process conditioned to hit 0. Therefore for $y>0$, under $\nu^{(-\delta)}_{\mathtt{BESQ}}(\cdot \mid \zeta(f)=y)$, 
the process $f$ is a $\rm BESQ^{4+\delta}$ bridge  from $0$ to $0$ over $[0,y]$ (\cite[Page~454]{PitmYor82}).

\begin{theorem}\label{thm:PPP}
The point process $\mathbf{N}= (\mathbf{e}_\ell,\, \ell \ge 0)$ is a Poisson point process, whose intensity $\nu$ is a multiple of the ${\rm BESQ}^{-\delta}$ excursion measure $\nu_{\mathtt{BESQ}}^{(-\delta)}$.
\end{theorem}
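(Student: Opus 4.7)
My plan is to split the proof into (a) establishing the Poisson point process structure of $\fN$ and (b) identifying the mark law as a $\besq^{-\delta}$ excursion, from which the intensity is forced to be a multiple of $\nu_{\mathtt{BESQ}}^{(-\delta)}$.

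For (a), the idea is to combine the regenerative property of $\cA$ from Proposition~\ref{cA:range} with the distributional invariance of the flows after $D_u$ from Proposition~\ref{p:rhohat kappa}. Via the bijection \eqref{eq:bijection subor spindles} between jumps of $\eta$ and spindles, the two propositions together say that for every $(\rG_u)$-stopping time $D_u$, the restriction of $\fN$ to jump times strictly greater than $\cL(D_u)$ is independent of $\rG_{D_u}$ and equidistributed with the whole $\fN$. A Maisonneuve-type argument (applied successively at a countable collection of such $D_u$'s) then forces $\fN$ to be a Poisson point process on $\mathscr{E}$ with some $\sigma$-finite intensity $\nu$.

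For (b), I would argue that, for any spindle $\cT_{(a,r)}$ with top point $(c, z)$, the width process
\begin{equation*}
w(x) := \cB_{r, r+x}(a) - \cR_{r, r+x}(a-), \qquad x \in [0, z - r],
\end{equation*}
is a $\besq^{-\delta}$ excursion of lifetime $z - r$. Formally subtracting the SDEs \eqref{eq:BESQflow} for the flow lines of $\cB$ and $\cR$ gives
\begin{equation*}
    dw(x) = 2\, W\bigl([\cR_{r,r+x}(a-),\, \cB_{r,r+x}(a)],\, dx\bigr) - \delta\, dx, \qquad w(0) = 0,
\end{equation*}
where the drift difference is uniformly $-\delta$ across the $x$-axis (since $\cB$ has parameter $0 \,|_0\, -\delta$ and $\cR$ has parameter $\delta \,|_0\, 0$, yielding $0 - \delta$ on the negative side and $-\delta - 0$ on the positive side). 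The strip martingale measure has quadratic variation $w(x)\, dx$, so after the classical time-change/martingale representation $w$ solves $dw = 2\sqrt{w}\, dB' - \delta\, dx$, the defining SDE of $\besq^{-\delta}$, and dies exactly at $x = z - r$. Since any $\sigma$-finite measure on $\mathscr{E}$ whose trajectories are $\besq^{-\delta}$ excursions coincides with $\nu_{\mathtt{BESQ}}^{(-\delta)}$ up to a multiplicative constant (the Pitman--Yor characterization), $\nu$ is of this form.

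The main obstacle will be making the SDE for $w$ rigorous: $\cR_{r,\cdot}(a-)$ is not a forward flow line, so Proposition~\ref{p:decomp} does not apply directly. I would circumvent this via the coalescence identity \eqref{eq:leftcont}, which says that past time $r$ the left-continuous line $\cR_{r,\cdot}(a-)$ coincides with a forward flow line $\cR_{r',\cdot}(a')$ started from some $(a', r')$ with $a' < a$; Proposition~\ref{p:decomp} applied to this genuine forward line then produces the ``strip noise'' between the red and blue boundaries, independent of the flows to the left, which is precisely the noise driving $w$. Some additional care is required on the set where the left red boundary hits $0$ on $\{x>0\}$ (where $\cR$ has parameter $0$ and the flow is absorbed), and to pass from ``$w$ satisfies the $\besq^{-\delta}$ SDE'' to ``$w$ is distributed according to $\nu_{\mathtt{BESQ}}^{(-\delta)}(\cdot \mid \zeta = z - r)$'', which is a standard consequence of the Pitman--Yor description of $\besq^{-\delta}$ excursions as $\besq^{4+\delta}$ bridges.
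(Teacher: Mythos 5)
Your plan for part (a)---the Poisson point process structure---is essentially the paper's approach: iterate the regenerative property from Propositions~\ref{cA:range} and~\ref{p:rhohat kappa} to obtain conditional i.i.d.\ marks. That portion is fine.

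Part (b), however, has a genuine gap, and the proposed circumvention via \eqref{eq:leftcont} does not repair it. There are three separate obstructions. First, \eqref{eq:leftcont} only gives $\cR_{r,y}(a-)=\cR_{r,y}(a')$ for $y\ge x$ for \emph{some} $x>r$; it does not identify $\cR_{r,\cdot}(a-)$ with a forward line starting at $r$ itself, so the SDE you wish to set up at the bottom point $(a,r)$ is not captured. Second, even granting a forward line, the level $a'$ (and the bottom point $(a,r)$ itself) is a \emph{random exceptional} point selected by looking at the entire flow realization; it is not measurable with respect to the filtration $\cF_r$, so Proposition~\ref{p:decomp} cannot be applied to it and there is no legitimate "strip noise'' independent of the left part. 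Third, and most decisively, even if you could establish that $w$ formally satisfies $dw=2\sqrt{w}\,dB'-\delta\,dx$ with $w(0)=0$, this says nothing useful: the unique solution of that SDE started at $0$ with parameter $-\delta<0$ is identically zero. The width process leaves $0$ immediately precisely because $(a,r)$ is an exceptional point, and an SDE argument is unable to detect this---the law is an excursion measure, not the law of a Markov process started from $0$.

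The paper avoids all three obstacles by never arguing from the bottom point. Instead it invokes the Pitman--Yor characterization \cite[(3.1)]{PitmYor82}: a $\sigma$-finite measure on $\mathscr{E}$ equals a multiple of $\nu_{\mathtt{BESQ}}^{(-\delta)}$ iff, for each $d>0$, conditionally on hitting width $d$ the post-$T_d$ trajectory is a $\besq^{-\delta}_d$ process. To verify this for $\nu$, the paper looks at the first spindle reaching width $d$ and uses Proposition~\ref{p:green}, which identifies the running difference $\cY=\cG^T-L(T,\cdot)$ as a genuine $\besq(2-\delta\,|_s\,-\delta)$ flow line starting from a fixed point $(0,I_T)$. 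The width-$d$ crossing then becomes a stopping time in the height-indexed filtration $\rG^{b,s}_y$, and the strong Markov property of $\besq^{-\delta}$ finishes the argument. This restart-at-level-$d$ route is what makes the exceptional-point difficulty disappear; you will need to adopt something like it, as the bottom-point SDE approach cannot be made to work.
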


Recall that a subordinator is determined by its range up to a linear scale of time. For $b >0$, the subordinator $\eta' = (\eta_{b \ell}, \ell \ge 0)$ has the same range as $\eta$  and corresponds to the Poisson point process $\fN' = (\fN_{b \ell}, \ell\ge 0)$  with intensity   $\nu' = b \nu$. In the sequel, we will always choose $b>0$ appropriately such that the Poisson point process $\fN$ exactly 
 has intensity  $\nu_{\mathtt{BESQ}}^{(-\delta)}$ given by \eqref{eq:nu_BESQ}; such properly rescaled $\fN$ is uniquely (in law) determined by the PRBM $X$.

\bigskip\bigskip
\begin{figure}[htbp]
\centering
       \scalebox{0.9}{ 
        \def\svgwidth{\columnwidth}
        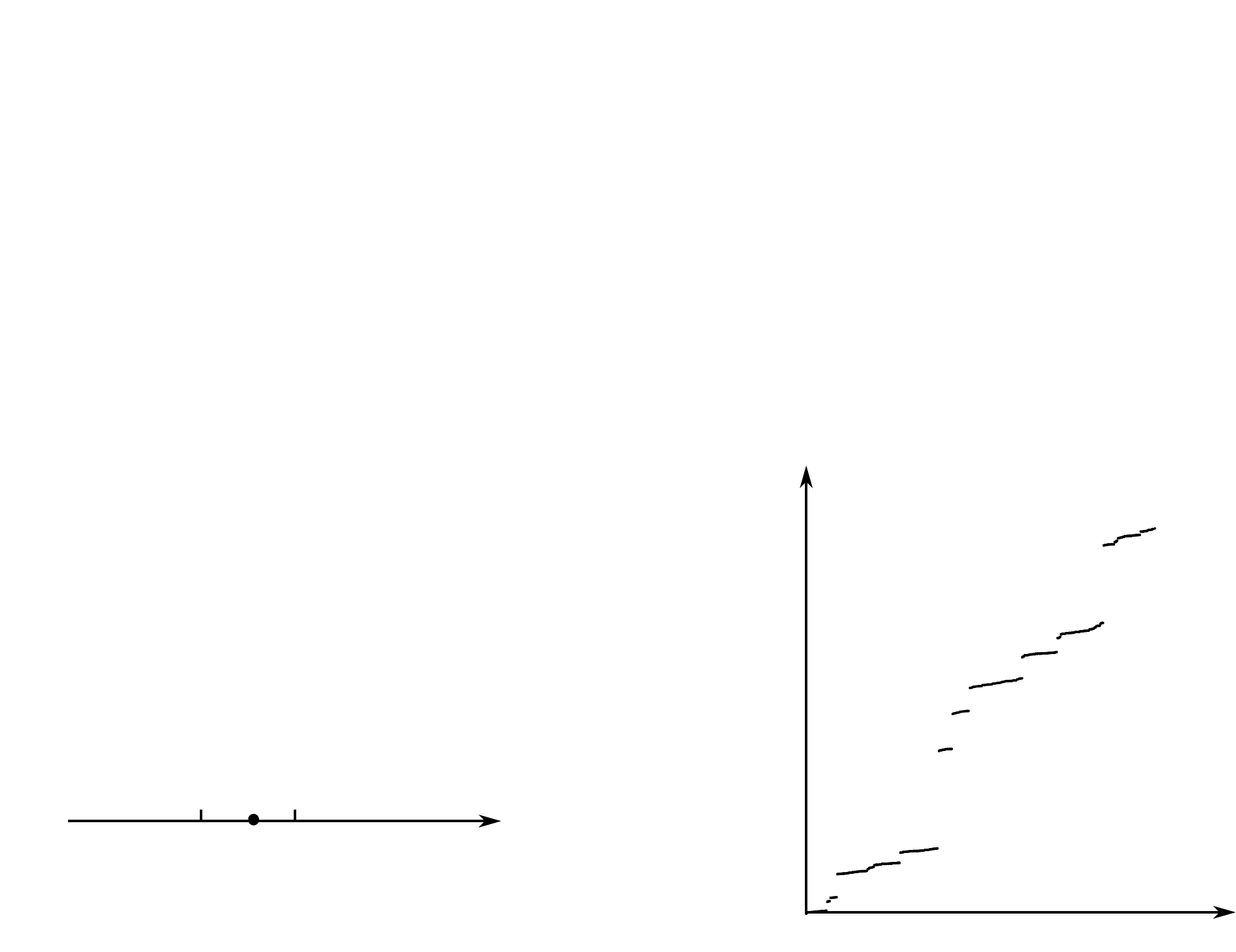
    }
\caption{In this figure we show how to construct the marked L\'evy process from the collection of spindles.On the top left,  $\varrho$ discovers the spindle $\cT$  at time $s_{\cT}$ and explores its left part during the time interval $(s_{\cT}, t_{\cT})$.  
The bottom left picture is the time axis of the exploration process $\varrho$. The transformation $r\mapsto A_r$ concatenates the left-exploration intervals, so that the closure of the endpoints of the intervals form the set $\mathcal{A} = \overline{\{A_{s_{\cT}}\colon \cT \textrm{ spindle} \}} = \overline{\{A_{t_{\cT}}\colon \cT \textrm{ spindle} \}}$. 
On the bottom right the subordinator $\eta$ has its range given by $\cA$, such that each spindle $\cT$ corresponds to a jump time $\ell>0$ of $\eta$ with $(A_{s_{\cT}}, A_{t_{\cT}}) = (\eta_{\ell-}, \eta_{\ell})$. 
On the top right we show the L\'evy process $\xi$ (coloured in purple), which has the same jump times as $\eta$. The jump time $\ell$ corresponds to the spindle $\cT$ with $(X_{s_{\cT}}, X_{t_{\cT}}) = (\xi_{\ell -}, \xi_{\ell})$. The jumps of $\xi$ are further marked by excursions $(\fe_{\ell}:\ell\ge 0)$ (some of them filled with gray), given by the width of the spindle $\cT$, and the lifetime of each excursion $\fe_\ell$ equals the size of the corresponding jump $\Delta \xi_{\ell}$.
}
\label{fig:markedLevy}
\end{figure}

\begin{proof}[Proof of Theorem~\ref{thm:PPP}]
We first show that  $\mathbf{N}$ is a Poisson point process. The jumps of the subordinator $\eta$ forming a Poisson point process,  it suffices to show that conditionally on $\eta$, the collection $\{\mathbf{e}_\ell\colon \Delta\eta_\ell>r\}$ 
are independent random variables whose conditional distributions only depend on the value of $\Delta \eta_\ell$. Let us prove this statement. 
Fix $r>0$, and denote by $\cT^{r,i}$ the $i$-th spindle $\cT$ discovered by $\varrho$ such that $t_{\cT}-s_{\cT}>r$. By \eqref{eq:xi-X} and the equality $A_{t_{\cT}}-A_{s_{\cT}}=t_{\cT}-s_{\cT}$ by definition of $A$ in \eqref{def:A-kappa}, this indeed corresponds to all jumps of $\eta$ with $\Delta\eta_\ell>r$. Let $(\mathbf{e}^{r,i})_{i\ge 1}$ denote the associated excursions in $\fN$. We only have to prove that $(\mathbf{e}^{r,i},t_{\cT^{r,i}}-s_{\cT^{r,i}})_{i\ge 1}$ are i.i.d.

By Proposition~\ref{p:rhohat kappa}, for each fixed $u> 0$, the collection of excursions $\mathbf{e}_\ell$ associated with spindles discovered after time $\kappa_u$ is independent of $\rG_u$ with always the same distribution. In particular, the excursion associated with the first spindle $\cT$ discovered after time $\kappa_u$ such that  $t_{\cT}-s_{\cT}>r$  is independent of $\rG_u$ and has the same law as $\cT^{r,1}$. It implies  that $(\mathbf{e}^{r,i},t_{\cT^{r,i}}-s_{\cT^{r,i}})_{i\ge 1}$ are i.i.d. Indeed, fix $i\ge 1$ and take $E\in \rG_{u^{r,i}}$ where  $u^{r,i}:=A_{s_{\cT^{r,i}}}$ is a $\rG$-stopping time by Lemma \ref{l:Gu-filtration}. Then for every positive measurable function $g$,  \vspace{-0.2cm}
\begin{align*}
\bE[\mathbbm{1}_E\,  g(\mathbf{e}^{r,i+1})] &= 
      \lim_{n\to\infty} \sum_{k=0}^\infty\bE[\mathbbm{1}_E \ind{k2^{-n}\le u^{r,i}<(k+1)2^{-n}<u^{r,i+1}}g(\mathbf{e}^{r,i+1})]\\
      &=
      \lim_{n\to\infty} \sum_{k=0}^\infty\bE[\mathbbm{1}_E \ind{k2^{-n}\le u^{r,i}<(k+1)2^{-n}<u^{r,i+1}}] \bE[g(\mathbf{e}^{r,1})] \\
      &=
      \bP(E) \bE[g(\mathbf{e}^{r,1})],
\end{align*}
where we applied the previous observation to $u=(k+1)2^{-n}$.

\medskip

We compute now the intensity $\nu$ of the Poisson point process $\mathbf{N}$. 
For $d>0$, let $T_d({\mathbf e}) = \inf\{s\ge 0 \colon \mathbf{e}(s) =d \}$ be the hitting time of $d$ by $\mathbf{e}$. We first prove that $\nu(T_d({\mathbf e})<\infty)\in(0,+\infty)$. Denote by $t_n:=\inf\{t>0:X_t=-n^{-1}\}$ for all $n\in\bN$. Then $L(t_n,x)=\cR_{-n^{-1},x}(0)$ for all $x\ge a_n$, and by Proposition~\ref{prop:explore} (iii), all spindles discovered by $\varrho$ before time $t_n$ have been fully explored. Since $L(t_n,x)\to 0$ as $n\to \infty$ uniformly in $x\in\bR$, it follows that $\sup_{x\in\bR}L(t_n,x)<d$ for all $t_n$ small enough, which implies that $\cB_{r,x}(a)-\cR_{r,x}(a-)< d$ for all spindles $\cT$ with $s_\cT<t_n$, and hence $\nu(T_d({\mathbf e})<\infty)$ is finite. We now observe that for all $(b,s)\in(0,+\infty)\times\bR$ and $T=\tau_b^s$, by the construction of $\cG^T$ as in Definition~\ref{green}, 
\[\bP\big(\exists\,\cT=\cT_{(a,r)},\, x>r \text{ with } s_\cT<T,\, \cB_{r,x}(a)-\cR_{r,x}(a-)\ge d\big) \ge \bP\big(\cG^T_x-L(T,x)\ge d \text{ for some } x>I_T\big).\] 
The probability on the right-hand-side is positive by Proposition~\ref{p:green}. It follows that $\nu(T_d({\mathbf e})<\infty)>0$.

Then by \cite[(3.1)]{PitmYor82}, it remains to prove that, for any $d>0$, the law of $(\mathbf{e} (t+T_d({\mathbf e})),t\ge 0)$ under $\nu(\cdot \mid T_d({\mathbf e})<\infty)$ is that of a ${\rm BESQ}^{-\delta}_d$ process. Let us prove it. 
Let $\overline{\cT}=\cT_{(\bar{a},\bar{r})}$ be the first spindle discovered by $\varrho$, such that $\cB_{\bar{r},x}(\bar{a})-\cR_{\bar{r},x}(\bar{a}-)=d$ for some $x>r$. Let $\bar{x}$ be the minimal such $x$. We have to show that the process
\begin{equation}\label{eq:proof_poisson}
    \max(\cB_{\bar{r},\bar{x}+t}(\bar{a})-\cR_{\bar{r},\bar{x}+t}(\bar{a}-),0),\quad t\ge 0
\end{equation}
\noindent is distributed as  a ${\rm BESQ}^{-\delta}_d$ process.

Recall by Proposition~\ref{p:green} that, for a fixed point $(b,s)\in (0,\infty)\times \bR$ with $T= \tau^{s}_b$, $(\cG^{T}_y - L(T,y), y\ge s)$ is a ${\rm BESQ}^{-\delta}$ process. 
Moreover, if $T= \tau^{s}_b$ happens just after the entrance time  of the spindle $\overline{\cT}$ by $\varrho$, i.e.\  $T>s_{\overline{\cT}}$ is close enough to $s_{\overline{\cT}}$, then we have by Proposition~\ref{prop:explore} that $\cG^{T}_y - L(T,y)= \cB_{\bar{r},y}(\bar{a})-\cR_{\bar{r},y}(\bar{a}-)$ for $y\ge \bar{x}$. 
We use an approximation argument to formulate this idea.

Let $(b_p,s_p)_{p\ge 1}$ be a deterministic countable dense set of points in $(0,\infty)\times \bR$, $T_p:=\tau_{b_p}^{s_p}$ and $(a_p,r_p)$ be the bottom point of the spindle $\cT_p$ which contains $(b_p,s_p)$.  Define
\begin{align}
    \widetilde{x}_p &:= \inf\{y\ge r_p: \cB_{r_p,y}(a_p) - \cR_{r_p,y}(a_p-) =d\}, 
\end{align}
\noindent with the usual convention $\inf \emptyset:=\infty$.
Intuitively, for each $k\ge 1$, we consider those $p\in \{1,\ldots, k\}$, such that $(b_{p},s_{p})$ is in the left part of the spindle $\cT_{p}$, whose maximum width is greater than $d$ with $s_{p} <\widetilde{x}_{p}$. Then we define $p(k)\in \{1,\ldots, k\}$ such that  $T_{p(k)}$ is the minimum time among such points. 
More precisely, we define an event  $E_{k,p}$, such that $p(k) = p$ on the event $E_{k,p}$, in the following way. 

Recall the notation $\cG^{T_p}$ and $H_{T_p}$ in Definition \ref{green} and equation \eqref{def:H}. We let
\begin{align}
     x_p&:=\inf\{y\in [r_p,H_{T_p}): \cB_{r_p,y}(a_p) - \cR_{r_p,y}(a_p-) =d\}, \nonumber \\
    y_p&:= \max(s_p,x_p), \label{def:y_p}
\end{align}
Since $\cG^{T_p}$ traces the right boundary of partially explored spindles at time $T_p$, $x_p$ can be rewritten as  
\begin{equation}\label{def:xp}
    x_p=\inf\{y\in [r_p,H_{T_p}): \cG^{T_p}_y - \cR_{r_p,y}(a_p-) =d\}.
\end{equation}

\noindent For $k\ge 1$ and $1\le p\le k$, let $E_{k,p}$ be the event 
$E_{k,p,1}\cap E_{k,p,2}\cap E_{k,p,3}$ where
\begin{align*}
    E_{k,p,1} &:=  \{ s_p<x_p<+\infty,\, L(T_p,x_p)=\cR_{r_p,x_p}(a_p-)\}, \\
    E_{k,p,2} & := \{\widetilde{x}_q=+\infty, \, \text{ for all } 1\le q\le k \text{ s.t } s_{\cT_q}<s_{\cT_p} \}, \\
    E_{k,p,3} &:= \{T_q\ge T_p, \, \text{ for all } 1\le q\le k \text{ s.t } \cT_q=\cT_p \text{ and } s_q\le y_p\}.
\end{align*}

\noindent In words, the event $E_{k,p,1}$ corresponds to the following: the spindle $\cT_p$ reaches width $d$ at a level above  $s_p$,  the point $(b_p,s_p)$ belongs to the left part of the spindle in the terminology of Section~\ref{s:explore}, and the explorer $\varrho$ does not visit level $\widetilde{x}_p=x_p=y_p$ between the entrance time  of $\cT_p$ and  $T_p$. The event $E_{k,p,2}$ says that, every spindle, which is discovered before entering $\cT_p$ and contains a point $(b_q,s_q)$, has maximal width strictly smaller than $d$. Finally, the event $E_{k,p,3}$ states that $T_p$ is the minimal value among those $T_q$ corresponding to the same spindle $\cT_q=\cT_p$ and satisfying  $s_q\le y_p$. Intersected with $E_{k,p,1}$ (which yields $T_p<T_q$ for any index $q$ with $\cT_q=\cT_p$ and $s_q \ge x_p=y_p$),  this implies that $T_p$ is the minimum over all $T_q$ with $\cT_q=\cT_p$. See Figure~\ref{fig:proof_PPP} for an illustration. 

 \begin{figure}[htbp]
\centering
       \scalebox{0.7}{ 
        \def\svgwidth{0.6\columnwidth}
        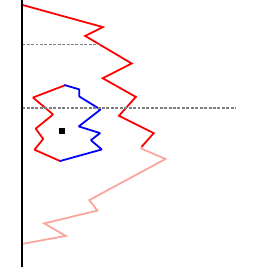
    }
\caption{This figure displays a realization of the event $E_{k,p}$ in the case $k=5,\, p=4$, where $(b_q,s_q),\, 1\le q\le 5$ are marked in black. 
Since the width of $\cT_4$ reaches $d$ at level $x_4\in(s_4, H_{T_4})$, and $L(T_4,x_4)=\cR_{s_4,x_4}(b_4)=\cR_{r_4,x_4}(a_4-)$, it belongs to $E_{5,4,1}$. 
The entrance times of $\cT_{1}$ and $\cT_3$ are both strictly smaller than $s_{\cT_4}$, while the widths of $\cT_1$ and $\cT_3$ are strictly smaller than $d$; thus it is in $E_{5,4,2}$. 
In addition, $(b_2,s_2)\in\cT_2=\cT_4$ and $s_2\le y_4=x_4$. Moreover, $b_2\ge L(T_4,s_2)$, which yields that $T_2\ge T_4$. This implies that it lies in $E_{5,4,3}$. Since the entrance time of $\cT_5$ is strictly greater than $s_{\cT_4}$, the shape of $\cT_5$ is irrelevant for the event. }
\label{fig:proof_PPP}
\end{figure}

Observe that $E_{k,p}, p=1,\ldots ,k$ are disjoint.  
Let $E_k:=\bigcup_{p=1}^k E_{k,p}$. 
Define $p(k):= p$ on $E_{k,p}$ and $p(k):=\infty$ on $E_k^c$. 
Recall that  $\overline{\cT} = \cT_{(\bar{a},\bar{r})}$ is the first spindle discovered by $\varrho$ with width greater than $d$. 
Almost surely, for $k$ large enough, 
\begin{equation}\label{eq:p(k)}
    p(k)<\infty,\, x_{p(k)}=\bar{x},\, (a_{p(k)},r_{p(k)})=(\bar{a},\bar{r}).
\end{equation}

\noindent  This happens when the left part of the spindle $\overline{\cT}$ contains a point $(b_p,s_p)$ with $s_p<\bar{x}$ and $L(T_p,\bar{x})=\cR_{\bar{r},\bar{x}}(\bar{a}-)$ for $p\le k$; in which case $p(k)$ is chosen to be the index $p$ which minimizes $T_p$ over all such points.

For $y\in \mathbb{R}$, let $\rG^{b_p,s_p}_y$ be the $\sigma$-field defined in \eqref{eq:G^bs_y}. Note that 
\begin{enumerate}[(i)]
    \item $L(T_p,\cdot)$ is $\rF_{T_p}$-measurable,
    \item $\cG^{T_p}$ is $(\rG_y^{b_p,s_p})_{y \in \bR}$-adapted, by (i) and Proposition \ref{p:green},
    \item $H_{T_p}$ is a $(\rG_y^{b_p,s_p})_{y \in \bR}$-stopping time by Proposition \ref{p:green},
    \item $(a_p,r_p)$ and $(\cR_{r_p,y}(a_p-),\, y\ge r_p)$ are $\rG^{b_p,s_p}_{s_p}$-measurable by Proposition \ref{c:measurability} (i)-(ii).
\end{enumerate}
In (ii) and (iii), we used Proposition~\ref{p:decomp}~(ii) with $Y=(\cG^{T_p}-L(T_p,\cdot),\, y\ge I_{T_p})$. \\

The random variable  $y_p$ defined in \eqref{def:y_p} is a $(\rG^{b_p,s_p}_y)_{y\in \bR}$-stopping time. Indeed $y_p\ge s_p$ and for $s>s_p$, the event $\{y_p\le s\}=\{x_p\le s\}$ is $\rG^{b_p,s_p}_s$-measurable, in view of \eqref{def:xp} and (ii)--(iv) above. 
We then see that  $E_{k,p,1} \in \rG_{y_p}^{b_p,s_p}$ by use of (i) and (iv). The event $E_{k,p,2}$ is measurable w.r.t. to the spindles discovered before the entrance time of $\cT_p$ , hence by Proposition \ref{c:measurability} (iii),  $E_{k,p,2} \in \rG_{s_p}^{b_p,s_p}$. Finally, $E_{k,p,3}$ can be reformulated as the event  
\[
(b_q,s_q) \notin \big\{(b,s): s\in [r_p,y_p],\, b\in [\cR_{r_p,s}(a_p-), L(T_p,s)]\big\},\quad \forall\, q\in \{1,\ldots,k\}\setminus\{p\}.
\]

\noindent We deduce that $E_{k,p,3}\in  \rG_{y_p}^{b_p,s_p}$ by (i) and (iv). We proved that $E_{k,p}\in \rG_{y_p}^{b_p,s_p}$. Recall that $W^{T_p,-}$ is independent of $W^{T_p}$ by Proposition \ref{p:markov S} (i), hence of $\widehat{W}^{T_p,-}$. By Proposition \ref{p:green} and the strong Markov property at level $y_p$, conditionally on $\rG^{b_p,s_p}_{y_p}$ and $\{s_p<x_p<\infty\}$, $(\cG^{T_p}_{y}-L(T_p,y),\, y\ge x_p)$ is a ${\rm BESQ}^{-\delta}_d$ process. We have already observed that $\cB_{r_p,y}(a_p)=\cG^{T_p}_y$ for $y\in [r_p,H_{T_p}]$. On $E_{k,p,1}$, for all $y\ge x_p$, we have $\cR_{r_p,y}(a_p-)=L(T_p,y)$. We deduce that on the event $E_{k,p}$, 
\[
\max(\cB_{r_p,y}(a_p)-\cR_{r_p,y}(a_p-),0) = \cG^{T_p}_y-L(T_p,y),\qquad \forall\, y\in [r_p,H_{T_p}].
\]
 This identity stays true when $y\ge H_{T_p}$: the left-hand side is $0$ since $H_{T_p}$ corresponds to the top point of $\cT_p$ on $E_{k,p,1}$, 
 and the right-hand side is $0$ by Remark \ref{r:green line} (ii).  Therefore we proved that, conditionally on $E_{k,p}$, $(\max(\cB_{r_p,y}(a_p)-\cR_{r_p,y}(a_p-),0),\, y\ge x_p)$ is a ${\rm BESQ}^{-\delta}_d$ process. 
 
 It remains to take the union $E_k=\bigcup_{p=1}^k E_{k,p}$ and send $k\to\infty$. By \eqref{eq:p(k)}, we conclude that the process in \eqref{eq:proof_poisson} is indeed a ${\rm BESQ}^{-\delta}_d$ process.
\end{proof}

 \begin{theorem}\label{thm:Levy}
 The process $\xi:=X\circ \kappa \circ\eta$ is a $(1+\frac\delta2)$-stable L\'evy process, spectrally positive starting from $0$. Its L\'evy measure is $\nu^{(-\delta)}_{\mathtt{BESQ}} (\zeta(f) \in  d y)$ and its Laplace exponent is ${2^{-\frac\delta 2}}q^{1+\frac\delta 2} \Gamma(1+\frac\delta2)$, $q\ge 0$.
 \end{theorem}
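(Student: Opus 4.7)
Plan for the proof of Theorem \ref{thm:Levy}.

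\textbf{Step 1 — Path structure.} By Proposition~\ref{p:ki}, the jumps of $\ki$ (and hence of $\xi$, its vertical coordinate) coincide with those of the subordinator $\eta$, and at each jump $\ell$ associated with a spindle $\cT=\cT_{(a,r)}$ via \eqref{eq:bijection subor spindles} with top point $(c,z)$,
\[
\Delta\xi_\ell = X_{t_\cT}-X_{s_\cT} = z-r = \zeta(\mathbf{e}_\ell).
\]
In particular, $\xi$ is càdlàg with only strictly positive jumps and $\xi_0=X_{\kappa_0}=X_0=0$.

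\textbf{Step 2 — Stationary and independent increments.} Fix $\ell_0>0$ and set $T:=\kappa(\eta_{\ell_0})\in\cE^c$. The strong Markov property of the subordinator $\eta$ at level $\ell_0$ gives that $(\eta_{\ell_0+s}-\eta_{\ell_0})_{s\ge0}$ is independent of $\sigma(\eta_u:u\le\ell_0)$ with the same law as $\eta$. To transport this into a statement about $\xi=X\circ\kappa\circ\eta$, I approximate from the right by deterministic levels: choose rationals $u_n\downarrow\eta_{\ell_0}$, apply Proposition~\ref{p:rhohat kappa} at the (non-random) times $T_n=\kappa_{u_n}$ to obtain that $(\widehat\varrho^{T_n},\widehat\cB^{T_n},\widehat\cR^{T_n})$ is an independent copy of $(\varrho,\cB,\cR)$, and pass to the limit using that $\eta_{\ell_0}$ is almost surely a two-sided accumulation point of $\cA$ (Proposition~\ref{p:charact A}) and the progressive measurability and regeneration of $\cA$ with respect to $(\rG_u)$ (Proposition~\ref{cA:range}). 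This yields that the full post-$T$ exploration is independent of $\sigma(\xi_s:s\le\ell_0)$ and has the same joint law as the original. Combined with the independence of $\eta$ at level $\ell_0$, it follows that $(\xi_{\ell_0+\ell}-\xi_{\ell_0})_{\ell\ge 0}$ is an independent copy of $\xi$. Hence $\xi$ is a Lévy process.

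\textbf{Step 3 — Identification of the law.} Since all jumps of $\xi$ are positive, $\xi$ is spectrally positive. By Theorem~\ref{thm:PPP} and Step 1, the point process of its jumps is Poisson with intensity equal to the pushforward $\nu^{(-\delta)}_{\mathtt{BESQ}}(\zeta(f)\in dy)$, which by \eqref{eq:nu_BESQ} admits a density proportional to $y^{-2-\delta/2}\,dy$. As $1+\tfrac{\delta}{2}\in(1,2)$, this identifies $\xi$ as the spectrally positive strictly $(1+\tfrac{\delta}{2})$-stable Lévy process (with no Brownian component and no additional linear drift, since any $\alpha$-stable Lévy process with $\alpha\in(1,2)$ together with the scale invariance inherited from \eqref{eq:scaling-inv} is pinned down by its Lévy measure). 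The Laplace exponent is then obtained by evaluating
\[
\psi(q)=\int_0^\infty(e^{-qy}-1+qy)\,\nu^{(-\delta)}_{\mathtt{BESQ}}(\zeta(f)\in dy),
\]
a direct computation using \eqref{eq:nu_BESQ} and standard Gamma-function identities, which yields the stated value $2^{-\delta/2}q^{1+\delta/2}\Gamma(1+\tfrac{\delta}{2})$.

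\textbf{Main obstacle.} The delicate point is Step 2. The time $T=\kappa(\eta_{\ell_0})$ is \emph{not} a stopping time in the natural filtration $\rF$ of the Brownian motion; it is adapted to the time-changed filtration $(\rG_u)_{u\ge 0}$ of \eqref{eq:G_u}. Proposition~\ref{p:rhohat kappa} was proved only for $T=\kappa_u$ at a \emph{deterministic} level $u$, so the transfer of the renewal property to the random time $T$ requires the above approximation argument together with the progressive-measurability and regeneration statements of Proposition~\ref{cA:range}; this is where the real work lies.
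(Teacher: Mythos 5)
Your overall architecture (regeneration via Propositions~\ref{p:rhohat kappa}/\ref{cA:range} for the increments, Theorem~\ref{thm:PPP} plus the jump--spindle bijection \eqref{eq:xi-X} for the L\'evy measure, scaling for stability) matches the paper, but the step you yourself single out as the main obstacle is exactly the step that is not done, and as written it does not work. You fix a deterministic $\ell_0$, set $T=\kappa(\eta_{\ell_0})$, and propose to ``choose rationals $u_n\downarrow\eta_{\ell_0}$'' and apply Proposition~\ref{p:rhohat kappa} at the ``(non-random) times $T_n=\kappa_{u_n}$''. This is incoherent as stated: $\eta_{\ell_0}$ is random, so any sequence $u_n\downarrow\eta_{\ell_0}$ is path-dependent, whereas Proposition~\ref{p:rhohat kappa} is proved only for a \emph{fixed} level $u$; turning this into a proof requires a partition/conditioning argument over a countable deterministic grid (of the type carried out inside the proofs of Proposition~\ref{p:rhohat kappa} and Theorem~\ref{thm:PPP}), and you have not supplied it. Moreover, invoking only the Markov property of $\eta$ at $\ell_0$ is insufficient: the independence you need is from the whole exploration past (the $\rG$-filtration), not merely from $\sigma(\eta_u,u\le\ell_0)$. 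The paper sidesteps the difficulty you run into by never conditioning at the random level $\eta_{\ell_0}$: it fixes a deterministic $u>0$, sets $T=\kappa_u$, proves the identity $\widehat X^T\circ\widehat\kappa^T\circ\widehat\eta^T=\xi_{\cL(u)+\cdot}-\xi_{\cL(u)}$ with $\widehat\eta^T_s=\eta_{s+\cL(u)}-\eta_{\cL(u)}$ (using $\widehat\cA^T=\cA\circ\theta_{D_u}$ and $D_u=\eta_{\cL(u)}$), notes that $\xi_{\cL(v)}$, $v\le u$, is $\rG_u$-measurable by Lemma~\ref{l:Gu-filtration}, and then deduces the L\'evy property from regeneration at the random $\xi$-times $\cL(u)$ together with right-continuity. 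Your plan would have to reproduce an equivalent of this, and currently it does not.

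A second, smaller point: in Step 3 you assert that the law is pinned down by the L\'evy measure together with ``the scale invariance inherited from \eqref{eq:scaling-inv}''. The scale invariance of $\xi$ is not immediate from \eqref{eq:scaling-inv}: scaling the plane by $c$ only determines the subordinator with range $c^{-2}\cA$ up to a linear time change $b$, and the paper must identify $b=c^{1+\frac{\delta}{2}}$ by computing how the marked Poisson point process transforms (Theorem~\ref{thm:PPP} plus BESQ scaling) before concluding $\xi_{b\ell}\overset{d}{=}c\,\xi_\ell$; this is what rules out a Gaussian part and an extra drift, and it also relies on the normalization of $\eta$ fixed after Theorem~\ref{thm:PPP} so that the L\'evy measure is exactly $\nu^{(-\delta)}_{\mathtt{BESQ}}(\zeta(f)\in dy)$. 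Your Laplace-exponent computation is fine once these points are in place, but they need to be argued, not assumed.
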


\begin{proof}
Fix $u>0$ and set $T=\kappa_u$. By Lemma~\ref{lem:kappa-u-fix} and Proposition \ref{p:charact A} (ii), $u\in (A_{s_{\cT}},A_{t_{\cT}})$ for some spindle $\cT$, i.e. $\varrho_T$ is in the left part of $\cT$. Then equation \eqref{eq:xi-X} shows that $\eta_{\cL(u)}=A_{t_{\cT}}$ and $\xi_{\cL(u)}=X_{t_{\cT}}$.

Define $(\widehat{X}^T,\widehat{\varrho}^T,\widehat{\cB}^T,\widehat{\cR}^T)$ as in Proposition~\ref{p:rhohat kappa} and let $(\widehat{\kappa}^T, \widehat{\cA}^T)$ be associated with $\widehat{\varrho}^T$ and the spindles of $(\widehat{\cB}^T,\widehat{\cR}^T)$, in the same way as in \eqref{def:A-kappa} and \eqref{eq:cA}. 
Recall that we have observed that $\widehat{\cA}^T = \cA \circ \theta_{D_u}$ in the proof of Proposition~\ref{cA:range} and $D_u=A_{t_{\cT}}$ by the discussion before Proposition \ref{cA:range}, which is also $\eta_{\cL(u)}$.   
Let $\widehat{\eta}^{T}_{s} := \eta_{s+\mathcal{L}(u)} - \eta_{\mathcal{L}(u)}$, $s\ge 0$. Then the range of $\widehat{\eta}^T$ is $\widehat{\cA}^T$, and we have 
\[
\widehat{X}^T\circ \widehat{\kappa}^T\circ  \widehat{\eta}^T = \xi_{\cL(u)+\cdot}-\xi_{\cL(u)}.\vspace{-0.1cm}
\]

\noindent The left-hand side is independent of $\rG_u$ and distributed as $\xi$ by Proposition~\ref{p:rhohat kappa}. Recall that  $\cL(u)$ is $\rG_u$-measurable. Observing that $X_{t_{\cT}}=\xi_{\cL(u)}$ is the vertical coordinate of the top point of a spindle discovered before time $\kappa_u$, we deduce by Lemma~\ref{l:Gu-filtration} that $\xi_{\cL(u)}$  is $\rG_u$-measurable. Hence for any $v<u$, $\xi_{\cL(v)}$ is also $\rG_u$-measurable. We conclude that, for any $u>0$, the process 
$
(\xi_{\cL(u)+\ell}-\xi_{\cL(u)},\, \ell\ge 0)
$
is independent of $(\xi_\ell,\,\ell\in [0,\cL(u)])$ and distributed as $\xi$. Using the right-continuity of $\xi$, we deduce that $\xi$ is a L\'evy process. It is spectrally positive by construction, and is recurrent since for example, its range contains the levels of all bottom points of spindles.

    To determine the L\'evy measure of $\xi$, recall from \eqref{eq:xi-X} that there is a bijection between the sizes of jumps $\Delta\xi_\ell$ and the heights of the spindles, i.e. the lifetimes of excursions $\zeta (\mathbf{e}_{\ell})$. 
    Therefore, it follows from Theorem~\ref{thm:PPP} that the L\'evy measure of $\xi$ is  $\nu(\dd y) := \nu^{(-\delta)}_{\mathtt{BESQ}} (f\colon \zeta(f) \in \dd y)$ given by \eqref{eq:nu_BESQ}. 
    
        We now show that $\xi$ is strictly stable with index $1+\frac{\delta}{2}$. Since we also know that $\xi$ is  spectrally positive, the L\'evy--Khintchine exponent is thus uniquely determined by the L\'evy measure. We will use the scaling properties of our model. 
        For $c>0$, let $\varrho^{(c)}_t:=\frac{1}{c}\varrho_{c^2 t}$, $t\ge 0$, which  has the same law as $\varrho$. 
        By Lemma~\ref{l:scaling A} and the arguments above, the set $\cA^{(c)}$ associated to $\varrho^{(c)}$ is $c^{-2}\cA$ and has the same law as $\cA$. 
        Recall that a subordinator is uniquely determined by its range up to a time-scale, then a subordinator with range $\cA^{(c)} = c^{-2}\cA$ must be of the form $\eta^{(b,c)}_\ell:=c^{-2}\eta_{b\ell}$ for some $b>0$. 
    The Poisson point process $\fN$ associated to $\varrho^{(c)}$ is thus $\fN^{(b,c)} = (\mathbf{e}^{(b, c)}_{\ell}, \ell \ge 0)$, 
    where each excursion $y\mapsto \mathbf{e}^{(b,c)}_{\ell} (y)$ satisfies 
    \[
    \mathbf{e}^{(b,c)}_{\ell} (y)
    = \frac{1}{c}\mathbf{e}_{b\ell} (c y), \qquad y\ge 0. \vspace{-0.1cm}
    \]

\noindent By Theorem~\ref{thm:PPP} and the scaling property of \besq, we deduce that $\fN^{(b,c)}$ is a Poisson point process with intensity $bc^{-1-\frac{\delta}{2}}\nu_{\mathrm{BESQ}}^{(-\delta)}$. Requiring that $\fN^{(b,c)}$ has the law of $\fN$, we have $b=c^{1+\frac{\delta}{2}}$.  Setting with natural notation $\xi^{(b,c)}:=X^{(c)}\circ \kappa^{(c)}\circ\eta^{(b,c)}$,  we deduce that  $\xi^{(b,c)}_\ell=c^{-1}\xi_{b\ell}$ by definition\footnote{We compute that $X^{(c)}_t=\frac1cX_{c^2t}$, $A^{(c)}_t=\frac1{c^2}A_{c^2t}$, $\kappa^{(c)}_t=\frac1{c^2}\kappa_{c^2t}$}. Since $\xi^{(b,c)}$ is distributed as $\xi$ and $b=c^{1+\frac{\delta}{2}}$, we conclude that $\xi$ is $(1+\frac{\delta}{2})$-stable.
\end{proof}

\begin{remark}\label{rmk:scaffolding}
Let us regard $\fN$ as a Poisson random measure with intensity  $\mathrm{Leb} \otimes \nu^{(-\delta)}_{\mathtt{BESQ}}$ and write $\fN = \sum_{i\in \mathbb{N}} \delta(t_i,f_i)$.  
Recall that the jumps of $\xi$ are given exactly by the lifetimes of the excursions $f_i$, such that
	\[\sum_{\Delta \xi(t)> 0} \delta(t, \Delta \xi (t)) = \sum_{i\in \mathbb{N}} \delta(t_i, \zeta(f_i)).\] 
Therefore, the L\'evy-It\^o decomposition of $\xi$ leads to the identification of the following limit (see \cite[Theorem 19.2]{Sato}): for every $t\ge 0$,
\begin{equation}\label{eq:scaffolding}
	\xi(t)=  \lim_{z\downarrow 0} \left( 
	\int_{[0,t]\times \{g\in \mathscr{E}\colon \zeta(g)>z\}} \zeta(f)  \fN(ds,df) - \frac{(1+\frac\delta 2)t}{(2z)^{\frac\delta 2}\Gamma(1-\frac\delta 2)\Gamma(1+\frac\delta 2)}
	\right).
\end{equation}
In this sense, the process $\xi$ is called the \emph{scaffolding function} of $\fN$ in the language of \cite{Paper1-1}.
\end{remark}

Based on well-known properties of stable processes and BESQ excursions, many explicit calculations can be performed. We refer to \cite{Paper0,Paper1-2} for several examples. In particular, we present a description of the intersection of the gasket $\mathscr{K}$ with any horizontal line. 
\begin{corollary}\label{cor:stb-sub}
    Fix any $y\in \bR$. The set $(\bR_+\times \{y\}) \cap \mathscr{K}$ is the range of a $\frac\delta2$-stable subordinator with Laplace exponent $\lambda \to \lambda^{\frac\delta 2}$.  
\end{corollary}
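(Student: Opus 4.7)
The plan is to identify the gasket at level $y$ with the closed range of an increasing process built from the marked L\'evy process in Theorems~\ref{thm:PPP} and \ref{thm:Levy}, and then to recognize this closed range as that of a $\delta/2$-stable subordinator.

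By the bijection between spindles and jumps of $\xi$, together with the left-to-right ordering at any fixed level described in the paragraph preceding Figure~\ref{fig:skewer}, the spindles meeting level $y$ are in order-preserving correspondence with the jumps of $\xi$ satisfying $\xi(t-)<y<\xi(t)$, and the associated spindle has horizontal width $\mathbf{e}_t(y-\xi(t-))$ at level $y$. Setting
\[
L^y(t) := \sum_{s\le t,\,\xi(s-)<y<\xi(s)} \mathbf{e}_s(y-\xi(s-)), \qquad t\ge 0,
\]
the interior of the spindle associated to jump time $t$ meets $\bR_+\times\{y\}$ in the open interval $(L^y(t-), L^y(t))$ (with the edge case $[0, L^y(t))$ when $L^y(t-)=0$, in view of the discussion following~\eqref{eq:gasket}). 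Since these intervals tile $\bR_+$ up to a Lebesgue-negligible set by Corollary~\ref{c:spindle zero measure}, the intersection $(\bR_+\times\{y\})\cap\mathscr{K}$ coincides with the closed range of $L^y$.

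To show that this closed set is the range of a $\delta/2$-stable subordinator, I would first appeal to the spatial scaling invariance \eqref{eq:scaling-inv} of the coupled flows (and hence of the gasket) to obtain distributional scale invariance of $(\bR_+\times\{y\})\cap\mathscr{K}$ as a random closed subset of $\bR_+$. Second, I would use the strong Markov property of $\xi$ applied at first-passage times above $y$, combined with the Poisson point process structure of $\fN$ (Theorem~\ref{thm:PPP}), to establish a regenerative property for this closed set: beyond any ``right'' endpoint of a gasket gap, the future of the set is independent of the past and has the same law up to translation. A Bertoin-type theorem (\cite[Theorem~2.1]{Bertoin1999}, as already used in Section~\ref{sec:markedLevy}) then yields that $(\bR_+\times\{y\})\cap\mathscr{K}$ is almost surely the closed range of a stable subordinator.

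The main obstacle is the precise identification of the index as $\delta/2$ and of the Laplace exponent as $\lambda\mapsto\lambda^{\delta/2}$: scaling and regeneration alone only pin the set down as the range of \emph{some} stable subordinator. For this last step, one may either invoke the detailed treatment of the skewer operation for marked stable L\'evy processes in \cite{Paper0,Paper1-1,Paper1-2}, whose scaffolding/marks data coincide exactly with our $(1+\delta/2)$-stable $\xi$ marked by $\nu^{(-\delta)}_{\mathtt{BESQ}}$-excursions, or carry out a direct computation: the L\'evy measure of the ``gaps'' of the gasket at level $y$ is obtained by integrating the BESQ$^{-\delta}$ excursion entrance law at time $u>0$ over $u$, and the normalization chosen in \eqref{eq:nu_BESQ} is precisely tuned so that this measure equals $\frac{\delta/2}{\Gamma(1-\delta/2)}\,x^{-1-\delta/2}\,dx$, giving Laplace exponent $\lambda^{\delta/2}$.
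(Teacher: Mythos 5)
Your first option is essentially the paper's proof: the paper disposes of this corollary in one line by citing the skewer results of \cite{Paper1-2} (Propositions 2.9 and 3.2(i)) for a $(1+\frac\delta2)$-stable scaffolding marked by $\nu^{(-\delta)}_{\mathtt{BESQ}}$-excursions, which is exactly the data produced by Theorems~\ref{thm:PPP} and \ref{thm:Levy}. Your preliminary dictionary — that the slice of the gasket at level $y$ is the complement of the intervals $(L^y(t-),L^y(t))$ — is the right statement, but be aware that the adjacency/order-preservation you take from the informal description before Figure~\ref{fig:skewer} is not a proved statement at this point of the paper; it is established only later, in the proof of Theorem~\ref{thm:RK-IPE}, where identifying the cumulative skewer positions $M^y(s_i\pm)$ with $\cB_{r,z+y}(a)$ and $\cR_{r,z+y}(a-)$ requires a genuine argument (matching the law of the total mass with a \besq\ process and then upgrading an inequality to an equality); the paper explicitly remarks that Corollary~\ref{c:spindle zero measure} alone does not suffice for this. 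For a fixed $y$ your Fubini-type use of Corollary~\ref{c:spindle zero measure} does give a Lebesgue-null slice a.s., but the left-right ordering of spindle slices versus jump times still needs justification rather than citation of the introduction.

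The self-contained alternative you sketch has a concrete gap. The scaling \eqref{eq:scaling-inv} is diagonal in space-time, so for the slice $K_y:=\{b\colon (b,y)\in\rK\}$ it yields $K_y\eqdis c^{-1}K_{cy}$, not scale invariance of $K_y$ for a fixed $y\neq 0$; since the flows are not translation invariant in the vertical variable (the drift changes at $0$), level-stationarity is not available for free, so ``scaling $+$ regeneration $+$ Bertoin'' as stated only pins down stability at $y=0$. (The regeneration itself, via first passage of $\xi$ over $y$ and the Poisson structure of $\fN$, is fine in spirit — this is how \cite{Paper1-2} argues.) Also, the final normalization claim (that \eqref{eq:nu_BESQ} produces exactly the gap measure $\frac{\delta/2}{\Gamma(1-\delta/2)}x^{-1-\delta/2}\dd x$) is asserted rather than computed; it is harmless here because the range of a subordinator determines it only up to a linear time change, so the exponent $\lambda^{\delta/2}$ is just the canonical representative, but as written that step is not a proof. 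In short: keep the identification of the slice with the skewer complement (proved as in Theorem~\ref{thm:RK-IPE}) and the citation of \cite{Paper1-2}, which is the paper's route; the scaling-based shortcut needs repair for $y\neq 0$.
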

\begin{proof}
    It follows from \cite[Proposition~2.9 and Proposition~3.2(i)]{Paper1-2}. 
\end{proof}


\section{Applications}\label{sec:constr}

In this section we use the marked L\'evy process to connect the spindle partition with the self-similar interval partition evolutions  \cite{Paper1-1,Paper1-2,FRSW2}, as well as the shredded sphere \cite{BjCuSi22}.

\subsection{Self-similar interval partition evolutions}
For $M\ge 0$, an \emph{interval partition} $\beta=\{U_i,i\in I\}$ of $[0,M]$ is a (finite or countably infinite) collection of disjoint open intervals $U_i=(a_i,b_i)\subseteq (0,M)$, 
such that the (compact) set of partition points 
$G(\beta):= [0,M]\setminus\bigcup_{i\in I}U_i$ has zero Lebesgue measure.  We refer to $\|\beta\|:=\sum_{U\in\beta}{\rm Leb}(U)$ as the 
\em total mass \em of $\beta$. 
For $c>0$ we also define a scaling map by
\[
c \beta:= \{(ca,cb)\colon (a,b)\in \beta\}.
\]

Let $\mathcal{I}_{H}$ denote the space of all 
interval partitions of $[0,M]$ for all $M\ge 0$. 
By convention, we denote $\emptyset$ the only interval partition that is an empty set.  
This space is equipped with the metric $d_H$, defined by applying the Hausdorff metric to the sets of partition points: 
for every $\gamma,\gamma'\in \mathcal{I}_{H}$, 
\[
d_{H} (\gamma, \gamma')
:= \inf \bigg\{b\ge 0\colon G(\gamma)\subseteq \bigcup_{x\in G(\gamma')} (x-b,x+b),~
G(\gamma')\subseteq \bigcup_{x\in G(\gamma)} (x-b,x+b) \bigg\}.
\]
Although $(\mathcal{I}_H, d_H)$ is not complete, the induced topological space is Polish \cite[Theorem~2.3]{Paper1-0}.
In particular, we identify a vector of positive integers $(n_1, \ldots, n_k)$ (so-called an \emph{integer composition}) with the interval partition of $[0,n_1+\cdots+n_k]$ given by  
\[
 \{ (s_{i-1},s_{i}), 1\le i\le k \}, \quad \text{where}~ s_i=n_1+\cdots+n_i.
\]

\bigskip
We next introduce the self-similar interval partition evolutions as the scaling limits of a class of continuous-time Markov chains, parametrised by $\alpha\in(0,1)$, $\theta\ge 0$.
This family of Markov chains is closely related to the well-known Chinese restaurant processes due to Dubins and Pitman 
(see e.g.\@ \cite{CSP}) and their ordered variants studied in \cite{PitmWink09}. 
 In the language of Chinese restaurant processes, we interpret the model as describing the number of customers at linearly ordered occupied tables, which evolves in continuous-time as customers arrive and depart one-by-one. Given the state $C(t) = (n_1, \ldots , n_k)\in \mathcal{I}_H$ with $n_1, \ldots , n_k\in\bN$ at time $t\ge 0$, the dynamics is as follows:
\begin{itemize}
	\item For each occupied table with $n_i$ customers, 
	a new customer joins that table at rate $n_i - \alpha$;
	\item A new customer starts a new table to the \emph{left} of the leftmost occupied table at rate $\theta$; 
	\item  Between any two adjacent occupied tables, or to the \emph{right} of the rightmost table, a new customer enters and begins a new table there at rate $\alpha$; 
    \item Each customer departs independently at rate $1$. 
\end{itemize}
By convention, the chain jumps from the null vector $\emptyset$ to state $(1)$ at rate $\theta$, hence $\emptyset$ is absorbing if $\theta= 0$. 
At every time $t\ge 0$, let $C(t)$ denote the left-to-right ordered vector of customer counts at occupied tables, viewed as a state in $\mathcal{I}_H$. 
This defines a continuous-time Markov chain $(C(t), t\ge 0)$,  with c\`adl\`ag paths on the metric space $(\mathcal{I}_H, d_H)$, referred to as a \emph{Poissonised up-down ordered Chinese restaurant process (PCRP) with parameters 
	$\alpha$, $\theta$}. With initial state $C(0)$, we denote the process by $\mathrm{PCRP}^{(\alpha)}_{C(0)}(\theta)$.

\begin{theorem}[{\cite[Theorem 1.1]{ShiWinkel-2}}]\label{thm:crp-ip} Let $\alpha\in(0,1)$ and $\theta \ge 0$. 
	For $n\in \bN$, let  $(C^{(n)}(t),\, t\ge 0)$ be a $\mathrm{PCRP}^{(\alpha)}(\theta)$ starting from $C^{(n)}(0)= \gamma^{(n)}$.
	Suppose that  the initial interval partitions $ \frac{1}{n}  \gamma^{(n)}$ converge in distribution to 
	$\gamma\in \mathcal{I}_H$ as $n\to \infty$, under $d_H$.
	Then there exists an $\mathcal{I}_H$-valued path-continuous Hunt process $(\beta(t), t\ge 0)$ starting from $\beta(0)= \gamma$, such that 
	\begin{equation}\label{mainthmeq}
	\Big(\frac{1}{n}  C^{(n)}(2 n t),\, t\ge 0 \Big)
	\underset{n\to \infty}{\longrightarrow}  (\beta(t),\, t\ge 0) , \quad \text{in distribution}, 
	\end{equation}
in the space of $(\mathcal{I}_H,d_H)$-valued c\`adl\`ag functions under the Skorokhod $J_1$-topology (see e.g.\@ \cite{Billingsley}).
\end{theorem}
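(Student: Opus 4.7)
The plan is to realise the candidate limit $(\beta(t),t\ge 0)$ as the skewer of a marked L\'evy process of the type produced in Theorems~\ref{thm:PPP}--\ref{thm:Levy}, and then to establish the convergence via tightness plus identification of finite-dimensional marginals. Concretely, taking $\delta = 2\alpha$, let $\xi$ be the $(1+\alpha)$-stable spectrally positive L\'evy process of Theorem~\ref{thm:Levy}, marked by independent $\besq^{-2\alpha}$ excursions $(f_i)$ of lifetimes $\Delta\xi(t_i)$; the parameter $\theta$ is incorporated by adding an independent immigration term on the left (equivalently, a negative linear drift in $\xi$ of rate $-\theta$). For each level $y\ge 0$, define $\beta(y)$ to be the interval partition whose ordered block sizes are $\{f_i(y-\xi(t_i-))\colon \xi(t_i-)\le y<\xi(t_i)\}$, ordered by the jump time $t_i$. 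Using the coupled BESQ flow construction of Section~\ref{s:spindle}, path-continuity of $y\mapsto \beta(y)$ in $(\mathcal{I}_H,d_H)$ follows from bicontinuity of the spindle widths together with the fact that the gasket has Lebesgue measure zero (Corollary~\ref{c:spindle zero measure}); the Hunt property follows from the strong Markov regenerative structure established in Proposition~\ref{cA:range}.

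For the convergence I would proceed in two stages. First, \emph{tightness} of $(n^{-1}C^{(n)}(2n\cdot))$ in $D([0,\infty),(\mathcal{I}_H,d_H))$. The total mass of the PCRP evolves as a continuous-time birth--death chain, which under the $n^{-1}$-space $2n$-time scaling converges to a $\besq^{2\theta}$-type diffusion by Lamperti's classical theorem, providing a uniform envelope. Each individual block, viewed in isolation, evolves as a birth--death chain on $\mathbb{N}\cup\{0\}$ with birth rate $n_i-\alpha$, death rate $n_i$ and absorption at $0$, whose rescaled limit is a $\besq^{-2\alpha}$ absorbed at $0$; this controls the modulus of continuity of each block. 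Combined with an Aldous-type criterion one obtains tightness of the size sequences. Second, \emph{identification}: any subsequential limit satisfies the martingale problem whose generator acts by (i) running each block as an independent $\besq^{-2\alpha}$ absorbed at $0$, and (ii) inserting new blocks into each gap (resp.\ on the left) as a Poisson point process of $\besq^{-2\alpha}$-excursions with intensity $\nu^{(-2\alpha)}_{\mathtt{BESQ}}$ (resp.\ $\theta\cdot \nu^{(-2\alpha)}_{\mathtt{BESQ}}$). This matches the generator of the skewer process, by the L\'evy--It\^o decomposition \eqref{eq:scaffolding} of the scaffolding $\xi$.

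The main obstacle will be tightness in the \emph{Hausdorff} metric. Convergence of ranked block sizes in $\ell^1$ only controls masses, whereas $d_H$ also requires matching of the locations of partition points, which is sensitive to the asymptotically infinite family of small intervals. The resolution exploits the Pitman--Yor/exchangeability structure of the ordered CRP: the ranked block sizes follow (asymptotically) a Poisson--Dirichlet$(\alpha,\theta)$ law, so truncating at the top $K$ blocks leaves a tail whose $d_H$-contribution is uniformly controlled by an $\alpha$-diversity estimate in the spirit of \cite{Paper1-2}. Once this is in place, the continuous-mapping theorem applied to the finite-dimensional skewer construction identifies the limit as $\beta$, completing the proof.
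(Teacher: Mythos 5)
First, a point of comparison: the paper does not prove this statement at all. Theorem~\ref{thm:crp-ip} is imported verbatim from \cite[Theorem 1.1]{ShiWinkel-2} as background for Section~\ref{sec:constr}, so there is no internal proof to measure your sketch against; what you have written is an outline of a proof of that external result, and it has to be judged on its own terms.

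On those terms there are genuine gaps. (i) The construction of the candidate limit for general $\theta\ge 0$ is not available from this paper: Theorems~\ref{thm:PPP} and \ref{thm:Levy} produce a marked spectrally positive stable process whose skewer yields only the $\mathrm{SSIP}^{(\alpha)}(\alpha)$ and $\mathrm{SSIP}^{(\alpha)}(0)$ cases (exactly as in Theorem~\ref{thm:RK-IPE}). Incorporating $\theta$ by ``adding a negative linear drift of rate $-\theta$ to $\xi$'' is not correct: a drift neither encodes left-immigration of new blocks at rate $\theta$ nor preserves the $(1+\alpha)$-self-similar structure on which the skewer construction rests; in the literature the general-$\theta$ evolutions require a genuinely different construction (two-sided immigration/clade constructions as in \cite{FRSW2,ShiWinkel-2}), and the Hunt property of the limit is itself a nontrivial theorem there, not a consequence of Proposition~\ref{cA:range}, which concerns the regenerative set $\cA$ and not the interval-partition-valued process. (ii) Your identification step assumes that subsequential limits can be pinned down by a martingale problem whose generator runs blocks as $\besq^{-2\alpha}$ and inserts excursions Poissonially. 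No such generator characterization of $\mathrm{SSIP}^{(\alpha)}(\theta)$-evolutions is known, and without well-posedness of that martingale problem the identification does not close; the actual proof in \cite{ShiWinkel-2} instead represents the PCRP itself by a discrete scaffolding-and-spindles picture and proves joint convergence of scaffolding and spindles to the marked stable process, then transfers this through the skewer map. (iii) The tightness issue in $d_H$ that you flag as ``the main obstacle'' is indeed the technical core, but your proposed resolution (truncate to the top $K$ blocks and invoke ``an $\alpha$-diversity estimate in the spirit of \cite{Paper1-2}'') restates the difficulty rather than resolving it: what is needed is uniform-in-$n$ control of the locations of partition points, i.e.\ of the accumulated mass of the asymptotically infinite family of small blocks of the PCRP at all levels simultaneously, and no such estimate is sketched. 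As it stands the proposal is a reasonable high-level road map resembling the known strategy, but each of these three steps is a missing argument rather than a routine verification.
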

The limiting diffusion $(\beta(t),\, t\ge 0)$ on $\mathcal{I}_H$ is called a \emph{self-similar interval partition evolution} ($\mathrm{SSIP}^{(\alpha)}(\theta)$-evolution). 
When $\theta =0$ (resp.\ $\theta =\alpha$), it is also called a \emph{type-1 (resp.\ type-0) evolution} in \cite{Paper1-2}.

\subsection{Interval partitions and a Ray--Knight theorem}

We still keep assumption \eqref{eq:mu} 
and set 
\[
\alpha := \frac{\delta}{2}  \in (0,1). 
\]

Given the spindle partition obtained from the coupled flows $\cR$ and $\cB$ as in Definition~\ref{def:red-blue}, we can  define an interval-partition-valued process in a straightforward way.

\begin{theorem}\label{thm:RK-IPE}
Let $\alpha =\frac{\delta}{2}\in (0,1)$. 
Fix any $z<0$ and consider the spindle partition of the region to the left of the flow line $(\cR_{z,z+y} (0), y\ge 0)$.  
For $y\ge 0$, set 
\begin{equation}\label{eq:skewer}
   \beta^y:= \{ (\cR_{r,z+y}(a-), \cB_{r,z+y}(a)) \colon  
 \cT_{(a,r)}\text{ a spindle, } \, \cB_{r,z+y}(a)\le \cR_{z,z+y} (0))\} .
\end{equation}
Then the process $\big(\beta^y,\, y\in [0,|z|)\big)$ is an $\mathrm{SSIP}^{(\alpha)}(\alpha)$-evolution starting from zero; and $(\beta^y,\, y\ge |z|)$ is an $\mathrm{SSIP}^{(\alpha)}(0)$-evolution. 
The total mass process $\big(\|\beta^y\|,\, y\ge 0)\big)$ is equal to $(\cR_{z,z+y} (0), y\ge 0)$. 
\end{theorem}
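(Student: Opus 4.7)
The plan is to bootstrap on Theorems~\ref{thm:PPP} and \ref{thm:Levy} to identify the family $(\beta^y)_{y \ge 0}$ with the ``skewer'' of a marked spectrally positive stable L\'evy process, and then to match this with the construction of $\mathrm{SSIP}^{(\alpha)}$-evolutions in \cite{Paper1-1,Paper1-2}. First, I would isolate the relevant region by decomposing the coupled flows with respect to the reference line $Y := \cR_{z,\cdot}(0)$. By Proposition~\ref{p:decomp} applied to $Y$, the driving white noise splits into an independent pair $(W_Y^-, W_Y^+)$, and the spindles that enter \eqref{eq:skewer}—those with $\cB_{r,z+y}(a)\le Y_{z+y}$—are measurable with respect to $W_Y^-$, while the remaining ones live to the right of $Y$ and can be ignored. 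Thanks to the comparison principle and to Lemma~\ref{l:basic blue red}, this restriction is internally consistent: a spindle whose blue boundary dips below $Y$ at some level $z+y$ has its entire left portion below $Y$ as well.

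Second, I would re-run the machinery of Section~\ref{sec:main} on the restricted flows $(\cB^-, \cR^-)$ obtained by decomposing below $Y$. The output is again a marked stable L\'evy process $(\xi^z, \fN^z)$ of the same type $(1+\tfrac{\delta}{2}\,,\,\nu^{(-\delta)}_{\mathtt{BESQ}})$, now encoding exactly the spindles under $Y$. The width of spindle $\cT_{(a,r)}$ at level $z+y$, namely $\cB_{r,z+y}(a)-\cR_{r,z+y}(a-)$, is then $\fe_{t_i}(z+y - \xi^z(t_i-))$ for the corresponding jump time $t_i$ of $\xi^z$, and the horizontal ordering of the spindles at level $z+y$ agrees (by the left-right order preservation of the jumps recalled in the introduction and Figure~\ref{fig:skewer}) with the left-to-right order in $\beta^y$. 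This identification shows that $(\beta^y)_{y\ge 0}$ is precisely the skewer process of the marked L\'evy process, which is the defining construction of SSIP-evolutions in \cite[Sections~3--5]{Paper1-2}.

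Third, the two-phase structure of the theorem is inherited directly from the drift function of $\cR$. On $[z,0]$ the reference line $Y$ is $\besq^{\delta}$ started at $0$, so mass is continuously injected at the left edge, each injection corresponding to a new spindle emerging from the vertical axis; the rate of these emergences—via the duality of Proposition~\ref{p:BESQdual vary} and the classical correspondence between $\besq^{\delta}$ and the $(1+\alpha)$-stable scaffolding with $\theta$-tilted excursion measure at $0$—matches the parameter $\theta = \alpha$ of the type-0 evolution. On $[0,\infty)$, $Y$ is $\besq^{0}$, no new spindle emerges from the axis, and the corresponding SSIP parameter is $\theta = 0$, giving the type-1 evolution. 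The initial condition $\beta^0 = \emptyset$ is immediate from $Y_z = 0$, and the total-mass identity follows from Corollary~\ref{c:spindle zero measure}: at any level $z+y$ the segment $[0, Y_{z+y}]$ is covered, up to a Lebesgue-null set, by the disjoint spindle widths comprising $\beta^y$, so $\|\beta^y\| = Y_{z+y} = \cR_{z,z+y}(0)$.

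The main obstacle is not the identification of widths and orderings, which is largely formal once the skewer picture is in place, but making the match between our spindle construction and the SSIP construction of \cite{Paper1-2} precise around the vertical axis $\{0\}\times(z,0)$, where new spindles enter continuously in time. The delicate point is to verify that the \emph{entrance law} of spindles off the axis—governed by the $\besq^{\delta}$ injection of mass into $Y$—matches the It\^o-type excursion entrance law used in \cite[Section~3]{Paper1-2} to define the type-0 evolution. I expect this to follow from Proposition~\ref{p:BESQdual vary} together with the excursion decomposition of $\besq^{\delta}$ away from $0$ as a Poisson point process of $\besq^{-\delta}$ excursions (\cite{PitmYor82}), but writing the matching rigorously, including the calibration of the constant multiplier in the intensity of $\fN$, will be the technical heart of the argument. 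A safer route, if the direct matching proves cumbersome, is to verify that $(\beta^y)$ satisfies the Hunt/martingale-problem characterisation of SSIP-evolutions recalled after Theorem~\ref{thm:crp-ip}, using the flow-line Markov property of Proposition~\ref{WN^t} and the scaling invariance \eqref{eq:scaling-inv}.
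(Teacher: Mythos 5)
Your high-level plan (identify $(\beta^y)_{y\ge 0}$ with the skewer of a marked stable L\'evy process and quote the SSIP construction of \cite{Paper1-2}) is the same as the paper's, but there is a genuine gap at the decisive step, namely the total-mass identity. You deduce $\|\beta^y\|=\cR_{z,z+y}(0)$ from Corollary~\ref{c:spindle zero measure}, but that corollary only says the gasket has \emph{two-dimensional} Lebesgue measure zero (a Fubini statement over fixed points); it does not give that the horizontal slice $(\bR_+\times\{z+y\})\cap\rK$ is one-dimensionally null for every level $y$, which is exactly what your covering claim needs. The paper explicitly warns that Corollary~\ref{c:spindle zero measure} alone does not suffice, and the one-dimensional statement (Corollary~\ref{c:zero measure 1D}) is obtained only as a \emph{consequence} of Theorem~\ref{thm:RK-IPE}, so your argument is circular here. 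The paper closes this gap by a different device: it forms the skewer $\widetilde\beta^y$ of $(\fN,\xi)$ stopped at $T^{\xi}_{z}$, observes that block widths match spindle widths so that $\|\beta^y\|=\|\widetilde\beta^y\|\le \cR_{z,z+y}(0)$, and then invokes \cite[Theorem 1.4]{Paper1-2} to see that $(\|\widetilde\beta^y\|)_{y\ge0}$ is a $\besq(2\alpha\,|_{|z|}\,0)$ process, hence has the same law as $(\cR_{z,z+y}(0))_{y\ge0}$; a process dominated pointwise by another with the same law must coincide with it a.s. Only after this do the \emph{endpoints} of the blocks (not just their lengths) get identified, yielding $\beta^y=\widetilde\beta^y$. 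Without such an argument your identification of $\beta^y$ with the skewer is incomplete: a priori the intervals in \eqref{eq:skewer} could leave gaps, so left endpoints need not agree with the skewer's partial sums, and this is precisely the non-formal part you dismiss as formal.

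Two further points. Re-running the machinery of Section~\ref{sec:main} on restricted flows $(\cB^-,\cR^-)$ below $Y=\cR_{z,\cdot}(0)$ is both heavier than needed and not justified as stated: the proofs of Theorems~\ref{thm:PPP} and \ref{thm:Levy} rely on the space-filling exploration $\varrho$ of the whole plane, exact scaling invariance, and the regenerative-set argument, none of which transfers verbatim to the region below a flow line started at $(0,z)$. The paper avoids this entirely by keeping the global pair $(\fN,\xi)$ and stopping $\xi$ at $T^{\xi}_{z}$, with Lemma~\ref{lem:xi-X} ($\kappa\circ\eta(T^{\xi}_{z})=T^{X}_{z}$) and Proposition~\ref{prop:explore}~(iii) identifying the atoms of $\fN$ on $(0,T^{\xi}_{z}]$ with exactly the spindles entering \eqref{eq:skewer}. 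Finally, the ``entrance-law matching at the axis'' that you flag as the technical heart is unnecessary: once $\beta$ is identified with the skewer of the stopped marked process, \cite[Theorem 1.8]{Paper1-2} directly delivers the $\mathrm{SSIP}^{(\alpha)}(\alpha)$ behaviour on $[0,|z|)$ and the $\mathrm{SSIP}^{(\alpha)}(0)$ behaviour afterwards; the only calibration required is the normalization of $\nu^{(-\delta)}_{\mathtt{BESQ}}$ already fixed after Theorem~\ref{thm:PPP}.
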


Note that Corollary~\ref{c:spindle zero measure} alone does not suffice to determine whether $\|\beta^y\| = \cR_{z,z+y}(0)$ for all $y\ge 0$. However, by Theorem~\ref{thm:RK-IPE} we  strengthen Corollary~\ref{c:spindle zero measure} into the following result:
\begin{corollary}\label{c:zero measure 1D}
Almost surely, for every $y\in \bR$, 
 the set $(\bR_+\times \{y\}) \cap \mathscr{K}$ has $\bR$-Lebesgue measure 0.
\end{corollary}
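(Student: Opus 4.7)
The idea is to derive Corollary~\ref{c:zero measure 1D} from Theorem~\ref{thm:RK-IPE} by letting the base level $z$ decrease to $-\infty$ along the integers. Fix $n \in \bN$ and apply Theorem~\ref{thm:RK-IPE} with $z = -n$: because both the total mass of an $\mathrm{SSIP}$-evolution and the $\besq$-flow line $(\cR_{-n,-n+y}(0),\, y\ge 0)$ are path-continuous, the identity $\|\beta^{y}\| = \cR_{-n,-n+y}(0)$ holds simultaneously for every $y \ge 0$ on an event $\Omega_n$ of probability one.

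On $\Omega_n$ and for every $y \ge 0$, the open intervals appearing in \eqref{eq:skewer} are pairwise disjoint by Lemma~\ref{lem:cT}~(iv), contained in $[0, \cR_{-n,-n+y}(0)]$ by the constraint imposed there, and each lies inside the interior of its enclosing spindle---this uses condition (i) in the characterization of $\mathscr{P}$ given just before \eqref{eq:gasket}. Since their total length equals $\cR_{-n,-n+y}(0)$, they cover $[0, \cR_{-n,-n+y}(0)]$ up to a Lebesgue-null subset. Re-parametrizing the horizontal level by $y' := -n + y$, this gives
\[
\operatorname{Leb}\Big(\mathscr{K} \cap \big([0, \cR_{-n, y'}(0)] \times \{y'\}\big)\Big) = 0 \qquad \text{for every } y' \ge -n.
\]
On the countable intersection $\Omega_\infty := \bigcap_n \Omega_n$, still of full probability, this holds simultaneously for every $n$ and every $y' \ge -n$.

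To conclude the corollary it then suffices to show that, almost surely, $\sup_n \cR_{-n, y'}(0) = +\infty$ for every $y' \in \bR$. Since $\cR_{-n, y'}(0) = L(\tau_0^{-n}, y')$ and $\tau_0^{-n} \uparrow \infty$ by recurrence of $X$, this supremum coincides with the total local time $L(\infty, y')$, so the issue reduces to proving that almost surely $L(\infty, y) = +\infty$ simultaneously for every $y \in \bR$. I expect this uniform-in-$y$ statement to be the principal (though classical) technical point. It can be verified directly via the $\besq$-flow representation of Theorem~\ref{t:rayknight}: for $y' \le 0$, the quantity $\cR_{-n, y'}(0)$ is a $\besq^\delta_0$-process evaluated at the diverging time $n+y'$, hence tends a.s.\ to $+\infty$; for $y' > 0$, the flow identity $\cR_{-n, y'}(0) = \cR_{0, y'}(\cR_{-n,0}(0))$ combined with the monotonicity of $a \mapsto \cR_{0, y'}(a)$ (by the comparison principle) transfers divergence from level $0$ to any $y' > 0$; joint continuity of the local time then lifts the statement from a countable dense set of $y'$'s to all of $\bR$. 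Combining this with the previous paragraph yields the corollary.
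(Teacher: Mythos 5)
Your overall route is the one the paper intends: Corollary~\ref{c:zero measure 1D} is presented there as a direct consequence of Theorem~\ref{thm:RK-IPE} (no separate proof is written), and your fixed-$z$ step is exactly that deduction. Indeed, the intervals in \eqref{eq:skewer} are pairwise disjoint (distinct spindles are disjoint), lie in $\mathscr{P}$ by condition (i) in the description of spindle interiors, are contained in $[0,\cR_{z,z+y}(0)]$, and by the theorem their total length equals $\cR_{z,z+y}(0)$ simultaneously for all $y\ge 0$; hence the gasket slice to the left of the flow line is Lebesgue-null at every level. Your reduction of the remaining step to showing that almost surely $L(\infty,y)=\sup_n \cR_{-n,y}(0)=+\infty$ \emph{simultaneously} for every $y\in\bR$ is also the right reduction.

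The one step that does not hold as written is the final lifting, ``joint continuity of the local time then lifts the statement from a countable dense set of $y'$'s to all of $\bR$.'' The map $y\mapsto L(\infty,y)=\sup_{t\ge 0}L(t,y)$ is only a supremum of continuous functions, hence lower semicontinuous, and a lower semicontinuous function can be $+\infty$ on a dense set while remaining finite at other points; continuity of $L(t,\cdot)$ for each finite $t$ gives no control on the supremum. A correct argument goes through a uniform statement on compacts, using monotonicity of the flow: for fixed $N\in\bN$, the quantity $\inf_{x\in[-N,N]}\cR_{-N,x}(m)=\inf_{x\in[-N,N]}L(\tau_m^{-N},x)$ is nondecreasing in $m$ (since $\tau_m^{-N}$ increases in $m$ and $L(\cdot,x)$ is nondecreasing), and by Theorem~\ref{t:rayknight} it is the infimum over a window of fixed length $2N$ of a ${\rm BESQ}(\delta\,|_0\,0)$ path started from $m$, which tends to $+\infty$ in probability as $m\to\infty$ by BESQ scaling (the path rescaled by $m$ is a path of the same type started from $1$ run over a time window of length $2N/m\to 0$). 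Monotone convergence upgrades this to almost sure divergence, so $L(\infty,x)=+\infty$ for all $x\in[-N,N]$ simultaneously; intersecting over $N\in\bN$ gives the uniform statement and, combined with your first paragraph, the corollary. With this replacement (your per-level arguments for $y'\le 0$ and $y'>0$ then become superfluous) the proof is complete and matches the paper's intended derivation.
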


Recall that  $\cR$ is built from the local time of the PRBM $X$, so we have $\cR_{z,z+y} (0) = L(T^{X}_{z}, z+y)$ for $y\ge 0$, where $T^{X}_{z}= \inf\{t\ge 0\colon X_t <z\}$. 
Theorem~\ref{thm:RK-IPE} provides a measurable refinement of the Ray--Knight theorem by partitioning the local time process into intervals.

To prove Theorem~\ref{thm:RK-IPE} we make use of the representation developed in Section~\ref{sec:markedLevy}. With the notation therein, let $\mathbf{N}$ be the Poisson point process defined in Theorem~\ref{thm:PPP} and $\xi = X\circ \kappa\circ \eta$ the L\'evy process obtained in Theorem~\ref{thm:Levy}.

\begin{lemma}\label{lem:xi-X}
    For every $x\le 0$, 
set $T^{X}_{x}= \inf\{t\ge 0\colon X_t <x\}$
and  $T^{\xi}_{x}= \inf\{\ell\ge 0\colon\ \xi_\ell <x\}$. Then we have almost surely  $\kappa\circ \eta(T^{\xi}_x) = T^{X}_x$ for every $x\le 0$. 
\end{lemma}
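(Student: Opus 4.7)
The plan is to identify $T^X_x$ and $T^\xi_x$ via the time change $\kappa\circ\eta$, exploiting two key ingredients: the spectral positivity of $\xi$ (Theorem~\ref{thm:Levy}), and the geometric observation that any spindle whose bottom is reached before $X$ first goes below $x$ must itself sit above level $x$. Fix $x<0$ (the case $x=0$ is immediate since both quantities vanish) and set $t^*:=T^X_x$, $\ell^*:=T^\xi_x$.

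First I would verify two preliminary facts. (i) By spectral positivity of $\xi$, first-passage below $x$ is by continuous descent: $\xi_{\ell^*-}=\xi_{\ell^*}=x$, and via the bijection between jumps of $\xi$ and jumps of $\eta$ given by \eqref{eq:bijection subor spindles} together with Theorem~\ref{thm:PPP}, $\eta$ is continuous at $\ell^*$. (ii) The time $t^*$ lies in $\cE^c$: if instead $\varrho_{t^*}\in\cT_{(a,r)}$ with $s_\cT<t^*$, then since a spindle lies strictly above its bottom level we would need $r<X_{t^*}=x$, contradicting $X\ge x$ on $[0,t^*]$ combined with $X_{s_\cT}=r$ (Proposition~\ref{prop:explore}). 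In particular $A_{t^*}\in\cA$ by Proposition~\ref{p:charact A}(i). The same bottom-level argument shows that every spindle $\cT$ with $s_\cT\le t^*$ satisfies $r\ge x$, and hence also $z\ge x$.

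Next I would establish $\kappa(\eta_{\ell^*})=t^*$ by proving both inequalities. For $\ell<\ell^*$, combining $\xi_\ell\ge x$ at gasket times with the fact that $X$ stays in $[r_i,z_i]\subseteq[x,\infty)$ during each spindle exploration (using the observation from (ii)) yields $X\ge x$ on all of $[0,\kappa(\eta_\ell)]$, hence $\kappa(\eta_\ell)\le t^*$; passing to the limit $\ell\uparrow\ell^*$ and using continuity of $\eta$ at $\ell^*$ gives $\kappa(\eta_{\ell^*}-)\le t^*$. Conversely, for $\ell>\ell^*$, $\xi_\ell<x$ forces $X(\kappa(\eta_\ell))<x$, so $\kappa(\eta_\ell)>t^*$; right-continuity of $\kappa\circ\eta$ then gives $\kappa(\eta_{\ell^*})\ge t^*$.

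The hard part is to rule out a discontinuity of $\kappa$ at $\eta_{\ell^*}$, which would otherwise permit $\kappa(\eta_{\ell^*}-)<t^*<\kappa(\eta_{\ell^*})$. Such a jump of $\kappa$ means $A$ is constant on an interval containing $t^*$ in its interior. I would rule this out by showing $A$ is strictly increasing from the right at $t^*$: immediately after $t^*$ the explorer $\varrho$ has vertical coordinate below $x$, but every previously discovered spindle has bottom level $\ge x$ by (ii), so $\varrho$ cannot be exploring any of them and must enter fresh spindles with bottom strictly below $x$. This produces entrance times $s_{\cT'}\in(t^*,t^*+\varepsilon)$ for every $\varepsilon>0$, on which short interval $A$ strictly increases; hence $A_{t^*+\varepsilon}>A_{t^*}$, which forces $\kappa(A_{t^*})=t^*$ and closes the argument. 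Finally, to upgrade the identity from a.s.-for-each-fixed-$x$ to a.s.-for-every-$x\le0$ simultaneously, one uses the monotonicity of $x\mapsto T^X_x$ and $x\mapsto T^\xi_x$ and the right-continuity of first-passage functionals to extend from a countable dense set of $x$.
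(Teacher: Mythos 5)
Your overall strategy is sound and rests on the same geometric mechanism as the paper's proof: the correspondence \eqref{eq:xi-X} between jumps and spindles, the fact that the explorer enters a spindle at its bottom point (Proposition~\ref{prop:explore}), and the observation that immediately after $T^X_x$ the explorer sits in freshly discovered spindles whose bottom levels lie strictly below $x$. The organization differs, though: the paper proves the two inequalities directly — a one-line comparison of infima (using $\xi=X\circ\kappa\circ\eta$ and that $\{\kappa\circ\eta(\ell'):\ell'<T^\xi_x\}\subseteq[0,\kappa\circ\eta(T^\xi_x))$) gives $\kappa\circ\eta(T^\xi_x)\ge T^X_x$, and for the converse it shows that any spindle discovered at a time $t>T^X_x$ close to $T^X_x$ with bottom below $x$ has $s_{\cT}=\kappa(A_{s_{\cT}})>\kappa\circ\eta(T^\xi_x)$, whence $T^X_x\ge\kappa\circ\eta(T^\xi_x)$. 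You instead pass to the limit $\ell\uparrow\ell^*$ (using continuity of $\eta$ at $\ell^*$, which the paper does not need) and then rule out a jump of $\kappa$ straddling $t^*$ via the strict right-increase of $A$ at $t^*$; this works, but is somewhat more laborious.

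Two points need repair, both with tools you already have. First, your justification of ``$X\ge x$ on $[0,\kappa(\eta_\ell)]$ for $\ell<\ell^*$'' by invoking observation (ii) is circular: (ii) bounds the bottom levels of spindles discovered before $t^*$, and whether the spindles explored during $[0,\kappa(\eta_\ell)]$ are among them is exactly what is at stake. The non-circular version: if $t\le\kappa(\eta_\ell)$ lies in $\cE$, the spindle being explored has $A_{s_{\cT}}=\eta_{\ell'-}\le A_t\le\eta_\ell$, so $\ell'\le\ell<\ell^*$ and its bottom level equals $\xi_{\ell'-}\ge x$ by \eqref{eq:xi-X}; on $\cE^c$ use Proposition~\ref{p:charact A}~(iv) and Lemma~\ref{lem:kappa-jump} to write $X_t$ as a value of $\xi$ at a time $\le\ell$. (Alternatively, the paper's infimum comparison replaces this step entirely.) Second, your concluding sentence skips a small bridge: ``$\kappa(A_{t^*})=t^*$'' alone does not finish; you should apply the continuous map $A$ (with $A\circ\kappa=\mathrm{id}$) to your two inequalities $\kappa(\eta_{\ell^*}-)\le t^*\le\kappa(\eta_{\ell^*})$ to get $A_{t^*}=\eta_{\ell^*}$, and only then conclude $\kappa(\eta_{\ell^*})=\kappa(A_{t^*})=t^*$. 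Minor slips: ``for $\ell>\ell^*$, $\xi_\ell<x$'' should read ``along some sequence $\ell_n\downarrow\ell^*$ with $\xi_{\ell_n}<x$'', and the existence of times $t\in(t^*,t^*+\varepsilon)$ with $X_t<x$ inside spindle interiors should be justified by positive occupation below $x$ together with Corollary~\ref{c:spindle zero measure} (as in the paper's own proof); also the final ``extend from a countable dense set of $x$'' is unnecessary, since every step is a pathwise consequence of statements holding a.s.\ simultaneously for all spindles and levels.
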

\begin{proof}
 We notice that, almost surely for every $x<0$,  
    \[
    x = \inf_{\ell < T^{\xi}_x} X\circ\kappa\circ \eta(\ell) \ge \inf_{t < \kappa\circ \eta(T^{\xi}_x)} X_t. 
    \]
    This inequality holds because the time change ensures that the first infimum is taken over a subset of the times considered in the second infimum. Consequently, we have $\kappa\circ \eta(T^{\xi}_x) \ge T^{X}_x$. 

    For the reverse inequality, note that we can find $t>T^{X}_x$ arbitrarily close to $T^X_x$ such that $\varrho_t \in (0,\infty) \times (-\infty ,x)$ and that $\varrho_t$ is in the interior of a spindle $\cT$. This implies that the bottom point of $\cT$ is below level $x$ and that $s_{\cT} <t$. So we have $X_{s_{\cT}} <-x$. By \eqref{eq:xi-X} 
    one has $\xi_{\ell-}<-x$ for $\ell$ such that $\eta_{\ell-}=A_{s_{\cT}}$. Let $\ell'<\ell$ be such that $\xi_{\ell'}<-x$. 
    Since $\kappa\circ \eta$ is strictly increasing, we have
    $\kappa(A_{s_{\cT}})>\kappa\circ \eta(\ell')>\kappa\circ \eta(T^{\xi}_x)$. From Lemma \ref{l:kappa_A}, we obtain  $t > s_{\cT} > \kappa\circ \eta(T^{\xi}_x)$. Since $t>T^{X}_x$ can be arbitrarily close, we conclude that $T^{X}_x \ge \kappa\circ \eta(T^{\xi}_x)$.  
\end{proof}

 \begin{proof}[Proof of Theorem~\ref{thm:RK-IPE}]
Let us write $\fN= \sum_{i\in \mathbb{N}} \delta_{(t_i,f_i)}$, such that $f_i$ is the width of the spindle $\cT_i$. 
Recall that $\fN$ is a Poisson random measure on $\bR_+\times \mathscr{E}$ with intensity measure $\mathrm{Leb} \otimes \nu^{(-\delta)}_{\mathtt{BESQ}}$, and that $\xi$ is 
a spectrally positive stable L\'evy process of index $1+\frac\delta2$. Recall from Remark~\ref{rmk:scaffolding} that the jumps of $\xi$ are given exactly by the lifetime of the excursions $f_i$, that is 
	\[\sum_{t\in[0,T]\colon\!\Delta \xi(t)> 0} \delta_{(t, \Delta \xi (t))} = \sum_{i\in \mathbb{N}} \delta_{(t_i, \zeta(f_i))}.\]

	Fix $z< 0$.   
	For the pair $(\fN,\xi)$ stopped at $T^{\xi}_{z}$, we define its \emph{skewer} at level $y\ge 0$ by the interval partition
	\begin{equation}\label{eq:skewer def}
		\widetilde{\beta}^y :=     
		\{ (M^y(t-),M^y(t)) \colon M^y(t-)<M^y(t), t\in [0,T^{\xi}_{z}] \}, 
	\end{equation}
    where, with the convention that $M^y(0-)=0$,  
    	\[
	M^y(t) = \int_{[0,t\wedge T^{\xi}_{z}]\times\mathscr{E}} f\big(z+ y- \xi(s-) \big)\mathbbm{1}_{z+y\ge \xi(s-)} \fN(ds,df)=\sum_{i\in\mathbb{N}\colon\! t_i\in[0,t\wedge T^{\xi}_{z}]}f_i\big(z+y-\xi(t_i-)\big)\mathbbm{1}_{z+y\ge \xi(t_i-)}.
	\]
Then it is known from   \cite[Theorem~1.8]{Paper1-2}
that 
 	$(\widetilde{\beta}^y,\, y\in [0,|z|\,])$ is an $\mathrm{SSIP}^{(\alpha)}(\alpha)$-evolution starting from zero and $(\widetilde{\beta}^y,\, y\ge |z|)$ is an $\mathrm{SSIP}^{(\alpha)}(0)$-evolution.

Finally, let us identify the two processes $\beta$ with $\widetilde{\beta}$, from which the claim would follow. 
By Proposition \ref{prop:explore} (iii), at time $T^X_z$, all spindles discovered by $\varrho$ before time $T^X_z$ have been fully explored. They are exactly those spindles whose right boundary are at the left of $\cR_{z,z+y} (0)=L(T^X_z,z+y)$ (those making contribution in \eqref{eq:skewer}).   
On the other hand, due to Lemma~\ref{lem:xi-X}, these spindles are given by $\mathbf{N}_z:= \fN\mid_{(0, T^{\xi}_{z}]\times \mathscr{E}}$. 
For each atom $(s_i, f_i)$ of $\fN$, which is given by the width of a spindle $\cT_{(a,r)}$, we have by \eqref{eq:xi-X} that $\xi_{s_i-} = r$ and thus 
\[
M^y(s_i ) - M^y(s_i -) = \cB_{r,z+y}(a)-\cR_{r,z+y}(a-). 
\]
This implies that $\|\beta^y\| =\|\widetilde{\beta}^y\|$ and that $M^y(s_i )\le \cB_{r,z+y}(a)$. 

Moreover, it is known from \cite[Theorem 1.4]{Paper1-2} that the total mass evolution $(\|\widetilde{\beta}^y\| ,y\ge 0)$ is a $\besq(2\alpha\,|_{|z|} \,0)$ process starting from $0$, so has the same law as $(\cR_{z, z+y}(0) ,y\ge 0)$. But we also know that almost surely $\|\widetilde{\beta}^y\| \le \cR_{z, z+y}(0)$ for all $y\ge 0$. This permits us to identify the two processes, that is $\|\widetilde{\beta}^y\| = \cR_{z, z+y}(0)$ for all $y\ge 0$. 

Therefore, we conclude that almost surely, 
$\|\beta^y\|=\|\widetilde{\beta}^y\| = \cR_{z, z+y}(0)$ for all $y\ge 0$. 
As a consequence, for each atom $(s_i, f_i)$ of $\fN$ and its corresponding spindle $\cT_{(a,r)}$, we also have $M^y(s_i )= \cB_{r,z+y}(a)$ and $M^y(s_i- )= \cR_{r,z+y}(a-)$ for all $y\ge \xi_{s_i-} -z$. We hence deduce that $(\beta^y, y\ge 0)= (\widetilde{\beta}^y, y\ge 0)$. 
In particular, $(\beta^y, y\ge 0)$ is indeed an interval-partition-valued process with each $\beta^y$ giving an interval partition of $[0,\cR_{z, z+y}(0)]$ (in the sense that the complement has zero one-dimensional Lebesgue measure). 
\end{proof}

 For $\alpha\in(0,1)$ and $\theta\ge 0$, a \emph{Poisson--Dirichlet $(\alpha,\theta)$-interval partition} was introduced by \cite{GnedPitm05,PitmWink09}, with motivation from regenerative compositions and random trees.  For each Poisson--Dirichlet $(\alpha,\theta)$-interval partition, the lengths of intervals (ranked from longer to shorter) follow the usual Poisson--Dirichlet $(\alpha,\theta)$ distribution. 
 
For a $\besq(2-2\alpha)$ bridge $(b_t ,\,t\in [0,1])$ from zero to zero, let  $A_{zero} = \{r\in [0,1]\colon b_r =0 \} $ be its zero set. Then the interval partition given by the open interval components of $[0,1]\setminus \overline{A_{zero}}$ is a  Poisson--Dirichlet $(\alpha,\alpha)$-interval partition. 
Consider the left-right reversal of the zero sets of a $\besq(2-2\alpha)$ process $(Z_t ,\,t\in [0,1])$, that is  $A'_{zero} = \{1-r\in [0,1]\colon Z_r =0 \}$, then we similarly obtain a Poisson--Dirichlet $(\alpha,0)$-interval partition. See \cite[Examples 3--4 and Sections 8.3--8.4]{GnedPitm05}.

\begin{corollary}
    Let $z< 0$, and consider the interval partition  of the interval $[0,\cR_{z,0} (0-)]$ given by 
\[
\beta_{z}:=
\{ (\cR_{r,0}(a-), \cB_{r,0}(a)) \colon  
 \cT_{(a,r)}\text{ a spindle, } \, \cB_{r,0}(a)\le \cR_{z,0} (0-)).  
\]
Then $\beta_{z}\eqdis G \gamma$, where $G$ has Gamma distribution $(\alpha, \frac{1}{2|z|})$ and $\gamma$ is a Poisson--Dirichlet interval partition with parameter $(\alpha,\alpha)$, independent of each other.  
\end{corollary}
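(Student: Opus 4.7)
The plan is to recognize $\beta_z$ as the time-$|z|$ marginal of an $\mathrm{SSIP}^{(\alpha)}(\alpha)$-evolution started from $\emptyset$ via Theorem~\ref{thm:RK-IPE}, and then invoke the known pseudo-stationary marginal of such an evolution.

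First, I would observe that the definition of $\beta_z$ coincides almost surely with $\beta^y$ of \eqref{eq:skewer} taken at $y=|z|$: indeed $z+y=0$, and $\cR_{z,0}(0-)$ coincides almost surely with $\cR_{z,0}(0)$ (both representing the $\cR$-flow line issuing from $(0,z)$ evaluated at position $0$, which is strictly positive a.s.). Therefore $\beta_z=\beta^{|z|}$ a.s., and by Theorem~\ref{thm:RK-IPE} together with the path-continuity of such evolutions (Theorem~\ref{thm:crp-ip}), $\beta_z$ has the law of an $\mathrm{SSIP}^{(\alpha)}(\alpha)$-evolution started from $\emptyset$ evaluated at time $|z|$.

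Next, I would identify the total mass via the same theorem: $\|\beta_z\|=\cR_{z,0}(0)$, and by Theorem~\ref{t:rayknight} this is the value at time $|z|$ of a $\besq^{2\alpha}$ process started from $0$. The explicit transition density for squared Bessel processes then yields $\cR_{z,0}(0)\sim\mathrm{Gamma}(\alpha,1/(2|z|))$, identifying the factor $G$.

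The remaining ingredient is the pseudo-stationarity of the $\mathrm{SSIP}^{(\alpha)}(\alpha)$-evolution from $\emptyset$: at every fixed $t>0$, the normalized partition $\beta(t)/\|\beta(t)\|$ is a Poisson--Dirichlet $(\alpha,\alpha)$-interval partition, independent of $\|\beta(t)\|$. This is established in the interval-partition-evolution literature (see e.g.\ \cite{Paper1-2,FRSW2}) and ultimately rests on the Pitman--Yor identification of the excursion structure of a $\besq^{2-2\alpha}$ bridge with a PD$(\alpha,\alpha)$-partition; applying it at $t=|z|$ together with the previous two paragraphs completes the proof. The main obstacle, and the step I would expect to be non-trivial if one wanted a self-contained derivation, is precisely this pseudo-stationarity assertion; within the present framework, one could attempt to prove it via excursion theory for the stable process $\xi$ across level $0$, or via Corollary~\ref{cor:stb-sub} applied to the horizontal slice of the gasket at level $0$ combined with the length-biasing structure of $\besq^{-\delta}$ excursions straddling that slice, but neither route is as short as citing the reference.
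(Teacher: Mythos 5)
Your proposal is correct and takes essentially the same route as the paper: the paper likewise identifies $\beta_z$ with the time-$|z|$ state of the interval-partition evolution of Theorem~\ref{thm:RK-IPE} (phrased there as the initial state of the $\mathrm{SSIP}^{(\alpha)}(0)$ part rather than, as you do, the terminal value of the $\mathrm{SSIP}^{(\alpha)}(\alpha)$ part — the same state of the same path) and then concludes by citing \cite[Proposition~4.1]{Paper1-2}, which is precisely the pseudo-stationarity/entrance-law input you flag as the one non-trivial external ingredient.
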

\begin{proof}
Since we know that $\beta_{z}$ has the same law as an $\mathrm{SSIP}^{(\alpha)}(0)$-evolution starting from empty at time $|z|$, the statement follows from \cite[Proposition~4.1]{Paper1-2}.
\end{proof}

\subsection{A connection with stable shredded disks}
\label{s:shredded}

Let $\alpha = \frac{\delta}{2}\in (0,1)$ and consider the $1+\alpha$-stable process $\xi$ obtained from Theorem~\ref{thm:Levy}.  We follow \cite{BjCuSi22}
 and define the stable shredded disk, analogously to the stable shredded sphere introduced in \cite{BjCuSi22}.  For any $\ell_1,\ell_2\in \bR_+$, let 
 \[
 D^u(\ell_1,\ell_2)=\xi_{\ell_1}+\xi_{\ell_2} -2 \inf_{\ell\in[\ell_1\land \ell_2,\ell_1\lor \ell_2]} \xi_\ell, \qquad D^d(\ell_1,\ell_2)=2 \sup_{\ell\in[\ell_1\land \ell_2,\ell_1\lor \ell_2]} \xi_\ell - \xi_{\ell_1}-\xi_{\ell_2}.
 \]

 \noindent Then let $D^*$ be the largest pseudo-distance smaller than $D^u$ and $D^d$, i.e.
 \[
 D^*(\ell_1,\ell_2):= \inf\bigg\{\sum_{i=1}^k D^u(a_i,b_i)+D^d(b_i,a_{i+1}): \ell_1=a_1,   a_{k+1}=\ell_2,b_1,\ldots,a_k,b_k\in [0,1]\bigg\},
 \]
 
 \noindent see \cite[equation (2.11)]{BjCuSi22}. 
 We define the stable shredded disk as  $\mathscr{P}_\alpha := \mathbb{R}_+/\sim_{D^*}$ where $\ell_1\sim_{D^*} \ell_2$ if and only if $D^*(\ell_1,\ell_2)=0$. It turns $D^*$ into a metric on $\mathscr{P}_\alpha$. The projection map from $\bR_+$ to $\mathscr{P}_\alpha$ is denoted by $\pi$.

It is proved in \cite[Theorem 4.3]{BjCuSi22} that for any $\ell_1,\ell_2\in \bR_+$, $D^*(\ell_1,\ell_2)=0$ if and only if $D^u(\ell_1,\ell_2)=0$ or $D^d(\ell_1,\ell_2)=0$. In other words, 
\begin{equation}\label{eq:identification}
    \pi(\ell_1)=\pi(\ell_2) \qquad \textrm{iff} \qquad \xi_{\ell_1}=\xi_{\ell_2} \in \Big\{\inf_{\ell\in[\ell_1\land \ell_2,\ell_1\lor \ell_2]} \xi_\ell,\sup_{\ell\in[\ell_1\land \ell_2,\ell_1\lor \ell_2]} \xi_\ell\Big\}.
\end{equation}

Recall that the process $\ki$ in \eqref{def:ki}  has range the gasket $\rK$ by Proposition \ref{p:ki}. The following proposition characterizes when two numbers are mapped to the same point by $\ki$.

\begin{proposition}
Almost surely, for any $\ell_1,\ell_2\in \bR_+$, $\pi(\ell_1)=\pi(\ell_2)$ if and only if  $\ki(\ell_1)=\ki(\ell_2)$.
\end{proposition}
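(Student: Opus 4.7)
Since $\ki(\ell)=(L(\kappa\circ\eta(\ell),\xi_\ell),\xi_\ell)$ has $\xi_\ell$ as its second coordinate, writing $t_i:=\kappa\circ\eta(\ell_i)$, the equation $\ki(\ell_1)=\ki(\ell_2)$ is equivalent to $X_{t_1}=X_{t_2}=:x$ together with $L(t_1,x)=L(t_2,x)$. By the characterization of the local time $L(\cdot,x)$ as strictly increasing whenever $X$ visits level $x$, the latter equality is equivalent to ``$X$ does not touch $x$ on $(t_1,t_2)$'', and by continuity of $X$ splits into the two cases $X>x$ throughout $(t_1,t_2)$ or $X<x$ throughout.

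For the direction $\ki(\ell_1)=\ki(\ell_2)\Rightarrow\pi(\ell_1)=\pi(\ell_2)$, I plan to argue in the case $X>x$ that $\xi_\ell\ge x$ for every $\ell\in[\ell_1,\ell_2]$: the nondecreasing map $\kappa\circ\eta$ sends $\ell$ into $[t_1,t_2]$, so $\xi_\ell=X_{\kappa\circ\eta(\ell)}\ge x$, and at a jump of $\eta$ at $\ell^*\in(\ell_1,\ell_2]$ the left-limit $\xi_{\ell^*-}=X_{s_\cT}$ has $s_\cT\in(t_1,t_2)$, hence $\ge x$. This yields $\inf_{[\ell_1,\ell_2]}\xi=x$, i.e.\ $D^u(\ell_1,\ell_2)=0$, and \eqref{eq:identification} delivers $\pi(\ell_1)=\pi(\ell_2)$. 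The case $X<x$ is handled symmetrically and gives $D^d(\ell_1,\ell_2)=0$.

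For the reverse implication, assume $\pi(\ell_1)=\pi(\ell_2)$ with $\xi_{\ell_1}=\xi_{\ell_2}=x=\inf_{[\ell_1,\ell_2]}\xi$ (the supremum case being symmetric via the duality between $\cR,\cB$ and their duals). I aim to rule out any $t\in(t_1,t_2)$ with $X_t=x$. For $t$ in the range of $\kappa\circ\eta$, $X_t=\xi_\ell\ge x$, with equality only when $\xi_\ell=x$; such intermediate $\ell$'s sit inside $(\ell_1,\ell_2)$ attaining the infimum and can be removed by splitting and iterating the argument on shorter sub-intervals. The remaining $t$'s lie in intervals skipped by $\kappa\circ\eta$: either in a left-part exploration $(s_\cT,t_\cT)$ of a spindle discovered at a jump $\ell^*\in(\ell_1,\ell_2)$ of $\eta$ (where Proposition~\ref{prop:explore} gives $X\in(r,z)$ with $r=\xi_{\ell^*-}\ge x$), or in a right-part excursion $(t_0,t_1')$ of a spindle $\cT''=\cT_{(a'',r'')}$ (where Proposition~\ref{p:right return}(iii) gives $X\in(r'',r_0)$). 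If the discovery time $\ell^{**}$ of $\cT''$ lies in $(\ell_1,\ell_2)$, then $r''=\xi_{\ell^{**}-}\ge x$ forces the excursion to stay above $x$.

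The main obstacle lies in the remaining scenario: the spindle $\cT''$ was discovered at $\ell^{**}\le\ell_1$, so that $r''$ is not directly constrained by $\inf\xi=x$, and a priori $r''<x<r_0$ is possible, allowing $X$ to touch $x$ during the excursion. To rule this out I would use Proposition~\ref{p:right return}(ii), which says the successive re-entry heights of $\varrho$ into $\cT''$ form a decreasing sequence and that before any excursion at height $r_0$ the right boundary of $\cT''$ at heights in $(r'',r_0)$ is unvisited. Combining this with the identification of $\varrho_{t_1}$ as a specific gasket point at level $x$, the regenerative property of $\cA$ (Proposition~\ref{cA:range}) applied at $\ell_1$, and the already-established forward direction on sub-intervals of $(\ell_1,\ell_2)$, I would conclude that any right-part excursion contained in $(t_1,t_2)$ must have entry height $r_0\le x$, so that $X\in(r'',r_0)\subset(-\infty,x)$ during it. This yields $X>x$ strictly on $(t_1,t_2)$, hence $L(t_1,x)=L(t_2,x)$ and $\ki(\ell_1)=\ki(\ell_2)$.
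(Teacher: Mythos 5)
Your forward implication ($\ki(\ell_1)=\ki(\ell_2)\Rightarrow\pi(\ell_1)=\pi(\ell_2)$) is essentially the paper's argument and is fine, except for one overstatement: $L(t_1,x)=L(t_2,x)$ is \emph{not} equivalent to ``$X$ does not touch $x$ on $(t_1,t_2)$'' (the PRBM can touch a level at a local extremum without its local time there increasing), so the dichotomy you can actually extract from \eqref{no_increase} is only $X\ge x$ or $X\le x$ on $[t_1,t_2]$ — which is all you need for $D^u$ or $D^d$ to vanish.

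The reverse implication is where the genuine gap lies. You attack it directly (assume $x=\xi_{\ell_1}=\xi_{\ell_2}$ is the infimum of $\xi$ on $[\ell_1,\ell_2]$ and try to show $X$ never crosses $x$ on $(t_1,t_2)$), and you yourself isolate the problematic case: a right-part excursion of a spindle $\cT''=\cT_{(a'',r'')}$ discovered \emph{before} $t_1$, with $r''<x$, during which $X$ could a priori dip below $x$. For this case you only offer a plan (``I would use \dots I would conclude \dots'') combining Proposition~\ref{p:right return}(ii), the regenerative property of $\cA$ and an unspecified iteration; no actual argument is given that the re-entry level $r_0$ of such an excursion must satisfy $r_0\le x$. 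Two auxiliary steps are also unjustified: the removal of interior times $\ell$ with $\xi_\ell=x$ ``by splitting and iterating'' (you would need that $x$ is not attained by $\xi$ in the interior, or that local time does not accumulate there, neither of which you prove), and the claim that the supremum case follows ``by symmetry via duality'' — the geometry of spindles above and below a level is not symmetric, and no duality statement in the paper delivers this. The paper sidesteps all of this by proving the contrapositive: if $\varrho_{t_1}\neq\varrho_{t_2}$ while $X_{t_1}=X_{t_2}=x$, then $L(t_2,x)>L(t_1,x)$, and Corollary~\ref{c:zero measure 1D} (a.s.\ the gasket slice at \emph{every} level has zero Lebesgue measure) produces a spindle whose cross-section at level $x$ sits inside $(L(t_1,x),L(t_2,x))$; combined with $t_1,t_2\in\cE^c$ (Proposition~\ref{p:charact A}(iv)) and \eqref{eq:xi-X}, this yields a jump of $\xi$ at some $\ell\in(\ell_1,\ell_2)$ with $\xi_{\ell-}<x<\xi_\ell$, so $x$ is neither the infimum nor the supremum of $\xi$ on $[\ell_1,\ell_2]$ and \eqref{eq:identification} gives $\pi(\ell_1)\neq\pi(\ell_2)$. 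You never invoke Corollary~\ref{c:zero measure 1D}, which is precisely the tool that makes the reverse direction work; without it (or a completed version of your excursion-by-excursion analysis), your proof is incomplete.
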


\begin{proof}
We may assume that $\ell_1<\ell_2$. Let $t_1=\kappa\circ \eta(\ell_1)$ and $t_2=\kappa\circ \eta(\ell_2)$. 
Suppose that $\varrho_{t_1}=\varrho_{t_2}$. Then $X$ makes an excursion above or below $X_{t_1}=X_{t_2}$ on the time interval $[t_1,t_2]$, and we have $\inf_{t\in[t_1,t_2]} X_t = X_{t_1}=X_{t_2}$ or $\sup_{t\in [t_1,t_2]} X_t=X_{t_1}=X_{t_2}$ respectively. 
As $\xi = X\circ \kappa\circ \eta$, it follows that 
 $\inf_{\ell\in[\ell_1,\ell_2]} \xi_\ell = \xi_{\ell_1}=\xi_{\ell_2}$ or $\sup_{\ell\in[\ell_1,\ell_2]} \xi_\ell = \xi_{\ell_1}=\xi_{\ell_2}$ hence $\pi(\ell_1)=\pi(\ell_2)$ by \eqref{eq:identification}. Suppose now that $\varrho_{t_1}\neq \varrho_{t_2}$. If $X_{t_1}\neq X_{t_2}$, then $\xi_{\ell_1}\neq \xi_{\ell_2}$ and hence $\pi(\ell_1)\neq \pi(\ell_2)$ by \eqref{eq:identification}. If $X_{t_1}=X_{t_2}=x$, then $L(t_2,x)>L(t_1,x)$.
By Proposition~\ref{p:charact A} (iv), $\kappa(\cA)\subseteq \mathcal{E}^c$, therefore $t_1$ and $t_2$ are not in $\cE$. By Corollary~\ref{c:zero measure 1D}, there exists some spindle $\cT=\cT_{(a,r)}$ with $r<x$, such that $\cR_{r,x}(a-)<\cB_{r,x}(a)$ and $(\cR_{r,x}(a-),\cB_{r,x}(a))\subseteq(L(t_1,x),L(t_2,x))$. The inequality $L(t_2,x)\ge \cB_{r,x}(a)$ implies that $t_2>t_{\cT}$. As for $t_1$, either $L(t_1,x)< \cR_{r,x}(a-)$ hence $t_1< s_{\cT}$ is immediate, or $L(t_1,x)=\cR_{r,x}(a-)$, i.e. $\varrho_{t_1}\in \cT$, but since $t_1\in \cE^c$, we still get $t_1<s_{\cT}$ by the definition of $\cE$ in \eqref{def:explore}.  
By \eqref{eq:xi-X}, the spindle corresponds to a jump time $\ell$ of $\xi$ with $\xi_{\ell-} = X_{s_{\cT}} < x < X_{t_{\cT}} = \xi_{\ell}$, $\eta_{\ell-}=A_{s_{\cT}}$, $\eta_\ell=A_{t_{\cT}}$. Taking the image by $A$, $t_1<s_{\cT}<t_{\cT}<t_2$ implies $\eta_{\ell_1}\le \eta_{\ell-}<\eta_\ell\le \eta_{\ell_2}$, hence $\ell_1\le \ell\le \ell_2$. These inequalities are strict since $\xi_\ell\neq x$. This is to say, $\xi$ has a jump crossing the level $x$ in the time interval $(\ell_1,\ell_2)$, hence $\pi(\ell_1)\ne \pi(\ell_2)$. This completes the proof. 
\end{proof}

The previous proposition shows that $\ki$ induces an injective mapping $\overline{\ki}$, by setting 
\begin{equation}\label{eq:bijection}
\begin{aligned}
    \overline{\ki}: \mathscr{P}_\alpha &\to \rK, \\
    \pi(\ell) &\mapsto \ki(\ell).
\end{aligned}
\end{equation}
We want to show that the mapping $\ki$ maps $\bR_+$ to $\rK$, i.e. that $\overline{\ki}$ is surjective. Recall Proposition \ref{p:ki}. Since $\rK$ is the range of $\ki$, it suffices to show that for all jump time $\ell$, there exists $\ell'\ge 0$ such that $\ki(\ell')=\ki(\ell-)$. It is implied by the following lemma.

 \begin{lemma}\label{l:charact K}
 If $\ell$ is a jump time of $\xi$ and $\ell'=\inf\{u>\ell\colon \xi_u=\xi_{\ell-}\}$, then $\kappa\circ\eta (\ell') = \tau_a^r$ where $(a,r)$ is the bottom point of the spindle associated with the jump time $\ell$. In particular, $\ki(\ell)=\ki(\ell-)$.
\end{lemma}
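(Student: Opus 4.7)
The target identity is $\kappa\circ\eta(\ell')=\tau_a^r$, where $\cT=\cT_{(a,r)}$ is the spindle associated to the jump time $\ell$ via \eqref{eq:bijection subor spindles}, so that $\eta_{\ell-}=A_{s_\cT}$, $\eta_\ell=A_{t_\cT}$, $\xi_{\ell-}=r$, and $\xi_\ell=z$ with $(c,z)$ the top point of $\cT$. By Proposition~\ref{prop:explore}(iii) the full exploration of $\cT$ ends at $\tau_a^r$ with $\varrho_{\tau_a^r}=(a,r)$; since bottom points lie on no spindle (Lemma~\ref{l:bottom_boundary}), we have $\tau_a^r\in\cE^c$ and hence $A_{\tau_a^r}\in\cA$ by Proposition~\ref{p:charact A}(i). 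The plan is to identify $\eta_{\ell'}$ with $A_{\tau_a^r}$; granted this, the final ``in particular'' assertion of the lemma---which I read as the intended $\ki(\ell')=\ki(\ell-)$, since this is exactly what the preceding surjectivity discussion requires---will follow at once from $\ki(\ell')=\varrho_{\tau_a^r}=(a,r)=\varrho_{s_\cT}=\ki(\ell-)$.

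The first key step is to show $A_{\tau_a^r+\varepsilon}>A_{\tau_a^r}$ for every $\varepsilon>0$, and analogously $A_{t_\cT+\varepsilon}>A_{t_\cT}$; these give $\kappa(A_{\tau_a^r})=\tau_a^r$ and $\kappa(A_{t_\cT})=t_\cT$. Equivalently, spindle entrance times $s_{\cT'}$ must accumulate at $\tau_a^r$ from the right. I would adapt the argument in the proof of Proposition~\ref{p:dense times}: using the local fluctuation property of the PRBM at level $r$, there exist $u>\tau_a^r$ arbitrarily close with $X_u>r$, and by Proposition~\ref{c:ancestor-time_fixed} one may further choose such $u$ so that $\varrho_u$ lies in the interior of some spindle $\cT'$; Lemma~\ref{lem:not-exploring} then yields $s_{\cT'}\in[\tau_a^r,u)$. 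Since $\varrho_{\tau_a^r}=(a,r)$ is the bottom of the unique spindle $\cT$ with bottom $(a,r)$ and $s_\cT=\sigma_a^r<\tau_a^r$, the case $s_{\cT'}=\tau_a^r$ is excluded, so $s_{\cT'}>\tau_a^r$ strictly, giving the desired accumulation and hence the strict increase of $A$.

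Set now $\ell^\ast:=\cL(A_{\tau_a^r})$, and verify $\eta_{\ell^\ast}=A_{\tau_a^r}$: if instead $\eta_{\ell^\ast-}=A_{\tau_a^r}$, then $A_{\tau_a^r}=A_{s_{\cT''}}$ for some spindle $\cT''$, and a short case analysis on $s_{\cT''}$ versus $\tau_a^r$ produces a contradiction in each case, via Lemma~\ref{l:bottom_boundary} (if $s_{\cT''}<\tau_a^r<t_{\cT''}$, then $(a,r)\in\cT''$, so $\cT''=\cT$, contradicting $t_\cT<\tau_a^r$), strict monotonicity of $A$ on $(s_{\cT''},t_{\cT''})$ (if $s_{\cT''}<\tau_a^r$ and $\tau_a^r\ge t_{\cT''}$, then $A_{\tau_a^r}\ge A_{t_{\cT''}}>A_{s_{\cT''}}=A_{\tau_a^r}$), uniqueness of the spindle bottom (if $s_{\cT''}=\tau_a^r$), and the right-strict increase above (if $s_{\cT''}>\tau_a^r$). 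Consequently $\kappa\circ\eta(\ell^\ast)=\tau_a^r$ and $\xi_{\ell^\ast}=r$, so $\ell^\ast\ge\ell'$. Conversely, for $u\in(\ell,\ell^\ast)$ strict monotonicity of $\eta$ places $\eta_u\in(A_{t_\cT},A_{\tau_a^r})$, and combined with $\kappa(A_{t_\cT})=t_\cT$, $\kappa(A_{\tau_a^r})=\tau_a^r$ and continuity of $A$ this forces $\kappa(\eta_u)\in(t_\cT,\tau_a^r)$; by Proposition~\ref{prop:explore}(iii), $X>r$ on $(s_\cT,\tau_a^r)\supset(t_\cT,\tau_a^r)$, so $\xi_u>r$ and $u<\ell'$. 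This gives $\ell^\ast=\ell'$ and closes the proof. The main obstacle is the right-strict increase of $A$ at $\tau_a^r$: a priori $\varrho$ could spend positive time exclusively in right-parts of previously discovered spindles just after $\tau_a^r$, and ruling this out requires the PRBM fluctuation at the specific level $r$ combined with the Lemma~\ref{lem:not-exploring} argument.
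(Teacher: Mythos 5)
Your argument is correct in substance, but it takes a genuinely different route from the paper's. The paper works directly with local times: from $\xi_u>r$ on $[\ell,\ell')$ it gets $L(\kappa\circ\eta(\ell'),r)\ge a$; then, for any $t>\tau_a^r$, it uses Corollary~\ref{c:zero measure 1D} to produce a spindle meeting $(a,L(t,r))\times\{r\}$, whose entrance time forces $\kappa\circ\eta(\ell')\le t$, hence $\kappa\circ\eta(\ell')\le\tau_a^r$; the reverse inequality comes from the fact that jumps of the stable process $\xi$ cross level $r$ immediately after $\ell'$, so $L(\kappa\circ\eta(\ell'+\varepsilon),r)>a$ and one concludes by right-continuity. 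You instead work ``upstairs'' with the time-change structure: you prove that $A$ increases strictly to the right of $\tau_a^r$ (via the Proposition~\ref{p:dense times} toolkit: fluctuations of $X$ above level $r$ just after $\tau_a^r$, Proposition~\ref{c:ancestor-time_fixed} and Lemma~\ref{lem:not-exploring}), deduce $\kappa(A_{\tau_a^r})=\tau_a^r$, rule out that $A_{\tau_a^r}$ is the left endpoint of a gap of $\cA$ so that $A_{\tau_a^r}$ lies in the range of $\eta$, and then identify $\cL(A_{\tau_a^r})$ with $\ell'$ by a two-sided comparison using $X>r$ on $(s_\cT,\tau_a^r)$. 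What your route buys is that it avoids both Corollary~\ref{c:zero measure 1D} and the regularity of the stable process; what it costs is the extra strict-increase property of $A$ at $\tau_a^r$, which is essentially an extension of Lemma~\ref{l:kappa_A}~(ii) and is the genuinely new ingredient you must supply. You also correctly read the ``in particular'' as $\ki(\ell')=\ki(\ell-)$, which is what the surjectivity argument before the lemma needs.

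Three small patches are needed. First, justify the fluctuation claim: $L(\cdot,r)$ strictly increases immediately after $\tau_a^r$ (by definition of the right-continuous inverse), and since a single level carries zero occupation time, $X$ must take values strictly above $r$ at times arbitrarily close to $\tau_a^r$ on the right; this also holds a.s.\ simultaneously for all spindles, since it only uses the occupation times formula. Second, in the first sub-case of your exclusion argument, $(a,r)\in\cT''$ does not give $\cT''=\cT$ (the spindle $\cT_{(a,r)}$ does not contain its bottom point); the correct contradiction is just Lemma~\ref{l:bottom_boundary}, which you cite anyway. Third, you implicitly use $\ell^\ast>\ell$ when comparing with $\ell'$ and when considering $u\in(\ell,\ell^\ast)$; this follows from $\eta_{\ell^\ast}=A_{\tau_a^r}>A_{t_\cT}=\eta_\ell$, which is immediate from $\kappa(A_{t_\cT})=t_\cT$ (Lemma~\ref{l:kappa_A}~(ii)) applied at $t=\tau_a^r>t_\cT$.
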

\begin{proof}
     Write $\cT=\cT_{(a,r)}$ for the spindle associated to $\ell$ by \eqref{eq:xi-X}, hence $\xi_{\ell-}=r$ and $\eta_{\ell}=A_{t_{\cT}}$. By Lemma \ref{l:kappa_A} (ii),  $\kappa\circ \eta(\ell)=t_{\cT}$, so that
     \begin{equation}\label{eq:Lkappa}
     L(\kappa \circ \eta(\ell'),r)\ge a.
     \end{equation}
     We have $\xi_u>r$ for $\ell \le u<\ell'$, which implies that the vertical coordinates of the bottom points of all new spindles discovered after time $t_{\cT}$ and before time $\kappa\circ\eta (\ell)$ are greater than $r$. 
     Let $t>\tau_a^r$. By definition, $L(t,r)>a$. By Corollary \ref{c:zero measure 1D}, there is a spindle $\cT'=\cT_{(a',r')}$ which intersects $(a,L(t,r))\times \{r\}$. Therefore $a\le \cR_{r',r}(a'-)<L(t,r)$, hence $s_{\cT'}>s_{\cT}$ (hence also $s_{\cT'}> t_{\cT}$) and $s_{\cT'}<t$. It implies that $\kappa\circ \eta(\ell')\le t$, and $\kappa\circ\eta (\ell')\le \tau_a^r$ by sending $t\to\tau_a^r$.
      Together with \eqref{eq:Lkappa}, $L(\kappa\circ\eta (\ell'),r)=a$. On the other hand,  since there will be jumps crossing level $r$ immediately after $\ell'$,  for every $\varepsilon>0$, there exists a spindle discovered between time $\kappa\circ\eta (\ell')$ and $\kappa\circ\eta (\ell'+\varepsilon)$ with  bottom point below level $r$ and  top point above $r$. Going back to the process $X$, we derive $L(\kappa\circ\eta (\ell'+\varepsilon),r)>L(\kappa\circ\eta (\ell'),r)=a$ for every $\varepsilon>0$. By the right-continuity of $\kappa\circ\eta$, we deduce $\kappa\circ\eta (\ell')\ge \tau_a^r$ as desired.
\end{proof}

Therefore, $\overline{\ki}$ is a bijection between $\mathscr{P}_\alpha$ and the gasket $\rK$. 
We next show that $\overline{\ki}$ is a homeomorphism, where the gasket, seen as a subset of $\bR_+\times \bR$, is equipped with the metric $d_E$, the Euclidean distance in $\bR_+\times\bR$. We start with a lemma.

        \begin{lemma}\label{l:convergence s_n}
        Let $(\ell_n)_{n\ge 1}$ be a sequence in $\bR_+$ such that $\ell_n \to \ell$ as $n\to \infty$. 
        \begin{enumerate}[(i)]
            \item Suppose that $\ell_n\ge \ell$ for all $n$, or $\ell$ is a continuity point of $\xi$. Then $\ki(\ell_n)\to \ki(\ell)$ and $\pi(\ell_n)\to \pi(\ell)$. 
            \item Suppose that $\ell_n<\ell$  for all $n$ and $\ell$ is  a jump time of $\xi$. Let $\ell':=\inf\{u>\ell \colon \xi_u=\xi_{\ell-}\}$. Then $\ki(\ell_n)\to \ki(\ell')$ and $\pi(\ell_n)\to \pi(\ell')$.
        \end{enumerate}
        \end{lemma}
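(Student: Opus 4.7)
The plan is to handle the three scenarios separately: (a) case (i) with $\ell_n\downarrow\ell$; (b) case (i) with $\ell$ a continuity point of $\xi$; and (c) case (ii). In each scenario the convergence $\pi(\ell_n)\to\pi(\ell^\sharp)$ will be reduced to showing $D^u(\ell_n,\ell^\sharp)\to 0$, where $\ell^\sharp=\ell$ in (a) and (b) and $\ell^\sharp=\ell'$ in (c); this suffices since the one-step path $a_1=\ell_n$, $b_1=a_2=\ell^\sharp$ in the definition of $D^*$ gives $D^*(\ell_n,\ell^\sharp)\le D^u(\ell_n,\ell^\sharp)+D^d(\ell^\sharp,\ell^\sharp)=D^u(\ell_n,\ell^\sharp)$.

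Case (a) will be routine: by construction $\eta$ and $\kappa$ are right-continuous, $\varrho$ is continuous (Lemma~\ref{l:kappa_A}), and $\xi$ is right-continuous, which will yield both $\ki(\ell_n)\to\ki(\ell)$ and $D^u(\ell_n,\ell)\to 0$.

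Case (b) hinges on the fact that $\ki$ is cadlag (as a composition of cadlag $\eta,\kappa$ and continuous $\varrho$) and that its jump times coincide with those of $\eta$ by Proposition~\ref{p:ki}(ii), which in turn coincide with the jump times of $\xi$ via the bijection \eqref{eq:bijection subor spindles} together with \eqref{eq:xi-X}. Therefore $\ki$ is continuous at every continuity point of $\xi$, and so $\ki(\ell_n)\to\ki(\ell)$ for any sequence $\ell_n\to\ell$. Continuity of $\xi$ at $\ell$ combined with the standard cadlag fact $\sup_{u\in[\ell-\delta,\ell+\delta]}|\xi_u-\xi_\ell|\to 0$ as $\delta\downarrow 0$ then gives $D^u(\ell_n,\ell)\to 0$.

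For case (c), I would write $\cT=\cT_{(a,r)}$ for the spindle associated to the jump of $\xi$ at $\ell$ via \eqref{eq:xi-X}, so that $\eta(\ell-)=A_{s_\cT}$ and $\eta(\ell_n)\uparrow A_{s_\cT}$. Proposition~\ref{p:dense times} supplies spindles $\cT_k$ with $s_{\cT_k}\uparrow s_\cT$, and Lemma~\ref{l:kappa_A}(ii) applied to each $\cT_k$ yields $\kappa(A_{s_{\cT_k}})=s_{\cT_k}\uparrow s_\cT=\kappa(A_{s_\cT})$, i.e.\ $\kappa$ is left-continuous at $A_{s_\cT}$. Hence $\kappa\circ\eta(\ell_n)\to s_\cT$, and by continuity of $\varrho$, $\ki(\ell_n)\to\varrho_{s_\cT}=(a,r)$. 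On the other side, Lemma~\ref{l:charact K} gives $\ki(\ell')=\varrho(\tau_a^r)=(a,r)$, so the two limits match. For $\pi$: spectral positivity of $\xi$ forces $\xi\ge\xi_{\ell-}$ on $[\ell,\ell']$ with equality at $\ell'$; combined with $\sup_{u\in[\ell_n,\ell)}|\xi_u-\xi_{\ell-}|\to 0$ (the cadlag left-limit fact) and $\xi_{\ell'}=\xi_{\ell-}$, this will give $\xi_{\ell_n}+\xi_{\ell'}\to 2\xi_{\ell-}$ and $\inf_{[\ell_n,\ell']}\xi\to\xi_{\ell-}$, hence $D^u(\ell_n,\ell')\to 0$. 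The most delicate ingredient is the left-continuity of $\kappa$ at $A_{s_\cT}$ in case (c), which requires combining Proposition~\ref{p:dense times} with Lemma~\ref{l:kappa_A}(ii) to approximate $s_\cT$ from below by bottom times of other spindles; everything else is a routine application of one-sided continuity.
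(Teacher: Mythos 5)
Your proof is correct and follows essentially the same route as the paper's: one-sided continuity of the time change gives the $\ki$-convergence, and bounding $D^*$ by a single one-step chain gives the $\pi$-convergence. In case~(c) you make explicit what the paper compresses into the assertion $\ki(\ell-)=\ki(\ell')$: you derive the left-continuity of $\kappa$ at $A_{s_{\cT}}$ directly from Proposition~\ref{p:dense times} and Lemma~\ref{l:kappa_A}(ii), rather than invoking the c\`adl\`ag property of $\ki$ wholesale; both are fine. One point worth flagging: in case~(c) you bound via $D^*(\ell_n,\ell')\le D^u(\ell_n,\ell')$, while the paper's proof writes $D^*(\ell_n,\ell')\le D^d(\ell_n,\ell')$. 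Your choice is the right one. Since $\ell\in(\ell_n,\ell')$ and $\xi$ jumps upward at $\ell$, one has $\sup_{[\ell_n,\ell']}\xi\ge\xi_\ell$, so $D^d(\ell_n,\ell')\ge 2\xi_\ell-\xi_{\ell_n}-\xi_{\ell'}\to 2\Delta\xi_\ell>0$ does not vanish, whereas $D^u(\ell_n,\ell')\to 0$ exactly as you argue; the occurrence of $D^d$ in the paper appears to be a typo for $D^u$. A minor stylistic note: when justifying that $\ki$ is c\`adl\`ag in case~(b), the clean citation is Lemma~\ref{l:kappa_A}(i), i.e.\ the continuity of the composite $\varrho\circ\kappa$ (then composed with the c\`adl\`ag increasing $\eta$); quoting only the continuity of $\varrho$ together with $\kappa$, $\eta$ c\`adl\`ag is slightly looser, since a composition of two c\`adl\`ag functions is not automatically c\`adl\`ag.
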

    
    \begin{proof}
            (i) We first show that $\ki(\ell_n)\to \ki(\ell)$. The case $\ell_n\ge \ell$ follows from right-continuity. In the case that $\ell$ is a continuity point of $\xi$, observe that $\ell$ is also a continuity point of $\eta$.
            Since $\varrho\circ\kappa$ is continuous by Lemma \ref{l:kappa_A} (i), $\ell$ is also a continuity point of $\ki$.
            To prove now that $\pi(\ell_n)\to \pi(\ell)$, we just observe  that $D^*(\ell_n,\ell)\le D^u(\ell_n,\ell)$ by definition. 
           
            (ii)  For a jump point $\ell$ of $\xi$, by \eqref{eq:xi-X} the time $\kappa\circ \eta (\ell-)$ is the first visit time of the bottom point of some spindle denoted by $\cT_{(a,r)}$. By Lemma~\ref{l:charact K}, $X$ starts an excursion above $r$ at time $\kappa\circ \eta (\ell-)$, and finishes this excursion at time $\kappa\circ\eta(\ell')=\tau_a^r$. Therefore $\ki(\ell-)=\ki(\ell')$. The second statement follows from $D^*(\ell_n,\ell')\le D^d(\ell_n,\ell')$.
        \end{proof}

\begin{theorem}\label{t:homeomorphism}
    The map $\overline{\ki}:(\mathscr{P}_\alpha,D^*)\to (\rK,d_E)$ is a homeomorphism.
\end{theorem}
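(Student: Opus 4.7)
Since $\overline{\ki}$ is already a bijection (from the construction in \eqref{eq:bijection} together with Lemma~\ref{l:charact K} ensuring surjectivity), the task is to prove continuity in both directions. The main tool throughout is Lemma~\ref{l:convergence s_n}, which says that $\ki$ and $\pi$ share the same c\`adl\`ag structure on $\bR_+$, so that convergence of $\ell_n$ in $\bR_+$ produces paired limits of $\ki(\ell_n)$ and $\pi(\ell_n)$ matching under the bijection.

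For continuity of $\overline{\ki}^{-1}\colon\mathscr{K}\to\mathscr{P}_\alpha$, suppose $\ki(\ell_n)\to \ki(\ell)$ in $d_E$. I would extract a subsequence $\ell_{n_k}\to\ell^*\in[0,\infty]$, splitting further into right- and left-convergent sub-subsequences at $\ell^*$. In the case $\ell^*<\infty$, Lemma~\ref{l:convergence s_n} gives paired convergences in $d_E$ and $D^*$, and the injectivity of $\overline{\ki}$ forces the limit of $\pi(\ell_{n_k})$ to equal $\pi(\ell)$. To exclude $\ell^*=\infty$, I would argue that $\kappa\circ\eta(\ell_{n_k})\to\infty$ (since $\eta$ has no killing and $A_\infty=\infty$, as the left-exploration set $\mathcal{O}$ has infinite Lebesgue measure), while $X_{\kappa\circ\eta(\ell_{n_k})}=\xi_{\ell_{n_k}}$ converges to the second coordinate of $\ki(\ell)$. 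Joint continuity of the PRBM local time, combined with the divergence $L(t,x)\to\infty$ as $t\to\infty$ guaranteed by recurrence of $X$, then yields $L(\kappa\circ\eta(\ell_{n_k}),\xi_{\ell_{n_k}})\to\infty$, contradicting boundedness of the first coordinate of $\ki(\ell)\in\mathscr{K}$.

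For continuity of $\overline{\ki}\colon\mathscr{P}_\alpha\to\mathscr{K}$, suppose $\pi(\ell_n)\to\pi(\ell)$ in $D^*$. A direct computation gives the estimate $D^u(a,b)+D^d(b,c)\ge |\xi_a-\xi_c|$, so iterating along the chains in the definition of $D^*$ yields $|\xi_{\ell_1}-\xi_{\ell_2}|\le D^*(\pi(\ell_1),\pi(\ell_2))$; in particular, $\xi_{\ell_n}\to\xi_\ell$. Extracting $\ell_{n_k}\to\ell^*\in[0,\infty]$, the case $\ell^*<\infty$ is handled via Lemma~\ref{l:convergence s_n} and uniqueness of limits in $D^*$, exactly as in the previous paragraph.

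The principal obstacle I expect is excluding $\ell^*=\infty$ in this second direction, because the bound above controls only the vertical coordinate of $\ki$ and says nothing about the horizontal local-time coordinate. My plan is to establish a quantitative lower bound for $D^*$ that detects this horizontal coordinate: if $\ell_{n_k}\to\infty$ with $\xi_{\ell_{n_k}}\to\xi_\ell$, then by recurrence of $\xi$ the path must cross the level $\xi_\ell$ arbitrarily often on $[\ell,\ell_{n_k}]$, so any chain realizing $D^*(\pi(\ell_{n_k}),\pi(\ell))$ is forced to traverse arbitrarily many excursions of $\xi$, pushing $D^*\to\infty$ and contradicting $\pi(\ell_{n_k})\to\pi(\ell)$. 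Making this crossing bound quantitative is the central technical step I anticipate.
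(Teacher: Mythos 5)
Your overall skeleton matches the paper's: bijectivity via \eqref{eq:bijection} and Lemma~\ref{l:charact K}, then bounded-subsequence extraction combined with Lemma~\ref{l:convergence s_n} and injectivity (uniqueness of limits) to identify the limits. Your treatment of the continuity of $\overline{\ki}^{-1}$ is sound, and your way of excluding $\ell^*=\infty$ there — noting $\kappa\circ\eta(\ell_{n_k})\ge \eta_{\ell_{n_k}}\to\infty$ and then forcing $L\bigl(\kappa\circ\eta(\ell_{n_k}),\xi_{\ell_{n_k}}\bigr)\to\infty$ by recurrence of $X$ and joint continuity of its local time — is a correct and self-contained substitute for the paper's unwritten ``proved similarly''. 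The estimate $|\xi_{\ell_1}-\xi_{\ell_2}|\le D^*(\pi(\ell_1),\pi(\ell_2))$ is also correct.

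The genuine gap is in the forward direction. To prove continuity of $\overline{\ki}$ you must rule out that representatives $\ell_{n_k}$ escape to infinity while $D^*(\pi(\ell_{n_k}),\pi(\ell))\to 0$, and the mechanism you propose — recurrence forces arbitrarily many crossings of the level $\xi_\ell$, hence $D^*\to\infty$ — is both unproven (you yourself flag it as the anticipated central step) and not the right mechanism: repeated crossings of a level do not by themselves inflate $D^*$, since chains in the definition of $D^*$ can pass at zero cost through points identified along excursions above or below that level, and in any case you do not need divergence of $D^*$, only a uniform positive lower bound. What is actually needed, and what the paper uses, is the single-straddling-jump bound: by recurrence there is a jump time $\ell''>\ell$ with $\xi_{\ell''-}<\xi_\ell<\xi_{\ell''}$, and then $D^*(v,\ell)\ge\min\{\xi_\ell-\xi_{\ell''-},\,\xi_{\ell''}-\xi_\ell\}>0$ for every $v\ge\ell''$; this is \cite[Theorem~3.1]{BjCuSi22}, quoted in the paper's proof. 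This one inequality shows the equivalence class of each point of $\mathscr{P}_\alpha$ is bounded and that, once $D^*(\pi(\ell_n),\pi(\ell))$ drops below that constant, every representative of $\pi(\ell_n)$ lies in $[0,\ell'']$, after which your subsequence argument goes through verbatim. Without establishing this (or an equivalent quantitative lower bound on $D^*$ across a level-straddling jump), your proof of the forward continuity is incomplete.
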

\begin{proof}
We first prove that for every $x\in\mathscr{P}_\alpha$, the equivalence class of $x$, as a subset of $\bR_+$, is bounded. Take any $\ell\ge 0$ with $\pi(\ell)=x$. Since $\xi$ is recurrent, there exists some jump time $\ell''>\ell$ with $\xi_{\ell''-}<\xi_\ell<\xi_{\ell''}$. Therefore  $D^*(v,\ell)\ge \min\{\xi_\ell-\xi_{\ell''-},\xi_{\ell''}-\xi_\ell\}$ for all $v\ge \ell''$, see \cite[Theorem 3.1]{BjCuSi22}. It implies that the equivalence class of $x$ is a subset of $[0,\ell'']$.

    We next prove the continuity of $\overline{\ki}$. 
Let $x_n=\pi(\ell_n)$ be a sequence in $\mathscr{P}_\alpha$ that converges to some $x\in \mathscr{P}_\alpha$. With the notation above, for all $n$ large enough, the equivalence class of $x_n$ is a subset of $[0,\ell'']$. 
Therefore, for any subsequence $(n_k)_{k\ge 1}$, there exists a further subsequence $(n_{k(m)})_{m\ge 1}$ such that either $\ell_{n_{k(m)}}\downarrow \ell_{\infty}$ or $\ell_{n_{k(m)}}\uparrow \ell_{\infty}$ in $\bR_+$. 
If we are in case (i) of Lemma~\ref{l:convergence s_n}, then we have $\ki(\ell_{n_{k(m)}})\to \ki(\ell_{\infty})$ and $\pi(\ell_{n_{k(m)}})\to \pi(\ell_{\infty})$; while in  case (ii) of Lemma~\ref{l:convergence s_n}, we have $\ki(\ell_{n_{k(m)}})\to \ki(\ell')$ and $\pi(\ell_{n_{k(m)}})\to \pi(\ell')$. 
As $\pi(\ell_n) = x_n \to x$ in both cases, we deduce that the subsequence $\bar\ki(x_{n_{k(m)}})$ converges to  $\bar\ki(x)$. So we conclude the convergence of the original sequence $\bar\ki(x_{n})$ to $\bar\ki(x)$. 
The continuity of the inverse of $\overline{\ki}$ is proved similarly.
\end{proof}

\bigskip
\begin{paragraph}{Funding} 
    This work was supported in part by the National Key R\&D Program of China (grant 2022YFA1006500) and National Natural Science Foundation of China (grants QXH1411004, 12288201 and 12301169). 
\end{paragraph}
\bibliographystyle{alpha}
\bibliography{CRP}

\begin{thebibliography}{FPRW23b}

\bibitem[AHS]{aidekon2023stochastic}
Elie Aïdékon, Yueyun Hu, and Zhan Shi.
\newblock The stochastic {Jacobi} flow.
\newblock {\em To appear in The Annals of Probability}.

\bibitem[AHS24]{aidekon2024infinite}
Elie A\"id\'ekon, Yueyun Hu, and Zhan Shi.
\newblock An infinite-dimensional representation of the {R}ay-{K}night
  theorems.
\newblock {\em Sci. China Math.}, 67(1):149--162, 2024.

\bibitem[Ald99]{AldousDiffusionProblem}
David Aldous.
\newblock Wright-{F}isher diffusions with negative mutation rate!, 1999.

\bibitem[AWY24]{AWYskew}
Elie Aïdékon, Chengshi Wang, and Yaolin Yu.
\newblock Meeting of squared {Bessel} flow lines and application to the skew
  {Brownian} motion, 2024.
\newblock arXiv:2411.04773 [math.PR].

\bibitem[Bar82]{Barlow}
M.~T. Barlow.
\newblock {$L(B\sb{t},\,t)$}\ is not a semimartingale.
\newblock In {\em Seminar on {P}robability, {XVI}}, volume 920 of {\em Lecture
  Notes in Math.}, pages 209--211. Springer, Berlin-New York, 1982.

\bibitem[BC01]{burdzy2001local}
Krzysztof Burdzy and Zhen-Qing Chen.
\newblock Local time flow related to skew {B}rownian motion.
\newblock {\em Ann. Probab.}, 29(4):1693--1715, 2001.

\bibitem[BCS22]{BjCuSi22}
Jakob Bj\"ornberg, Nicolas Curien, and Sigurdur~\"Orn Stef\'ansson.
\newblock Stable shredded spheres and causal random maps with large faces.
\newblock {\em Ann. Probab.}, 50(5):2056--2084, 2022.

\bibitem[Bef03]{beffara}
V.~Beffara.
\newblock {\em Mouvement Brownien Plan, SLE, Invariance Conforme et Dimensions
  Fractales}.
\newblock PhD thesis, Université Paris-Sud (Orsay), 2003.

\bibitem[Ber99]{Bertoin1999}
Jean Bertoin.
\newblock Subordinators: examples and applications.
\newblock In {\em Lectures on probability theory and statistics
  ({S}aint-{F}lour, 1997)}, volume 1717 of {\em Lecture Notes in Math.}, pages
  1--91. Springer, Berlin, 1999.

\bibitem[Bil99]{Billingsley}
Patrick Billingsley.
\newblock {\em Convergence of probability measures}.
\newblock Wiley Series in Probability and Statistics: Probability and
  Statistics. John Wiley \& Sons, Inc., New York, second edition, 1999.
\newblock A Wiley-Interscience Publication.

\bibitem[BK04]{burdzy2004lenses}
Krzysztof Burdzy and Haya Kaspi.
\newblock Lenses in skew {B}rownian flow.
\newblock {\em Ann. Probab.}, 32(4):3085--3115, 2004.

\bibitem[BLG00]{bertoin-legall00}
Jean Bertoin and Jean-Fran\c{c}ois Le~Gall.
\newblock The {B}olthausen-{S}znitman coalescent and the genealogy of
  continuous-state branching processes.
\newblock {\em Probab. Theory Related Fields}, 117(2):249--266, 2000.

\bibitem[CHN20]{CuHuNa}
Nicolas Curien, Tom Hutchcroft, and Asaf Nachmias.
\newblock Geometric and spectral properties of causal maps.
\newblock {\em J. Eur. Math. Soc. (JEMS)}, 22(12):3997--4024, 2020.

\bibitem[CPY94]{CarmPetiYor94}
Ph. Carmona, F.~Petit, and M.~Yor.
\newblock Some extensions of the arc sine law as partial consequences of the
  scaling property of {B}rownian motion.
\newblock {\em Probab. Theory Related Fields}, 100(1):1--29, 1994.

\bibitem[DFG02]{difrancesco_guitter}
Philippe Di~Francesco and Emmanuel Guitter.
\newblock Critical and multicritical semi-random (1+d)-dimensional lattices and
  hard objects in d dimensions.
\newblock {\em Journal of Physics A: Mathematical and General}, 35:897--927,
  2002.

\bibitem[DL12]{dawson2012stochastic}
Donald~A. Dawson and Zenghu Li.
\newblock Stochastic equations, flows and measure-valued processes.
\newblock {\em Ann. Probab.}, 40(2):813--857, 2012.

\bibitem[DLG05]{DuLG05}
Thomas Duquesne and Jean-Fran\c~cois Le~Gall.
\newblock Probabilistic and fractal aspects of {L}\'evy trees.
\newblock {\em Probab. Theory Related Fields}, 131(4):553--603, 2005.

\bibitem[EK05]{EthKurtzBook}
Stewart~N. Ethier and Thomas~G. Kurtz.
\newblock {\em Markov processes : characterization and convergence}.
\newblock Wiley series in probability and mathematical statistics. J. Wiley \&
  Sons, New York, Chichester, 2005.

\bibitem[For18]{Formanbrick}
Noah Forman.
\newblock Brownian bricklayer: a random space-filling curve.
\newblock {\em Statist. Probab. Lett.}, 143:43--46, 2018.

\bibitem[FPRW18]{Paper0}
Noah Forman, Soumik Pal, Douglas Rizzolo, and Matthias Winkel.
\newblock Uniform control of local times of spectrally positive stable
  processes.
\newblock {\em Ann. Appl. Probab.}, 28(4):2592--2634, 2018.

\bibitem[FPRW20a]{Paper1-1}
Noah Forman, Soumik Pal, Douglas Rizzolo, and Matthias Winkel.
\newblock Diffusions on a space of interval partitions: construction from
  marked {L\'e}vy processes.
\newblock {\em Electron. J. Probab.}, 25:46 pp., 2020.

\bibitem[FPRW20b]{Paper1-0}
Noah Forman, Soumik Pal, Douglas Rizzolo, and Matthias Winkel.
\newblock Metrics on sets of interval partitions with diversity.
\newblock {\em Electron. Commun. Probab.}, 25:16 pp., 2020.

\bibitem[FPRW21]{Paper1-2}
Noah Forman, Soumik Pal, Douglas Rizzolo, and Matthias Winkel.
\newblock {Diffusions on a space of interval partitions: Poisson--Dirichlet
  stationary distributions}.
\newblock {\em Ann. Probab.}, 49(2):793--831, 2021.

\bibitem[FPRW23a]{FPRW-Aldous}
Noah Forman, Soumik Pal, Douglas Rizzolo, and Matthias Winkel.
\newblock The aldous diffusion: a stationary evolution of the brownian crt.
\newblock arXiv:2305.17269v1 [math.PR], 2023.

\bibitem[FPRW23b]{Paper1-3}
Noah Forman, Soumik Pal, Douglas Rizzolo, and Matthias Winkel.
\newblock {Ranked masses in two-parameter Fleming--Viot diffusions}.
\newblock {\em Trans. Amer. Math. Soc.}, 376(02):1089--1111, 2023.

\bibitem[FRSW22]{PaperFV}
Noah Forman, Douglas Rizzolo, Quan Shi, and Matthias Winkel.
\newblock {A two-parameter family of measure-valued diffusions with
  Poisson--Dirichlet stationary distributions}.
\newblock {\em Ann. Appl. Probab.}, 32(3):2211--2253, 2022.

\bibitem[FRSW23]{FRSW2}
Noah Forman, Douglas Rizzolo, Quan Shi, and Matthias Winkel.
\newblock Diffusions on a space of interval partitions: the two-parameter
  model.
\newblock {\em Electron. J. Probab.}, 28:Paper No. 61, 46, 2023.

\bibitem[FS10]{FengSun10}
Shui Feng and Wei Sun.
\newblock Some diffusion processes associated with two parameter
  {P}oisson-{D}irichlet distribution and {D}irichlet process.
\newblock {\em Probab. Theory Related Fields}, 148(3-4):501--525, 2010.

\bibitem[GJY03]{GoinYor03}
Anja G{\"o}ing-Jaeschke and Marc Yor.
\newblock A survey and some generalizations of {B}essel processes.
\newblock {\em Bernoulli}, 9(2):313--349, 2003.

\bibitem[GP05]{GnedPitm05}
Alexander Gnedin and Jim Pitman.
\newblock Regenerative composition structures.
\newblock {\em Ann. Probab.}, 33(2):445--479, 2005.

\bibitem[Kni63]{Knight63}
F.~B. Knight.
\newblock Random walks and a sojourn density process of {B}rownian motion.
\newblock {\em Trans. Amer. Math. Soc.}, 109:56--86, 1963.

\bibitem[Lam02]{lambert02}
Amaury Lambert.
\newblock The genealogy of continuous-state branching processes with
  immigration.
\newblock {\em Probab. Theory Related Fields}, 122(1):42--70, 2002.

\bibitem[Lam10]{Lambert10}
Amaury Lambert.
\newblock The contour of splitting trees is a {L}\'evy process.
\newblock {\em Ann. Probab.}, 38(1):348--395, 2010.

\bibitem[LGY86]{legall-yor}
Jean-Fran{\c{c}}ois Le~Gall and Marc Yor.
\newblock Excursions browniennes et carr{\'e}s de processus de {Bessel}
  ({Brownian} excursions and squares of {Bessel} processes).
\newblock {\em C. R. Acad. Sci., Paris, S{\'e}r. I}, 303:73--76, 1986.

\bibitem[LUB18]{LambertBravo16}
Amaury Lambert and Ger\'onimo Uribe~Bravo.
\newblock Totally ordered measured trees and splitting trees with infinite
  variation.
\newblock {\em Electron. J. Probab.}, 23:Paper No. 120, 41, 2018.

\bibitem[Pet09]{Petrov09}
L.~A. Petrov.
\newblock A two-parameter family of infinite-dimensional diffusions on the
  {K}ingman simplex.
\newblock {\em Funktsional. Anal. i Prilozhen.}, 43(4):45--66, 2009.

\bibitem[Pit06]{CSP}
J.~Pitman.
\newblock {\em Combinatorial stochastic processes}, volume 1875 of {\em Lecture
  Notes in Mathematics}.
\newblock Springer-Verlag, Berlin, 2006.
\newblock Lectures from the 32nd Summer School on Probability Theory held in
  Saint-Flour, July 7--24, 2002.

\bibitem[PW97]{perman1997perturbed}
Mihael Perman and Wendelin Werner.
\newblock Perturbed {B}rownian motions.
\newblock {\em Probab. Theory Related Fields}, 108(3):357--383, 1997.

\bibitem[PW09]{PitmWink09}
Jim Pitman and Matthias Winkel.
\newblock Regenerative tree growth: binary self-similar continuum random trees
  and {P}oisson-{D}irichlet compositions.
\newblock {\em Ann. Probab.}, 37(5):1999--2041, 2009.

\bibitem[PW18]{PW18}
Jim Pitman and Matthias Winkel.
\newblock {Squared Bessel processes of positive and negative dimension embedded
  in Brownian local times}.
\newblock {\em Electron. Commun. Probab.}, 23:13 pp., 2018.

\bibitem[PY82]{PitmYor82}
Jim Pitman and Marc Yor.
\newblock A decomposition of {B}essel bridges.
\newblock {\em Z. Wahrsch. Verw. Gebiete}, 59(4):425--457, 1982.

\bibitem[Ray63]{Ray63}
Daniel Ray.
\newblock Sojourn times of diffusion processes.
\newblock {\em Illinois J. Math.}, 7:615--630, 1963.

\bibitem[RLR23]{RivRiz23}
Kelvin Rivera-Lopez and Douglas Rizzolo.
\newblock Diffusive limits of two-parameter ordered {C}hinese restaurant
  process up-down chains.
\newblock {\em Ann. Inst. Henri Poincar\'e{} Probab. Stat.}, 59(1):303--324,
  2023.

\bibitem[RW22]{RogersWinkel}
Dane Rogers and Matthias Winkel.
\newblock A {R}ay-{K}night representation of up-down {C}hinese restaurants.
\newblock {\em Bernoulli}, 28(1):689--712, 2022.

\bibitem[RY99]{RevuzYor}
Daniel Revuz and Marc Yor.
\newblock {\em Continuous martingales and {B}rownian motion}, volume 293 of
  {\em Grundlehren der Mathematischen Wissenschaften [Fundamental Principles of
  Mathematical Sciences]}.
\newblock Springer-Verlag, Berlin, third edition, 1999.

\bibitem[Sat99]{Sato}
Ken-iti Sato.
\newblock {\em L\'evy processes and infinitely divisible distributions}.
\newblock Cambridge University Press, 1999.

\bibitem[SW]{ShiWinkel-2}
Quan Shi and Matthias Winkel.
\newblock {Up-down ordered Chinese restaurant processes with two-sided
  immigration, emigration and diffusion limits}.
\newblock {\em To appear in Ann. Appl. Probab.}

\bibitem[Wal86]{Walsh1986AnIT}
John~B. Walsh.
\newblock {\em An introduction to stochastic partial differential equations},
  volume 1180 of {\em Lecture Notes in Math.}
\newblock Springer, Berlin, 1986.

\end{thebibliography}

\end{document}